\theoremstyle{plain}
\newtheorem{thm}{Theorem}[subsection]
\newtheorem{lem}[thm]{Lemma}
\newtheorem{prop}[thm]{Proposition}
\newtheorem{cor}{Corollary}[subsection]
\theoremstyle{definition}
\newtheorem{defn}{Definition}[subsection]
\newtheorem{exmp}{Example}[subsection]
\theoremstyle{remark}
\newtheorem{remark}{Remark}[subsection]
\newtheorem{note}{Note}[subsection]
\theoremstyle{proof}
\author{SELCAN AKSOY\\Giresun University, Giresun TURKEY \\ E-mail: selcan.aksoy@giresun.edu.tr}
\title{2 Category of FRBSU Monoidal Categories and Crossed Modules }
\begin{document}
\maketitle
\begin{abstract}
In that paper, we prove that the collection of all FRBSU monoidal categories and the collection of all crossed modules form a 2 category.
\end{abstract}
\tableofcontents

\section{Preliminaries}
Throughout this paper, we are assuming that the symbol FRBSU monoidal category indicates a finite, rigid, braided monoidal category whose unit object $I$ is simple.\\

A category is small if the collection of all objects form a set. If $A$ is an object in a category $\mathcal{A}$, then a subobject $B$ of $A$ is an object with a monomorphism $B\rightarrow A$.\\ 

\cite{baki} An object $A$ is simple in an abelian category $\mathcal{A}$ if for any injection $B\rightarrow A$, we get $B=0$ or $B\cong A$.\\

A cover for an object $A$ in a category $\mathcal{A}$ is an object $P$ with an epimorphism $f:~P\rightarrow A$. This cover is projective if $P$ is a projective object.\\

An object $A$ in a category is of finite length if there exists a finite sequence of monomorphisms $\xymatrix{ 0\ar[r] & A_n\ar[r] & A_{n-1}\ar[r] & ...\ar[r] & A_0=A}$ such that the cokernels of these monomorphisms are simple objects.\\

A $k$ linear abelian category is semisimple if every object is isomorphic to direct sum of simple objects.
\begin{lem}
\label{22}
$[Schur's~Lemma]$ If $k$ is an algebraically closed field of characteristic zero, then $End(X)=k$ whenever $X$ is a simple object in an abelian $k$ linear category $\mathcal{A}$.
\end{lem}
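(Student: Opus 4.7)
The plan is to deduce the result in two stages: first, show that $\operatorname{End}(X)$ is a division algebra over $k$; second, use algebraic closure (together with the finite-dimensionality of $\operatorname{End}(X)$ supplied by the ambient assumption that the category is finite) to collapse this division algebra to $k$ itself.

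For the first stage, I would take any nonzero $f\colon X\to X$ and examine its kernel and image as subobjects of $X$. Since $\ker f \hookrightarrow X$ is a monomorphism and $X$ is simple, either $\ker f = 0$ or $\ker f \cong X$; the latter would force $f=0$, so $f$ is a monomorphism. A dual argument applied to $\operatorname{im} f \hookrightarrow X$ (again using simplicity) shows $\operatorname{im} f \cong X$, so $f$ is also an epimorphism. In an abelian category a morphism that is simultaneously monic and epic is an isomorphism, hence every nonzero element of $\operatorname{End}(X)$ is invertible. Thus $\operatorname{End}(X)$ is a division $k$-algebra.

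For the second stage, fix any $f \in \operatorname{End}(X)$. Because $\operatorname{End}(X)$ is finite-dimensional over $k$ (this is where I draw on the ``finite'' part of the FRBSU hypothesis; the paper's setup ensures hom-spaces are finite-dimensional), the powers $1, f, f^{2}, \ldots$ are linearly dependent, so $f$ satisfies some nonzero polynomial $p(t) \in k[t]$. Since $k$ is algebraically closed, I can factor $p(t) = c \prod_{i=1}^{n}(t-\lambda_{i})$ with $\lambda_{i}\in k$. Substituting and using that $\operatorname{End}(X)$ is a division ring, hence has no zero divisors, the identity $\prod_{i}(f-\lambda_{i}\cdot\operatorname{id}_{X}) = 0$ forces $f - \lambda_{i}\cdot\operatorname{id}_{X} = 0$ for at least one index $i$. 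Therefore $f = \lambda_{i}\cdot\operatorname{id}_{X}$, and the map $k \to \operatorname{End}(X)$ sending $\lambda \mapsto \lambda\cdot\operatorname{id}_{X}$ is surjective; it is clearly injective (as $\operatorname{id}_{X}\neq 0$ in a nonzero object), giving $\operatorname{End}(X)=k$.

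The main obstacle I anticipate is not the algebraic manipulation but the bookkeeping for finite-dimensionality: the statement as written does not make finite-dimensionality of $\operatorname{End}(X)$ explicit, so I would need to cite the standing ``finite'' assumption from the preliminaries (or, more conservatively, add it as an extra hypothesis). Without that input, algebraic closure of $k$ is not enough: one can construct simple modules whose endomorphism algebra is a transcendental extension of $k$, which would block the polynomial-factorization step. Characteristic zero plays no role in the argument and is used only to match the hypothesis stated.
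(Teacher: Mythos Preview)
The paper does not actually prove this lemma; it is stated without proof, and the \texttt{proof} environment that follows in the source belongs to the next lemma (Lemma~\ref{12}). Your argument is the standard proof of Schur's Lemma and is correct, provided one has finite-dimensionality of $\operatorname{End}(X)$ over $k$. You are right to flag that this hypothesis is not explicit in the statement as written and must be imported from the standing finiteness assumptions in the preliminaries; without it the conclusion can fail, exactly as you observe. Your note that characteristic zero plays no role is also accurate.
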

\begin{lem}
\label{12}
If $X\cong Y$ are nonzero simple objects in a $k$ linear abelian category $\mathcal{A}$ for $k$ is a perfect field, then $Hom(X,~Y)=0$.
\end{lem}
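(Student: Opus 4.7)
The plan is to interpret the hypothesis as $X \not\cong Y$ (as stated, the conclusion fails: any isomorphism lies in $\operatorname{Hom}(X,Y)$), and then show every morphism $f\colon X\to Y$ must vanish by deriving a contradiction from the assumption that some $f\neq 0$ exists.

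First I would pass to $\ker(f)\hookrightarrow X$. Since $\ker(f)$ is a subobject of the simple object $X$, by the definition of simplicity recalled earlier either $\ker(f)=0$ or $\ker(f)\cong X$; the latter forces $f=0$, contrary to hypothesis. Therefore $f$ is a monomorphism.

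Next I would consider the canonical image factorisation $X\twoheadrightarrow \operatorname{im}(f)\hookrightarrow Y$, available because $\mathcal{A}$ is abelian. The monomorphism $\operatorname{im}(f)\hookrightarrow Y$ makes $\operatorname{im}(f)$ a subobject of the simple object $Y$, so $\operatorname{im}(f)=0$ or $\operatorname{im}(f)\cong Y$; the first possibility contradicts $f\neq 0$, leaving $\operatorname{im}(f)\cong Y$. Hence $f$ is also an epimorphism. In an abelian category a morphism that is simultaneously monic and epic is an isomorphism, so $f\colon X\xrightarrow{\sim} Y$, contradicting $X\not\cong Y$. Therefore $\operatorname{Hom}(X,Y)=0$.

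The main thing to flag, rather than a genuine obstacle, is the role of the perfect field hypothesis: the argument above uses nothing about $k$ beyond $\mathcal{A}$ being $k$-linear abelian, and in fact works in an arbitrary abelian category. I would therefore either remove the perfectness assumption or, if it is retained for symmetry with the preceding Schur's lemma (Lemma \ref{22}), remark explicitly that it is not invoked. No nontrivial calculation is required; the proof is a direct application of the simplicity hypothesis together with the mono-epi-iso property of abelian categories.
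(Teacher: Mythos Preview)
Your proposal is correct and follows essentially the same approach as the paper's proof: assume a nonzero $f\colon X\to Y$, use simplicity of $X$ to force $\ker(f)=0$, and then invoke simplicity of $Y$ to conclude $f$ is an isomorphism, contradicting $X\not\cong Y$. The only difference is cosmetic: the paper jumps directly from ``$f$ is a monomorphism'' to ``$X\cong Y$'' (implicitly viewing $X$ as a nonzero subobject of the simple $Y$), whereas you spell out the image factorisation and the mono--epi--iso argument; you also correctly flag both the typo in the hypothesis and the irrelevance of the perfect-field assumption, neither of which the paper comments on.
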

\begin{proof}
Assume that $\mathcal{A}$ is a $k$ linear abelian category and $X$, $Y$ are nonzero simple objects. Let $f:~X\rightarrow Y$ is a nonzero morphism in $\mathcal{A}$. $Ker(f)\cong 0$ since $X$ is simple and $f\neq 0$, so that morphism is a monomorphism. As a result, $X\cong Y$ that is a contradiction, hence $f=0$.
\end{proof}

A $k$ linear abelian category $\mathcal{A}$ where $k$ is a perfect field is finite if for all objects $X$, $Y$ in $\mathcal{A}$, $Hom_{\mathcal{A}}(X,~Y)$ is finite dimensional vector space over $k$, all objects $A\in \mathcal{A}$ has finite length, every simple object in $\mathcal{A}$ has a projective cover and that category has finitely many isomorphism classes of simple objects.\\

For example $Vec_f(k)$ is a finite category, because $Hom_{Vec_f(k)}(V,~W)$ is isomorphic to the vector space $M_{m\times n}(k)$ of $m\times n$ matrices in which the entries are elements of the field $k$ where $dim(V)=m$ and $dim(W)=n$ for given two finite dimensional vector spaces $V$ and $W$. It is finite dimensional since $M_{m\times n}(k)$ is finite dimensional with dimension $m\times n$. The only simple object is $k$ and every object is free in that category, as a result every object is projective and $k^2\rightarrow k$ is a surjection for example.\\

\subsection{Monoidal Category and Braiding} 
We use \cite{joro} as a reference for the following definitions and example.
\begin{defn}
$(\mathcal{A},~\otimes,~I,~a,~l,~r)$ is a monoidal category if for all objects $X$, $Y$, $Z$ and $W$ in $\mathcal{A}$, the associativity pentagon and the unit triangle commute.\\

Here $\mathcal{A}$ is a category, $\otimes:~\mathcal{A} \times \mathcal{A} \rightarrow \mathcal{A}$ is a functor, $I$ is a unit object in $\mathcal{A}$, $a$ is the associativity constraint which is a family of natural isomorphisms 
\begin{align}
\begin{tikzpicture}
\node (A) at (0, 0) {$a_{XYZ}:~(X\otimes Y) \otimes Z$};
\node (B) at (4, 0) {$X\otimes (Y\otimes Z)$};
\path[->] (A) edge node [above] {$\cong$} (B); 
\end{tikzpicture}
\end{align}

$l$ is a left unit constraint which is a family of natural isomorphisms 
\begin{align}
\begin{tikzpicture}
\node (A) at (0, 0) {$l_X:~I\otimes X$};
\node (B) at (3, 0) {$X$};
\path[->] (A) edge node [above] {$\cong$} (B); 
\end{tikzpicture}
\end{align}

and $r$ is a right unit constraint which is a family of natural isomorphisms 
\begin{align}
\begin{tikzpicture}
\node (A) at (0, 0) {$r_X:~X\otimes I$};
\node (B) at (3, 0) {$X.$};
\path[->] (A) edge node [above] {$\cong$} (B); 
\end{tikzpicture}
\end{align}
\end{defn}
\begin{lem}
If $(\mathcal{A},~\otimes,~I,~a,~l,~r)$ is a monoidal category, then $\mathcal{A}^{op}$ is a monoidal category.
\end{lem}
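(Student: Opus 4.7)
The plan is to equip $\mathcal{A}^{op}$ with the same tensor product and unit as $\mathcal{A}$, but with the associativity and unit constraints replaced by their inverses, and then to observe that the pentagon and triangle axioms in $\mathcal{A}^{op}$ are precisely the reversals of the pentagon and triangle in $\mathcal{A}$.

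First I would define $\otimes^{op}:~\mathcal{A}^{op}\times \mathcal{A}^{op}\rightarrow \mathcal{A}^{op}$ on objects by $X\otimes^{op}Y:=X\otimes Y$. On morphisms, given $f:~X\rightarrow Y$ and $g:~Z\rightarrow W$ in $\mathcal{A}^{op}$ (equivalently $f:~Y\rightarrow X$ and $g:~W\rightarrow Z$ in $\mathcal{A}$), I would set $f\otimes^{op} g:=f\otimes g$ considered as the morphism $X\otimes Z\rightarrow Y\otimes W$ in $\mathcal{A}^{op}$. Functoriality of $\otimes^{op}$ follows directly from functoriality of $\otimes$ since composition in $\mathcal{A}^{op}$ is reversed. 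The unit object will again be $I$.

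Next I would define the constraints in $\mathcal{A}^{op}$ as the inverses of those in $\mathcal{A}$, that is $a^{op}_{XYZ}:=a_{XYZ}^{-1}$, $l^{op}_X:=l_X^{-1}$ and $r^{op}_X:=r_X^{-1}$. Each of these is an isomorphism in $\mathcal{A}$, hence also an isomorphism in $\mathcal{A}^{op}$, and naturality in $\mathcal{A}^{op}$ is equivalent to naturality of $a^{-1}$, $l^{-1}$, $r^{-1}$ in $\mathcal{A}$, which follows from naturality of $a$, $l$, $r$ by inverting squares.

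The key step is verifying the pentagon and triangle axioms. The pentagon axiom in $\mathcal{A}^{op}$ for the data $(a^{-1})$ is obtained from the pentagon in $\mathcal{A}$ by reversing every arrow; since a commutative diagram remains commutative after formally reversing all arrows (this is exactly the content of passing to $\mathcal{A}^{op}$), the pentagon holds. The same argument applies to the triangle axiom relating $l$, $r$, and $a$. I do not expect a genuine obstacle here; the only point that requires a small amount of care is checking that the inverse of a natural isomorphism is natural and that inverting all morphisms in a commutative diagram yields a commutative diagram, both of which are standard.
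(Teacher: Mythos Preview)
Your argument is correct and yields a valid monoidal structure on $\mathcal{A}^{op}$, but it is not the one the paper constructs. You keep the tensor product unchanged, $X\otimes^{op}Y:=X\otimes Y$, and take $a^{op}=a^{-1}$, $l^{op}=l^{-1}$, $r^{op}=r^{-1}$; the coherence axioms then follow immediately by reversing every arrow in the pentagon and triangle. The paper instead \emph{reverses the tensor product}, setting $X\otimes^{op}Y:=Y\otimes X$, and accordingly takes $a^{op}_{XYZ}=a^{-1}_{ZYX}$, $l^{op}=r$, $r^{op}=l$. Both constructions are legitimate ways to put a monoidal structure on the arrow-opposite category, and in fact they are monoidally equivalent whenever $\mathcal{A}$ is braided, but as bare monoidal categories they differ in general.

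What each approach buys: your route is the cleanest possible---the axioms transfer verbatim by ``invert all arrows''---and requires no reindexing. The paper's route builds in the tensor reversal from the start, which is the convention it relies on later: the proof that $\mathcal{A}^{op}$ is rigid and the discussion of $\mathcal{A}^{op}\simeq\mathcal{A}^{rev}$ both use $\otimes^{op}$ with swapped factors. So while your proof stands on its own, be aware that if you adopt your convention you would need to adjust those subsequent arguments (or insert an explicit equivalence $(\mathcal{A}^{op},\otimes)\simeq(\mathcal{A}^{op},\otimes^{op})$) before citing them.
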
 
\begin{proof}
We define the tensor product as $X\otimes^{op} Y=Y\otimes X$ and associativity constraint $a^{op}$ as a family of natural isomorphisms 
\begin{align*}
\begin{tikzpicture}
\node (A) at (0, 0) {$a^{op}_{XYZ}:~(X\otimes^{op} Y)\otimes^{op} Z$};
\node (B) at (5.5, 0) {$X\otimes^{op}(Y\otimes^{op} Z)$};
\path[->] (A) edge node [above] {$\cong$} (B); 
\end{tikzpicture}
\end{align*}

 in $\mathcal{A}^{op}$ for all objects $X$, $Y$ and $Z$ in $\mathcal{A}$. This is same as the family of natural isomorphisms
\begin{align*}
\begin{tikzpicture}
\node (A) at (0, 0) {$Z\otimes (Y\otimes X)$};
\node (B) at (4, 0) {$(Z\otimes Y) \otimes X$};
\path[->] (A) edge node [above] {$\cong$} (B); 
\end{tikzpicture}
\end{align*}

in $\mathcal{A}^{op}$ which can be obtained by inverting the arrows 
\begin{align*}
\begin{tikzpicture}
\node (A) at (0, 0) {$(Z\otimes Y) \otimes X$};
\node (B) at (4, 0) {$Z\otimes (Y\otimes X)$};
\path[->] (A) edge node [above] {$\cong$} (B); 
\end{tikzpicture}
\end{align*}

in $\mathcal{A}$ and as a result, we get $a^{op}_{XYZ}=a^{-1}_{ZYX}$ for all objects $X$, $Y$ and $Z$ in $\mathcal{A}$.\\

Here, $I^{op}=I$. $l^{op}$ is a left unit constraint which is a family of natural isomorphisms
\begin{align*}
\begin{tikzpicture}
\node (A) at (0, 0) {$l^{op}_X:~I\otimes^{op} X$};
\node (B) at (3, 0) {$X$};
\path[->] (A) edge node [above] {$\cong$} (B); 
\end{tikzpicture}
\end{align*}

in $\mathcal{A}^{op}$ which is same as
\begin{align*}
\begin{tikzpicture}
\node (A) at (0, 0) {$ l^{op}_X:~X\otimes I$};
\node (B) at (3, 0) {$X$};
\path[->] (A) edge node [above] {$\cong$} (B); 
\end{tikzpicture}
\end{align*} 

for all objects $X$ in $\mathcal{A}$. So, we take $l^{op}_X=r_X$ and $l^{op}=r$ in $\mathcal{A}$. Similarly, we can take $r^{op}=l$.
\end{proof}

Also, we define a category $\mathcal{A}^{rev}$ for a given monoidal category $\mathcal{A}$ in which the objects and the arrows are the same as in $\mathcal{A}$ and the tensor product is defined as $X\otimes^{rev} Y=Y\otimes X$.\\

A strictly full subcategory $\mathcal{B}$ of a monoidal category $\mathcal{A}$ is monoidal if it contains the unit object $I$ in $\mathcal{A}$ and $A\otimes B$ for all objects $A$ and $B$ in $\mathcal{A}$.\\

$(\mathcal{A},~\otimes)$ is an additive monoidal category if $\mathcal{A}$ is an additive category and $\otimes$ is a biadditive functor. It is abelian if $\mathcal{A}$ is an abelian category.\\

A monoidal category is strict if all $a$, $l$ and $r$ are identity arrows. For example, the category of all $k$ vector spaces $Vec(k)$ is not a strict monoidal category for a given field $k$. $U\otimes (V\otimes W)\neq (U\otimes V)\otimes W$ in general for all vector spaces $U,~V,~W$ in $Vec(k)$, even in $Vec_f(k)$, but we can obtain a family of natural isomorphisms of those products as an associativity constraint.
\begin{thm}
$[MacLane]$ Every monoidal category is equivalent to a strict monoidal category.
\end{thm}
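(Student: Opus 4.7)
The plan is to build a strict monoidal category $\mathcal{A}'$ together with a monoidal equivalence $L:\mathcal{A}\to\mathcal{A}'$. For $\mathcal{A}'$ I would take the category whose objects are pairs $(F,\sigma)$, where $F:\mathcal{A}\to\mathcal{A}$ is a functor and $\sigma_{X,Y}:F(X)\otimes Y\to F(X\otimes Y)$ is a family of natural isomorphisms satisfying a pentagon-type compatibility with $a$ and a triangle-type compatibility with $r$ and $l$; morphisms from $(F,\sigma)$ to $(G,\tau)$ are natural transformations $F\Rightarrow G$ commuting with the structure maps. The tensor product of two such pairs is given by functor composition together with the induced structure isomorphism, and the unit is the identity functor with identity structure isomorphism; strict associativity and strict unitality on $\mathcal{A}'$ are inherited for free from functor composition.

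Next I would define $L:\mathcal{A}\to\mathcal{A}'$ on objects by $L(X)=(X\otimes -,\, a_{X,-,-})$ and on morphisms by $L(f)=f\otimes\mathrm{id}$. The pentagon axiom for $a$ in $\mathcal{A}$ is exactly the coherence condition needed for $(X\otimes -,\, a_{X,-,-})$ to qualify as an object of $\mathcal{A}'$. A natural isomorphism $L(X)\otimes L(Y)\cong L(X\otimes Y)$ is supplied by $a_{X,Y,-}$ itself, and checking that this turns $L$ into a (non-strict) monoidal functor is another direct application of the pentagon and triangle axioms.

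The remaining work is to check that $L$ is an equivalence of categories. Faithfulness and fullness both follow from the right unit constraint: a morphism $L(X)\to L(Y)$ in $\mathcal{A}'$ is determined by its component at $I$, which the isomorphism $r$ identifies with an ordinary morphism $X\to Y$ in $\mathcal{A}$; thus $L$ is fully faithful on hom-sets after transporting along $r$. For essential surjectivity, given an arbitrary object $(F,\sigma)$ of $\mathcal{A}'$, I would show $(F,\sigma)\cong L(F(I))$ via the natural isomorphism $F(X)\cong F(I)\otimes X$ obtained by combining $F(l_X^{-1})$ with $\sigma_{I,X}^{-1}$.

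The main obstacle will be verifying that this last candidate isomorphism is in fact an isomorphism in $\mathcal{A}'$, not merely in $\mathcal{A}$: one has to show that it commutes with the structure isomorphisms $\sigma$ and $a_{F(I),-,-}$. This step uses the full strength of the coherence axioms imposed on $(F,\sigma)$ and is the only place where the argument is more than bookkeeping; the strict monoidality of $\mathcal{A}'$, the construction of $L$, and the checks of faithfulness and fullness are all formal once the right setup is in place.
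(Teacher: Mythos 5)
The paper does not actually prove this theorem; it is stated without proof and attributed to MacLane, so there is no in-paper argument to compare against. Your proposal is the standard strictification proof (as in Joyal--Street or Etingof--Gelaki--Nikshych--Ostrik): realize $\mathcal{A}$ inside the strict monoidal category of right $\mathcal{A}$-module endofunctors $(F,\sigma)$ of $\mathcal{A}$, with tensor given by composition, via $X\mapsto (X\otimes -,\,a_{X,-,-})$. The outline is correct: the pentagon axiom is exactly what makes $L(X)$ an object of $\mathcal{A}'$ and $a_{X,Y,-}$ a monoidal structure on $L$; full faithfulness comes from the fact that the compatibility of a morphism $\theta$ with $\sigma$ at the pair $(I,Z)$, together with naturality along $l_Z$ and the triangle axiom, forces $\theta$ to be determined by $\theta_I$, which $r$ identifies with a morphism $X\to Y$; and essential surjectivity via $F(X)\cong F(I)\otimes X$ is where the module-functor coherence of $\sigma$ is genuinely used, as you correctly flag. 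The only points worth making explicit in a full write-up are (i) the verification that $f\mapsto f\otimes(-)$ lands in morphisms of $\mathcal{A}'$ (naturality of $a$), so that $L$ is full and not merely faithful, and (ii) the routine set-theoretic caveat that $\mathcal{A}'$ should be cut down (e.g.\ to the essential image or to module endofunctors) if one worries about size; neither is a gap in the mathematics.
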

\begin{defn}
An object $A$ is invertible in a category $\mathcal{A}$ if there exists an object $B$ in $\mathcal{A}$ such that $A\otimes B\cong B\otimes A\cong  I$ for $I$ is the unit object. 
\end{defn}
\begin{remark}
Invertible objects in a monoidal category $\mathcal{A}$ form a monoidal subcategory of that category. If every simple object in $\mathcal{A}$ is invertible, then we say that the category is pointed. 
\end{remark}
\begin{defn}
A braiding $c$ for a monoidal category $\mathcal{A}$ is a natural family of isomorphisms $\xymatrix{c_{XY}:~X\otimes Y \ar[rr]^{\cong} & & Y\otimes X}$ for all objects $X$, $Y$ in $\mathcal{A}$ such that two hexagon diagrams commute in $\mathcal{A}$.
\end{defn}
\begin{note}
If $\mathcal{A}$ is a braided monoidal category, then $\mathcal{A}^{rev}$ is a braided monoidal category with the braiding $c^{rev}$ that is a family of natural isomorphisms $c^{rev}_{XY}=c_{YX}$ for all objects $X$ and $Y$ in $\mathcal{A}$. Similarly, $\mathcal{A}^{op}$ is a braided monoidal category with the braiding $c^{op}$ that is a family of natural isomorphisms $c^{op}_{XY}=c^{-1}_{XY}$ for all objects $X$ and $Y$ in $\mathcal{A}$. $\mathcal{A}^{op}\simeq \mathcal{A}^{rev}$ in that situation.
\end{note}
\begin{defn}
A monoidal category $\mathcal{A}$ with a braiding $c$ is called symmetric if the composition 
\begin{align}
\xymatrix{ X\otimes Y \ar[rr]^{c_{XY}}  & & Y\otimes X \ar[rr]^{c_{YX}} & & X\otimes Y}
\end{align}

is $id_{X\otimes Y}$ for all objects $X$, $Y$ in $\mathcal{A}$.
\end{defn}
\begin{exmp}
The category of all $k$ vector spaces $Vec(k)$ is a braided, symmetric monoidal category for a field $k$.
\end{exmp}
\begin{proof}
$(c_{YX}\circ c_{XY})(x\otimes y)=(c_{YX}\circ c_{XY})(xy)=c_{YX}(yx)=xy=x\otimes y$ for all objects $X$ and $Y$ in $Vec(k)$, for all elements $x\in X,~y\in Y$. As a result, the composition is the identity.
\end{proof}
\begin{exmp}
Assume that $G$ is an abelian group, $k$ is a field, $(f,~h)$ is an abelian 3 cocycle on $G$ with coefficients in $k$. That is, $f:~G\times G\times G\rightarrow k$ is a normalized 3 cocycle such that 
\begin{align}
f(x,~0,~y)=0,
\end{align}
\begin{align}
f(x,~y,~z)+f(w,~x+y,~z)+f(w,~x,~y)=f(w,~x,~y+z)+f(w+x,~y,~z)
\end{align}

and $h:~G\times G\rightarrow k$ is a function such that 
\begin{align}
f(y,~z,~x)+h(x,~y+z)+f(x,~y,~z)=h(x,~z)+f(y,~x,~z)+h(x,~y),
\end{align}
\begin{align}
-f(z,~x,~y)+h(x+y,~z)-f(x,~y,~z)=h(x,~z)-f(x,~z,~y)+h(y,~z).
\end{align}

Let $\mathcal{A}$ be a category such that the objects are families of $k$ modules $X=\{X_g~|~g\in G\}$ and the arrow between two families $X$, $Y$ is a family $\xymatrix{ \theta=\{X_{g_1}\ar[r]^{\theta_{g_1g_2}} & Y_{g_2}\}}$ where $\theta_{g_1g_2}$ is a $k$ module homomorphism for all $g_1$ and $g_2$ in $G$,
\begin{align}
(X\otimes Y)_g=\underset{g_1+g_2=g}{\Sigma}(X_{g_1}\otimes Y_{g_2})
\end{align}

is the tensor product,
\begin{align}
a_{XYZ}:~(X\otimes Y)\otimes Z\rightarrow X\otimes (Y\otimes Z),\\ a_{XYZ}((x\otimes y)\otimes z)=f(g_1,~g_2,~g_3)x\otimes (y\otimes z)
\end{align}

is the associativity constraint and 
\begin{align}
c_{XY}:~X\otimes Y\rightarrow Y\otimes X,\\ c(x\otimes y)=h(g_1,~g_2)y\otimes x
\end{align}

is the braiding for $x\in X_{g_1}$, $y\in Y_{g_2}$, $z\in Z_{g_3}$. So, this category is a braided monoidal category.
\end{exmp} 

\subsection{The Category of Monoidal Functors}
The following materials are found in \cite{joro}.
\begin{defn}
For two monoidal categories $\mathcal{A}$ and $\mathcal{B}$, assume that $\xymatrix{ \mathcal{F}:~\mathcal{A} \ar[r] & \mathcal{B}}$ is a functor, $\gamma$ is the family of natural isomorphisms 
\begin{align}
\begin{tikzpicture}
\node (A) at (0, 0) {$\gamma_{XY}:~\mathcal{F}(X) \otimes \mathcal{F}(Y)$};
\node (B) at (4, 0) {$\mathcal{F}(X\otimes Y)$};
\path[->] (A) edge node [above] {$\cong$} (B); 
\end{tikzpicture}
\end{align} 

for all objects $X$, $Y$ in $\mathcal{A}$, $\xymatrix{ \varphi:~I\ar[r]^{\cong} & \mathcal{F}(I)}$ is an isomorphism for the unit object $I$. Then, $(\mathcal{F},~\gamma,~\varphi)$ is a monoidal functor if it satisfies compatible conditions.
\end{defn}
\begin{note}
$(\mathcal{F},~\gamma,~\varphi)$ is strict if $\gamma$ and $\varphi$ are identities.
\end{note}
\begin{defn}
A monoidal functor $\mathcal{F}:~\mathcal{A} \rightarrow \mathcal{B}$ between braided monoidal categories $\mathcal{A}$ and $\mathcal{B}$ is braided if the following diagram is commutative.
\begin{equation}
\xymatrix{ \mathcal{F}(X) \otimes \mathcal{F}(Y) \ar[d]^c \ar[r]^{\gamma} & \mathcal{F}(X\otimes Y) \ar[d]^{\mathcal{F}(c)}\\ \mathcal{F}(Y) \otimes \mathcal{F}(X) \ar[r]^{\gamma} & \mathcal{F}(Y\otimes X) }
\end{equation}
\end{defn}
\begin{defn}
If $\mathcal{F},~\mathcal{G}:~\mathcal{A} \rightarrow \mathcal{B}$ are two monoidal functors, then a map $\theta:~\mathcal{F} \rightarrow \mathcal{G}$ is a natural transformation if the following two diagrams commute.
\begin{equation}
\xymatrix@C+1em { \mathcal{F}(X) \otimes \mathcal{F}(Y) \ar[d]^{\theta(X) \otimes \theta(Y)} \ar[r]^{\gamma} & \mathcal{F}(X\otimes Y) \ar[d]^{\theta(X\otimes Y)}\\ \mathcal{G}(X) \otimes \mathcal{G}(Y) \ar[r]^{\gamma} & \mathcal{G}(X\otimes Y) } \quad \xymatrix{ & I \ar[ld]_{\varphi} \ar[rd]^{\varphi}\\ \mathcal{F}(I) \ar[rr]^{\theta(I)} & & \mathcal{G}(I) }
\end{equation}
\end{defn}
\begin{prop}
The collection $Hom(\mathcal{A},~\mathcal{B})$ in which the objects are monoidal functors $\mathcal{F}:~\mathcal{A} \rightarrow \mathcal{B}$ and morphisms are natural transformations between monoidal functors for given monoidal categories $\mathcal{A}$ and $\mathcal{B}$ forms a category.
\end{prop}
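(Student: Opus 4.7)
The plan is to verify the three data of a category (identities, composition, and the category axioms) for $Hom(\mathcal{A},~\mathcal{B})$, and to check at each step that the structure involved is compatible with the extra monoidal data $(\gamma,~\varphi)$ carried by the objects and required by the morphisms. Since on underlying data a morphism in $Hom(\mathcal{A},~\mathcal{B})$ is just an ordinary natural transformation between ordinary functors, the plain categorical axioms will come for free from the fact that $\mathcal{B}^{\mathcal{A}}$ (ordinary functor category) is a category; the real content is the two commuting diagrams in the definition of a monoidal natural transformation.

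First I would define, for each monoidal functor $(\mathcal{F},~\gamma,~\varphi)$, the identity transformation $id_{\mathcal{F}}$ by setting $id_{\mathcal{F}}(X)=id_{\mathcal{F}(X)}$ for every object $X$ in $\mathcal{A}$, and verify the two diagrams in the definition of a natural transformation between monoidal functors. The first (tensor) square commutes because both legs reduce to $\gamma_{XY}$ after using $id_{\mathcal{F}(X)}\otimes id_{\mathcal{F}(Y)}=id_{\mathcal{F}(X)\otimes \mathcal{F}(Y)}$ and functoriality of $\otimes$; the unit triangle commutes trivially since $id_{\mathcal{F}(I)}\circ \varphi=\varphi$.

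Next I would define composition. Given $\theta:~\mathcal{F}\rightarrow \mathcal{G}$ and $\eta:~\mathcal{G}\rightarrow \mathcal{H}$, set $(\eta\circ \theta)(X)=\eta(X)\circ \theta(X)$ in $\mathcal{B}$ for each $X$ in $\mathcal{A}$. The compatibility with $\gamma$ is obtained by vertically stacking the tensor square for $\theta$ on top of the one for $\eta$: the top square commutes by the hypothesis on $\theta$, the bottom by the hypothesis on $\eta$, and the outer rectangle is exactly the tensor square for $\eta\circ \theta$, using biadditivity (functoriality) of $\otimes$ to rewrite $(\eta(X)\circ \theta(X))\otimes (\eta(Y)\circ \theta(Y))=(\eta(X)\otimes \eta(Y))\circ (\theta(X)\otimes \theta(Y))$. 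The unit triangle is obtained similarly by pasting two triangles along $\mathcal{G}(I)$.

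Finally, associativity of composition and the left/right identity laws hold pointwise in $\mathcal{B}$, so they are inherited from the category axioms for the ordinary functor category $\mathcal{B}^{\mathcal{A}}$; no monoidal conditions need to be re-examined. The main obstacle, such as it is, lies in the composition step: one must use the interchange law $(\eta(X)\otimes \eta(Y))\circ (\theta(X)\otimes \theta(Y))=(\eta(X)\circ \theta(X))\otimes (\eta(Y)\circ \theta(Y))$, which is exactly the biadditivity/bifunctoriality of $\otimes$, to paste the two compatibility squares correctly. Once that is recorded, the remainder is a diagram chase that is routine.
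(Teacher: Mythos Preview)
Your proof plan is correct and is the standard verification for this result. The paper, however, states this proposition without proof, so there is nothing to compare against; your argument supplies exactly the routine check the paper omits, and the use of the interchange law (bifunctoriality of $\otimes$) in the composition step is the one point worth isolating, as you did.
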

\begin{lem}
$Hom(\mathcal{A},~\mathcal{A})$ is a monoidal category in which the tensor product is the composition of functors for a given monoidal category $\mathcal{A}$.
\end{lem}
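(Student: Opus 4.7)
The plan is to exhibit an explicitly strict monoidal structure on $Hom(\mathcal{A},~\mathcal{A})$, taking composition of monoidal endofunctors as the tensor product, the identity functor $\mathrm{Id}_{\mathcal{A}}$ as the unit object, and the associativity/unit constraints as identity natural transformations. Since composition of ordinary functors is strictly associative and strictly unital with respect to $\mathrm{Id}_{\mathcal{A}}$, the pentagon and triangle axioms will be trivially satisfied once the tensor product is shown to be a well-defined bifunctor
\begin{equation*}
\circ:~Hom(\mathcal{A},~\mathcal{A}) \times Hom(\mathcal{A},~\mathcal{A}) \rightarrow Hom(\mathcal{A},~\mathcal{A}).
\end{equation*}

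The substantive content is to check this bifunctoriality. First, I would verify on objects: given monoidal functors $(\mathcal{F},~\gamma^{\mathcal{F}},~\varphi^{\mathcal{F}})$ and $(\mathcal{G},~\gamma^{\mathcal{G}},~\varphi^{\mathcal{G}})$ from $\mathcal{A}$ to $\mathcal{A}$, I would endow $\mathcal{G}\circ \mathcal{F}$ with the composite monoidal data
\begin{equation*}
\gamma^{\mathcal{G}\mathcal{F}}_{XY}:~\mathcal{G}\mathcal{F}(X)\otimes \mathcal{G}\mathcal{F}(Y) \xrightarrow{\gamma^{\mathcal{G}}_{\mathcal{F}(X),\mathcal{F}(Y)}} \mathcal{G}(\mathcal{F}(X)\otimes \mathcal{F}(Y)) \xrightarrow{\mathcal{G}(\gamma^{\mathcal{F}}_{XY})} \mathcal{G}\mathcal{F}(X\otimes Y),
\end{equation*}
\begin{equation*}
\varphi^{\mathcal{G}\mathcal{F}}:~I\xrightarrow{\varphi^{\mathcal{G}}} \mathcal{G}(I) \xrightarrow{\mathcal{G}(\varphi^{\mathcal{F}})} \mathcal{G}\mathcal{F}(I),
\end{equation*}
and then confirm that the compatibility axioms for $(\mathcal{G}\mathcal{F},~\gamma^{\mathcal{G}\mathcal{F}},~\varphi^{\mathcal{G}\mathcal{F}})$ follow by pasting the two hexagons/squares for $\mathcal{F}$ and $\mathcal{G}$ and applying $\mathcal{G}$ to the one for $\mathcal{F}$. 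On morphisms, given monoidal natural transformations $\theta:~\mathcal{F}\rightarrow \mathcal{F}'$ and $\eta:~\mathcal{G}\rightarrow \mathcal{G}'$, the horizontal composite $\eta \ast \theta:~\mathcal{G}\mathcal{F}\rightarrow \mathcal{G}'\mathcal{F}'$ is defined componentwise by $(\eta \ast \theta)(X)=\eta(\mathcal{F}'(X))\circ \mathcal{G}(\theta(X))=\mathcal{G}'(\theta(X))\circ \eta(\mathcal{F}(X))$, and I would check that the two monoidal naturality squares from the previous definition commute for $\eta \ast \theta$ by juxtaposing the naturality squares for $\theta$ and $\eta$ together with naturality of $\eta$ applied to $\gamma^{\mathcal{F}}_{XY}$.

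Next I would verify functoriality of $\circ$ in each slot, namely $\eta \ast \mathrm{id}_{\mathcal{F}}$ and $\mathrm{id}_{\mathcal{G}}\ast \theta$ compose correctly under vertical composition (the Godement interchange law). This is the standard calculation for horizontal versus vertical composition in $\mathbf{Cat}$, and it specializes to the monoidal setting because all relevant naturality squares already commute. Finally, I would observe that $(\mathrm{Id}_{\mathcal{A}},~\mathrm{id},~\mathrm{id})$ is a monoidal functor, that $\mathrm{Id}_{\mathcal{A}}\circ \mathcal{F}=\mathcal{F}=\mathcal{F}\circ \mathrm{Id}_{\mathcal{A}}$ on the nose, that $(\mathcal{H}\circ \mathcal{G})\circ \mathcal{F}=\mathcal{H}\circ (\mathcal{G}\circ \mathcal{F})$ with matching structure maps (use strict associativity of function composition and functoriality of $\mathcal{H}$ applied to $\gamma^{\mathcal{G}\mathcal{F}}$), and therefore the associators and unitors may be taken as identities.

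The main obstacle is the bookkeeping in the bifunctoriality step, specifically making sure that the composite structure isomorphisms $\gamma^{\mathcal{G}\mathcal{F}}$ are genuinely natural in both arguments and that the horizontal composition of monoidal natural transformations preserves monoidality; all other conditions reduce to strictness of composition of functors. I do not anticipate any deeper difficulty, as everything reduces to careful diagram chasing that is already implicit in the definitions the paper has set up.
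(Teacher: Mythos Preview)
Your proposal is correct and follows the standard approach. The paper actually states this lemma without proof at that point; however, the composite monoidal structure you write down for $\mathcal{G}\circ\mathcal{F}$ (namely $\gamma^{\mathcal{G}\mathcal{F}}_{XY}=\mathcal{G}(\gamma^{\mathcal{F}}_{XY})\circ\gamma^{\mathcal{G}}_{\mathcal{F}(X),\mathcal{F}(Y)}$ and $\varphi^{\mathcal{G}\mathcal{F}}=\mathcal{G}(\varphi^{\mathcal{F}})\circ\varphi^{\mathcal{G}}$) is exactly the construction the paper spells out later in Lemma~\ref{47}, so your argument is fully aligned with the paper's methods.
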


We denote the category of right exact monoidal functors by $Hom^{re}(\mathcal{A},~\mathcal{B})$, the category of left exact monoidal functors by $Hom^{le}(\mathcal{A},~\mathcal{B})$ and the category of exact monoidal functors by $Hom^{e}(\mathcal{A},~\mathcal{B})$.
\begin{remark}
A monoidal functor $(\mathcal{F},~\gamma,~\varphi):~(\mathcal{A},~\otimes_{\mathcal{A}}) \rightarrow (\mathcal{B},~\otimes_{\mathcal{B}})$ is a monoidal equivalence if $\mathcal{F}:~\mathcal{A}\rightarrow \mathcal{B}$ is an equivalence of categories. In that situation, there exists a monoidal functor $(\mathcal{G},~\gamma',~\varphi'):~\mathcal{B} \rightarrow \mathcal{A}$ and isomorphism of monoidal functors $\mathcal{G}\circ \mathcal{F} \rightarrow id_{\mathcal{A}}$, $\mathcal{F}\circ \mathcal{G} \rightarrow id_{\mathcal{B}}$.
\end{remark}
\begin{prop}
\label{32}
\cite{joro} If $\mathcal{A}$ is braided monoidal category, then we get a monoidal equivalence $\mathcal{A} \rightarrow \mathcal{A}^{rev}$.
\end{prop}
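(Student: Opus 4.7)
The plan is to exhibit an explicit monoidal equivalence by taking the identity on the underlying category and using the braiding $c$ of $\mathcal{A}$ to supply the monoidal structure isomorphisms. Since $\mathcal{A}$ and $\mathcal{A}^{rev}$ have the same objects, same morphisms, same unit, same associator (up to the formula $a^{rev}_{XYZ}=a^{-1}_{ZYX}$ established earlier), and only differ in having reversed tensor $X\otimes^{rev}Y=Y\otimes X$, the braiding is the obvious device to intertwine the two tensor products.

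Concretely, I would first set $\mathcal{F}:\mathcal{A}\to\mathcal{A}^{rev}$ to be the identity on objects and morphisms, define the structure isomorphism $\gamma_{XY}:\mathcal{F}(X)\otimes^{rev}\mathcal{F}(Y)\to \mathcal{F}(X\otimes Y)$ to be $c_{YX}:Y\otimes X\to X\otimes Y$, and take $\varphi=id_I$. Naturality of $\gamma$ is immediate from naturality of the braiding $c$. Then I would verify the two coherence conditions that $(\mathcal{F},\gamma,\varphi)$ must satisfy to be a monoidal functor. The two unit coherence axioms reduce, after substituting $\gamma_{IX}=c_{XI}$ and $\gamma_{XI}=c_{IX}$ and $a^{rev}=a^{-1}$ with indices reversed, to the standard identities $l_X\circ c_{XI}=r_X$ and $r_X\circ c_{IX}=l_X$ that hold in any braided monoidal category.

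The associativity coherence axiom, written out with $\mathcal{F}=id$, $\gamma=c_{-,-}$ swapped, and $a^{rev}_{XYZ}=a^{-1}_{ZYX}$, becomes the statement that the hexagon
\[
a_{XYZ}\circ c_{Z,\,X\otimes Y}\circ (id_Z\otimes c_{YX}) \;=\; c_{Y\otimes Z,\,X}\circ (c_{ZY}\otimes id_X)\circ a^{-1}_{ZYX}
\]
commutes from $Z\otimes(Y\otimes X)$ to $X\otimes(Y\otimes Z)$; this is precisely one of the two defining hexagons for the braiding $c$ on $\mathcal{A}$ (applied to the triple $(Z,Y,X)$ and rearranged).

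The main obstacle is this last bookkeeping step: one must carefully keep track of the direction of the associator in $\mathcal{A}^{rev}$, identify the correct source and target of each $\gamma$, and match the resulting hexagon with the correct one of the two braiding hexagons; with the conventions fixed above, the match is exact. Once the monoidal functor axioms are verified, because $\mathcal{F}$ is the identity on the underlying category it is trivially an equivalence of categories, and so $(\mathcal{F},\gamma,\varphi)$ is the desired monoidal equivalence $\mathcal{A}\to\mathcal{A}^{rev}$.
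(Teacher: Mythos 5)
Your construction is exactly the one the paper uses: the identity functor with $\gamma_{XY}=c_{YX}$, $\varphi=id_I$, and the reduction of the associativity coherence to an identity among braidings and associators in $\mathcal{A}$. The only quibble is your final claim that the resulting identity
\[
a_{XYZ}\circ c_{Z,\,X\otimes Y}\circ (id_Z\otimes c_{YX}) = c_{Y\otimes Z,\,X}\circ (c_{ZY}\otimes id_X)\circ a^{-1}_{ZYX}
\]
``is precisely one of the two defining hexagons'': it is not a single hexagon, since it involves both composite braidings $c_{Z,X\otimes Y}$ and $c_{Y\otimes Z,X}$; one must expand each via its own hexagon axiom and then invoke naturality of $c$ (the paper does this with a twelve-sided diagram), so the conclusion is right but that last step deserves the extra argument.
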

\begin{proof}
We define a monoidal functor $\mathcal{F}:~\mathcal{A} \rightarrow \mathcal{A}^{rev}$ by sending an object $A$ in $\mathcal{A}$ to itself, $\gamma$ as a family of natural isomorphisms $\gamma_{XY}:~X\otimes^{rev} Y=Y\otimes X \rightarrow X\otimes Y$ for all objects $X$, $Y$ in $\mathcal{A}$ and also $\varphi=id_I$. We define $\gamma_{XY}=c^{rev}_{XY}$. Then, we need to show that the following diagram commutes in $\mathcal{A}^{rev}$.
\begin{align}
\label{354}
\xymatrix{ (X\otimes^{rev} Y)\otimes^{rev} Z \ar[d]_{\gamma \otimes^{rev} id} \ar[r]^{a_{XYZ}^{rev}} & X\otimes^{rev} (Y\otimes^{rev} Z) \ar[d]^{id\otimes^{rev} \gamma} \\ (X\otimes Y)\otimes^{rev} Z\ar[d]_{\gamma} & X\otimes^{rev} (Y\otimes Z) \ar[d]^{\gamma} & &\\ (X\otimes Y)\otimes Z\ar[r]_{a_{XYZ}} & X\otimes (Y\otimes Z)}
\end{align}

This diagram is same as the following diagram in $\mathcal{A}$.
\begin{align}
\label{400}
\xymatrix{ Z\otimes (Y\otimes X) \ar[d]_{id_Z\otimes c_{YX}} \ar[r]^{a^{-1}_{ZYX}} & (Z\otimes Y) \otimes X\ar[d]^{c_{ZY}\otimes id_X} \\ Z\otimes (X\otimes Y) \ar[d]_{c_{Z(XY)}} & (Y\otimes Z) \otimes X \ar[d]^{c_{(YZ)X}} & & \\ (X\otimes Y)\otimes Z \ar[r]_{a_{XYZ}} & X\otimes (Y\otimes Z)}
\end{align}

The first and third squares commute by definition and the middle one commutes by using naturality of the braiding in the following diagram.
\begin{align}
\label{353}
\begin{tikzpicture}
\node (A) at (0, 0) {$(Z\otimes Y)\otimes X$};
\node (B) at (4, 0) {$(Y\otimes Z)\otimes X$};
\node (C) at (8, 0) {$Y\otimes (Z\otimes X)$};
\node (D) at (12, 0) {$Y\otimes (X\otimes Z)$};
\node (E) at (0, -2) {$Z\otimes (Y\otimes X)$};
\node (F) at (12, -2) {$(Y\otimes X)\otimes Z$};
\node (G) at (0, -4) {$Z\otimes (X\otimes Y)$};
\node (H) at (12, -4) {$(X\otimes Y)\otimes Z$};
\node (K) at (0, -6) {$(Z\otimes X)\otimes Y$};
\node (L) at (4, -6) {$(X\otimes Z)\otimes Y)$};
\node (M) at (8, -6) {$X\otimes (Z\otimes Y)$};
\node (N) at (12, -6) {$X\otimes (Y\otimes Z)$};
\path[->] (A) edge node [above] {$c_{ZY}\otimes id_X$} (B);
\path[->] (B) edge node [above] {$a_{YZX}$} (C);
\path[->] (C) edge node [above] {$id_Y\otimes c_{ZX}$} (D);
\path[->] (E) edge node [above] {$c_{Z(YX)}$} (F);
\path[->] (G) edge node [above] {$c_{Z(XY)}$} (H);
\path[->] (K) edge node [below] {$c_{ZX}\otimes id_Y$} (L);
\path[->] (L) edge node [below] {$a_{XZY}$} (M);
\path[->] (M) edge node [below] {$id_X\otimes c_{ZY}$} (N);
\path[->] (A) edge node [left] {$a_{ZYX}$} (E);
\path[->] (E) edge node [left] {$id_Z\otimes c_{YX}$} (G);
\path[->] (G) edge node [left] {$a^{-1}_{ZXY}$} (K);
\path[->] (D) edge node [right] {$a^{-1}_{YXZ}$} (F);
\path[->] (F) edge node [right] {$c_{YX}\otimes id_Z$} (H);
\path[->] (H) edge node [right] {$a_{XYZ}$} (N);
\end{tikzpicture}
\end{align}

As a result, that diagram commutes.\\

$a_{XYZ}\circ c_{Z(XY)}\circ (id_Z\otimes c_{YX})=a_{XYZ}\circ a^{-1}_{XYZ}\circ (id_X\otimes c_{ZY})\circ a_{XZY}\circ (c\otimes id_Y)\circ a^{-1}_{ZXY}\circ (id_Z\otimes c_{YX})=(id_X\otimes c_{ZY})\circ a_{XZY}\circ (c\otimes id_Y)\circ a^{-1}_{ZXY}\circ (id_Z\otimes c_{YX})$.\\

$c_{(YZ)X}\circ (c_{ZY}\otimes id_X)\circ a^{-1}_{ZYX}=a_{XYZ}\circ (c_{YX}\otimes id_Z)\circ a^{-1}_{YXZ}\circ (id_Y\otimes c_{ZX})\circ a_{YZX}\circ (c_{ZY}\otimes id_X)\circ a^{-1}_{ZYX}$.\\

These two equations are same by the commutativity of Diagram \ref{353}. As a result, Diagram \ref{354} commutes. The commutativity of other diagrams are easy to show, also it is a braided monoidal functor. The reader can show that the conditions for equivalence are satisfied.
\end{proof}

$\mathcal{A} \simeq \mathcal{A}^{op}$ as a corollary of this proposition.

\subsection{Rigid Monoidal Categories}
\cite{baki} An object $Y$ in a monoidal category $\mathcal{A}$ is a right dual for a given object $X$ in $\mathcal{A}$ if there are morphisms $ev_{rX}:~Y\otimes X \rightarrow I$ and $coev_{rX}:~I \rightarrow X\otimes Y$ such that the following compositions are the identities.
\begin{align}
\begin{tikzpicture}
\node (A) at (0, 0) {$Y=Y\otimes I$};
\node (B) at (5, 0) {$Y\otimes X \otimes Y$};
\node (C) at (10, 0) {$I\otimes Y=Y$};
\path[->] (A) edge node [above] {$id_Y\otimes coev_{rX}$} (B); 
\path[->] (B) edge node [above] {$ev_{rX}\otimes id_Y$} (C); 
\end{tikzpicture}
\\
\label{97}
\begin{tikzpicture}
\node (A) at (0, 0) {$X=I\otimes X $};
\node (B) at (5, 0) {$X\otimes Y \otimes X$};
\node (C) at (10, 0) {$X\otimes I=X$};
\path[->] (A) edge node [above] {$coev_{rX}\otimes id_X$} (B); 
\path[->] (B) edge node [above] {$id_X\otimes ev_{rX}$} (C); 
\end{tikzpicture}
\end{align}

Similarly, an object $Z$ is a left dual object for the object $X$ in that category if there are morphisms $ev_{lX}:~X\otimes Z\rightarrow I$ and $coev_{lX}:~I\rightarrow Z\otimes X$ such that the following compositions are the identities.
\begin{align}
\label{98}
\begin{tikzpicture}
\node (A) at (0, 0) {$Z=I\otimes Z$};
\node (B) at (5, 0) {$Z\otimes X\otimes Z$};
\node (C) at (10, 0) {$Z\otimes I=Z$};
\path[->] (A) edge node [above] {$coev_{lX}\otimes id_Z$} (B);
\path[->] (B) edge node [above] {$id_Z\otimes ev_{lX}$} (C);
\end{tikzpicture}
\\
\label{99}
\begin{tikzpicture}
\node (A) at (0, 0) {$X=X\otimes I$};
\node (B) at (5, 0) {$X\otimes Z\otimes X$};
\node (C) at (10, 0) {$I\otimes X=X$};
\path[->] (A) edge node [above] {$id_X\otimes coev_{lX}$} (B);
\path[->] (B) edge node [above] {$ev_{lX}\otimes id_X$} (C);
\end{tikzpicture}
\end{align}

We denote the left dual with $^+X$ and the right dual with $X^+$.
\begin{lem}
\label{96}
A left dual $^+X$ and a right dual $X^+$ in a monoidal category $\mathcal{A}$ is unique up to a unique isomorphism.
\end{lem}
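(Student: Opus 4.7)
The plan is to prove uniqueness of the right dual by writing down the canonical candidate isomorphism built out of the evaluation and coevaluation morphisms, and then using the zig-zag (triangle) identities \eqref{97} to verify it is an isomorphism. The left dual case will follow by the symmetric construction using \eqref{98}, \eqref{99}, so I will spell out only the right dual argument. Since by MacLane's coherence theorem we may as well assume $\mathcal{A}$ is strict, I will suppress the associators $a$ and the unit constraints $l, r$ in the displays, which keeps the diagrams readable without loss of generality.

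Suppose $Y$ and $Y'$ are both right duals of $X$, equipped with morphisms $(ev_{rX}, coev_{rX})$ and $(ev'_{rX}, coev'_{rX})$ respectively. I define
\begin{align*}
\phi \;:\; Y \;=\; Y\otimes I \;\xrightarrow{\;id_Y\otimes coev'_{rX}\;}\; Y\otimes X\otimes Y' \;\xrightarrow{\;ev_{rX}\otimes id_{Y'}\;}\; I\otimes Y' \;=\; Y',
\end{align*}
and symmetrically
\begin{align*}
\psi \;:\; Y' \;=\; Y'\otimes I \;\xrightarrow{\;id_{Y'}\otimes coev_{rX}\;}\; Y'\otimes X\otimes Y \;\xrightarrow{\;ev'_{rX}\otimes id_Y\;}\; I\otimes Y \;=\; Y.
\end{align*}
The next step is to expand $\psi\circ\phi$ and insert an identity created by the zig-zag identity for $(Y',ev'_{rX},coev'_{rX})$ in the middle factor $X$; after reassociating, one of the two zig-zag compositions collapses by the identity for $(Y,ev_{rX},coev_{rX})$ and the other by the identity for $(Y',ev'_{rX},coev'_{rX})$, leaving $id_Y$. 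The computation for $\phi\circ\psi=id_{Y'}$ is symmetric. This shows existence of an isomorphism $\phi\colon Y\to Y'$.

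For uniqueness, I will characterize $\phi$ abstractly by the compatibility conditions
\begin{align*}
ev'_{rX}\circ(\phi\otimes id_X) \;=\; ev_{rX}, \qquad (id_X\otimes \phi)\circ coev_{rX} \;=\; coev'_{rX},
\end{align*}
which one verifies directly from the definition of $\phi$ using the zig-zag identities. Conversely, if $f\colon Y\to Y'$ is any morphism satisfying these two identities, then plugging $f$ into the formula for $\phi$ and using the zig-zag identity for $(Y',ev'_{rX},coev'_{rX})$ to eliminate the composite $(ev'_{rX}\otimes id_{Y'})\circ(id_{Y'}\otimes coev'_{rX})$ collapses the expression to $f$, so $f=\phi$. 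In particular any compatible isomorphism is unique, and when $f$ is an isomorphism its inverse automatically satisfies the analogous compatibility for the reverse direction, giving a unique isomorphism between any two right duals.

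The only genuinely delicate point is the bookkeeping of the zig-zag identities when inserted inside a longer composite; if one insisted on working in a non-strict monoidal category, one would have to paste in the associators and unit constraints and verify that the resulting diagram commutes by the pentagon, triangle, and naturality axioms, which is exactly what MacLane's coherence result lets us avoid. The left dual $^+X$ is then handled verbatim, by replacing \eqref{97} with \eqref{98} and \eqref{99} and swapping the roles of the two tensor factors throughout.
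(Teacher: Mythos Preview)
Your argument is correct and is the standard proof: construct the canonical morphisms $\phi,\psi$ from the (co)evaluation data, verify they are mutually inverse via the triangle identities, and then pin down uniqueness by the compatibility conditions with $ev$ and $coev$. The paper itself does not supply a proof but refers the reader to \cite{baki}, where precisely this construction is carried out; so your write-up is essentially what the cited reference contains, and in that sense matches the paper's intended approach.

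One small remark on presentation: your uniqueness paragraph is slightly roundabout. Once you have shown that $\phi$ satisfies $ev'_{rX}\circ(\phi\otimes id_X)=ev_{rX}$ and $(id_X\otimes\phi)\circ coev_{rX}=coev'_{rX}$, the cleanest way to conclude is to observe directly that any $f\colon Y\to Y'$ satisfying the second identity yields
\[
\phi=(ev_{rX}\otimes id_{Y'})\circ(id_Y\otimes coev'_{rX})=(ev_{rX}\otimes id_{Y'})\circ(id_Y\otimes id_X\otimes f)\circ(id_Y\otimes coev_{rX})=f,
\]
using interchange and the zig-zag for $(Y,ev_{rX},coev_{rX})$; the sentence about inverses of isomorphisms is then unnecessary.
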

\begin{proof}
See \cite{baki} for the proof.
\end{proof}
\begin{defn}
A monoidal category is rigid if every object $X$ in that category has both a right and a left dual object.
\end{defn}
\begin{exmp}
The category of finite dimensional vector spaces $Vec_k$ over a field $k$ is rigid. If that category is consisting of all $k$ vector spaces without finiteness assumption, then it is not rigid.
\end{exmp}
\begin{proof}
For a given finite dimensional $k$ vector space $V$, the right and left dual object for $V$ is the dual space $Hom_k(V,~k)$ with the evaluation map 
\begin{align*}
ev_{rV}:~Hom_k(V,~k)\otimes V\rightarrow k,~(f,~v)\mapsto f(v)
\end{align*}

and the coevaluation map $coev_{rV}:~k\rightarrow V\otimes Hom_k(V,~k)$ which is an embedding. We may see that the following compositions are the identities.
\begin{align*}
\begin{tikzpicture}
\node (A) at (0,0) {$V=k\otimes V$};
\node (B) at (6,0) {$V\otimes Hom_k(V,~k)\otimes V$};
\node (C) at (12,0) {$V\otimes k=V$};
\path[->] (A) edge [left] node [above] {$coev_{rV}\otimes id_V$} (B);
\path[->] (B) edge [right] node [above] {$id_V\otimes ev_{rV}$} (C);
\end{tikzpicture}
\end{align*}
\begin{align*}
\begin{tikzpicture}
\node (A) at (0,0) {$Hom_k(V,~k)\otimes k$};
\node (B) at (6,0) {$Hom_k(V,~k)\otimes V\otimes Hom_k(V,~k)$};
\node (C) at (12,0) {$k\otimes Hom_k(V,~k)$};
\node (D) at (0,-2) {$Hom_k(V,~k)$};
\node (E) at (6, -2) {$Hom_k(V,~k)\otimes V\otimes Hom_k(V,~k)$};
\node (F) at (12,-2) {$Hom_k(V,~k)$};
\draw[thick, double] (0, -0.2)--(0, -1.8) [xshift=5pt];
\draw[thick, double] (12, -0.2)--(12, -1.8) [xshift=5pt];
\path[->] (A) edge [left] node [above] {$id\otimes coev_{rV}$} (B);
\path[->] (B) edge [right] node [above] {$ev_{rV}\otimes id$} (C);
\path[->] (D) edge [left] node [above] {$id\otimes coev_{rV}$} (E);
\path[->] (E) edge [right] node [above] {$ev_{rV}\otimes id$} (F);
\end{tikzpicture}
\end{align*}

Similarly, we may show that the compositions \ref{98} and \ref{99} are the identities which shows that $Hom_k(V,~k)$ is a left dual for the object $V$.\\

Second part follows since infinite dimensional spaces don't have any coevaluation map.  
\end{proof}
\begin{remark}
\label{82}
\cite{baki} Tensor product functor $\otimes:~\mathcal{A} \times \mathcal{A} \rightarrow \mathcal{A}$ is exact in each variable in an abelian, rigid monoidal category $\mathcal{A}$.
\end{remark}
\begin{lem}
If a monoidal category $\mathcal{A}$ is rigid, then $\mathcal{A}^{op}$ is rigid, too.
\end{lem}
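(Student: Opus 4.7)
The plan is to exhibit explicit right and left duals in $\mathcal{A}^{op}$ by reusing the duals already available in $\mathcal{A}$, with the roles of evaluation and coevaluation exchanged. The key observation is purely notational: a morphism in $\mathcal{A}^{op}$ is the same datum as a morphism of $\mathcal{A}$ with its direction reversed, and the tensor product in $\mathcal{A}^{op}$ satisfies $A\otimes^{op}B=B\otimes A$.

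Given an object $X$ in $\mathcal{A}^{op}$, I would propose $X^+$ (the right dual in $\mathcal{A}$) as a right dual of $X$ in $\mathcal{A}^{op}$, and $^+X$ (the left dual in $\mathcal{A}$) as a left dual of $X$ in $\mathcal{A}^{op}$. A candidate $ev^{op}_{rX}\colon X^+\otimes^{op}X\to I$ in $\mathcal{A}^{op}$ is the same datum as a morphism $I\to X\otimes X^+$ in $\mathcal{A}$, for which I would take $coev_{rX}$. Dually, I would set $coev^{op}_{rX}=ev_{rX}$. The analogous assignments $ev^{op}_{lX}=coev_{lX}$ and $coev^{op}_{lX}=ev_{lX}$ handle the left dual.

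Next, I would verify the snake identities in $\mathcal{A}^{op}$. Invoking MacLane's strictification theorem, I may assume that the monoidal structure is strict, so associators and unitors drop out. Then each snake identity in $\mathcal{A}^{op}$, for instance $(ev^{op}_{rX}\otimes^{op}id_{X^+})\circ^{op}(id_{X^+}\otimes^{op}coev^{op}_{rX})=id_{X^+}$, translates under the correspondences $\circ^{op}\leftrightarrow$ reversed composition and $f\otimes^{op}g\leftrightarrow g\otimes f$ (with direction reversed) to exactly the snake identity $(ev_{rX}\otimes id_{X^+})\circ(id_{X^+}\otimes coev_{rX})=id_{X^+}$ for the right dual of $X$ in $\mathcal{A}$, which holds by hypothesis. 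The remaining snake identity and the identities for ${}^+X$ follow by the same translation.

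The main obstacle is purely notational bookkeeping: one has to carefully keep track that passing to $\mathcal{A}^{op}$ simultaneously reverses composition, reverses the source and target of every morphism, and swaps the order of tensor factors. Once these conventions are aligned, the verification is mechanical, and no new content beyond the existence of duals in $\mathcal{A}$ is needed. By Lemma \ref{96}, the duals obtained are unique up to unique isomorphism, so the construction is well defined.
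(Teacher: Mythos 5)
Your proposal is correct and follows essentially the same route as the paper: you keep $X^+$ (resp.\ $^+X$) as the right (resp.\ left) dual of $X$ in $\mathcal{A}^{op}$, exchange the roles of evaluation and coevaluation via $ev^{op}_{rX}=coev_{rX}$ and $coev^{op}_{rX}=ev_{rX}$, and observe that reversing composition and swapping tensor factors turns each snake identity in $\mathcal{A}^{op}$ into the corresponding one already known in $\mathcal{A}$. The only cosmetic difference is that you invoke MacLane's strictification explicitly where the paper simply writes the unit isomorphisms as equalities.
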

\begin{proof}
If $X$ is an object in a rigid monoidal category $\mathcal{A}$, then $X$ has both a left and a right dual objects $^+X$ and $X^+$ which are unique up to a unique isomorphism by Lemma \ref{96} such that the following compositions are the identities by definition where $ev_{rX}:~X_+\otimes X \rightarrow I$, $coev_{rX}:~I \rightarrow X\otimes X^+$, $ev_{lX}:~X\otimes {^+X}\rightarrow I$ and  $coev_{lX}:~I \rightarrow {^+X}\otimes X$  are morphisms in $\mathcal{A}$.
\begin{align*}
\begin{tikzpicture}
\node (A) at (0, 0) {$X^+=X^+\otimes I$};
\node (B) at (6, 0) {$X^+\otimes X\otimes X^+$};
\node (C) at (12, 0) {$I\otimes X^+=X^+$};
\path[->] (A) edge node [above] {$id_{X^+}\otimes coev_{rX}$} (B); 
\path[->] (B) edge node [above] {$ev_{rX}\otimes id_{X^+}$} (C); 
\end{tikzpicture}
\\
\begin{tikzpicture}
\node(A) at (0, 0) {$X=I\otimes X$};
\node (B) at (6, 0) {$X\otimes X^+\otimes X$};
\node (C) at (12, 0) {$X\otimes I=X$};
\path[->] (A) edge node [above] {$coev_{rZ}\otimes id_X$} (B);
\path[->] (B) edge node [above]{$id_X\otimes ev_{rX}$} (C);
\end{tikzpicture}
\\
\begin{tikzpicture}
\node (A) at (0, 0) {$^+X=I\otimes {^+X}$};
\node (B) at (6, 0) {$^+X\otimes X \otimes {^+X}$};
\node (C) at (12, 0) {$ ^+X\otimes I={^+X}$};
\path[->] (A) edge node [above] {$coev_{lX}\otimes id_{^+X}$} (B); 
\path[->] (B) edge node [above] {$id_{^+X}\otimes ev_{lX}$} (C); 
\end{tikzpicture}
\\
\begin{tikzpicture}
\node (A) at (0, 0) {$X=X\otimes I$};
\node (B) at (6, 0) {$X\otimes {^+X} \otimes X$};
\node (C) at (12, 0) {$I\otimes X=X$};
\path[->] (A) edge node [above] {$id_X\otimes coev_{lX}$} (B); 
\path[->] (B) edge node [above] {$ev_{lX}\otimes id_X$} (C); 
\end{tikzpicture}
\end{align*}

We know that $ev_{rX}\otimes id_{X^+} \circ id_{X^+}\otimes coev_{rX}=id_{X^+}$ in $\mathcal{A}$ by the first composition, so $(ev_{rX}\otimes id_{X^+} \circ id_{X^+}\otimes coev_{rX})^{op}=(id_{X^+})^{op}=id_{X^+}$. This implies that 
\begin{align*}
(id_{X^+}\otimes coev_{rX})^{op}\circ (ev_{rX}\otimes id_{X^+})^{op}=coev_{rX}^{op}\otimes ^{op} id_{X^+}\circ id_{X^+}\otimes ^{op} ev_{rX}^{op}=id_{X^+}.
\end{align*}

As a result, the following composition is the identity of $X^+$ in $\mathcal{A}^{op}$
\begin{align*}
\begin{tikzpicture}
\node (A) at (0, 0) {$X^+=I\otimes X^+$};
\node (B) at (6, 0) {$X^+\otimes X \otimes X^+$};
\node (C) at (12, 0) {$X^+\otimes I=X^+$};
\path[->] (A) edge node [above] {$id_{X^+}\otimes^{op} ev_{rX}^{op}$} (B); 
\path[->] (B) edge node [above] {$coev_{rX}^{op}\otimes^{op} id_{X^+}$} (C); 
\end{tikzpicture}
\end{align*}

which is same as the following one
\begin{align*}
\begin{tikzpicture}
\node (A) at (0, 0) {$X^+=X^+\otimes^{op} I $};
\node (B) at (6, 0) {$ X^+\otimes^{op} X \otimes^{op} X^+$};
\node (C) at (12, 0) {$I\otimes^{op} X^+=X^+.$};
\path[->] (A) edge node [above] {$id_{X^+}\otimes^{op} ev_{rX}^{op}$} (B); 
\path[->] (B) edge node [above] {$coev_{rX}^{op}\otimes^{op} id_{X^+}$} (C); 
\end{tikzpicture}
\end{align*}

Also, we get an identity of $X$ by the composition
\begin{align*}
\begin{tikzpicture}
\node (A) at (0, 0) {$X=X\otimes I$};
\node (B) at (6, 0) {$X\otimes X^+\otimes X$};
\node (C) at (12, 0) {$ I\otimes X=X$};
\path[->] (A) edge node [above] {$(id_X\otimes ev_{rX})^{op}$} (B); 
\path[->] (B) edge node [above] {$(coev_{rX}\otimes id_X)^{op}$} (C); 
\end{tikzpicture}
\end{align*}

which is same as the following one
\begin{align*}
\begin{tikzpicture}
\node (A) at (0, 0) {$X=I\otimes^{op} X$};
\node (B) at (6, 0) {$X\otimes^{op} X^+\otimes^{op} X$};
\node (C) at (12, 0) {$X\otimes^{op} I=X.$};
\path[->] (A) edge node [above] {$ev_{rX}^{op}\otimes^{op} id_X$} (B); 
\path[->] (B) edge node [above] {$id_X\otimes^{op} coev_{rX}^{op}$} (C); 
\end{tikzpicture}
\end{align*}

These two identities show that $X^+$ is right dual for $X$ in $\mathcal{A}^{op}$. By using same technique, we may show that $^+X$ is left dual for $X$ in $\mathcal{A}^{op}$. Those are unique objects in $\mathcal{A}$ by Lemma \ref{96}, so they are also unique in $\mathcal{A}^{op}$ as objects. This shows that $\mathcal{A}^{op}$ is a rigid category.
\end{proof}
\begin{lem}
If $I$ is a unit object in a rigid monoidal category $\mathcal{A}$, then $I^+=I$.
\end{lem}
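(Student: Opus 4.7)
The plan is to use the uniqueness of right duals from Lemma \ref{96}: it suffices to exhibit evaluation and coevaluation morphisms that make $I$ itself a right dual for $I$, and then appeal to uniqueness up to unique isomorphism to conclude $I^+ \cong I$.

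First I would take as candidate structure maps the unit constraint and its inverse, namely
\begin{align*}
ev_{rI} := l_I : I\otimes I \longrightarrow I, \qquad coev_{rI} := l_I^{-1} : I \longrightarrow I\otimes I.
\end{align*}
Recall from the standard theory of monoidal categories (a consequence of the pentagon and triangle axioms of the defining Definition) that on the unit object one has $l_I = r_I$, so these choices are symmetric between the left and right constraints. This symmetry is what will allow the same maps to serve in both of the compositions \ref{97} and the analogous one above it.

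Next I would verify the two zig-zag identities. The composition
\begin{align*}
I \xrightarrow{\,id_I \otimes coev_{rI}\,} I \otimes (I \otimes I) \xrightarrow{\,ev_{rI}\otimes id_I\,} I \otimes I \xrightarrow{\,l_I\,} I
\end{align*}
(after the obvious identifications using $l$ and $r$) reduces, by naturality of $l$ together with $l_I = r_I$, to $l_I \circ l_I^{-1} = id_I$. The other triangle
\begin{align*}
I \xrightarrow{\,coev_{rI}\otimes id_I\,} (I \otimes I) \otimes I \xrightarrow{\,id_I \otimes ev_{rI}\,} I \otimes I \xrightarrow{\,r_I\,} I
\end{align*}
is handled symmetrically using $r_I = l_I$. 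The bookkeeping uses only the commutativity of the unit triangle, naturality of $a$, $l$, $r$, and the identity $l_I = r_I$.

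The main obstacle, and really the only nontrivial point, is keeping track of the associator and unit constraints when turning $I\otimes I\otimes I$ into something explicit; in a strict monoidal category this is trivial, and by MacLane's coherence theorem stated earlier we may reduce to the strict case, where $l_I = r_I = id_I$ and both triangles collapse to $id_I \circ id_I = id_I$. Once both compositions are identities, $I$ is a right dual for $I$, and Lemma \ref{96} forces $I^+ \cong I$, i.e.\ $I^+ = I$ as claimed.
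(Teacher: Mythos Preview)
Your proposal is correct and follows essentially the same route as the paper: exhibit $I$ itself as a right dual of $I$ (using the unit constraints as evaluation and coevaluation) and then invoke the uniqueness Lemma \ref{96}. The paper's argument is terser---it simply notes that the composition \ref{97} for $X=I$ is the identity and that ``$I$ satisfies the required conditions, too''---whereas you spell out the choice $ev_{rI}=l_I$, $coev_{rI}=l_I^{-1}$, the identity $l_I=r_I$, and the option of passing to the strict case via MacLane coherence; but the underlying idea is identical.
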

\begin{proof}
The composition $\xymatrix{I\ar[r] & I^+\ar[r] & I}$ is the identity by \ref{97}. Also, the other condition is satisfied. It is easy to see that $I$ satisfies the required conditions, too. Hence, $I=I^+$ by uniqueness of a right dual.
\end{proof}
\begin{lem}
\label{71}
\cite{baki} If a monoidal category $\mathcal{A}$ is rigid, then for all objects $X$ and $Y$ in $\mathcal{A}$, $(X\otimes_{\mathcal{A}} Y)^+=Y^+\otimes_{\mathcal{A}} X^+$.
\end{lem}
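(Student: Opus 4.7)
The plan is to exhibit $Y^+ \otimes X^+$ together with explicit evaluation and coevaluation morphisms which make it a right dual for $X \otimes Y$, and then invoke the uniqueness statement in Lemma \ref{96} to conclude $(X \otimes Y)^+ \cong Y^+ \otimes X^+$.

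First I would define the candidate evaluation and coevaluation. For the evaluation, I would compose
\begin{align*}
(Y^+ \otimes X^+) \otimes (X \otimes Y) \to Y^+ \otimes (X^+ \otimes X) \otimes Y \xrightarrow{id_{Y^+} \otimes ev_{rX} \otimes id_Y} Y^+ \otimes I \otimes Y \to Y^+ \otimes Y \xrightarrow{ev_{rY}} I,
\end{align*}
where the unnamed arrows are built from $a$, $l$, $r$ and their inverses; call this composite $ev_{r(X\otimes Y)}$. For the coevaluation, I would compose in the reverse pattern,
\begin{align*}
I \xrightarrow{coev_{rX}} X \otimes X^+ \to X \otimes I \otimes X^+ \xrightarrow{id_X \otimes coev_{rY} \otimes id_{X^+}} X \otimes (Y \otimes Y^+) \otimes X^+ \to (X \otimes Y) \otimes (Y^+ \otimes X^+),
\end{align*}
and call this $coev_{r(X\otimes Y)}$.

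Next I would check the two rigidity compositions \ref{97} (and its partner) for the pair $(X \otimes Y,\ Y^+ \otimes X^+)$. The strategy is to expand each big snake identity into a diagram on five tensor factors, insert the associators in a coherent way (justified by MacLane's theorem so I may pretend the category is strict), and then observe that the diagram splits as an outer snake for $Y$ wrapped around an inner snake for $X$: applying the rigidity compositions for $X$ collapses the middle $X^+ \otimes X$ (respectively $X \otimes X^+$) to $I$, after which the outer layer becomes precisely the rigidity composition for $Y$. Naturality of $a$, $l$, $r$ is what lets the inner reduction pass through the outer tensor factors.

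The hard part is the bookkeeping in the associators: one must verify that the composite of associativity and unit constraints chosen above is really the one produced by the monoidal coherence, so that both snake identities reduce exactly to the product of the two individual snakes. I would appeal to MacLane's coherence theorem, quoted earlier as a theorem in Section 1, to avoid writing out the pentagon chase explicitly; once coherence is invoked the remaining computation is essentially formal. Having verified the two snake identities, $Y^+ \otimes X^+$ is a right dual of $X \otimes Y$, and Lemma \ref{96} gives a unique isomorphism $(X \otimes Y)^+ \cong Y^+ \otimes X^+$.
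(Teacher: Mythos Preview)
Your proposal is correct and is exactly the standard argument: exhibit $Y^+\otimes X^+$ with the ``nested'' evaluation and coevaluation, collapse the inner snake for $X$ and then the outer one for $Y$, and conclude via uniqueness of duals (Lemma~\ref{96}). Note, however, that the paper does not actually supply a proof of this lemma; it is stated with a citation to \cite{baki} and left unproved, so there is no in-paper argument to compare against. Your write-up is precisely the proof one finds in that reference (and in any standard treatment), so it is entirely appropriate here.
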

\begin{exmp}
Assume that $\mathcal{A}$ and $\mathcal{B}$ are two monoidal categories and $(\mathcal{F},~\gamma,~\varphi)$ is a monoidal functor between those categories. If $X$ is an object in $\mathcal{A}$ with a right dual $X^+$, then $\mathcal{F}(X^+)$ is a right dual of $\mathcal{F}(X)$.
\end{exmp}
\begin{proof}
We define the evaluation map as $ev_{r\mathcal{F}(X)}=\mathcal{F}(ev_{rX}) \circ \gamma$ that is shown with the following diagram
\begin{align*}
ev_{r\mathcal{F}(X)}:~\mathcal{F}(X^+)\otimes \mathcal{F}(X)\rightarrow \mathcal{F}(X^+\otimes X) \rightarrow \mathcal{F}(I)
\end{align*}

and the coevaluation map as $coev_{r\mathcal{F}(X)}=\gamma^{-1} \circ \mathcal{F}(coev_{rX})$ that is shown with the following diagram
\begin{align*}
coev_{r\mathcal{F}(X)}:~\mathcal{F}(I) \rightarrow \mathcal{F}(X\otimes  X^+) \rightarrow \mathcal{F}(X) \otimes \mathcal{F}(X^+)
\end{align*}

by using $\gamma$. It is obvious that the following compositions are the identities since $\mathcal{F}$ is a monoidal functor and $X^+$ is a right dual for $X$.
\begin{align*}
\xymatrix{ \mathcal{F}(X^+)=\mathcal{F}(X^+) \otimes \mathcal{F}(I) \ar[r] & \mathcal{F}(X^+) \otimes \mathcal{F}(X) \otimes \mathcal{F}(X^+) \ar[r] & \mathcal{F}(I) \otimes \mathcal{F}(X^+)=\mathcal{F}(X^+)}\\
\xymatrix{ \mathcal{F}(X)=\mathcal{F}(I) \otimes \mathcal{F}(X) \ar[r] & \mathcal{F}(X) \otimes \mathcal{F}(X^+) \otimes \mathcal{F}(X) \ar[r] & \mathcal{F}(X) \otimes \mathcal{F}(I)=\mathcal{F}(X)}
\end{align*}

As a result, $\mathcal{F}(X^+)$ is a right dual for $\mathcal{F}(X)$.
\end{proof}
\begin{defn}
A monoidal subcategory of a monoidal category $\mathcal{A}$ is a monoidal category under the induced monoidal structure of $\mathcal{A}$ and it is a rigid monoidal subcategory of a rigid monoidal category $\mathcal{A}$ if it contains $X^+$ and $^+X$ whenever it contains an object $X$. 
\end{defn}
\begin{prop}
\label{66}
If $\mathcal{A}$ is a rigid monoidal category, then an object $X$ in $\mathcal{A}$ is invertible if and only if $ev_{rX}:~X^+\otimes X\rightarrow I$ and $coev_{rX}:~I\rightarrow X\otimes X^+$ are isomorphisms. In that situation, $^+X\cong X^+$. If $Y$ is another invertible object, then $X\otimes Y$ is invertible.
\end{prop}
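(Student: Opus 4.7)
The plan unfolds in three steps: dispatch the biconditional, deduce $^+X\cong X^+$, and verify tensor-closure. By MacLane's strictness theorem I may assume $\mathcal{A}$ is strict, so associators and unit constraints drop out. The reverse implication is immediate: if $ev_{rX}$ and $coev_{rX}$ are isomorphisms, then $X^+$ is a two-sided inverse of $X$ witnessed by these maps.

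For the forward implication, suppose $X$ is invertible with inverse $B$ and fix isomorphisms $\alpha\colon X\otimes B\xrightarrow{\sim} I$ and $\beta\colon B\otimes X\xrightarrow{\sim} I$. Two ingredients combine. First, the snake identities for the right dual are precisely the triangle identities of the adjunction $(-\otimes X)\dashv(-\otimes X^+)$, with unit $\eta_A=id_A\otimes coev_{rX}$ and counit $\epsilon_A=id_A\otimes ev_{rX}$; this is a short computation. Second, the isomorphisms $\alpha$ and $\beta$ exhibit $-\otimes B$ as a quasi-inverse to $-\otimes X$, so $-\otimes X$ is an auto-equivalence of $\mathcal{A}$. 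For an equivalence, any right adjoint is also a quasi-inverse, and the unit and counit of any accompanying adjunction are natural isomorphisms. Therefore $-\otimes X^+\simeq -\otimes B$ as functors (evaluating at $I$ gives $X^+\cong B$), and the natural transformations $-\otimes coev_{rX}$ and $-\otimes ev_{rX}$ are isomorphisms; evaluating at $A=I$ shows that $coev_{rX}$ and $ev_{rX}$ themselves are isomorphisms.

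The isomorphism $^+X\cong X^+$ follows by the mirror argument using the adjunction $(X\otimes -)\dashv({^+X}\otimes -)$ coming from the left-dual snake identities. The same equivalence principle forces its unit and counit to be isomorphisms and yields $^+X\cong B\cong X^+$. For the final assertion, if $Y$ is invertible with inverse $C$, then $(X\otimes Y)\otimes(C\otimes B)\cong X\otimes I\otimes B\cong I$ and $(C\otimes B)\otimes(X\otimes Y)\cong C\otimes I\otimes Y\cong I$, so $C\otimes B$ is an inverse for $X\otimes Y$.

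The main obstacle is justifying the adjunction-theoretic input: that the snake identities encode the adjunctions claimed above, and that for an equivalence the unit and counit of any accompanying adjunction are natural isomorphisms. Both are classical, but since the excerpt does not record them explicitly, I would include short verifications of each before invoking them; the remaining assertions then fall out almost mechanically.
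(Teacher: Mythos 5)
Your proof is correct, but it takes a genuinely different route from the paper's. The paper argues the forward implication by appealing to uniqueness of duals (Lemma \ref{96}): given an inverse $Z$ with $X\otimes Z\cong I\cong Z\otimes X$, it declares that $Z$ ``can be used as a right dual,'' hence $Z\cong X^+$, and then reads off that $ev_{rX}$ and $coev_{rX}$ are isomorphisms. That step quietly assumes an arbitrary pair of isomorphisms witnessing invertibility can serve as duality data, which in general requires rescaling one of them so that the snake identities hold before the uniqueness lemma applies. Your argument sidesteps this delicate point entirely: you only use the isomorphisms $\alpha,\beta$ to conclude that $-\otimes B$ is a quasi-inverse of $-\otimes X$, so that $-\otimes X$ is an equivalence, and then you extract the invertibility of $ev_{rX}$ and $coev_{rX}$ from the general fact that the unit and counit of any adjunction whose left adjoint is an equivalence are natural isomorphisms, evaluated at $I$. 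This buys a cleaner and more robust forward implication (and a cleaner derivation of $^+X\cong X^+$ via the mirror adjunction $(X\otimes -)\dashv({}^+X\otimes -)$), at the cost of importing two pieces of adjunction theory --- that the snake identities are exactly the triangle identities for $(-\otimes X)\dashv(-\otimes X^+)$, and the equivalence-forces-isomorphic-unit/counit principle --- which, as you note, should be verified explicitly since the paper does not record them. The reverse implication and the closure of invertible objects under $\otimes$ are handled identically in both proofs.
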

\begin{proof}
If the above maps are isomorphisms, then we get $X^+\otimes X\cong I\cong X\otimes X^+$, so $X^+$ is the required object in the definition of an invertible object. Thus, $X$ is invertible. Similarly, we see that $X^+$ is invertible.\\

Conversely, if $X$ is invertible, then there exists an object $Z$ such that $X\otimes Z\cong Z\otimes X\cong I$, so we can use $Z$ as a right dual, hence $Z\cong X^+$ by uniqueness of a right dual. Then the above maps are isomorphisms. With the same idea, we may consider $Z\cong {^+X}$ by the isomorphism and we reverse the arrows if required and see $Z$ is a left dual. As a result, $X^+\cong {^+X}$.\\

Now, assume that $X$ and $Y$ are two invertible objects in the category. Then, $X^+\otimes X\cong I\cong X\otimes X^+$ and $Y^+\otimes Y\cong I\cong Y\otimes Y^+$. So, we get $Y^+\otimes X^+\otimes X\otimes Y\cong I\cong X\otimes Y\otimes Y^+\otimes X^+$.  This shows that $Y^+\otimes X^+$ is the inverse of $X\otimes Y$.   
\end{proof}
\begin{prop}
Assume that $(\mathcal{F},~\gamma,~\varphi)$ and $(\mathcal{G},~\gamma',~\varphi')$ are two monoidal functors from $\mathcal{A} \rightarrow \mathcal{B}$. If $\mathcal{A}$ and $\mathcal{B}$ are rigid monoidal categories, then every morphism of monoidal functors from $\mathcal{F}$ to $\mathcal{G}$ is an isomorphism. 
\end{prop}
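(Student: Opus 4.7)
The plan is to construct, for each object $X$ in $\mathcal{A}$, a two-sided inverse $\psi_X\colon \mathcal{G}(X)\to \mathcal{F}(X)$ for $\theta_X$, exploiting rigidity of $\mathcal{B}$. By the earlier Example, $\mathcal{F}(X^+)$ is a right dual of $\mathcal{F}(X)$ in $\mathcal{B}$ with coevaluation $coev^{\mathcal{F}}_X=\gamma^{-1}\circ \mathcal{F}(coev_{rX})\circ \varphi$ and evaluation $ev^{\mathcal{F}}_X=\varphi^{-1}\circ \mathcal{F}(ev_{rX})\circ \gamma$, and similarly $\mathcal{G}(X^+)$ is a right dual of $\mathcal{G}(X)$ via maps $coev^{\mathcal{G}}_X, ev^{\mathcal{G}}_X$ built from $\gamma', \varphi'$. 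I would then set $\psi_X$ to be the transpose of $\theta_{X^+}$ between these two dualities:
\begin{align*}
\psi_X=(id_{\mathcal{F}(X)}\otimes ev^{\mathcal{G}}_X)\circ (id_{\mathcal{F}(X)}\otimes \theta_{X^+}\otimes id_{\mathcal{G}(X)})\circ (coev^{\mathcal{F}}_X\otimes id_{\mathcal{G}(X)}).
\end{align*}

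The key step is to establish two auxiliary identities in $\mathcal{B}$:
\begin{align*}
(\theta_X\otimes \theta_{X^+})\circ coev^{\mathcal{F}}_X=coev^{\mathcal{G}}_X, \quad ev^{\mathcal{G}}_X\circ (\theta_{X^+}\otimes \theta_X)=ev^{\mathcal{F}}_X.
\end{align*}
Each unfolds by applying, in order, the monoidal compatibility square of $\theta$ (which trades $\theta_X\otimes \theta_{X^+}$ for $\theta_{X\otimes X^+}$ through $\gamma, \gamma'$), the naturality of $\theta$ on the morphism $coev_{rX}\colon I\to X\otimes X^+$ (respectively on $ev_{rX}$), and finally the unit compatibility $\theta_I\circ \varphi=\varphi'$.

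Once these identities are in hand, the verification that $\psi_X$ inverts $\theta_X$ becomes a short diagram chase. Using the interchange law and the triangle identity \ref{97} applied in $\mathcal{B}$ to the duality $(\mathcal{G}(X),\mathcal{G}(X^+))$, I would compute
\begin{align*}
\theta_X\circ \psi_X=(id_{\mathcal{G}(X)}\otimes ev^{\mathcal{G}}_X)\circ (coev^{\mathcal{G}}_X\otimes id_{\mathcal{G}(X)})=id_{\mathcal{G}(X)},
\end{align*}
and symmetrically $\psi_X\circ \theta_X=id_{\mathcal{F}(X)}$. The family $\{\psi_X\}$ is automatically a morphism of monoidal functors: its naturality follows from the componentwise inverse of a natural transformation being natural, compatibility of $\psi$ with $\gamma, \gamma'$ is obtained by inverting the $\gamma$ square for $\theta$, and compatibility with $\varphi, \varphi'$ by inverting the unit square (now that $\theta_I$ is known to be invertible).

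The main obstacle I anticipate is the bookkeeping in the two auxiliary identities. Because $coev^{\mathcal{F}}_X$ and $coev^{\mathcal{G}}_X$ each bundle three structural isomorphisms---$\gamma^{\pm 1}$, $\varphi$, and $\mathcal{F}$ (or $\mathcal{G}$) applied to a coevaluation in $\mathcal{A}$---the three defining properties of a morphism of monoidal functors must be applied in precisely the right order. Passing to a strict monoidal model for $\mathcal{A}$ and $\mathcal{B}$ via the MacLane theorem removes the associator and unitor coherences and reduces each identity to a short, linear diagram chase.
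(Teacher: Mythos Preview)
The paper states this proposition without proof, so there is nothing to compare against. Your argument is correct and is the standard one (cf.\ Deligne--Milne, \emph{Tannakian Categories}, Prop.~1.13, or Saavedra Rivano): build the candidate inverse $\psi_X$ as the ``mate'' of $\theta_{X^+}$ with respect to the transported dualities $(\mathcal{F}(X),\mathcal{F}(X^+))$ and $(\mathcal{G}(X),\mathcal{G}(X^+))$, verify the two auxiliary identities by stacking the monoidal-naturality square, naturality of $\theta$ at $coev_{rX}$ (resp.\ $ev_{rX}$), and the unit triangle, and then reduce $\theta_X\circ\psi_X$ and $\psi_X\circ\theta_X$ to the snake identities in $\mathcal{B}$. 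Two small remarks: the hypothesis that $\mathcal{B}$ be rigid is not actually used in your argument---only that $\mathcal{A}$ is rigid, so that $X^+$ exists and the earlier Example produces the duals $\mathcal{F}(X^+),\mathcal{G}(X^+)$ in $\mathcal{B}$; and the final paragraph about $\{\psi_X\}$ being itself a morphism of monoidal functors is a pleasant observation but not required for the proposition as stated, which only asks that each $\theta_X$ be invertible.
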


\subsection{ Drinfeld Center of A Monoidal Category}
Assume that $\mathcal{A}$ is a monoidal category. We denote the Drinfeld center of $\mathcal{A}$ by $\mathcal{Z}(\mathcal{A})$.\\

Objects in $\mathcal{Z}(\mathcal{A})$ are $(Z,~\gamma_Z)$ where $Z$ is an object in $\mathcal{A}$ and $\gamma_Z$ is a family of natural isomorphisms $\xymatrix{ \gamma_{ZX}:~Z\otimes X \ar[rr]^{\sim} & & X\otimes Z}$ for all objects $X$ in $\mathcal{A}$ such that the following diagrams commute.
\begin{align}
\begin{tikzpicture}
\node (A) at (1, 0) {$Z\otimes (X\otimes Y)$};
\node (B) at (-2, -1.5) {$(Z\otimes X)\otimes Y$};
\node (C) at (1, -3) {$(X\otimes Z)\otimes Y$};
\node (D) at (5, -3) {$X\otimes (Z\otimes Y)$};
\node (E) at (8, -1.5) {$X\otimes(Y\otimes Z)$};
\node (F) at (5, 0) {$(X\otimes Y)\otimes Z$};
\draw [->, thick] (4, -1.3) arc (360:20:20pt);
\path[->] (A) edge node [above] {$\gamma_{Z(X\otimes Y)}$} (F);
\path[->] (B) edge node [left, midway] {$a$} (A);
\path[->] (B) edge node [left, midway] {$\gamma_{ZX} \otimes id_Y$} (C);
\path[->] (C) edge node [below] {$a$} (D);
\path[->] (D) edge node [right, midway] {$id_X\otimes \gamma_{ZY}$} (E);
\path[->] (F) edge node [right, midway] {$a$} (E);
\end{tikzpicture}
\end{align}
\begin{align}
\begin{tikzpicture}
\node (A) at (0, 0) {$Z\otimes I$};
\node (B) at (4, 0) {$I\otimes Z$};
\node (C) at (2,-2) {$Z$};
\path[->] (A) edge node [above] {$\gamma_{ZI}$} (B);
\path[->] (A) edge node [left]  {$r$} (C);
\path[->] (B) edge node [right] {$l$} (C);
\end{tikzpicture}
\end{align}

A morphism $f:~(Z,~\gamma_Z) \rightarrow (W,~\gamma_W)$ in $\mathcal{Z}(\mathcal{A})$ is an arrow $f:~Z \rightarrow W$ such that the following diagram commutes.
\begin{equation}
\xymatrix{ Z\otimes X\ar[d]_{f\otimes id_X} \ar[rr]^{\gamma_{ZX}} & & X\otimes Z \ar[d]^{id_X\otimes f}\\ W\otimes X \ar[rr]_{\gamma_{WX}} & & X\otimes W}
\end{equation}
\begin{lem}
$Z(\mathcal{A})$ is a braided monoidal category.
\end{lem}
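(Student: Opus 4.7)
The plan is to equip $\mathcal{Z}(\mathcal{A})$ with a monoidal structure lifted from $\mathcal{A}$ and then exhibit the half-braiding itself as the braiding. For objects, I would define the tensor product by
\begin{align*}
(Z,\gamma_Z)\otimes (W,\gamma_W)=(Z\otimes W,\gamma_{Z\otimes W}),
\end{align*}
where $\gamma_{(Z\otimes W)X}:(Z\otimes W)\otimes X\to X\otimes (Z\otimes W)$ is the composite
\begin{align*}
(Z\otimes W)\otimes X\xrightarrow{a} Z\otimes(W\otimes X)\xrightarrow{id_Z\otimes \gamma_{WX}} Z\otimes(X\otimes W)\xrightarrow{a^{-1}}(Z\otimes X)\otimes W\xrightarrow{\gamma_{ZX}\otimes id_W}(X\otimes Z)\otimes W\xrightarrow{a} X\otimes(Z\otimes W).
\end{align*}
The unit object is $(I,\gamma_I)$ with $\gamma_{IX}=r_X^{-1}\circ l_X$. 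A morphism $f\otimes g$ of central objects is just $f\otimes g$ in $\mathcal{A}$, and one checks directly from naturality of $\gamma_Z$ and $\gamma_W$ that $f\otimes g$ satisfies the compatibility square with the newly defined half-braiding.

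The next step is the coherence verification. The associator and unit constraints for $\mathcal{Z}(\mathcal{A})$ are inherited from $\mathcal{A}$, so the pentagon and triangle in $\mathcal{Z}(\mathcal{A})$ reduce to the corresponding identities in $\mathcal{A}$. What has to be checked is that $a_{(Z,\gamma_Z)(W,\gamma_W)(U,\gamma_U)}$ and the unit constraints are honest morphisms in $\mathcal{Z}(\mathcal{A})$; this amounts to showing that the formula defining $\gamma_{Z\otimes W}$ (and $\gamma_I$) is compatible with associativity, which follows by pasting together the two hexagons satisfied by $\gamma_Z$ and $\gamma_W$ together with the pentagon for $a$ in $\mathcal{A}$. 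I would also verify in passing that $\gamma_{Z\otimes W}$ itself satisfies the hexagon and triangle required of a half-braiding, so that the tensor product indeed lands in $\mathcal{Z}(\mathcal{A})$.

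For the braiding I would set
\begin{align*}
c_{(Z,\gamma_Z),(W,\gamma_W)}:=\gamma_{ZW}:Z\otimes W\longrightarrow W\otimes Z,
\end{align*}
which is an isomorphism by hypothesis. Naturality in $(W,\gamma_W)$ is naturality of $\gamma_Z$ in $W$, and naturality in $(Z,\gamma_Z)$ is precisely the square defining morphisms in $\mathcal{Z}(\mathcal{A})$. The two braiding hexagons then coincide, after unpacking the definition of $\gamma_{Z\otimes W}$, with the hexagon required of a half-braiding for $Z$ and for $W$ respectively. I expect the main obstacle to be neither the definitions nor the high-level shape of the argument but the careful bookkeeping of associators in these hexagons: one must paste the hexagon for $\gamma_{Z}$ (applied to $W\otimes X$ or $X\otimes Y$) with that for $\gamma_W$, together with several instances of the pentagon in $\mathcal{A}$, and confirm that the resulting outer diagram is exactly the braiding hexagon for $c$ in $\mathcal{Z}(\mathcal{A})$. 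Once this is done, the verification that $c$ is a braiding is complete.
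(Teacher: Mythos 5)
Your construction is the same as the paper's: the tensor product $(Z,\gamma_Z)\otimes(W,\gamma_W)=(Z\otimes W,\,(\gamma_Z\otimes id_W)\circ(id_Z\otimes\gamma_W))$ (your composite is exactly this with the associators written out) and the braiding given by the half-braiding $\gamma_{ZW}$ itself. Your write-up is in fact more complete than the paper's, which records only the definitions and omits the coherence and hexagon verifications that you correctly identify as the remaining bookkeeping.
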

\begin{proof}
$(Z,~\gamma_Z)\otimes (W,~\gamma_W)=(Z\otimes W,~(\gamma_Z \otimes id_W)\circ (id_Z\otimes \gamma_W))$ is a tensor product for all objects $Z$ and $W$ in $\mathcal{A}$ and $c_{(Z,~\gamma_Z)(W,~\gamma_W)}: (Z,~\gamma_Z)\otimes (W,~\gamma_W) \rightarrow (W,~\gamma_W)\otimes (Z,~\gamma_Z)$ is a braiding which is same as 
\begin{align*}
c:~(Z\otimes W,~(\gamma_Z \otimes id_W) \circ (id_Z\otimes \gamma_W)) \rightarrow (W\otimes Z,~(\gamma_W \otimes id_Z)\circ (id_W\otimes \gamma_Z)).
\end{align*}
\end{proof}

\subsection{Module Categories}
\begin{defn}
\cite{os} A category $\mathcal{M}$ is a left module category on a finite monoidal category $\mathcal{A}$ if there exists an exact bifunctor
\begin{align}
\label{74}
\otimes_{l\mathcal{M}}:~\mathcal{A} \times \mathcal{M} \rightarrow \mathcal{M},~(X,~M)\mapsto X\otimes_{l\mathcal{M}} M
\end{align}

for all objects $X$ in $\mathcal{A}$, $M$ in $\mathcal{M}$, associativity constraint $a_{l\mathcal{M}}$ consisting of associativity isomorphisms $a_{XYM}: (X\otimes_{\mathcal{A}} Y)\otimes_{l\mathcal{M}} M\rightarrow X\otimes_{l\mathcal{M}}(Y\otimes_{l\mathcal{M}} M)$ and left unit constraint $l_{\mathcal{M}}$ which is a family of left unit isomorphisms $l_M:~I\otimes_{l\mathcal{M}} M\rightarrow M$ such that for any $X$, $Y$, $Z$ in $\mathcal{A}$, $M$ in $\mathcal{M}$, the following diagrams commute.
\begin{align}
\begin{tikzpicture}
\node (A) at (0, 0) {$((X\otimes_{\mathcal{A}} Y)\otimes_{\mathcal{A}} Z)\otimes_{l\mathcal{M}} M$};
\node (B) at (8, 0) {$(X\otimes_{\mathcal{A}} (Y\otimes_{\mathcal{A}} Z))\otimes_{l\mathcal{M}} M$};
\node (C) at (-1, -2) {$(X\otimes_{\mathcal{A}} Y)\otimes_{l\mathcal{M}} (Z\otimes_{l\mathcal{M}} M) $};
\node (D) at (9, -2) {$X\otimes_{l\mathcal{M}} ((Y\otimes_{\mathcal{A}} Z)\otimes_{l\mathcal{M}} M)$};
\node (E) at (4, -4) {$X\otimes_{l\mathcal{M}} (Y\otimes_{l\mathcal{M}} (Z\otimes_{l\mathcal{M}} M))$};
\draw [->, thick] (4.8, -1.5) arc (360:30:20pt);
\path[->] (A) edge node [above] {$a_{XYZ}\otimes_{l\mathcal{M}} id_M$} (B);
\path[->] (A) edge node [left, midway] {$a_{(XY)ZM}$} (C);
\path[->] (B) edge node [right, midway] {$a_{X(YZ)M}$} (D);
\path[->] (C) edge node [left, midway] {$a_{XY(ZM)}$} (E);
\path[->] (D) edge node [right, midway] {$id_X\otimes_{l\mathcal{M}} a_{YZM}$} (E);
\end{tikzpicture}
\end{align}
\begin{align}
\begin{tikzpicture}
\node (A) at (0, 0) {$(X\otimes_{\mathcal{A}} I)\otimes_{l\mathcal{M}} M$};
\node (B) at (6, 0) {$ X\otimes_{l\mathcal{M}} (I\otimes_{l\mathcal{M}} M)$};
\node (C) at (3, -2) {$X\otimes_{l\mathcal{M}} M$};
\path[->] (A) edge node [above] {$a_{XIM}$} (B);
\path[->] (A) edge node [left] {$r_X\otimes_{l\mathcal{M}} id_M$} (C);
\path[->] (B) edge node [right] {$id_X\otimes_{l\mathcal{M}} l_M$} (C);
\end{tikzpicture}
\end{align}
\end{defn}
\begin{defn}
A category $\mathcal{M}$ is right module category on a finite monoidal category $\mathcal{A}$ if there exists an exact bifunctor
\begin{align*}
\otimes_{r\mathcal{M}}:~\mathcal{M} \times \mathcal{A} \rightarrow \mathcal{M},~(M,~X)\mapsto M\otimes_{r\mathcal{M}} X
\end{align*}

for all objects $X\in \mathcal{A}$, $M\in \mathcal{M}$, associativity constraint $a_{r\mathcal{M}}$ consisting of associativity isomorphisms $a_{MXY}:~M\otimes_{r\mathcal{M}} (X\otimes_{\mathcal{A}} Y) \rightarrow (M\otimes_{r\mathcal{M}} X)\otimes_{r\mathcal{M}} Y$ and right unit constraint $r_{\mathcal{M}}$ which is a family of right unit isomorphisms $r_M:~M\otimes_{r\mathcal{M}} I\rightarrow M$ such that for any $X$, $Y$, $Z$ in $\mathcal{A}$, $M$ in $\mathcal{M}$, the following diagrams commute.
\begin{align}
\label{69}
\begin{tikzpicture}
\node (A) at (0, 0) {$M\otimes_{r\mathcal{M}} ((X\otimes_{\mathcal{A}} Y)\otimes_{\mathcal{A}} Z)$};
\node (B) at (8, 0) {$M\otimes_{r\mathcal{M}} (X\otimes_{\mathcal{A}} (Y\otimes_{\mathcal{A}} Z))$};
\node (C) at (-1, -2) {$(M\otimes_{r\mathcal{M}} X) \otimes_{r\mathcal{M}}(Y\otimes_{\mathcal{A}} Z)$};
\node (D) at (9, -2) {$(M\otimes_{r\mathcal{M}} (X\otimes_{\mathcal{A}} Y))\otimes_{r\mathcal{M}} Z$};
\node (E) at (4, -4) {$((M\otimes_{r\mathcal{M}} X) \otimes_{r\mathcal{M}} Y)\otimes_{r\mathcal{M}} Z$};
\draw [->, thick] (4.8, -1.5) arc (360:30:20pt);
\path[->] (A) edge node [above] {$_{id_M\otimes_{r\mathcal{M}} a_{XYZ}}$} (B);
\path[->] (A) edge node [left, midway] {$a_{M(XY)Z}$} (C);
\path[->] (B) edge node [right, midway] {$a_{MX(YZ)}$} (D);
\path[->] (C) edge node [left, midway] {$a_{(MX)YZ}$} (E);
\path[->] (D) edge node [right, midway] {$a_{MXY}\otimes_{r\mathcal{M}} id_Z$} (E);
\end{tikzpicture}
\end{align}
\begin{align}
\label{70}
\begin{tikzpicture}
\node (A) at (0, 0) {$M\otimes_{r\mathcal{M}} (I\otimes_{\mathcal{A}} X)$};
\node (B) at (6, 0) {$(M\otimes_{r\mathcal{M}} I)\otimes_{r\mathcal{M}} X$};
\node (C) at (3, -2) {$M\otimes_{r\mathcal{M}} X$};
\path[->] (A) edge node [above] {$a_{MIX}$} (B);
\path[->] (A) edge node [left] {$id_M\otimes_{r\mathcal{M}} l_X$} (C);
\path[->] (B) edge node [right] {$r_M\otimes_{r\mathcal{M}} id_X$} (C);
\end{tikzpicture}
\end{align}
\end{defn}
\begin{prop} 
\label{1}
For a left $\mathcal{A}$ module category $\mathcal{M}$ where $\mathcal{A}$ is a finite, rigid monoidal category, $\mathcal{M}^{op}$ is a right $\mathcal{A}$ module category obtained from $\mathcal{M}$ by reversing the arrows.
\end{prop}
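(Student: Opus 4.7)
The plan is to endow $\mathcal{M}^{op}$ with a right $\mathcal{A}$-action constructed from the given left $\mathcal{A}$-action on $\mathcal{M}$, with the duality in the rigid category $\mathcal{A}$ used to repair the variance mismatch that arises after reversing arrows. Naively setting $M\otimes_{r\mathcal{M}^{op}}X:=X\otimes_{l\mathcal{M}} M$ on objects would make the bifunctor contravariant in $X$ once arrows in $\mathcal{M}^{op}$ are reversed, and so would only yield a right $\mathcal{A}^{op}$-module structure; composing with the contravariant duality $(-)^+:\mathcal{A}\to \mathcal{A}^{op}$ (available by rigidity) corrects this.

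I would define
\begin{align*}
\otimes_{r\mathcal{M}^{op}}:~\mathcal{M}^{op}\times \mathcal{A}\rightarrow \mathcal{M}^{op},\qquad M\otimes_{r\mathcal{M}^{op}} X := X^+\otimes_{l\mathcal{M}} M,
\end{align*}
and on morphisms combine the contravariant action of $(-)^+$ on $\mathcal{A}$ with $\otimes_{l\mathcal{M}}$ and the arrow-reversal in $\mathcal{M}^{op}$, so that the result is covariant in both variables. Exactness in each variable then follows from exactness of $\otimes_{l\mathcal{M}}$ (built into Definition \ref{74}) together with the exactness of $(-)^+$, which is standard in rigid monoidal categories by Remark \ref{82} and the adjunctions characterising right duals.

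For the associator, Lemma \ref{71} gives $(X\otimes_{\mathcal{A}} Y)^+ \cong Y^+\otimes_{\mathcal{A}} X^+$, whence
\begin{align*}
M\otimes_{r\mathcal{M}^{op}}(X\otimes_{\mathcal{A}} Y) = (Y^+\otimes_{\mathcal{A}} X^+)\otimes_{l\mathcal{M}} M \quad\text{and}\quad (M\otimes_{r\mathcal{M}^{op}} X)\otimes_{r\mathcal{M}^{op}} Y = Y^+\otimes_{l\mathcal{M}}(X^+\otimes_{l\mathcal{M}} M).
\end{align*}
I then take $a^{r\mathcal{M}^{op}}_{MXY}$ to be the $\mathcal{M}^{op}$-arrow whose underlying $\mathcal{M}$-morphism is the inverse $(a^{l\mathcal{M}}_{Y^+,X^+,M})^{-1}$. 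Since $I^+\cong I$ in $\mathcal{A}$, one has $M\otimes_{r\mathcal{M}^{op}} I = I\otimes_{l\mathcal{M}} M$, and the right unit isomorphism is defined as the $\mathcal{M}^{op}$-reversal of the left unit $l_M^{l\mathcal{M}}$.

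The main obstacle will be verifying the pentagon Diagram \ref{69} and triangle Diagram \ref{70} for this new structure. Since every structural morphism on the right has been translated into one on the left applied to duals, the pentagon for the right action at $(M,X,Y,Z)$ unfolds, after two applications of Lemma \ref{71}, into the pentagon for the left action at $(Z^+,Y^+,X^+)$ against $M$ together with the coherence comparing the two bracketings of $(X\otimes Y\otimes Z)^+$. Both hold by hypothesis and by the standard compatibility of duals with the associator of $\mathcal{A}$. The triangle axiom similarly reduces to the left-unit triangle for $M$ after identifying $I^+$ with $I$; careful but routine diagram chasing then completes the verification.
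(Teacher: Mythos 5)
Your proposal is correct and follows essentially the same route as the paper: defining $M\otimes_{r\mathcal{M}^{op}} X = X^+\otimes_{l\mathcal{M}} M$, invoking Lemma \ref{71} to transport the associator, and using $I^+=I$ for the unit constraint. Your additional remarks on the variance repair via $(-)^+$ and the reduction of the pentagon to the left-module pentagon at $(Z^+,Y^+,X^+)$ merely flesh out the coherence checks that the paper leaves to the reader.
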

\begin{proof}
We define the action as 
\begin{align*}
\otimes_{r\mathcal{M}^{op}}:~\mathcal{M}^{op} \times \mathcal{A} \rightarrow \mathcal{M}^{op},~(M,~X)\mapsto M\otimes_{r\mathcal{M}^{op}} X
\end{align*}

for all objects $M\in \mathcal{M}^{op}$ and for all objects $X\in \mathcal{A}$ where $M\otimes_{r\mathcal{M}^{op}} X=X^+\otimes_{l\mathcal{M}} M$. $\otimes_{r\mathcal{M}^{op}}$ is an exact bifunctor since $\otimes_{l\mathcal{M}}$ is an exact bifunctor.\\

Can we find an associativity constraint $a_{r\mathcal{M}^{op}}$ consisting of associativity isomorphisms $a_{MXY}:~M\otimes_{r\mathcal{M}^{op}} (X\otimes_{\mathcal{A}} Y) \rightarrow (M\otimes_{r\mathcal{M}^{op}} X)\otimes_{r\mathcal{M}^{op}} Y$ for all objects $X$, $Y$ in $\mathcal{A}$, $M$ in $\mathcal{M}$ and a right unit constraint $r_{\mathcal{M}^{op}}$ which is a family of right unit isomorphisms $r_M:~M\otimes_{r\mathcal{M}^{op}} I\rightarrow M$ such that for any $X$, $Y$, $Z$ in $\mathcal{A}$, $M$ in $\mathcal{M}$, the Diagram \ref{69} and Diagram \ref{70} commute?
\begin{align*}
M\otimes_{r\mathcal{M}^{op}}(X\otimes_{\mathcal{A}} Y)=(X\otimes_{\mathcal{A}} Y)^+\otimes_{l\mathcal{M}} \mathcal{M}=(Y^+\otimes_{\mathcal{A}} X^+)\otimes_{l\mathcal{M}} M~by~Lemma~\ref{71},
\end{align*}
\begin{align*}
(M\otimes_{r\mathcal{M}^{op}} X)\otimes_{r\mathcal{M}^{op}} Y=Y^+\otimes_{l\mathcal{M}}(X^+\otimes_{l\mathcal{M}} M).
\end{align*}

We know that there is an isomorphism 
\begin{align*}
a_{Y^+X^+M}:~(Y^+\otimes_{\mathcal{A}} X^+)\otimes_{l\mathcal{M}} M\rightarrow Y^+\otimes_{l\mathcal{M}}(X^+\otimes_{l\mathcal{M}} M)
\end{align*}

since $\mathcal{M}$ is a left $\mathcal{A}$ module category, so, we can take $a_{MXY}=a_{Y^+X^+M}$.\\

$M\otimes_{r\mathcal{M}^{op}} I=I^+\otimes_{l\mathcal{M}} M$ and there is an isomorphism $l_M:~I^+\otimes_{l\mathcal{M}} M\rightarrow M$, so we get the family of right unit constraints $r_M=l_M$ since $I^+=I$. The reader can show that the Diagram \ref{69} and Diagram \ref{70} commute.
\end{proof}

Similarly, for a right $\mathcal{A}$ module category $\mathcal{M}$, $\mathcal{M}^{op}$ is the category obtained from $\mathcal{M}$ by reversing the arrows which is a left $\mathcal{A}$ module category with $X\times M=M\otimes {^+X}$ for all objects $M\in \mathcal{M}$, $X\in \mathcal{A}$.
\begin{lem}
Assume that $\mathcal{A}$ is a fusion category and $\mathcal{M}$ is a module category over $\mathcal{A}$. $(\mathcal{M}^{op})^{op}\simeq \mathcal{M}$ canonically as an $\mathcal{A}$ module category.
\end{lem}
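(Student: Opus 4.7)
The plan is to unwind the double-op construction and exhibit an explicit equivalence built from the canonical rigidity isomorphism $({^+X})^+\cong X$ in $\mathcal{A}$.

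First I would compute the action on $(\mathcal{M}^{op})^{op}$. By Proposition \ref{1}, $\mathcal{M}^{op}$ is a right $\mathcal{A}$-module category with action
\begin{align*}
M\otimes_{r\mathcal{M}^{op}} X = X^+\otimes_{l\mathcal{M}} M.
\end{align*}
Applying the dual construction (the paragraph following Proposition \ref{1}) to this right module, $(\mathcal{M}^{op})^{op}$ becomes a left $\mathcal{A}$-module category whose action is
\begin{align*}
X\times M = M\otimes_{r\mathcal{M}^{op}} {^+X} = ({^+X})^+\otimes_{l\mathcal{M}} M
\end{align*}
with associator and unitor obtained by reversing arrows twice, so essentially inherited from those of $\mathcal{M}$ up to the above dualizations.

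Next, I would define the comparison functor $\mathcal{F}:(\mathcal{M}^{op})^{op}\to \mathcal{M}$ as the identity on objects and morphisms (both categories have the underlying data of $\mathcal{M}$, since reversing arrows twice returns the original category). To upgrade $\mathcal{F}$ to an $\mathcal{A}$-module functor, I would use the canonical isomorphism $\eta_X:({^+X})^+\xrightarrow{\cong} X$ guaranteed by Lemma \ref{96} applied in the rigid category $\mathcal{A}$: this produces a family of natural isomorphisms
\begin{align*}
\mu_{X,M}:\mathcal{F}(X\times M)=({^+X})^+\otimes_{l\mathcal{M}} M \xrightarrow{\eta_X\otimes_{l\mathcal{M}} id_M} X\otimes_{l\mathcal{M}} M = X\otimes_{l\mathcal{M}} \mathcal{F}(M),
\end{align*}
and on the unit I would use that $({^+I})^+\cong I$ together with the unit constraint $l_M$.

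The main work, and the step I expect to be the principal obstacle, is checking the module-functor pentagon
\begin{align*}
\mathcal{F}((X\otimes_{\mathcal{A}} Y)\times M)\cong (X\otimes_{\mathcal{A}} Y)\otimes_{l\mathcal{M}} \mathcal{F}(M)
\end{align*}
is compatible with the associators. This amounts to verifying that the canonical isomorphisms $\eta_X:({^+X})^+\cong X$ are monoidal in $X$, i.e.\ that the diagram
\begin{align*}
\xymatrix@C+1em{({^+(X\otimes Y)})^+ \ar[r]^-{\eta_{X\otimes Y}} \ar[d]_{\cong} & X\otimes Y \\ ({^+Y}\otimes {^+X})^+ \ar[r]_-{\cong} & ({^+X})^+\otimes ({^+Y})^+ \ar[u]_{\eta_X\otimes \eta_Y}}
\end{align*}
commutes, where the bottom row uses Lemma \ref{71} applied to $({^+Y}\otimes {^+X})^+$, and the left vertical uses Lemma \ref{71} applied to $^+(X\otimes Y)={^+Y}\otimes{^+X}$. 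Once this coherence is established, combining it with the pentagon in $\mathcal{M}$ (which holds because $\mathcal{M}$ is a left $\mathcal{A}$-module) yields the pentagon in $(\mathcal{M}^{op})^{op}$, and the unit triangle follows from $I^+=I=\,^+I$ together with $l_M$. Since $\eta_X$ is an isomorphism for every $X$ and $\mathcal{F}$ is bijective on objects and hom-sets, $\mathcal{F}$ is an equivalence of $\mathcal{A}$-module categories, and the whole construction is canonical because $\eta_X$ is the unique isomorphism afforded by Lemma \ref{96}.
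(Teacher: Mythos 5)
The paper states this lemma without any proof, so there is no argument of the author's to compare against; your sketch supplies one, and it is essentially correct. Unwinding the two constructions via Proposition \ref{1} and the paragraph following it does give the action $X\times M=({^+X})^+\otimes_{l\mathcal{M}}M$ on $(\mathcal{M}^{op})^{op}$, whose underlying category is again $\mathcal{M}$, and taking $\mathcal{F}$ to be the identity with module structure $\eta_X\otimes_{l\mathcal{M}} id_M$ for the canonical isomorphism $\eta_X:({^+X})^+\to X$ of Lemma \ref{96} is the right move. You also correctly isolate the only real content: that $\eta$ is compatible with tensor products, which is the standard monoidality of the double-dual comparison in a rigid category. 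Two small points to tighten. First, your coherence square invokes Lemma \ref{71}, but the paper records only the right-dual identity $(X\otimes Y)^+=Y^+\otimes X^+$; you also need its left-dual companion $^+(X\otimes Y)={^+Y}\otimes{^+X}$ (provable by the mirror argument), so state or prove it explicitly. Second, for the unit triangle you need $^+I=I$ in addition to the paper's $I^+=I$; again this is the mirrored proof. Neither is a substantive gap. Note finally that the paper elsewhere asserts $({^+A})^+=A$ as a literal equality, under which one could take $\eta_X=id$ and make $\mathcal{F}$ a strict module functor; your version with an explicit canonical isomorphism is the more careful reading for a general fusion category, and the claimed canonicity does follow from the uniqueness clause of Lemma \ref{96}.
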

\begin{lem}
\label{43}
Assume that $\mathcal{A}$ is a finite monoidal category and $A$ is an algebra in $\mathcal{A}$. Then, the category of right $A$ modules $\mathcal{A}A$ is a left $A$ module category and the category of left $A$ modules $A\mathcal{A}$ is a right $A$ module category.
\end{lem}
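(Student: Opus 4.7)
The plan is to construct the module category structure on $\mathcal{A}A$ explicitly using the monoidal structure of $\mathcal{A}$ itself, and then to argue that it satisfies Definition \ref{74} along with its pentagon and triangle axioms. For a right $A$-module $(M,\mu_M)$ with $\mu_M:M\otimes_{\mathcal{A}} A\to M$ and an object $X\in\mathcal{A}$, I will endow the underlying object $X\otimes_{\mathcal{A}} M$ with the right $A$-action
\begin{equation*}
(X\otimes_{\mathcal{A}} M)\otimes_{\mathcal{A}} A \xrightarrow{\ a_{X,M,A}\ } X\otimes_{\mathcal{A}}(M\otimes_{\mathcal{A}} A) \xrightarrow{\ id_X\otimes_{\mathcal{A}} \mu_M\ } X\otimes_{\mathcal{A}} M,
\end{equation*}
and set $X\otimes_{l\mathcal{M}} M:=X\otimes_{\mathcal{A}} M$ with this action. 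A direct check using the pentagon of $\mathcal{A}$ and the right $A$-module axiom of $M$ shows this is a right $A$-module. On arrows, if $f:X\to Y$ is in $\mathcal{A}$ and $g:M\to N$ is a map of right $A$-modules, I take $f\otimes_{l\mathcal{M}} g := f\otimes_{\mathcal{A}} g$ and verify it commutes with the $A$-actions using functoriality and naturality of $a$.

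Next, I will supply the constraints. For the associativity isomorphism $a_{XYM}:(X\otimes_{\mathcal{A}} Y)\otimes_{l\mathcal{M}} M\to X\otimes_{l\mathcal{M}}(Y\otimes_{l\mathcal{M}} M)$ I simply take the associator $a_{X,Y,M}$ of $\mathcal{A}$, and for the left unit $l_M:I\otimes_{l\mathcal{M}} M\to M$ the left unit isomorphism of $\mathcal{A}$. To verify that $a_{XYM}$ is a map of right $A$-modules one applies naturality of $a$ together with the pentagon in $\mathcal{A}$; similarly $l_M$ commutes with the $A$-action using the triangle. The pentagon diagram for $(\mathcal{A}A,\otimes_{l\mathcal{M}})$ then reduces to the pentagon of $\mathcal{A}$ applied to the quadruple $(X,Y,Z,M)$, and the triangle likewise reduces to the triangle of $\mathcal{A}$. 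Exactness of $\otimes_{l\mathcal{M}}$ follows from exactness of $\otimes_{\mathcal{A}}$ in each variable together with the fact that kernels and cokernels of morphisms of $A$-modules are computed in $\mathcal{A}$.

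For the second assertion, I will carry out the mirror construction on $A\mathcal{A}$. Given a left $A$-module $(N,\nu_N)$ with $\nu_N:A\otimes_{\mathcal{A}} N\to N$ and $X\in\mathcal{A}$, I define $N\otimes_{r\mathcal{M}} X:=N\otimes_{\mathcal{A}} X$ with left $A$-action
\begin{equation*}
A\otimes_{\mathcal{A}}(N\otimes_{\mathcal{A}} X) \xrightarrow{\ a^{-1}_{A,N,X}\ } (A\otimes_{\mathcal{A}} N)\otimes_{\mathcal{A}} X \xrightarrow{\ \nu_N\otimes_{\mathcal{A}} id_X\ } N\otimes_{\mathcal{A}} X,
\end{equation*}
take $a_{NXY}=a_{N,X,Y}$ and $r_N=r_N$ from $\mathcal{A}$, and verify the analogues of Diagram \ref{69} and Diagram \ref{70} by the same formal reduction to the pentagon and triangle in $\mathcal{A}$.

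The main obstacle is the bookkeeping that must confirm that every structural arrow constructed from $(\mathcal{A},\otimes,a,l,r)$ is in fact a morphism of $A$-modules; once this is done the module category axioms transfer from $\mathcal{A}$ essentially for free. A secondary point to address is exactness of $\otimes_{l\mathcal{M}}$ and $\otimes_{r\mathcal{M}}$: since $A$-module morphisms are $\mathcal{A}$-morphisms commuting with the $A$-action, short exact sequences in $\mathcal{A}A$ are exactly short exact sequences in $\mathcal{A}$ whose terms carry a compatible $A$-action, and tensoring with $X\in\mathcal{A}$ in each variable preserves them, so exactness reduces to exactness of $\otimes_{\mathcal{A}}$.
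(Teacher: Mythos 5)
Your construction is correct, and it is the standard one: $X\otimes_{l\mathcal{M}}M$ is the underlying object $X\otimes_{\mathcal{A}}M$ with $A$ acting through $M$ via the associator, the constraints are inherited from $(\mathcal{A},a,l,r)$, and the pentagon and triangle for the module structure reduce to the pentagon and triangle of $\mathcal{A}$ together with naturality of $a$. The paper states Lemma \ref{43} without any proof, so there is no argument to compare yours against; your write-up supplies exactly the verification the paper leaves implicit, and the mirror construction for $A\mathcal{A}$ is handled correctly as well. Two small points worth tightening. First, your exactness claim rests on $\otimes_{\mathcal{A}}$ being exact in each variable, which the paper only guarantees for \emph{rigid} abelian monoidal categories (Remark \ref{82}); the lemma as stated assumes only finiteness, so either that hypothesis should be invoked or rigidity should be added (elsewhere the paper does work with finite \emph{rigid} categories, so this is really a gap in the statement rather than in your proof). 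Second, your remark that short exact sequences in $\mathcal{A}A$ are computed in $\mathcal{A}$ silently uses that cokernels of module maps inherit an $A$-action, which requires $-\otimes_{\mathcal{A}}A$ to be right exact; this follows from the same exactness hypothesis, but it deserves a sentence since it is what makes the forgetful functor exact.
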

\begin{note}
A tensor category $\mathcal{A}$ is a $Z(\mathcal{A})$ module category with the action 
\begin{align*}
Z(\mathcal{A})\times \mathcal{A} \rightarrow \mathcal{A},~((Z,~\gamma),~X)\mapsto Z\otimes X.
\end{align*}
\end{note}

\subsection{Indecomposable, Exact and Semisimple Module Category}
\begin{prop}
Assume that $\mathcal{M}$ and $\mathcal{N}$ are module categories over a finite monoidal category $\mathcal{A}$. Then, the direct sum $\mathcal{P}=\mathcal{M} \oplus \mathcal{N}$ of $\mathcal{M}$ and $\mathcal{N}$ is a module category over $\mathcal{A}$ with $\otimes_{\mathcal{P}}=\otimes_{\mathcal{M}} \oplus \otimes_{\mathcal{N}}, \quad a_{\mathcal{P}}=a_{\mathcal{M}} \oplus a_{\mathcal{N}}, \quad l_{\mathcal{P}}=l_{\mathcal{M}} \oplus l_{\mathcal{N}}$.
\end{prop}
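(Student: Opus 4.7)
The plan is to verify the module-category axioms componentwise, exploiting the fact that everything in a direct sum category decomposes uniquely into its two factors. Objects of $\mathcal{P}=\mathcal{M}\oplus\mathcal{N}$ can be written as pairs $(M,N)$ with $M\in\mathcal{M}$ and $N\in\mathcal{N}$, and morphisms as pairs of morphisms in the respective components. First I would define the action
$$\otimes_{\mathcal{P}}\colon\mathcal{A}\times\mathcal{P}\to\mathcal{P},\qquad X\otimes_{\mathcal{P}}(M,N)=(X\otimes_{\mathcal{M}}M,\;X\otimes_{\mathcal{N}}N),$$
and check that this is a bifunctor simply because it is a bifunctor in each coordinate.

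Next I would verify exactness. A sequence in $\mathcal{P}$ is exact if and only if each of its components is exact in $\mathcal{M}$ and $\mathcal{N}$ separately, so the exactness of $\otimes_{\mathcal{P}}$ in each variable follows at once from the exactness of $\otimes_{l\mathcal{M}}$ and $\otimes_{l\mathcal{N}}$ (as required in Definition of a left module category, see \ref{74}). Then I would define the associativity constraint and left unit constraint pointwise:
$$a^{\mathcal{P}}_{XY(M,N)}=(a^{\mathcal{M}}_{XYM},\;a^{\mathcal{N}}_{XYN}),\qquad l^{\mathcal{P}}_{(M,N)}=(l^{\mathcal{M}}_M,\;l^{\mathcal{N}}_N),$$
which are isomorphisms in $\mathcal{P}$ because both components are isomorphisms, and which are natural because naturality holds separately in each factor.

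Finally I would check the pentagon and triangle coherence diagrams for $\mathcal{P}$. A diagram in $\mathcal{M}\oplus\mathcal{N}$ commutes if and only if each of its projections to $\mathcal{M}$ and to $\mathcal{N}$ commutes; but the projected diagrams are exactly the pentagon and triangle for $\mathcal{M}$ and for $\mathcal{N}$, which commute by hypothesis. Hence the coherence axioms for $\mathcal{P}$ hold, and $\mathcal{P}$ is a module category over $\mathcal{A}$.

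There is really no substantive obstacle here: the entire argument reduces to the observation that limits, colimits, and commuting diagrams in $\mathcal{M}\oplus\mathcal{N}$ are computed componentwise. The only item requiring any care is making sure one fixes a consistent notation for objects and morphisms of the direct sum (whether as pairs or as formal sums) so that the definitions of $\otimes_{\mathcal{P}}$, $a_{\mathcal{P}}$, and $l_{\mathcal{P}}$ are unambiguous; once that bookkeeping is in place, each verification is immediate from the corresponding fact for $\mathcal{M}$ and $\mathcal{N}$.
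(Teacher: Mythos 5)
Your proof is correct and is exactly the componentwise argument the paper intends (the statement's formulas $\otimes_{\mathcal{P}}=\otimes_{\mathcal{M}}\oplus\otimes_{\mathcal{N}}$, $a_{\mathcal{P}}=a_{\mathcal{M}}\oplus a_{\mathcal{N}}$, $l_{\mathcal{P}}=l_{\mathcal{M}}\oplus l_{\mathcal{N}}$ already encode it); the paper itself states the proposition without supplying any proof, so your verification fills that gap. All the key points are present: the action, constraints, exactness, and coherence diagrams are all checked componentwise, which is all that is needed.
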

\begin{defn}
A module category $\mathcal{P}$ over a finite monoidal category $\mathcal{A}$ is indecomposable if $\mathcal{M}=0$ or $\mathcal{N}=0$ whenever $\mathcal{P} \simeq \mathcal{M} \oplus \mathcal{N}$.
\end{defn}
\begin{defn}
A module category $\mathcal{M}$ over a monoidal category $\mathcal{A}$ is exact if for all projective objects $P$ in $\mathcal{A}$ and all objects $M$ in $\mathcal{M}$, $P\otimes M$ is a projective object in $\mathcal{M}$.
\end{defn}
\begin{lem}
\label{72}
Every finite monoidal category $\mathcal{A}$ is an exact module category over itself.
\end{lem}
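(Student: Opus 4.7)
The plan is to unfold the definition of exact module category and reduce to the exactness of a $Hom$-functor. Since here $\mathcal{M}=\mathcal{A}$ with its own tensor product as the action, the goal is to show that whenever $P$ is a projective object of $\mathcal{A}$ and $M$ is an arbitrary object, the tensor product $P\otimes M$ is projective in $\mathcal{A}$; equivalently, the functor $Hom_{\mathcal{A}}(P\otimes M,\,-)$ should be exact.

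First I would invoke rigidity, which is implicit in the paper's FRBSU convention, to produce the natural isomorphism
\[
Hom_{\mathcal{A}}(P\otimes M,\,N)\;\cong\;Hom_{\mathcal{A}}(P,\,N\otimes M^+),
\]
induced by $coev_{rM}$ and $ev_{rM}$ together with the triangle identities of the duality pairing. This exhibits $Hom_{\mathcal{A}}(P\otimes M,\,-)$ as the composition of the endofunctor $(-)\otimes M^+$ of $\mathcal{A}$ followed by $Hom_{\mathcal{A}}(P,\,-)$.

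Next I would chain two exactness statements. By Remark~\ref{82}, the tensor product is exact in each variable in an abelian rigid monoidal category, so $(-)\otimes M^+$ is exact. By hypothesis $P$ is projective, so $Hom_{\mathcal{A}}(P,\,-)$ is exact. A composite of two exact functors is exact, so $Hom_{\mathcal{A}}(P\otimes M,\,-)$ is exact, which is precisely projectivity of $P\otimes M$. The associativity and unit isomorphisms required by the definition of a module category are inherited directly from the monoidal structure of $\mathcal{A}$, so they need no separate verification.

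The main obstacle I foresee is not the exactness chain, which is formal, but the careful justification of the hom--tensor adjunction with a right dual: one has to write down the unit and counit using $coev_{rM}$ and $ev_{rM}$ and verify the two triangle identities using the associativity and unit constraints of $\mathcal{A}$. Mac Lane's coherence theorem lets us work strictly without loss of generality, after which this verification collapses to a brief diagram chase; thus the substantive content really reduces to the single observation that a projective object tensored with any object remains projective by way of the rigid duality.
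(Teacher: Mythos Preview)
The paper states Lemma~\ref{72} without proof, so there is no argument to compare against directly. Your approach is correct and is the standard one: the hom--tensor adjunction $Hom_{\mathcal{A}}(P\otimes M,N)\cong Hom_{\mathcal{A}}(P,N\otimes M^+)$ coming from rigidity, combined with exactness of $(-)\otimes M^+$ (Remark~\ref{82}) and projectivity of $P$, yields projectivity of $P\otimes M$.

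One point deserves comment. The lemma as literally written says ``finite monoidal category,'' not ``FRBSU monoidal category,'' and the paper's opening convention only fixes the meaning of the latter abbreviation; it does not declare that every finite monoidal category in the paper is tacitly rigid. Your proof genuinely needs rigidity (or at least a one-sided dual for $M$), and the statement is false in general without it. So your parenthetical that rigidity is ``implicit in the paper's FRBSU convention'' is doing real work and is not quite justified by the paper's own wording; it would be more honest to say the lemma should be read under a rigidity hypothesis, which is how the paper in fact uses it (e.g.\ in the proof of Lemma~\ref{77}, and compare the very next lemma after~\ref{72}, which proves the companion statement that $A\otimes P$ is projective for $P$ projective under an explicit rigidity assumption, by essentially the same adjunction argument you give).
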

\begin{exmp}
Every object in an exact module category $\mathcal{M}$ over $Vec_f(k)$ is projective.
\end{exmp}
\begin{proof}
$k$ is free, hence a projective object in $Vec_f(k)$, for all objects $M$ in $\mathcal{M}$, we have $M=k\otimes M$ that is projective. As a result, every object is projective in $\mathcal{M}$.
\end{proof}
\begin{lem}
If $\mathcal{M}$ is a module category over a rigid monoidal category $\mathcal{A}$, then for all objects $A\in \mathcal{A}$ and projective objects $P\in \mathcal{M}$, $A\otimes P$ is a projective object in $\mathcal{M}$.
\end{lem}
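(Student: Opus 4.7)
The plan is to exhibit the functor $A\otimes_{l\mathcal{M}}-:\mathcal{M}\to \mathcal{M}$ as the left adjoint of an exact functor, which immediately forces it to preserve projective objects. Rigidity of $\mathcal{A}$ supplies the candidate right adjoint: since $A$ admits a left dual ${}^+A$ with evaluation $ev_{lA}:A\otimes {}^+A\to I$ and coevaluation $coev_{lA}:I\to {}^+A\otimes A$, I propose to show that ${}^+A\otimes_{l\mathcal{M}}-:\mathcal{M}\to \mathcal{M}$ is right adjoint to $A\otimes_{l\mathcal{M}}-$.

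First I would construct the unit and counit of the adjunction. The unit $\eta_M:M\to {}^+A\otimes_{l\mathcal{M}}(A\otimes_{l\mathcal{M}} M)$ is built by applying $l_M^{-1}$, then $coev_{lA}\otimes_{l\mathcal{M}} id_M$, then the module associativity $a_{{}^+A,A,M}$. The counit $\epsilon_N:A\otimes_{l\mathcal{M}}({}^+A\otimes_{l\mathcal{M}} N)\to N$ is built by applying $a_{A,{}^+A,N}^{-1}$, then $ev_{lA}\otimes_{l\mathcal{M}} id_N$, then $l_N$. The triangle identities then reduce, after unwinding the constraints $a_{l\mathcal{M}}$ and $l_{\mathcal{M}}$ using the pentagon and unit axioms of the module structure, to the usual snake identities \ref{98} and \ref{99} for the left dual of $A$; this is the one bookkeeping step that needs care and is the main technical obstacle in making the argument precise.

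Granted the adjunction, the rest is painless. Since $\otimes_{l\mathcal{M}}$ is exact in each variable by the definition of a module category (see \ref{74}), the right adjoint ${}^+A\otimes_{l\mathcal{M}}-$ is exact. Concretely, given any epimorphism $\pi:N'\twoheadrightarrow N$ in $\mathcal{M}$ and any $f:A\otimes_{l\mathcal{M}} P\to N$, the transpose $\bar f:P\to {}^+A\otimes_{l\mathcal{M}} N$ lifts along the epimorphism $id_{{}^+A}\otimes_{l\mathcal{M}}\pi:{}^+A\otimes_{l\mathcal{M}} N'\to {}^+A\otimes_{l\mathcal{M}} N$ to some $\bar g:P\to {}^+A\otimes_{l\mathcal{M}} N'$ by projectivity of $P$. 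Untransposing produces $g:A\otimes_{l\mathcal{M}} P\to N'$, and naturality of the adjunction isomorphism forces $\pi\circ g=f$. Hence $A\otimes_{l\mathcal{M}} P$ is projective.

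I would end by noting that the right-module case is symmetric, using the right dual $A^+$ in place of ${}^+A$ to provide the adjunction $-\otimes_{r\mathcal{M}} A\dashv -\otimes_{r\mathcal{M}} A^+$, with the rest of the argument unchanged. The only piece that requires genuine work is the adjunction itself; everything downstream is a formal consequence of "left adjoints with exact right adjoints preserve projectives."
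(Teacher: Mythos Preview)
Your proposal is correct and follows essentially the same approach as the paper: both arguments use the adjunction $Hom_{\mathcal{M}}(A\otimes P,\,-)\cong Hom_{\mathcal{M}}(P,\,{}^+A\otimes -)$ coming from rigidity, together with the exactness of ${}^+A\otimes_{l\mathcal{M}}-$ (so that epimorphisms are preserved), to lift via the projectivity of $P$. The only difference is presentational: you spell out the adjunction via unit and counit and invoke the general principle ``left adjoints with exact right adjoints preserve projectives,'' whereas the paper simply writes down the Hom-set isomorphisms and performs the lifting directly.
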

\begin{proof}
Assume that $\mathcal{M}$ is a module category over $\mathcal{A}$ given as above. Let $A$ be an object in $\mathcal{A}$ and $P$ be projective in $\mathcal{M}$. For all epimorhisms $f:~X\rightarrow Y$ and morphisms $g:~A\otimes P\rightarrow Y$ in $\mathcal{M}$, can we find a morphism $k:~A\otimes P\rightarrow X$ such that $g=k\circ f$.
\begin{align*}
Hom_{\mathcal{M}}(X,~Y)\cong Hom_{\mathcal{M}}(^+A\otimes X,~^+A\otimes Y),\\
Hom_{\mathcal{M}}(A\otimes P,~Y)\cong Hom_{\mathcal{M}}(P,~^+A\otimes Y),\\
Hom_{\mathcal{M}}(A\otimes P,~X)\cong Hom_{\mathcal{M}}(P,~^+A\otimes X).
\end{align*}

We find an epimorphism $f':~{^+A}\otimes X\rightarrow {^+A}\otimes Y$ corresponding to $f$ and a morphism $g':~P\rightarrow {^+A}\otimes Y$ corresponding to $g$. Since $P$ is projective, then we get a morphism $k':~P\rightarrow {^+A}\otimes X$ such that $g'=f'\circ k'$. After that, we get a morphism $k:~A\otimes P\rightarrow X$ corresponding to $k'$ such that $f\circ k=g$. As a result, $A\otimes P$ is a projective object in $\mathcal{M}$.
\end{proof}
\begin{lem}
\label{77}
If $\mathcal{A}$ is a finite semisimple monoidal category, then the unit object $I$ is projective in that category. Plus, all objects are projective in such a category.
\end{lem}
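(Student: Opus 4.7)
The plan is to deduce both assertions from the fact that every short exact sequence in a semisimple abelian category splits. First I would fix an epimorphism $p: X \to Y$ and build a section $s: Y \to X$ as follows: decompose $X$ and $Y$ into finite direct sums of simples using semisimplicity together with the finite length hypothesis; since $p$ is surjective, each simple summand $Y_j$ of $Y$ is hit by a non-zero morphism from some simple summand $X_i$ of $X$, and by Schur's Lemma (Lemma \ref{22}) together with Lemma \ref{12} this forces $X_i \cong Y_j$ via a scalar isomorphism that can be inverted to produce the corresponding component of $s$.

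Given this splitting principle, projectivity of $I$ is immediate: for an epimorphism $p: X \to Y$ and a morphism $g: I \to Y$, choose a section $s: Y \to X$ of $p$; then $s \circ g: I \to X$ satisfies $p \circ (s \circ g) = g$, exhibiting the required lift. Exactly the same argument with $I$ replaced by an arbitrary object $A$ of $\mathcal{A}$ shows that every object is projective. Alternatively, once $I$ is known projective, the second claim also follows from the isomorphism $A \cong I \otimes A$ combined with Lemma \ref{72} and the preceding lemma (projective objects remain projective after tensoring with any object in a rigid monoidal category), provided rigidity is available in the ambient setting, as it is in the FRBSU framework of the paper.

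The main obstacle is the splitting principle itself: specifically, assembling the component isomorphisms $X_i \cong Y_j$ into a single coherent section of $p$ without double-counting when a given $Y_j$ is hit by several $X_i$'s, and verifying that the resulting $s$ really satisfies $p \circ s = \mathrm{id}_Y$ rather than just doing so on each simple summand separately. Once that bookkeeping is handled, both conclusions reduce to a one-line lifting argument from the definition of projectivity.
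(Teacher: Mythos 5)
Your overall route --- first establish that every epimorphism in a finite semisimple abelian category admits a section, then read off projectivity of $I$ (and of every object) as a one-line lifting argument --- is a genuinely different and, in outline, cleaner strategy than the paper's. The paper decomposes the particular maps $f$ and $g$ into components between simple summands and invokes the vanishing of $Hom$ between non-isomorphic simples, then handles the second assertion separately via $A\cong I\otimes A$ and exactness of $\mathcal{A}$ as a module category over itself (Lemma \ref{72}); your splitting principle would give both assertions simultaneously, and you correctly note the paper's tensor-product route as an alternative for the second half.

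The genuine gap is the one you flag, but it is more than bookkeeping, and the repair you gesture at (``without double-counting when a given $Y_j$ is hit by several $X_i$'s'') is not the right fix: inverting individual nonzero components $p_{ji}:X_i\to Y_j$ and assembling them can fail even when each $Y_j$ is hit by exactly the summands you select. Concretely, take $X=Y=S\oplus S$ for a simple $S$ with $End(S)=k$ and let $p$ have matrix $\begin{pmatrix}1&1\\0&1\end{pmatrix}$; any $s$ built by choosing one preimage summand per $Y_j$ and inverting that single component yields $p\circ s\neq id_Y$, whereas the actual section has matrix $\begin{pmatrix}1&-1\\0&1\end{pmatrix}$, whose off-diagonal entry is not the inverse of any component of $p$. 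The honest argument is either to reduce, via Schur's Lemma (Lemma \ref{22}) and Lemma \ref{12}, to a surjective matrix over $End(S)$ on each isotypic block and take a right inverse of that \emph{matrix}, or --- more cleanly --- to observe that $\ker p$ is a subobject of the semisimple object $X$, hence a direct summand, so that $X\cong \ker p\oplus X'$ with $p|_{X'}$ a monic epimorphism and therefore an isomorphism; its inverse composed with the inclusion of $X'$ is the desired section. With either repair in place, the rest of your argument (the lift $s\circ g$, and the extension to arbitrary objects) goes through.
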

\begin{proof}
We want to show that $I$ is projective in $\mathcal{A}$. Assume that we are given an epimorphism $f:~A\rightarrow B$ and a map $g:~I\rightarrow B$. Can we find a map $h:~I\rightarrow A$ such that $f\circ h=g$?\\

$A=\oplus_i A_i$, $I=\oplus_j I_j$ and $B=\oplus_k B_k$ for simple objects $A_i$, $I_j$ and $B_k$ for all $i$, $j$, and $k$. We can decompose $f$ and $g$ as $f=~\oplus_{ik} f_{ik}$ and $g=~\oplus_{jk} g_{jk}$ where $f_{ik}:~A_i\rightarrow B_k$, $g_{jk}:~I_j\rightarrow B_k$ are morphisms in $\mathcal{A}$. By Proposition \ref{12}, we get $f_{ik}=g_{jk}=0$, so any morphism $I_j\rightarrow A_i$ works, then we take $h$ as the direct sum of those morphisms and the result follows afterwards.\\

$A=I\otimes A$ for all objects $A$ in $\mathcal{A}$ by left unit associativity. $\mathcal{A}$ is an exact left module category over itself by Lemma \ref{72}. $I$ is projective, thus $I\otimes A$ is projective by exactness. Hence, every object is projective in $\mathcal{A}$.
\end{proof}
\begin{cor}
If a module category $\mathcal{M}$ over a finite monoidal category $\mathcal{A}$ is semisimple, then it is exact.
\end{cor}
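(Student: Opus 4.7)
The plan is to reduce the claim of exactness to the much stronger statement that every object of $\mathcal{M}$ is already projective, so that no hypothesis on $P \in \mathcal{A}$ is really needed. Once this is in hand, exactness is immediate: for any projective $P$ in $\mathcal{A}$ and any $M \in \mathcal{M}$, the tensor product $P \otimes M$ lies in $\mathcal{M}$, which is semisimple, and hence is projective by the strengthened claim. So the real content is showing that in a semisimple module category every object is projective.

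To prove this, I would proceed exactly along the lines of the first half of the proof of Lemma \ref{77}. Let $f : A \rightarrow B$ be an epimorphism in $\mathcal{M}$ and $g : N \rightarrow B$ an arbitrary morphism, with the aim of constructing a lift $h : N \rightarrow A$ with $f \circ h = g$. Writing $A = \oplus_i A_i$, $B = \oplus_k B_k$, and $N = \oplus_j N_j$ as direct sums of simple objects of $\mathcal{M}$, one decomposes $f = \oplus f_{ik}$ and $g = \oplus g_{jk}$; by Schur's Lemma (Lemma \ref{22}) together with Lemma \ref{12}, every component is zero unless source and target are isomorphic simples, and in the isomorphic case is a scalar multiple of a fixed isomorphism. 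Surjectivity of $f$ guarantees that for each simple type appearing in $B$ at least one $A_i$ maps isomorphically onto a corresponding $B_k$ summand; inverting those isomorphisms and assembling the pieces produces the required $h$. Cleanly, one may bypass the index bookkeeping by the equivalent observation that in any semisimple abelian category every short exact sequence splits: applied to the kernel-cokernel sequence of $f$, this yields a section $s : B \rightarrow A$, and then $h := s \circ g$ immediately satisfies $f \circ h = g$.

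The only possible obstacle is the combinatorial bookkeeping in the first approach, ensuring that the restriction of $f$ to the isotypic components is genuinely surjective onto the corresponding summands of $B$; the splitting version sidesteps this, and is the presentation I would adopt. With every object of $\mathcal{M}$ shown to be projective, the defining condition for an exact module category is satisfied trivially, and the corollary follows.
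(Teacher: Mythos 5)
Your proposal is correct and follows essentially the same route as the paper, whose entire proof is the one-line observation that every object of a semisimple category is projective, so that $P\otimes M$ is automatically projective for any $P$ and $M$. The only difference is that you actually justify that observation (most cleanly via the splitting of short exact sequences in a semisimple abelian category, which gives a section $s$ of the epimorphism and hence the lift $h=s\circ g$), whereas the paper takes it for granted; your splitting argument is also tidier than the component-by-component bookkeeping sketched in the paper's Lemma \ref{77}.
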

\begin{proof}
Assume that $\mathcal{M}$ is a semisimple module category over a finite monoidal category $\mathcal{A}$. Any object in a semisimple category is projective, so $\mathcal{M}$ is exact.
\end{proof}
\begin{cor}
\cite{etos} A module category $\mathcal{M}$ over a fusion category $\mathcal{A}$ is exact if and only if it is semisimple.
\end{cor}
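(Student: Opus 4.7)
The forward implication is already covered by the preceding corollary, so the work is in the reverse direction: assuming $\mathcal{M}$ is an exact module category over a fusion $\mathcal{A}$, I want to show $\mathcal{M}$ is semisimple.

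The plan is to first upgrade ``exact'' to the much stronger statement that every object of $\mathcal{M}$ is projective. Since $\mathcal{A}$ is fusion, it is in particular finite and semisimple, so Lemma \ref{77} applies and the unit object $I \in \mathcal{A}$ is projective. Exactness of $\mathcal{M}$ then says that $I \otimes_{l\mathcal{M}} M$ is projective in $\mathcal{M}$ for every object $M$. But the left unit constraint gives a natural isomorphism $l_M : I \otimes_{l\mathcal{M}} M \xrightarrow{\cong} M$, so $M$ itself is projective. Hence every object in $\mathcal{M}$ is projective.

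Next, I would deduce semisimplicity from this. Since $\mathcal{A}$ is assumed to be finite, a module category $\mathcal{M}$ over $\mathcal{A}$ should be a finite abelian category, so every object $M$ has finite length: there is a subobject chain whose successive cokernels are simple. Taking a simple subobject $S \hookrightarrow M$, the quotient $M/S$ is projective by the previous paragraph, so the short exact sequence
\begin{equation*}
0 \longrightarrow S \longrightarrow M \longrightarrow M/S \longrightarrow 0
\end{equation*}
splits, giving $M \cong S \oplus M/S$. Induction on the length of $M$ then writes $M$ as a finite direct sum of simple objects, which is precisely the definition of semisimplicity recalled in the preliminaries.

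The main obstacle I expect is the splitting/induction step in the last paragraph, in the sense that it relies on $\mathcal{M}$ being a finite length abelian category so that the induction actually terminates; this finiteness is implicit in the setup (a module category over a fusion category is tacitly assumed finite), but it should be invoked explicitly. The first reduction to ``every object projective'' is routine and uses only Lemma \ref{77} together with the left unit isomorphism, and the forward direction is immediate from the preceding corollary.
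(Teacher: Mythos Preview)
The paper does not actually supply its own proof of this corollary: it is stated with a citation to \cite{etos} and immediately followed by the next lemma, with no proof environment in between. So there is nothing in the paper to compare against directly.

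That said, your argument is correct and is precisely the standard proof one finds in the cited reference. One direction is indeed the preceding corollary. For the other, since a fusion category is finite and semisimple, Lemma~\ref{77} gives that $I$ is projective in $\mathcal{A}$; exactness of $\mathcal{M}$ then forces $I\otimes_{l\mathcal{M}} M\cong M$ to be projective for every $M$, and in a finite-length abelian category where every object is projective one gets semisimplicity by the splitting/induction argument you describe. Your self-identified caveat is the only real point to watch: the conclusion needs $\mathcal{M}$ to be a finite (in particular finite-length) abelian category, which is the standing hypothesis in \cite{etos} for module categories over fusion categories but is not spelled out in this paper; you are right to make it explicit. A small terminological slip: what you call the ``forward implication'' (semisimple $\Rightarrow$ exact) is actually the converse direction of the stated ``exact iff semisimple'', but this does not affect the mathematics.
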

\begin{lem}
Assume that $\mathcal{A}$ is a finite, rigid monoidal category with simple unit object $I$. Then, any exact module category $\mathcal{M}$ over $\mathcal{A}$ has enough projectives.
\end{lem}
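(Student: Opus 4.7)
The plan is to produce, for an arbitrary object $M \in \mathcal{M}$, an epimorphism onto $M$ from an explicitly constructed projective object of $\mathcal{M}$. The idea is to exploit the fact that $M \cong I \otimes_{l\mathcal{M}} M$ via the left unit constraint and then transport a projective cover of $I$ in $\mathcal{A}$ across the action functor.

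First, I would invoke finiteness of $\mathcal{A}$: by the definition of a finite $k$-linear abelian category recalled in the preliminaries, every simple object of $\mathcal{A}$ has a projective cover. Since the unit $I$ is assumed simple, there is an epimorphism $\pi : P_I \to I$ in $\mathcal{A}$ with $P_I$ projective. Next, I would apply the functor $-\otimes_{l\mathcal{M}} M : \mathcal{A} \to \mathcal{M}$ to $\pi$. This functor is exact in the first variable because, by the module category definition \ref{74}, the action $\otimes_{l\mathcal{M}}$ is an exact bifunctor; consequently $\pi \otimes_{l\mathcal{M}} \mathrm{id}_M$ is an epimorphism $P_I \otimes_{l\mathcal{M}} M \twoheadrightarrow I \otimes_{l\mathcal{M}} M$ in $\mathcal{M}$. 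Composing with the left unit isomorphism $l_M : I \otimes_{l\mathcal{M}} M \xrightarrow{\cong} M$ yields an epimorphism $P_I \otimes_{l\mathcal{M}} M \twoheadrightarrow M$.

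Finally, I would check projectivity of the source. By hypothesis $\mathcal{M}$ is exact over $\mathcal{A}$, so by definition $P \otimes_{l\mathcal{M}} N$ is projective in $\mathcal{M}$ whenever $P$ is projective in $\mathcal{A}$ and $N \in \mathcal{M}$; applied to $P = P_I$ and $N = M$, this gives that $P_I \otimes_{l\mathcal{M}} M$ is projective in $\mathcal{M}$. Thus every object of $\mathcal{M}$ is the target of an epimorphism from a projective, which is exactly the statement that $\mathcal{M}$ has enough projectives.

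The argument is essentially a bookkeeping exercise, so there is no serious analytic obstacle; the only place where one must be careful is in justifying that the epimorphism $\pi$ remains an epimorphism after tensoring with $M$. This depends on exactness of the action in the first variable, which is part of the definition of a module category here, but it is worth noting that the hypothesis of rigidity of $\mathcal{A}$ would also give exactness of $\otimes_{\mathcal{A}}$ by Remark \ref{82} if one needed to verify it from scratch. The simplicity of $I$ is used exactly once, to guarantee the existence of the projective cover $P_I \to I$; without it the argument breaks down.
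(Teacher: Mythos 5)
Your proposal is correct and follows essentially the same route as the paper: take a projective cover $P\to I$ of the simple unit (available by finiteness of $\mathcal{A}$), tensor with $M$ using exactness of the action to get an epimorphism $P\otimes_{l\mathcal{M}} M\twoheadrightarrow I\otimes_{l\mathcal{M}} M\cong M$, and conclude projectivity of $P\otimes_{l\mathcal{M}} M$ from exactness of $\mathcal{M}$. Your version is slightly more careful in spelling out why the epimorphism is preserved under the action functor, but the argument is the same.
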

\begin{proof}
Assume that $\mathcal{A}$ is given as above and $\mathcal{M}$ is an exact left module category over $\mathcal{A}$. Then, we find a projective object $P$ in $\mathcal{A}$ with an epimorphism $P\rightarrow I$ since $\mathcal{A}$ is a finite category. Hence, for all objects $M$ in $\mathcal{M}$, we get an epimorphism $P\otimes M\rightarrow I\otimes M\cong M$ in $\mathcal{M}$. $\mathcal{M}$ is exact, so $P\otimes M$ is projective by exactness. As a result, we see that $\mathcal{M}$ has enough projectives and there exists a projective cover for every simple object in $\mathcal{M}$.   
\end{proof}
 
\subsection{The Category of Module Functors}
\begin{defn}
\cite{os} Assume that $\mathcal{M}$ and $\mathcal{N}$ are two left module categories over a finite monoidal category $\mathcal{A}$. A module functor between them is a pair $(\mathcal{F},~f)$ where $\mathcal{F}:~\mathcal{M} \rightarrow \mathcal{N}$ is a functor and $f$ is a family of natural isomorphisms 
\begin{align}
\label{13} 
f_{XM}:~\mathcal{F}(X\otimes M)\rightarrow X\otimes \mathcal{F}(M)
\end{align}

for all objects $X$ in $\mathcal{A}$ and $M$ in $\mathcal{M}$ such that for any $X$, $Y$ in $\mathcal{A}$, $M$ in $\mathcal{M}$, the following diagrams are commutative.
\begin{equation}
\xymatrix{ \mathcal{F}((X\otimes Y)\otimes M) \ar[d]^{f_{(X\otimes Y)M}} \ar[rr]^{\mathcal{F}(a_{XYZ})} & & \mathcal{F}(X\otimes (Y\otimes M)) \ar[rr]^{f_{X(Y\otimes M)}} & & X\otimes \mathcal{F}(Y\otimes M) \ar[d]^{id_X\otimes f_{YM}}\\ (X\otimes Y)\otimes \mathcal{F}(M) \ar[rrrr]_{a_{XY\mathcal{F}(M)}} & & & & X\otimes(Y\otimes \mathcal{F}(M)) }
\end{equation}
\begin{equation}
\xymatrix{ \mathcal{F}(I\otimes M) \ar[rd]_{\mathcal{F}(l_{M})} \ar[rr]^{f_{IM}} & & I\otimes \mathcal{F}(M) \ar[ld]^{l_{\mathcal{F}(M)}} \\ & \mathcal{F}(M) }
\end{equation}
\end{defn}

The collection of all module functors $(\mathcal{F},~f):~\mathcal{M} \rightarrow \mathcal{N}$ between two module categories $\mathcal{M}$ and $\mathcal{N}$ over a finite monoidal category $\mathcal{A}$ is denoted by $Hom_{\mathcal{A}}(\mathcal{M},~\mathcal{N})$.
\begin{lem}
If $(\mathcal{F},~f):~\mathcal{M} \rightarrow \mathcal{N}$ and $(\mathcal{G},~g):~\mathcal{N} \rightarrow \mathcal{K}$ are two module functors, then $(\mathcal{G} \circ \mathcal{F},~e):~\mathcal{M} \rightarrow \mathcal{K}$ is a module functor where $e=g\circ \mathcal{G}(f)$.
\end{lem}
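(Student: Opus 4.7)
The plan is to verify the three conditions in the definition of a module functor for the pair $(\mathcal{G}\circ \mathcal{F},~e)$ with $e_{XM}=g_{X\mathcal{F}(M)}\circ \mathcal{G}(f_{XM})$: namely that each $e_{XM}$ is a natural isomorphism, that the associativity diagram commutes, and that the unit triangle commutes. The naturality and invertibility of $e_{XM}$ are automatic, because the family is built by post-composing the natural isomorphism $\mathcal{G}(f_{XM})$ with the natural isomorphism $g_{X\mathcal{F}(M)}$, both of which are natural in both $X\in\mathcal{A}$ and $M\in \mathcal{M}$ (the latter by naturality of $g$ applied to the morphism $f_{XM}$), and compositions of natural isomorphisms are natural isomorphisms.

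For the unit triangle, I would first apply $\mathcal{G}$ to the unit triangle of $(\mathcal{F},~f)$, obtaining $\mathcal{G}(\mathcal{F}(l_M))=\mathcal{G}(l_{\mathcal{F}(M)})\circ \mathcal{G}(f_{IM})$. Then I would use the unit triangle of $(\mathcal{G},~g)$ at the object $\mathcal{F}(M)$ of $\mathcal{N}$ to rewrite $\mathcal{G}(l_{\mathcal{F}(M)})$ as $l_{\mathcal{G}\mathcal{F}(M)}\circ g_{I\mathcal{F}(M)}$. Chaining the two identities yields $\mathcal{G}\mathcal{F}(l_M)=l_{\mathcal{G}\mathcal{F}(M)}\circ \bigl(g_{I\mathcal{F}(M)}\circ \mathcal{G}(f_{IM})\bigr)=l_{\mathcal{G}\mathcal{F}(M)}\circ e_{IM}$, which is exactly the unit triangle for $(\mathcal{G}\circ \mathcal{F},~e)$.

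The main obstacle, and the heart of the proof, is the associativity diagram. The strategy is to tile it into three commuting regions. First, I would apply the functor $\mathcal{G}$ to the entire associativity diagram for $(\mathcal{F},~f)$; since $\mathcal{G}$ preserves commutativity of diagrams, this produces an identity among morphisms of the form $\mathcal{G}(f_{X,(Y\otimes M)})$, $\mathcal{G}(id_X\otimes f_{YM})$, $\mathcal{G}(f_{(X\otimes Y)M})$, $\mathcal{G}\mathcal{F}(a_{XYM})$ and $\mathcal{G}(a_{XY\mathcal{F}(M)})$. Second, I would apply the associativity diagram for $(\mathcal{G},~g)$ at the triple $(X,~Y,~\mathcal{F}(M))$ to govern the interaction between $g_{X,(Y\otimes \mathcal{F}(M))}$, $g_{(X\otimes Y)\mathcal{F}(M)}$, $\mathcal{G}(a_{XY\mathcal{F}(M)})$ and the associator $a$ in $\mathcal{K}$. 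Third, I would bridge these two pieces by the naturality of $g_{XN}$ in the variable $N\in \mathcal{N}$, applied to $f_{YM}:~\mathcal{F}(Y\otimes M)\rightarrow Y\otimes \mathcal{F}(M)$; this gives the compatibility $(id_X\otimes \mathcal{G}(f_{YM}))\circ g_{X\mathcal{F}(Y\otimes M)}=g_{X,(Y\otimes \mathcal{F}(M))}\circ \mathcal{G}(id_X\otimes f_{YM})$.

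Combining these three equalities traces both paths around the hexagon for $e$ to the same composite morphism $\mathcal{G}\mathcal{F}((X\otimes Y)\otimes M)\rightarrow X\otimes (Y\otimes \mathcal{G}\mathcal{F}(M))$, so the diagram commutes. The conceptual content is minimal, and no ingredient beyond the module functor axioms for $(\mathcal{F},~f)$ and $(\mathcal{G},~g)$ together with the naturality of $g$ is needed; the real work is bookkeeping, since each intermediate object is a nested composite of $\mathcal{G}$, $\mathcal{F}$, and $\otimes$, and the full verification is best presented as a single large diagram whose every inner region is one of the three commuting types identified above.
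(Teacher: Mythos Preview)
Your argument is correct and complete: the decomposition into naturality/invertibility of $e$, the unit triangle, and the associativity hexagon tiled by (i) $\mathcal{G}$ applied to the associativity diagram for $(\mathcal{F},f)$, (ii) the associativity diagram for $(\mathcal{G},g)$ at $(X,Y,\mathcal{F}(M))$, and (iii) the naturality square of $g$ in its second variable, is exactly the standard verification and all the equalities you wrote hold on the nose.

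The paper, however, states this lemma without proof, so there is no argument to compare against; your write-up supplies precisely the routine check the paper omits.
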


 A morphism between $(\mathcal{F},~f)$ and $(\mathcal{G},~g)$ is a natural transformation $h:~\mathcal{F} \rightarrow \mathcal{G}$ such that for any $X$ in $\mathcal{A}$, $M$ in $\mathcal{M}$, the following diagram commutes.
\begin{equation}
\xymatrix{ \mathcal{F}(X\otimes M) \ar[d]_{f_{XM}} \ar[rr]^{h(X\otimes M)} & & \mathcal{G}(X\otimes M) \ar[d]^{g_{XM}}\\ X\otimes \mathcal{F}(M) \ar[rr]^{id_X\otimes h(M)} & & X\otimes \mathcal{G}(M) }
\end{equation}
\begin{lem}
$Hom_{\mathcal{A}}(\mathcal{M},~\mathcal{N})$ is a category of module functors $(\mathcal{F},~f):~\mathcal{M} \rightarrow \mathcal{N}$ for all module categories $\mathcal{M}$ and $\mathcal{N}$ over a given finite monoidal category $\mathcal{A}$. 
\end{lem}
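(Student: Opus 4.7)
The plan is to exhibit composition, identities, and the category axioms for $Hom_{\mathcal{A}}(\mathcal{M},~\mathcal{N})$, using the fact that its morphisms are, underneath, natural transformations of ordinary functors. Since morphisms of module functors are required to be natural transformations satisfying an extra compatibility square, my job reduces to checking that these extra squares are stable under the usual composition of natural transformations and that the identity natural transformation trivially satisfies the square.

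First I would define composition. Given module functors $(\mathcal{F},~f)$, $(\mathcal{G},~g)$, $(\mathcal{H},~e):~\mathcal{M} \rightarrow \mathcal{N}$ and morphisms of module functors $h:~(\mathcal{F},~f) \rightarrow (\mathcal{G},~g)$, $k:~(\mathcal{G},~g) \rightarrow (\mathcal{H},~e)$, I would set $(k \circ h)(M)=k(M) \circ h(M)$ for every object $M$ in $\mathcal{M}$. Since $h$ and $k$ are natural transformations of underlying functors, so is $k\circ h$. The identity morphism on $(\mathcal{F},~f)$ is the identity natural transformation $id_{\mathcal{F}}$, with components $id_{\mathcal{F}(M)}$.

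Next I would check the compatibility square for $k\circ h$. For any $X$ in $\mathcal{A}$ and $M$ in $\mathcal{M}$, I would paste the two squares
\[
\xymatrix{ \mathcal{F}(X\otimes M) \ar[d]_{f_{XM}} \ar[r]^{h(X\otimes M)} & \mathcal{G}(X\otimes M) \ar[d]_{g_{XM}} \ar[r]^{k(X\otimes M)} & \mathcal{H}(X\otimes M) \ar[d]^{e_{XM}} \\ X\otimes \mathcal{F}(M) \ar[r]_{id_X\otimes h(M)} & X\otimes \mathcal{G}(M) \ar[r]_{id_X\otimes k(M)} & X\otimes \mathcal{H}(M) }
\]
vertically. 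The left square commutes because $h$ is a morphism of module functors and the right square commutes because $k$ is. The outer rectangle therefore commutes, and since $(id_X\otimes k(M))\circ (id_X\otimes h(M))=id_X\otimes (k(M)\circ h(M))$ by functoriality of $\otimes$ in the second variable, this is exactly the coherence square for $k\circ h$. The identity $id_{\mathcal{F}}$ satisfies the square trivially, since both horizontal arrows become identities and $f_{XM}=f_{XM}$.

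Finally I would verify the category axioms. Associativity $(k_3\circ k_2)\circ k_1=k_3\circ (k_2\circ k_1)$ and the unit laws $id\circ h=h=h\circ id$ hold componentwise because they hold for composition of morphisms in $\mathcal{N}$, which is a category. Thus $Hom_{\mathcal{A}}(\mathcal{M},~\mathcal{N})$ is a category. The only non-routine step is the pasting of the coherence squares above, and even that reduces immediately to bifunctoriality of the action; there is no serious obstacle.
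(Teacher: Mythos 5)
Your proof is correct. The paper actually states this lemma without proof, and your argument is the standard verification one would supply: vertical composition of the underlying natural transformations preserves the extra compatibility square by pasting the two commuting squares and using functoriality of $X\otimes -$ to identify $(id_X\otimes k(M))\circ(id_X\otimes h(M))$ with $id_X\otimes(k(M)\circ h(M))$, the identity transformation satisfies the square trivially, and associativity and unit laws hold componentwise in $\mathcal{N}$.
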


Two module categories $\mathcal{M}$ and $\mathcal{N}$ over a finite monoidal category $\mathcal{A}$ are equivalent if there exist module functors $(\mathcal{F},~f):~\mathcal{M} \rightarrow \mathcal{N}$, $(\mathcal{G},~g):~\mathcal{N} \rightarrow \mathcal{M}$ and natural isomorphisms
\begin{align}
h:~id_{\mathcal{N}} \rightarrow (\mathcal{F} \circ \mathcal{G}),~\quad k:~id_{\mathcal{M}} \rightarrow (\mathcal{G} \circ \mathcal{F}).
\end{align}

We denote the full subcategory of $Hom_{\mathcal{A}}(\mathcal{M},~\mathcal{N})$ consisting of right exact $\mathcal{A}$ module functors by $Hom^{re}_{\mathcal{A}}(\mathcal{M},~\mathcal{N})$. Similarly, we use $Hom^{le}_{\mathcal{A}}(\mathcal{M},~\mathcal{N})$ to denote the full subcategory of left exact $\mathcal{A}$ module functors and $Hom^e_{\mathcal{A}}(\mathcal{M},~\mathcal{N})$ to denote the full subcategory of exact $\mathcal{A}$ module functors.
\begin{thm}
\label{250}
\cite{etos} Every additive module functor $\mathcal{F}:~\mathcal{M} \rightarrow \mathcal{N}$ between two module categories $\mathcal{M}$ and $\mathcal{N}$ over an FRBSU monoidal category $\mathcal{A}$ is exact if $\mathcal{M}$ is exact. 
\end{thm}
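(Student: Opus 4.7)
The goal is to verify that any additive module functor $\mathcal{F}:~\mathcal{M}\rightarrow\mathcal{N}$ preserves short exact sequences; since $\mathcal{F}$ is additive and $\mathcal{M},~\mathcal{N}$ are abelian, this is equivalent to exactness. Three structural features of the setup will be used repeatedly: rigidity of $\mathcal{A}$, which makes $\otimes$ exact in each variable by Remark~\ref{82}; finiteness of $\mathcal{A}$ together with simplicity of $I$, which guarantees a projective cover $\pi:~P\twoheadrightarrow I$ of $I$ in $\mathcal{A}$; and exactness of $\mathcal{M}$ over $\mathcal{A}$, which makes $P\otimes M$ projective in $\mathcal{M}$ for every $M$.

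Starting from a short exact sequence $0\rightarrow X\rightarrow Y\rightarrow Z\rightarrow 0$ in $\mathcal{M}$, I would first tensor with $P$ to obtain $0\rightarrow P\otimes X\rightarrow P\otimes Y\rightarrow P\otimes Z\rightarrow 0$, a short exact sequence all of whose terms are projective in $\mathcal{M}$. The central technical step is to show that any such sequence of projectives splits. The standard route exploits the rigidity-induced adjunction $Hom_{\mathcal{M}}(Q\otimes M,~N)\cong Hom_{\mathcal{M}}(M,~{^+Q}\otimes N)$, which transports projectivity to injectivity and ultimately forces the classes of projective and injective objects in $\mathcal{M}$ to coincide. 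Since $P\otimes X$ is then injective as well as projective, the sequence splits.

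An additive functor preserves split short exact sequences, so applying $\mathcal{F}$ and using the module functor constraint $\mathcal{F}(P\otimes -)\cong P\otimes \mathcal{F}(-)$ of equation \ref{13} produces a split short exact sequence $0\rightarrow P\otimes \mathcal{F}(X)\rightarrow P\otimes \mathcal{F}(Y)\rightarrow P\otimes \mathcal{F}(Z)\rightarrow 0$ in $\mathcal{N}$. To descend from the $P$-tensored version to the original, set $K=\ker\pi$; tensoring $0\rightarrow K\rightarrow P\rightarrow I\rightarrow 0$ with any $M$ yields a natural epimorphism $P\otimes M\twoheadrightarrow M$ with kernel $K\otimes M$. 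Comparing the rows $0\rightarrow K\otimes (-)\rightarrow P\otimes (-)\rightarrow (-)\rightarrow 0$ applied before and after $\mathcal{F}$, and using the module functor isomorphisms to identify $\mathcal{F}(P\otimes -)$ with $P\otimes\mathcal{F}(-)$ and $\mathcal{F}(K\otimes -)$ with $K\otimes\mathcal{F}(-)$, allows a diagram chase which forces $0\rightarrow \mathcal{F}(X)\rightarrow \mathcal{F}(Y)\rightarrow \mathcal{F}(Z)\rightarrow 0$ to be exact; if the chase does not close in one step, the argument iterates on $K$, which is a subobject of the projective object $P$ and sits in an analogous position.

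The principal obstacle is the splitting claim for short exact sequences of projectives: the identification of projectives with injectives in $\mathcal{M}$ is the substantive content and rests on a careful use of rigidity together with the projective covers guaranteed by the FRBSU hypothesis. The subsequent descent is more routine once the splitting is in hand, but must be carried out with attention to naturality of the constraint $f_{XM}$ from equation \ref{13} to ensure that all squares in the chase commute and that the resulting morphisms assemble into the maps $\mathcal{F}(X)\rightarrow \mathcal{F}(Y)\rightarrow \mathcal{F}(Z)$ of the original sequence.
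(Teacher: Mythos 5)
The paper does not actually prove Theorem \ref{250}; it is quoted from \cite{etos} without argument, so there is no internal proof to compare against. Your outline reconstructs the standard argument from that source correctly: tensor the given short exact sequence with a projective cover $P\twoheadrightarrow I$, observe that the resulting sequence of projectives splits, push the split sequence through the additive functor $\mathcal{F}$ using the constraint \ref{13}, and descend along $P\twoheadrightarrow I$. Two remarks. First, you misplace the difficulty: the splitting of $0\rightarrow P\otimes X\rightarrow P\otimes Y\rightarrow P\otimes Z\rightarrow 0$ needs nothing beyond the projectivity of the quotient $P\otimes Z$, since a short exact sequence with projective cokernel always splits; the coincidence of projectives and injectives in $\mathcal{M}$, which you identify as the substantive content, is a genuinely harder theorem and is not required here. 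Second, the descent is cleaner than your sketch suggests and needs no iteration on $K=\ker(P\rightarrow I)$: the functor $P\otimes -$ on $\mathcal{N}$ is exact (Remark \ref{82} and the exactness built into \ref{74}), so the homology objects $W$ of the complex $0\rightarrow\mathcal{F}(X)\rightarrow\mathcal{F}(Y)\rightarrow\mathcal{F}(Z)\rightarrow 0$ satisfy $P\otimes W\cong 0$ by comparison with the exact $P$-tensored complex; and tensoring the epimorphism $P\twoheadrightarrow I$ with $W$ gives an epimorphism $P\otimes W\twoheadrightarrow I\otimes W\cong W$, so $W=0$. With these two simplifications your argument closes completely and agrees with the proof in the cited reference.
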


\subsection{Bimodule Category and Some Properties}
\cite{dani} Assume that $\mathcal{A}$ and $\mathcal{B}$ are two finite monoidal categories. A category $\mathcal{M}$ is an $(\mathcal{A}- \mathcal{B})$ bimodule category if it is a left $\mathcal{A}$ module category and right $\mathcal{B}$ module category such that there exists a middle associativity constraint $a$ consisting of a collection of isomorphisms
\begin{align}
\label{3}
a_{XMY}:~X\otimes (M\otimes Y)\rightarrow (X\otimes M)\otimes Y
\end{align}

natural in $X\in \mathcal{A}$, $Y\in \mathcal{B}$, $M\in \mathcal{M}$ which satisfies the commutativity of two pentagons.
\begin{lem}
If $\mathcal{M}$ is an $(\mathcal{A}-\mathcal{B})$ bimodule category, then $\mathcal{M}^{op}$ is a $(\mathcal{B}-\mathcal{A})$ bimodule category.
\end{lem}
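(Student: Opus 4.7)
The plan is to build the $(\mathcal{B}-\mathcal{A})$ bimodule structure on $\mathcal{M}^{op}$ by combining the two one-sided constructions that have already appeared in this subsection. By Proposition \ref{1}, the left $\mathcal{A}$ action on $\mathcal{M}$ induces a right $\mathcal{A}$ action on $\mathcal{M}^{op}$ given by $M\otimes_{r\mathcal{M}^{op}} X := X^+\otimes_{l\mathcal{M}} M$, while the remark just after that proposition gives the dual statement: the right $\mathcal{B}$ action on $\mathcal{M}$ induces a left $\mathcal{B}$ action on $\mathcal{M}^{op}$ given by $Y\otimes_{l\mathcal{M}^{op}} M := M\otimes_{r\mathcal{M}} {^+Y}$. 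Each of these already comes with its own associativity and unit constraints inherited from duality, so the only new ingredient needed is the middle associativity constraint mixing the two actions.

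The middle associativity demanded on $\mathcal{M}^{op}$ is a natural family of isomorphisms $a^{op}_{YMX}:~Y\otimes_{l\mathcal{M}^{op}}(M\otimes_{r\mathcal{M}^{op}} X)\to (Y\otimes_{l\mathcal{M}^{op}} M)\otimes_{r\mathcal{M}^{op}} X$ in $\mathcal{M}^{op}$. Unpacking, the source is $(X^+\otimes_{l\mathcal{M}} M)\otimes_{r\mathcal{M}} {^+Y}$ and the target is $X^+\otimes_{l\mathcal{M}}(M\otimes_{r\mathcal{M}} {^+Y})$. Since an arrow in $\mathcal{M}^{op}$ is precisely a reversed arrow in $\mathcal{M}$, providing such an isomorphism is the same data as providing a morphism from the target back to the source in $\mathcal{M}$, and that is exactly the middle associativity $a_{X^+\, M\, {^+Y}}$ supplied by the bimodule structure of $\mathcal{M}$. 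So I take $a^{op}_{YMX}$ to be $a_{X^+\, M\, {^+Y}}$ viewed as an arrow in $\mathcal{M}^{op}$; naturality in $X$, $Y$, and $M$ reduces to naturality of $a$ in $\mathcal{M}$ combined with the contravariant functoriality of $X\mapsto X^+$ and $Y\mapsto {^+Y}$.

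What remains is to verify the two middle pentagons required of a bimodule category. The strategy is to translate each pentagon on $\mathcal{M}^{op}$ into a reversed pentagon on $\mathcal{M}$ whose inputs have been dualized: the pure-left and pure-right associativities on $\mathcal{M}^{op}$ are already governed by duality through Proposition \ref{1} and its dual, and the coherence isomorphism $(X\otimes X')^+\cong X'^+\otimes X^+$ of Lemma \ref{71} together with its analogue for left duals is needed to reconcile the composite associativities on $\mathcal{M}^{op}$ with those on $\mathcal{M}$. The main obstacle is precisely this bookkeeping: one must check that these duality isomorphisms intertwine the $\mathcal{M}^{op}$ associativities with the $\mathcal{M}$ associativities in such a way that each pentagon on $\mathcal{M}^{op}$ becomes a reversed pentagon of $\mathcal{M}$ evaluated on dualized inputs. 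Once that identification is in place, commutativity is immediate from the bimodule axioms of $\mathcal{M}$, giving $\mathcal{M}^{op}$ the structure of a $(\mathcal{B}-\mathcal{A})$ bimodule category.
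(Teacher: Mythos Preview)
Your proposal is correct and follows essentially the same route as the paper: invoke Proposition~\ref{1} and its dual for the one-sided actions on $\mathcal{M}^{op}$, then obtain the middle associativity on $\mathcal{M}^{op}$ by reversing the middle associativity of $\mathcal{M}$ evaluated at the appropriate (dualized) inputs, leaving the pentagons as routine bookkeeping. Your version is in fact more careful than the paper's, which writes the new constraint as the reversal of $a_{YMX}$ without explicitly tracking that the arguments should be dualized; your identification $a^{op}_{YMX}=a_{X^{+}\,M\,{}^{+}Y}$ and your remark that Lemma~\ref{71} is needed to reconcile the composite associativities are precisely the details the paper glosses over.
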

\begin{proof}
Assume that $\mathcal{M}$ is an $(\mathcal{A}-\mathcal{B})$ bimodule category. In that situation $\mathcal{M}$ is a left $\mathcal{A}$ module category and a right $\mathcal{B}$ module category, so $\mathcal{M}^{op}$ is a left $\mathcal{B}$ module category and right $\mathcal{A}$ module category by Proposition \ref{1}.\\

We have an associativity constraint $a$ consisting of a family of isomorphisms 
\begin{align*}
a_{XMY}:~X\otimes (M\otimes Y)\rightarrow (X\otimes M)\otimes Y
\end{align*}

natural in $X\in \mathcal{A}$, $Y\in \mathcal{B}$, $M\in \mathcal{M}$ as in \ref{3} which satisfies the commutativity of the required diagrams for all $X,~Y\in \mathcal{A}$, $Z,~W\in \mathcal{B}$ and $M\in \mathcal{M}$ to be an $(\mathcal{A}-\mathcal{B})$ bimodule. We need to define $a^{op}$ consisting of associativity constraints 
\begin{align}
\label{4}
a^{op}_{XMY}:~X\otimes^{op}(M\otimes^{op} Y)\rightarrow (X\otimes^{op} M)\otimes^{op} Y.
\end{align}
  
\ref{4} is obtained by reversing the morphism $Y\otimes (M\otimes X)\rightarrow (Y\otimes M)\otimes X$ in $\mathcal{M}$ which is same as $a_{YMX}$. We can prove the compatibility conditions without no difficulty.
\end{proof}
\begin{lem}
Every finite, rigid monoidal category $\mathcal{A}$ is a bimodule category over itself.
\end{lem}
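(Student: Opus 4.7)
The plan is to equip $\mathcal{A}$ with left and right self-actions both given by the monoidal tensor product, and to take the middle associativity constraint to be the inverse of the monoidal associator. The rigidity hypothesis is used exactly to ensure that $\otimes$ is exact in each variable (Remark \ref{82}), which is required in order for $\mathcal{A}$ to be a module category over itself on either side.

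Concretely, I would define the left action by $X\otimes_{l\mathcal{A}} M := X\otimes M$ with associativity $a_{XYM}$ and unit $l_M$ inherited directly from the monoidal structure of $\mathcal{A}$, and symmetrically the right action by $M\otimes_{r\mathcal{A}} Y := M\otimes Y$ with associativity obtained from $a^{-1}_{MXY}$ and right unit $r_M$. The pentagon and unit triangle for the left and right module category structures then reduce literally to the pentagon and triangle axioms of $\mathcal{A}$ itself; this is essentially the content of Lemma \ref{72}. For the middle associativity constraint I set
\[ a_{XMY} := a^{-1}_{XMY}\colon X\otimes(M\otimes Y)\longrightarrow (X\otimes M)\otimes Y, \]
where the $a$ on the right denotes the monoidal associator of $\mathcal{A}$. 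Naturality in the three variables $X, M, Y$ is inherited from the naturality of the monoidal associator.

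It then remains to verify the two bimodule pentagons relating this middle constraint to the left and right associators. Given four objects $X, Y, M, Z$ of $\mathcal{A}$, the first pentagon compares the two ways of going from $(X\otimes Y)\otimes(M\otimes Z)$ to $(X\otimes(Y\otimes M))\otimes Z$, and the second pentagon compares the two ways of going from $X\otimes(M\otimes(Y\otimes Z))$ to $((X\otimes M)\otimes Y)\otimes Z$. In the self-bimodule case each edge of each bimodule pentagon is either an instance of $a$ or of $a^{-1}$ for the monoidal category $\mathcal{A}$, so the whole diagram collapses to a version of MacLane's pentagon with some arrows inverted, and hence commutes. The main obstacle is purely notational: one has to identify each edge of a bimodule pentagon with the correct (possibly inverted) instance of the monoidal associator so that the five-term diagram is recognised as MacLane's axiom. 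Once that identification is carried out, no additional data or coherence is needed, and the bimodule structure is complete.
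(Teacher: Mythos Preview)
Your proposal is correct and follows essentially the same approach as the paper: both take the tensor product of $\mathcal{A}$ as the left and right actions, invoke Remark~\ref{82} (rigidity) for exactness in each variable, and use the monoidal associator and unit constraints to furnish the module and bimodule coherence data, with the pentagons reducing to MacLane's axiom. You are in fact slightly more precise than the paper in noting that, given the direction conventions in the definitions of right module category and of the middle constraint~(\ref{3}), the relevant associators are instances of $a^{-1}$ rather than $a$; the paper's proof simply says ``the same associativity constraint'' without tracking this.
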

\begin{proof}
Assume that $\mathcal{A}$ is a finite, rigid monoidal category. We can take $\mathcal{M}=\mathcal{A}$. We have a bifunctor $\mathcal{F}:~\mathcal{A} \times \mathcal{A} \rightarrow \mathcal{A}$ taking $(X,~Y)$ to $X\otimes Y$. $\mathcal{F}$ is exact in each variable by Remark \ref{82}.\\

We can use the associativity constraint $a$ and left unit constraint $l$ in the definition of a monoidal category. We can see that these satisfy the commutativity of the required diagrams to be a left $\mathcal{A}$ module category. Similarly, it is a right $\mathcal{A}$ module category with the same associativity constraint and right unit constraint $r$ by the definition of a monoidal category. Also, we use same associativity constraint and a middle associativity constraint. These satisfy the compatibility conditions.
\end{proof}
\begin{lem}
If $\mathcal{A}$ and $\mathcal{B}$ are finite monoidal categories, then every exact $(\mathcal{A}-\mathcal{B})$ bimodule category $\mathcal{M}$ is finite.
\end{lem}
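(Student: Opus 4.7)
The plan is to verify the four defining conditions of a finite $k$-linear abelian category for $\mathcal{M}$ in turn: existence of a projective cover for every simple object, finite-dimensional Hom spaces, finite length for every object, and finitely many isomorphism classes of simples. I would use only the left $\mathcal{A}$-module structure on $\mathcal{M}$ together with the exactness hypothesis; the right $\mathcal{B}$-action is available if needed to symmetrize the argument.

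First I would carry over the enough-projectives argument from the earlier lemma. Since $\mathcal{A}$ is finite it has enough projectives, so there is an epimorphism $P\twoheadrightarrow I$ with $P$ projective in $\mathcal{A}$. Exactness of $\mathcal{M}$ then makes $P\otimes M\twoheadrightarrow I\otimes M\cong M$ an epimorphism with projective domain for every $M\in\mathcal{M}$. Restricting this to a simple quotient $S$ of $M$ and extracting an indecomposable direct summand of the projective source supplies the projective cover of $S$.

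Next, to bound Hom spaces I would set up the internal-Hom right adjoint $\underline{\mathrm{Hom}}(M,-):\mathcal{M}\to\mathcal{A}$ of the exact functor $-\otimes M:\mathcal{A}\to\mathcal{M}$; such an adjoint exists because $\mathcal{A}$ is a finite abelian category and $-\otimes M$ preserves colimits. Specializing the adjunction isomorphism $\mathrm{Hom}_{\mathcal{M}}(X\otimes M,N)\cong \mathrm{Hom}_{\mathcal{A}}(X,\underline{\mathrm{Hom}}(M,N))$ at $X=I$ yields $\mathrm{Hom}_{\mathcal{M}}(M,N)\cong \mathrm{Hom}_{\mathcal{A}}(I,\underline{\mathrm{Hom}}(M,N))$, whose right-hand side is finite-dimensional because $\mathcal{A}$ is finite.

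For finite length and finitely many simples, I would start from the epimorphism $P\otimes M\twoheadrightarrow M$, lift the finite composition series of $P$ in $\mathcal{A}$ through the exact functor $-\otimes M$ to filter $P\otimes M$ by subquotients of the form $S\otimes M$ with $S$ simple in $\mathcal{A}$, and then use the Hom-bound from the previous paragraph to show that each such $S\otimes M$ has only finitely many composition factors with bounded multiplicity. Running the same argument with $M$ ranging over a set of representatives of indecomposable projectives of $\mathcal{M}$ shows that every simple of $\mathcal{M}$ appears as a subquotient of some $S\otimes P_k$, capping the total number of simples. The principal obstacle is precisely this last step: bounding the composition content of $S\otimes M$ uniformly in $M$ and ruling out infinitely many nonisomorphic simples of $\mathcal{M}$ among these subquotients. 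I expect to reduce to the indecomposable case via the direct-sum decomposition of module categories, and then to use the full $(\mathcal{A}-\mathcal{B})$-bimodule structure to transport the internal-Hom finiteness between simple components of $\mathcal{M}$.
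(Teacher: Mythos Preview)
The paper does not actually supply a proof for this lemma; it is stated and immediately followed by the next lemma. The nearest the paper comes to an argument is the later Lemma~\ref{79} (also stated without proof), which records that $\mathcal{A}A$ is finite for $A=\underline{Hom}_{\mathcal{M}}(M,M)$ and hence, via the equivalence of Theorem~\ref{9}, that $\mathcal{M}$ is finite. So the paper's intended route, insofar as one can infer it, is: reduce to an indecomposable summand, pick a generator $M$, identify $\mathcal{M}\simeq\mathcal{A}A$, and use that module categories over an algebra object in a finite category are finite (ultimately because $\mathcal{A}$ is equivalent to $R\text{-mod}$ for a finite-dimensional $k$-algebra $R$, whence $\mathcal{A}A$ is modules over another finite-dimensional algebra).

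Your direct verification of the four finiteness axioms is a genuinely different, more hands-on approach. The first two steps are sound and in line with the paper's toolkit: enough projectives via $P\otimes M\twoheadrightarrow M$ with $P$ a projective cover of $I$, and finite-dimensional Hom spaces via the internal-Hom adjunction evaluated at $X=I$. The gap you yourself flag in the third step is real, however. Finite-dimensional Hom spaces alone do not give finite length of an arbitrary $M$, nor do they bound the number of isomorphism classes of simples; filtering $P\otimes M$ by subquotients $S\otimes M$ merely relocates the problem to the factors $S\otimes M$, which have no a~priori length bound. The ``uniform in $M$'' control you are looking for is not available from the adjunction alone. The standard way to close this gap is precisely the Morita-type reduction the paper gestures at: pass to an indecomposable piece of $\mathcal{M}$, choose a nonzero $M$ (which then generates), and transport the problem along $\underline{Hom}_{\mathcal{M}}(M,-):\mathcal{M}\to\mathcal{A}A$, where finiteness becomes an algebra-level statement. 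Without invoking some form of this equivalence, your sketch of step three does not complete to a proof.
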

\begin{lem}
\cite{dani} If $\mathcal{A}$ is a braided monoidal category, then any left $\mathcal{A}$ module category $\mathcal{M}$ is an $(\mathcal{A}-\mathcal{A})$ bimodule category.
\end{lem}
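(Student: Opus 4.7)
The plan is to build the right $\mathcal{A}$-action on $\mathcal{M}$ by transporting the given left action through the braiding $c$ of $\mathcal{A}$, and then assemble these two actions into a bimodule structure. Concretely, for objects $M\in\mathcal{M}$ and $X\in\mathcal{A}$, I set
\[
M\otimes_{r\mathcal{M}} X \;:=\; X\otimes_{l\mathcal{M}} M,
\]
which is automatically an exact bifunctor because $\otimes_{l\mathcal{M}}$ is. The right unit constraint $r_M:M\otimes_{r\mathcal{M}} I\to M$ is taken to be $l_M$, noting $M\otimes_{r\mathcal{M}} I = I\otimes_{l\mathcal{M}} M$. The crucial piece is the right associator
\[
a^{r}_{MXY}\colon M\otimes_{r\mathcal{M}}(X\otimes Y) \;\longrightarrow\; (M\otimes_{r\mathcal{M}} X)\otimes_{r\mathcal{M}} Y,
\]
which unfolds to a morphism $(X\otimes Y)\otimes_{l\mathcal{M}} M \to Y\otimes_{l\mathcal{M}}(X\otimes_{l\mathcal{M}} M)$. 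I define it as the composite
\[
(X\otimes Y)\otimes_{l\mathcal{M}} M \xrightarrow{\,c_{XY}\otimes\mathrm{id}_M\,} (Y\otimes X)\otimes_{l\mathcal{M}} M \xrightarrow{\,a^{l\mathcal{M}}_{YXM}\,} Y\otimes_{l\mathcal{M}}(X\otimes_{l\mathcal{M}} M).
\]

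Next I would verify the two right-module diagrams. The right unit triangle reduces, after unfolding, to the left unit triangle for $\mathcal{M}$ combined with the fact that $c_{XI}$ composed with left/right units equals the identity on the relevant objects, which is standard from the hexagon axioms. The right-module pentagon expands into a diagram of five faces in $\mathcal{M}$ whose vertices are various parenthesizations of $(X\otimes_{\mathcal{A}} Y\otimes_{\mathcal{A}} Z)\otimes_{l\mathcal{M}} M$. The verification proceeds exactly as in the proof of Proposition \ref{32}: one rewrites each occurrence of $a^r$ in terms of one braiding and one left-module associator, and the pentagon for $a^r$ then follows by combining the left-module pentagon in $\mathcal{M}$ with the braiding hexagon for $c_{X,Y\otimes Z}$ (respectively $c_{X\otimes Y,Z}$) applied to $X,Y,Z$, together with naturality of $c$ in both arguments.

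Once the right $\mathcal{A}$-module structure is in place, I would define the middle associativity constraint of \ref{3} by
\[
a^{\mathrm{mid}}_{XMY}\colon X\otimes_{l\mathcal{M}}(M\otimes_{r\mathcal{M}} Y)=X\otimes_{l\mathcal{M}}(Y\otimes_{l\mathcal{M}} M)\longrightarrow Y\otimes_{l\mathcal{M}}(X\otimes_{l\mathcal{M}} M)=(X\otimes_{l\mathcal{M}} M)\otimes_{r\mathcal{M}} Y,
\]
given by $(a^{l\mathcal{M}}_{YXM})\circ (c_{XY}\otimes\mathrm{id}_M)\circ (a^{l\mathcal{M}}_{XYM})^{-1}$. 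The naturality of $a^{\mathrm{mid}}$ in all three variables is inherited from naturality of $c$, $a^{l\mathcal{M}}$ and of the tensor product. The two bimodule pentagons (relating $a^{\mathrm{mid}}$ to $a^{l\mathcal{M}}$ on the $\mathcal{A}$-side and to $a^{r\mathcal{M}}$ on the $\mathcal{B}=\mathcal{A}$-side) then expand into hexagonal diagrams which commute by exactly the same mechanism used in Proposition \ref{32}: one reduces each face to an instance of the left-module pentagon, the hexagon axiom for $c$, or naturality of $c$.

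The main obstacle is purely bookkeeping: the bimodule pentagon axioms produce large diagrams with many parenthesizations, and one must carefully keep track of which occurrence of $c$ is being used. Conceptually, however, everything reduces to the hexagon axioms for the braiding together with the left-module pentagon, exactly as in the $\mathcal{A}\simeq\mathcal{A}^{\mathrm{rev}}$ construction already carried out in Proposition \ref{32}; so no essentially new idea is required beyond transporting that argument from $\mathcal{A}$ itself to a general left module $\mathcal{M}$.
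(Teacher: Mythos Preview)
Your construction is the standard one and is correct: defining $M\otimes_{r\mathcal{M}} X:=X\otimes_{l\mathcal{M}} M$, building the right associator from $c_{XY}\otimes\mathrm{id}_M$ followed by the left-module associator, and taking the middle constraint to be $a^{l\mathcal{M}}\circ(c\otimes\mathrm{id})\circ(a^{l\mathcal{M}})^{-1}$ is exactly how this bimodule structure is obtained. Your reduction of the coherence pentagons to the hexagon axioms for $c$ together with the left-module pentagon, in parallel with the argument of Proposition~\ref{32}, is also the right mechanism.

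Note, however, that the paper does not supply its own proof of this lemma; it is stated with a citation to \cite{dani} and left unproved. So there is no in-paper argument to compare against. That said, your choices are fully consistent with how the paper uses this structure elsewhere: the Note following Remark~\ref{38} records the right action as $M\otimes_{r\mathcal{M}} A=A\otimes_{l\mathcal{M}} M$, and the proof of Lemma~\ref{40} explicitly writes down the isomorphism $a_{BAM}\circ(c_{AB}\otimes\mathrm{id})\circ a^{-1}_{ABM}$, which is precisely your middle associativity constraint. Your proposal therefore both proves the lemma and matches the paper's implicit conventions.
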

\begin{remark}
$A\mathcal{A}=A\mathcal{A} I$ and $\mathcal{A}B=I\mathcal{A} B$.
\end{remark}
\begin{proof}
Assume that $M$ is an object in $A\mathcal{A}$, so it is a left $A$ module. $M\otimes I\cong M$, so it is a right $I$ module at the same time. As a result, it is an object in $A\mathcal{A} I$. Same for $I\mathcal{A} B$. 
\end{proof}
\begin{lem}
The category $A\mathcal{A} B$ consisting of $(A-B)$ bimodules is an $(\mathcal{A}-\mathcal{A})$ bimodule category.
\end{lem}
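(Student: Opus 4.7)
The plan is to leverage the braiding of $\mathcal{A}$ (which is part of the FRBSU standing assumption) in the same spirit as the earlier lemma promoting any left $\mathcal{A}$-module category over a braided $\mathcal{A}$ to an $(\mathcal{A}-\mathcal{A})$-bimodule category. I will define a left and a right $\mathcal{A}$-action on $A\mathcal{A}B$ by one-sided tensoring in $\mathcal{A}$, take the middle associator from the associator of $\mathcal{A}$, and reduce the coherence axioms to those of $\mathcal{A}$.

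For the left $\mathcal{A}$-action, given $X\in\mathcal{A}$ and an $(A,B)$-bimodule $M\in A\mathcal{A}B$ with structure maps $\lambda:A\otimes M\to M$ and $\rho:M\otimes B\to M$, set $X\triangleright M:=X\otimes M$. The right $B$-action on $X\otimes M$ is the obvious composite $(X\otimes M)\otimes B\xrightarrow{a}X\otimes(M\otimes B)\xrightarrow{\mathrm{id}\otimes\rho}X\otimes M$, extending the structure already used on $\mathcal{A}B$ in Lemma \ref{43}. The left $A$-action uses the braiding $c$ of $\mathcal{A}$:
\[
A\otimes(X\otimes M)\xrightarrow{a^{-1}}(A\otimes X)\otimes M\xrightarrow{c_{AX}\otimes\mathrm{id}}(X\otimes A)\otimes M\xrightarrow{a}X\otimes(A\otimes M)\xrightarrow{\mathrm{id}\otimes\lambda}X\otimes M.
\]
The right $\mathcal{A}$-action $M\triangleleft X:=M\otimes X$ is defined symmetrically: it carries the evident left $A$-action inherited from $A\mathcal{A}$ and a right $B$-action that threads $B$ past $X$ using the braiding. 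Functoriality is automatic; exactness of each action follows from Remark \ref{82} combined with the fact that the forgetful functor $A\mathcal{A}B\to\mathcal{A}$ reflects exactness; and the one-sided associativity and unit constraints are inherited from those of $\mathcal{A}$.

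For the middle associativity constraint I would take $a_{XMY}:X\otimes(M\otimes Y)\to(X\otimes M)\otimes Y$ to be the associator of $\mathcal{A}$ evaluated at $(X,M,Y)$. Naturality in all three variables is immediate, and the two pentagons required of a bimodule category reduce to the pentagon axiom of $\mathcal{A}$ combined with naturality of $a$; the unit triangles reduce similarly to the corresponding triangles in $\mathcal{A}$.

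The main obstacle will be the bookkeeping step: verifying that the constructed left $A$- and right $B$-actions on $X\triangleright M$ and on $M\triangleleft X$ satisfy the bimodule compatibility axiom, and that the middle associator together with the one-sided associators is indeed a morphism in $A\mathcal{A}B$, i.e.\ respects both algebra actions. Both checks are diagram chases combining the bimodule axiom for $M$, the hexagon axioms for the braiding $c$, and the pentagon axiom of $\mathcal{A}$; once these verifications are in place, the remaining coherence axioms follow directly from coherence in $\mathcal{A}$.
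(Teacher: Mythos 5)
Your proposal is correct and follows essentially the same route as the paper: both define the left action by $X\otimes M$ (inheriting the structure from $\mathcal{A}B$), the right action by $M\otimes Y$ (inheriting from $A\mathcal{A}$), and take the middle associativity constraint to be the associator of $\mathcal{A}$. The only difference is that you make explicit the use of the braiding to equip $X\otimes M$ with a left $A$-action (and $M\otimes Y$ with a right $B$-action), a point the paper's proof passes over with ``we may show that it is a left $A$ module''; this is a welcome clarification rather than a divergence.
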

\begin{proof}
Every $(A-B)$ bimodule $M$ is a left $A$ module and a right $B$ module in $\mathcal{A}$ that satisfies the compatible conditions, so $M$ is an object in $A\mathcal{A}$ and an object in $\mathcal{A}B$. This means that $A\mathcal{A} B$ is a subcategory of $A\mathcal{A}$ and a subcategory of $\mathcal{A} B$. $A\mathcal{A}$ is a right $\mathcal{A}$ module category and $\mathcal{A} B$ is left $\mathcal{A}$ module category. As a result, $A\mathcal{A} B$ is both a left $\mathcal{A}$ and right $\mathcal{A}$ module category. We need to define an associativity constraint $a$ consisting of associativity isomorphisms as in \ref{3} for all objects $X$, $Y$ in $\mathcal{A}$, $M$ in $A\mathcal{A} B$ that satisfies the required conditions.\\

We have two actions $\mathcal{A} \times A\mathcal{A} B\rightarrow A\mathcal{A} B$ taking $(X,~M)$ to $X\otimes_{l(\mathcal{A} B)} M$ and $A\mathcal{A} B\times \mathcal{A} \rightarrow A\mathcal{A} B$ taking $(M,~Y)$ to $M\otimes_{r(A\mathcal{A})} Y$.\\

We know that $X\otimes_{l(\mathcal{A} B)} M$ is a right $B$ module and we may show that it is a left $A$ module which means that it is an $(A-B)$ bimodule. Similarly, $M\otimes_{r(A\mathcal{A})} Y$ is an $(A-B)$ bimodule.\\

$X\otimes_{l(\mathcal{A}B)}(M\otimes_{r(A\mathcal{A})} Y)\rightarrow (X\otimes_{l(\mathcal{A}B)} M)\otimes_{r(A\mathcal{A})} Y$ is an isomorphism since $M$ is an object in $\mathcal{A}$ and the above actions are exactly same as the tensor product  in $\mathcal{A}$, we can use the associativity constraint in $\mathcal{A}$ as a middle associativity constraint. It is obvious, this gives the commutativity of the diagrams in the definition.
\end{proof}

The following proposition and its proof is found in \cite{dani} and we want to repeat the proof here.
\begin{prop}
\label{10}
The functor $\mathfrak{F}:~A\mathcal{A} B\rightarrow Hom^{re}_{\mathcal{A}}(\mathcal{A}A,~\mathcal{A}B)$ taking $M$ to $-\otimes_A M$ is an equivalence of categories for all algebras $A$ and $B$ in $\mathcal{A}$ for $\mathcal{A}$ is a finite monoidal category. 
\end{prop}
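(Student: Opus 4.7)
The plan is to establish that $\mathfrak{F}$ is well-defined, fully faithful, and essentially surjective, in that order.

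First, I would verify well-definedness. For $M\in A\mathcal{A}B$, the functor $-\otimes_A M:~\mathcal{A}A \rightarrow \mathcal{A}B$ sends a right $A$-module $X$ to $X\otimes_A M$, equipped with the right $B$-action inherited from $M$. Right-exactness follows because $X\otimes_A M$ is the coequalizer of the two canonical maps $X\otimes A\otimes M \rightrightarrows X\otimes M$ and $\otimes$ is exact in each variable by Remark \ref{82}. The left $\mathcal{A}$-module functor structure $f_{YX}:~(Y\otimes X)\otimes_A M\rightarrow Y\otimes(X\otimes_A M)$ is induced by the associativity constraint $a$ of $\mathcal{A}$, and the pentagon and unit coherences are inherited from those of $\mathcal{A}$.

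Next, faithfulness is immediate: if $\phi:~M\rightarrow N$ is a bimodule map, the induced natural transformation is $X\mapsto id_X\otimes_A \phi$, and evaluating at $X=A$ under $A\otimes_A M\cong M$, $A\otimes_A N\cong N$ recovers $\phi$. For fullness, given a natural transformation $\eta:~\mathfrak{F}(M)\rightarrow \mathfrak{F}(N)$ of $\mathcal{A}$-module functors, define $\phi:=\eta_A$ under the identifications above. Right $B$-linearity holds because $\eta_A$ is a morphism in $\mathcal{A}B$, and left $A$-linearity follows from the module functor compatibility square for $\eta$ specialized to $Y=A$, combined with the way the left $A$-action factors through $\mu:~A\otimes A\rightarrow A$.

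The main step, and the main obstacle, is essential surjectivity. Given a right-exact $\mathcal{A}$-module functor $(\mathcal{F},~f):~\mathcal{A}A\rightarrow \mathcal{A}B$, I would set $M:=\mathcal{F}(A)$, a right $B$-module. The multiplication $\mu:~A\otimes A\rightarrow A$ is a morphism of right $A$-modules (acting on the rightmost factor), so the composition
\[
A\otimes M = A\otimes \mathcal{F}(A)\xrightarrow{f_{AA}^{-1}} \mathcal{F}(A\otimes A)\xrightarrow{\mathcal{F}(\mu)} \mathcal{F}(A) = M
\]
defines a left $A$-action on $M$. Associativity and unitality follow from the pentagon and unit conditions on $(\mathcal{F},~f)$ together with those of $\mu$, and compatibility with the right $B$-action holds because $\mathcal{F}(\mu)$ is a morphism in $\mathcal{A}B$, so $M$ becomes an $(A-B)$-bimodule.

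To produce the isomorphism $\mathcal{F}\simeq -\otimes_A M$, I would use that every $X\in \mathcal{A}A$ admits a bar-type coequalizer presentation $X\otimes A\otimes A\rightrightarrows X\otimes A\twoheadrightarrow X$ whose first two terms are free right $A$-modules. Applying the right-exact functors $\mathcal{F}$ and $-\otimes_A M$ and using the natural identifications $\mathcal{F}(X\otimes A)\cong X\otimes M$ (via $f$) and $(X\otimes A)\otimes_A M\cong X\otimes M$ (via associativity) yields compatible coequalizer diagrams. The remaining difficulty is checking the coherence that identifies the two parallel pairs of maps, which is a diagram chase combining the module functor structure of $\mathcal{F}$, the bimodule structure on $M$, and the naturality of the bar coequalizers. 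Naturality in $X$ then promotes the coequalizer comparison into an isomorphism of $\mathcal{A}$-module functors.
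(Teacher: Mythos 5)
Your proposal is correct and follows the standard Eilenberg--Watts strategy; it shares the paper's skeleton (full faithfulness checked by evaluating at $A$ under $A\otimes_A M\cong M$, essential surjectivity by taking $M=\mathcal{G}(A)$ with left $A$-action built from $\mathcal{G}(\mu)$ and the module-functor structure map), but the decisive steps are handled genuinely differently. The paper proves injectivity on morphisms by manipulating ``elements'' $k\otimes a(m)$ of objects of $\mathcal{A}$, which is not meaningful in an abstract abelian monoidal category, and it justifies $T\otimes_A\mathcal{G}(A)\cong\mathcal{G}(T)$ by asserting $\mathcal{G}(A)\cong A$ and $\mathcal{G}(T)\cong T$ --- claims that are false in general, since $\mathcal{G}$ lands in the different category $\mathcal{A}B$. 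Your bar-coequalizer argument ($X\otimes A\otimes A\rightrightarrows X\otimes A\twoheadrightarrow X$, then apply both right-exact functors and identify them on the free modules $X\otimes A$ via the structure isomorphism $f$) is exactly what is needed to produce the natural isomorphism $\mathcal{G}\cong -\otimes_A\mathcal{G}(A)$ without those unjustified identifications; the same presentation also closes the one small gap in your fullness step, namely that after setting $\phi=\eta_A$ you must still verify $\eta_X=\mathrm{id}_X\otimes_A\phi$ for general $X$, which follows by comparing the two coequalizer diagrams. In short, your route buys a proof that actually works at the stated level of generality, at the cost of the routine but nontrivial coherence chases you already flag.
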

\begin{proof}
We must show that 
\begin{align*}
f:~Hom_{A\mathcal{A} B}(M,~N)\rightarrow Hom_{Hom^{re}_{\mathcal{A}}(\mathcal{A}A,~\mathcal{A}B)}(-\otimes_A M,~-\otimes_A N)
\end{align*}

is an isomorphism and $\mathcal{F}$ is essentially surjective for all $(A-B)$ bimodules $M$ and $N$.\\

We send each morphism $\xymatrix{M\ar[r]^a & N}$ in the category of $(A-B)$ bimodules to a natural transformation $\xymatrix{-\otimes_A M\ar[r]^{f(a)} & -\otimes_A N}$ in the category of module functors $Hom^{re}_{\mathcal{A}}(\mathcal{A}A,~\mathcal{A}B)$. Here, $-\otimes_A M,~-\otimes_A N:~\mathcal{A}A \rightarrow \mathcal{A}B$ are $\mathcal{A}$ module functors for all $(A-B)$ bimodules $M$ and $N$ in $\mathcal{A}$.\\

To show that they are module functors, we show that $K\otimes_A N=K\otimes N$ is a right $B$ module in the category $\mathcal{A}$ at first for all right $A$ modules $K$ in $\mathcal{A}$. We define the action as $a_{(K\otimes N)}=(K\otimes N)\times B\rightarrow (K\otimes N)$ by sending the pair $((k\otimes n),~b)$ to $k\otimes (n\otimes b)$ for all elements $k\in K$, $n\in N$, $b\in B$. As a result, the following diagrams commute since $N$ is a right $B$ module.
\begin{equation*}
\xymatrix@C+1em{ (K\otimes N)\otimes B\otimes B \ar[d]_{a_{(K\otimes N)}\otimes id} \ar[rr]^{id_{(K\otimes N)}\otimes m} & & (K\otimes N)\otimes B \ar[d]^{a_{(K\otimes N)}}\\ (K\otimes N)\otimes B \ar[rr]^{a_{(K\otimes N)}} & & (K\otimes N)} \quad \xymatrix{ (K\otimes N)\otimes B \ar[rr]^{a_{(K\otimes N)}} & & K\otimes N\\ (K\otimes N)\otimes I \ar[u]^{id\otimes u} \ar[urr]_{l_{(K\otimes N)}}}
\end{equation*}

We want to show that $f$ is an injection. Let $a,~b:~M\rightarrow N$ be two $(A-B)$ bimodule homomorphisms and $f(a)=f(b)$.\\

$f(a),~f(b):~-\otimes_A M\rightarrow -\otimes_A N$ are natural transformations and for all objects $K$ in $\mathcal{A}A$, we get $f(a)(K)=f(b)(K):~K\otimes_A M\rightarrow K\otimes_A N$.\\

For all elements $k$ in $K$ and $m$ in $M$, we have $k\otimes a(m)=k\otimes b(m)$, so $k\otimes (a(m)-b(m))=0$ and $a(m)=b(m)$. This says that $a=b$. Surjectivity is clear. As a result $f$ is an isomorphism.\\

For all right exact $\mathcal{A}$ module functors $\mathcal{G}:~\mathcal{A}A \rightarrow \mathcal{A}B$, can we find an $(A-B)$ bimodule $M$ such that $\mathfrak{F}(M) \cong \mathcal{G}$? We take $M=\mathcal{G}(A)$. It is $(A-B)$ bimodule.\\ 

Now we want to show the commutativity of the required diagram for the natural isomorphism. For all morphism $\alpha:~T\rightarrow S$ in $\mathcal{A}A$, we get the following commutative diagram.
\begin{equation*}
\xymatrix{ T\otimes_A \mathcal{G}(A) \ar[rr]^{\cong}_{\varphi} \ar[d]_{\alpha \otimes_A \mathcal{G}(A)} & & \mathcal{G}(T) \ar[d]^{\mathcal{G}(\alpha)}\\ S\otimes_A \mathcal{G}(A) \ar[rr]^{\cong}_{\theta} & & \mathcal{G}(S)}
\end{equation*} 

$T\otimes_A \mathcal{G}(A)\cong T\otimes_A A\cong T\cong \mathcal{G}(T)$ since $\mathcal{G}(A)\cong A$ and $\mathcal{G}(T)\cong T$. Similarly, $\theta$ is an isomorphism, so the diagram commutes. 
\end{proof}

As a result, we see that $Hom^{re}_{\mathcal{A}}(\mathcal{A}A,~\mathcal{A}B)$ is a finite category since the category $A\mathcal{A} B$ is finite.\\

Similarly, the functor $A\mathcal{A}\rightarrow Hom^{le}_{\mathcal{A}}(\mathcal{A}A,~\mathcal{A}B)$ taking $M$ to $Hom_{\mathcal{A}A}(-,~M):~\mathcal{A}A \rightarrow \mathcal{A}B$ is an equivalence of categories for all given algebras $A$ and $B$ in $\mathcal{A}$. So, $Hom^{le}_{\mathcal{A}}(\mathcal{A}A,~\mathcal{A}B)$ is a finite category.
\begin{lem}
$Hom^{re}_{\mathcal{A}}(\mathcal{M},~\mathcal{N})$ is finite if $\mathcal{M}$ and $\mathcal{N}$ are exact module categories over $\mathcal{A}$ and satisfies the required conditions in Proposition \ref{9}.
\end{lem}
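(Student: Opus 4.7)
The plan is to reduce the claim to Proposition \ref{10} by replacing the abstract exact module categories $\mathcal{M}$ and $\mathcal{N}$ with concrete module categories of the form $\mathcal{A}A$ and $\mathcal{A}B$, and then transporting finiteness from the bimodule side to the functor side.

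First, I would invoke the structure result alluded to in Proposition \ref{9}: under the stated hypotheses, every exact module category over $\mathcal{A}$ is equivalent, as an $\mathcal{A}$-module category, to the category $\mathcal{A}A$ of right $A$-modules for some algebra $A$ in $\mathcal{A}$. Applying this to $\mathcal{M}$ and $\mathcal{N}$, I obtain algebras $A$ and $B$ in $\mathcal{A}$ together with equivalences $\mathcal{M}\simeq \mathcal{A}A$ and $\mathcal{N}\simeq \mathcal{A}B$ of left $\mathcal{A}$-module categories. Since equivalences of module categories preserve right exactness of module functors, they induce an equivalence
\begin{align*}
Hom^{re}_{\mathcal{A}}(\mathcal{M},~\mathcal{N})\simeq Hom^{re}_{\mathcal{A}}(\mathcal{A}A,~\mathcal{A}B).
\end{align*}

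Next, I would apply Proposition \ref{10} directly: the functor $\mathfrak{F}:~A\mathcal{A}B\rightarrow Hom^{re}_{\mathcal{A}}(\mathcal{A}A,~\mathcal{A}B)$ sending $M$ to $-\otimes_A M$ is an equivalence of categories. Composing this with the previous equivalence gives
\begin{align*}
Hom^{re}_{\mathcal{A}}(\mathcal{M},~\mathcal{N})\simeq A\mathcal{A}B.
\end{align*}
Finiteness of a category is preserved under equivalence (the Hom-spaces, lengths of objects, number of isomorphism classes of simples, and existence of projective covers all transport), so it suffices to show $A\mathcal{A}B$ is finite. Since $\mathcal{M}$ and $\mathcal{N}$ are exact, the bimodule category $A\mathcal{A}B$ is an exact $(\mathcal{A}-\mathcal{A})$ bimodule category, and by the earlier lemma stating that every exact bimodule category over finite monoidal categories is finite, we conclude that $A\mathcal{A}B$ is finite.

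The main obstacle, and the step I would spend the most care on, is verifying the first reduction: that an equivalence of module categories actually induces an equivalence on the full subcategories of right exact module functors (as opposed to just an equivalence on all module functors). This requires checking that conjugating a right exact functor by an $\mathcal{A}$-module equivalence yields a right exact functor, which is immediate since an equivalence is in particular exact, but it should be stated carefully. The rest of the argument is a direct chain of equivalences, with finiteness pushed through Proposition \ref{10} and the finiteness result for exact bimodule categories cited above.
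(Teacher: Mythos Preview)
Your proposal is correct and follows essentially the same route as the paper: reduce to $\mathcal{M}\simeq\mathcal{A}A$, $\mathcal{N}\simeq\mathcal{A}B$ via Theorem~\ref{9}, then invoke Proposition~\ref{10} to identify $Hom^{re}_{\mathcal{A}}(\mathcal{M},\mathcal{N})$ with $A\mathcal{A}B$, which is finite. The paper's proof is a two-line sketch of exactly this argument; your version simply makes the intermediate equivalences and the transport of finiteness explicit.
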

\begin{proof}
$\mathcal{M} \simeq \mathcal{A}A$ and $\mathcal{N} \simeq \mathcal{A}B$ for some algebras $A$ and $B$ in $\mathcal{A}$. So, the category $Hom^{re}_{\mathcal{A}}(\mathcal{M},~\mathcal{N})$ is equivalent to the category $A\mathcal{A} B$ and $A\mathcal{A} B$ is finite.
\end{proof}
\begin{lem}
$Hom^{re}_{\mathcal{A}}(\mathcal{M},~\mathcal{M})$ is a monoidal category of endofunctors of $\mathcal{M}$ for $\mathcal{M}$ is a left module category over a finite monoidal category $\mathcal{A}$. 
\end{lem}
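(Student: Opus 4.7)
The plan is to equip $Hom^{re}_{\mathcal{A}}(\mathcal{M},~\mathcal{M})$ with composition of functors as the tensor product, and the identity functor as the unit object, and then verify that this data lands inside $Hom^{re}_{\mathcal{A}}(\mathcal{M},~\mathcal{M})$ and satisfies the monoidal category axioms. Concretely, for $(\mathcal{F},~f),~(\mathcal{G},~g)$ in $Hom^{re}_{\mathcal{A}}(\mathcal{M},~\mathcal{M})$, I would set $(\mathcal{G},~g)\otimes(\mathcal{F},~f):=(\mathcal{G}\circ\mathcal{F},~e)$, where the module structure $e_{XM}:~\mathcal{G}(\mathcal{F}(X\otimes M))\rightarrow X\otimes\mathcal{G}(\mathcal{F}(M))$ is the composite $g_{X\mathcal{F}(M)}\circ\mathcal{G}(f_{XM})$. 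The lemma stated just after the definition of module functors already guarantees that $(\mathcal{G}\circ\mathcal{F},~e)$ is a module functor, so the first task is only to note that $\mathcal{G}\circ\mathcal{F}$ is right exact: this is immediate since the composition of two right exact functors is right exact.

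Next I would define the unit object as $\mathbf{1}:=(id_{\mathcal{M}},~id)$, where the natural isomorphism $id_{XM}:~id_{\mathcal{M}}(X\otimes M)\rightarrow X\otimes id_{\mathcal{M}}(M)$ is literally the identity; the two module functor coherence diagrams collapse to trivially commuting squares, and $id_{\mathcal{M}}$ is obviously right exact. For the tensor product on morphisms I would use horizontal composition of natural transformations: given $h:~(\mathcal{F},~f)\rightarrow(\mathcal{F}',~f')$ and $k:~(\mathcal{G},~g)\rightarrow(\mathcal{G}',~g')$ the natural transformation $k\ast h:~\mathcal{G}\circ\mathcal{F}\rightarrow\mathcal{G}'\circ\mathcal{F}'$ is defined at an object $M$ by $k(\mathcal{F}'(M))\circ\mathcal{G}(h(M))=\mathcal{G}'(h(M))\circ k(\mathcal{F}(M))$. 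I would then check that this horizontal composite satisfies the module morphism square using the module morphism squares for $h$ and $k$ together with naturality of $k$; this produces a bifunctor $\otimes:~Hom^{re}_{\mathcal{A}}(\mathcal{M},~\mathcal{M})\times Hom^{re}_{\mathcal{A}}(\mathcal{M},~\mathcal{M})\rightarrow Hom^{re}_{\mathcal{A}}(\mathcal{M},~\mathcal{M})$.

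For the associativity and unit constraints, functor composition is strictly associative on the nose and $id_{\mathcal{M}}\circ\mathcal{F}=\mathcal{F}=\mathcal{F}\circ id_{\mathcal{M}}$, and the induced module structures agree on the nose (because pre or post composing with the identity module functor leaves $f$ untouched). So I would take $a$, $l$, $r$ to all be identities, which makes the pentagon and triangle axioms trivial; this in fact gives a strict monoidal structure.

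The main obstacle, such as it is, is the compatibility of horizontal composition of natural transformations with the module structures $e,~e'$ on the composite functors; concretely, one needs the square
\begin{equation*}
\xymatrix@C+1em{\mathcal{G}(\mathcal{F}(X\otimes M))\ar[d]_{e_{XM}}\ar[rr]^{(k\ast h)(X\otimes M)} & & \mathcal{G}'(\mathcal{F}'(X\otimes M))\ar[d]^{e'_{XM}}\\ X\otimes\mathcal{G}(\mathcal{F}(M))\ar[rr]_{id_X\otimes(k\ast h)(M)} & & X\otimes\mathcal{G}'(\mathcal{F}'(M))}
\end{equation*}
to commute, which I would verify by splitting $e=g\circ\mathcal{G}(f)$ and $e'=g'\circ\mathcal{G}'(f')$, then filling in the intermediate square with the module morphism axiom for $h$ (to pass $\mathcal{G}(f)$ past $\mathcal{G}(h)$ using $\mathcal{G}$ applied to the $h$ square) and the module morphism axiom for $k$ at the object $\mathcal{F}'(M)$ with morphism $X$. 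Everything else is routine bookkeeping.
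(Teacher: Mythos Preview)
Your proposal is correct and follows essentially the same approach as the paper: tensor product is composition of module functors (with the module structure $e=g\circ\mathcal{G}(f)$ on the composite coming from the earlier lemma), and the unit is $id_{\mathcal{M}}$. The paper's own proof is a two-line sketch of exactly this structure; you simply supply more of the details (right exactness of compositions, horizontal composition on morphisms, and the observation that the associativity and unit constraints can be taken to be identities, which the paper only records later in the proposition immediately following).
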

\begin{proof}
For given two module categories $\mathcal{F},~\mathcal{G}:~\mathcal{M} \rightarrow \mathcal{M}$, we define their tensor product as the composition $\mathcal{G} \circ \mathcal{F}:~\mathcal{M} \rightarrow \mathcal{M}$ and the unit functor as the identity of $\mathcal{M}$ which is $\mathcal{I}=id_{\mathcal{M}}:~\mathcal{M} \rightarrow \mathcal{M}$. We get an associativity constraint $a$ which is a family of associative isomorphisms ${a_{\mathcal{F} \mathcal{G} \mathcal{H}}:~(\mathcal{F} \circ \mathcal{G}) \circ \mathcal{H} \rightarrow \mathcal{F} \circ (\mathcal{G} \circ \mathcal{H})}$, left and right unit constraints satisfying the required compatibility conditions.
\end{proof}
\begin{prop}
The monoidal category $Hom^{re}_{\mathcal{A}}(\mathcal{A}A,~\mathcal{A} A)$ is strict and rigid. If $\mathcal{A}=Vec(k)$, then it is not a rigid category.
\end{prop}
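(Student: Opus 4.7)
The plan has three parts: strictness, rigidity, and the $Vec(k)$ counterexample.

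For strictness, I would note that the monoidal product on $Hom^{re}_{\mathcal{A}}(\mathcal{A}A, \mathcal{A}A)$ is composition of endofunctors $\mathcal{F}\otimes\mathcal{G}=\mathcal{G}\circ\mathcal{F}$, which is associative on the nose, with $id_{\mathcal{A}A}$ serving as a strict two-sided unit. Hence the associator and both unitors can be chosen to be identity natural transformations, so the category is strict by definition.

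For rigidity, I would invoke Proposition \ref{10} with $B=A$ to obtain a monoidal equivalence $\mathfrak{F}:A\mathcal{A}A \simeq Hom^{re}_{\mathcal{A}}(\mathcal{A}A, \mathcal{A}A)$, where monoidality follows from the natural identification $\mathfrak{F}(M)\otimes\mathfrak{F}(N)=\mathfrak{F}(N)\circ\mathfrak{F}(M)=\mathfrak{F}(M\otimes_A N)$ together with $\mathfrak{F}(A)=id_{\mathcal{A}A}$. It then suffices to prove that $A\mathcal{A}A$ is rigid. For each bimodule $M$ I would take its right dual $M^+$ in $\mathcal{A}$ (available because $\mathcal{A}$ is rigid), endow it with bimodule structure by dualizing the two $A$-actions on $M$, and then build the bimodule evaluation and coevaluation by descending $ev_{rM}:M^+\otimes M\to I$ and $coev_{rM}:I\to M\otimes M^+$ through the coequalizers defining $\otimes_A$, combining them with the algebra unit $u:I\to A$ and multiplication $A\otimes A\to A$ as needed. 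The left dual $^+M$ is handled symmetrically. The main obstacle I expect is checking that the descended maps are well defined on the relative tensor products and that the zig-zag identities really follow from those in $\mathcal{A}$; this is routine but delicate bookkeeping with the bimodule axioms and Lemma \ref{71}.

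For the final assertion, I would take $A=k$ so that $\mathcal{A}A=Vec(k)$. The argument of Proposition \ref{10} goes through here (its proof uses only that $\mathcal{G}$ is determined by $\mathcal{G}(A)$) and identifies $Hom^{re}_{Vec(k)}(Vec(k),Vec(k))$ monoidally with $Vec(k)$ itself via $V\mapsto -\otimes_k V$. Under this identification, a right dual of $-\otimes_k V$ in the endofunctor category corresponds to a right dual of $V$ in $Vec(k)$, which would require a coevaluation $k\to V\otimes V^+$. As the paper already observes in contrasting $Vec(k)$ with $Vec_f(k)$, no infinite-dimensional $V$ admits such a coevaluation. Choosing any infinite-dimensional $V$ therefore produces a right exact $Vec(k)$-module endofunctor with no right dual, so $Hom^{re}_{Vec(k)}(Vec(k),Vec(k))$ fails to be rigid.
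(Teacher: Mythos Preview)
Your proposal is correct but takes a different route from the paper. The paper's one-line proof simply says that the right and left duals are the right and left adjoint functors: since the tensor product is composition, the unit and counit of an adjunction $\mathcal{G}\dashv\mathcal{F}$ furnish the evaluation and coevaluation exhibiting $\mathcal{G}$ as a dual of $\mathcal{F}$, and in the finite setting every right exact $\mathcal{A}$-module endofunctor of $\mathcal{A}A$ admits both adjoints (again right exact $\mathcal{A}$-module functors). This is more conceptual and avoids any computation. Your approach instead transports the problem along the equivalence of Proposition~\ref{10} to the bimodule category $A\mathcal{A}A$ with $\otimes_A$, and then builds duals there by dualizing in $\mathcal{A}$; this is also standard and valid, but it trades the single adjunction observation for the bookkeeping you flag (descending $ev$ and $coev$ through the $\otimes_A$ coequalizers and checking the zig-zags). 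Your treatment of strictness and of the $Vec(k)$ counterexample is fine and in fact more explicit than what the paper writes; the failure in $Vec(k)$ is, in the paper's language, precisely that an infinite-dimensional $V$ gives a right exact endofunctor $-\otimes_k V$ with no adjoint, hence no dual.
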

\begin{proof}
The right and left duals are the right and left adjoint functors. 
\end{proof}
\begin{thm}
If $\mathcal{A}$ is a multifusion category, $\mathcal{M}$ and $\mathcal{N}$ are module categories over $\mathcal{A}$, then the category $Hom^{re}_{\mathcal{A}}(\mathcal{M},~\mathcal{N})$ is semisimple module category over $Hom^{re}_{\mathcal{A}}(\mathcal{M},~\mathcal{M})$ with action given by composition of functors. It is exact if $\mathcal{M}$ and $\mathcal{N}$ are exact module categories over $\mathcal{A}$.
\end{thm}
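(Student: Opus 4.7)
The plan is to equip $Hom^{re}_{\mathcal{A}}(\mathcal{M},\mathcal{N})$ with a module-category structure using composition of functors, and then reduce both semisimplicity and exactness to known properties of bimodule categories via Proposition \ref{10}.

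First I would define the action
\begin{align*}
Hom^{re}_{\mathcal{A}}(\mathcal{M},\mathcal{M}) \times Hom^{re}_{\mathcal{A}}(\mathcal{M},\mathcal{N}) \rightarrow Hom^{re}_{\mathcal{A}}(\mathcal{M},\mathcal{N}), \quad (\mathcal{H},\mathcal{F}) \mapsto \mathcal{F} \circ \mathcal{H}.
\end{align*}
By the earlier Lemma on composition of module functors, $\mathcal{F} \circ \mathcal{H}$ is again a module functor, and a composition of right exact functors is still right exact. The associativity isomorphisms descend from the strict associativity of functor composition, with unit the identity module functor $id_{\mathcal{M}}$; the pentagon and triangle axioms then collapse to trivial identities, so the module-category axioms are essentially free.

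Next I would prove semisimplicity. Since $\mathcal{A}$ is multifusion, I invoke the standard classification of module categories over $\mathcal{A}$ (the Ostrik-type result alluded to in Proposition \ref{9}) to write $\mathcal{M} \simeq \mathcal{A}A$ and $\mathcal{N} \simeq \mathcal{A}B$ for algebras $A$ and $B$ in $\mathcal{A}$. Proposition \ref{10} then gives
\begin{align*}
Hom^{re}_{\mathcal{A}}(\mathcal{M},\mathcal{N}) \simeq A\mathcal{A}B, \qquad Hom^{re}_{\mathcal{A}}(\mathcal{M},\mathcal{M}) \simeq A\mathcal{A}A,
\end{align*}
and under these equivalences the composition of module functors matches the bimodule tensor product $- \otimes_A -$, so the asserted module-category structure on the left-hand side is identified with the standard action of $A\mathcal{A}A$ on $A\mathcal{A}B$ via $\otimes_A$. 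The semisimplicity claim then reduces to semisimplicity of the bimodule category $A\mathcal{A}B$ inside the multifusion $\mathcal{A}$.

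For the exactness statement, the Corollary proved earlier says that any semisimple module category over a finite monoidal category is automatically exact, so exactness follows once semisimplicity is in hand. As an alternative argument that uses the hypothesis that $\mathcal{M}$ and $\mathcal{N}$ are exact directly, Theorem \ref{250} makes every $\mathcal{A}$-module functor between them exact, so composition with a projective endofunctor of $\mathcal{M}$ preserves projectives and the projectivity required for exactness of the action follows. The main obstacle I anticipate is the semisimplicity reduction step: one must justify carefully that $A\mathcal{A}B$ really is semisimple as an abelian category whenever $\mathcal{A}$ is multifusion. This depends on the separability of the algebras $A$ and $B$ arising from module categories over a semisimple tensor category, which in turn rests on a Maschke-type averaging argument; depending on the target level of detail, this will either be imported as a standard fact or it will need to be spelled out as a self-contained lemma.
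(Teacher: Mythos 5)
Your proposal is correct in outline but takes a genuinely different, and far more substantive, route than the paper: the paper's entire proof consists of the single sentence defining the action $(\mathcal{F},~\mathcal{G})\mapsto \mathcal{G}\circ \mathcal{F}$, and it leaves both the semisimplicity claim and the exactness claim completely unargued. Your definition of the action agrees with the paper's up to the order of the arguments, and your remark that the associativity and unit constraints come from the strictness of functor composition is essentially all the content the paper supplies. Everything after that in your proposal --- the identification $\mathcal{M}\simeq \mathcal{A}A$, $\mathcal{N}\simeq \mathcal{A}B$ via Theorem \ref{9}, the passage to $A\mathcal{A}B$ via Proposition \ref{10}, matching composition with $-\otimes_A -$, and deducing exactness from semisimplicity via the earlier corollary --- is the standard Etingof--Nikshych--Ostrik argument and is the right way to prove the statement; the paper simply does not attempt it.

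Two points in your reduction deserve to be made explicit, since the paper gives you no cover for them. First, Theorem \ref{9} requires a generator $M$ with $Hom_{\mathcal{M}}(X\otimes M,~N)\neq 0$ for all $N$, which is available for indecomposable exact module categories; since the first sentence of the theorem does not hypothesize exactness of $\mathcal{M}$, you must either invoke the convention that module categories over a multifusion category are semisimple (hence exact, by the earlier corollary) or first decompose $\mathcal{M}$ and $\mathcal{N}$ into indecomposable summands and argue summand by summand. Second, the semisimplicity of $A\mathcal{A}B$ is the genuine mathematical content of the theorem: it amounts to separability of the algebra $A=\underline{Hom}_{\mathcal{M}}(M,~M)$ over a semisimple tensor category, and you correctly flag that this rests on a Maschke-type averaging argument. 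This must be imported from \cite{etos} or \cite{etnios1} or proved as a standalone lemma; without it neither your proof nor the paper's establishes semisimplicity.
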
 
\begin{proof}
We define the action as $Hom^{re}_{\mathcal{A}}(\mathcal{M},~\mathcal{M})\times Hom^{re}_{\mathcal{A}}(\mathcal{M},~\mathcal{N})\rightarrow Hom^{re}_{\mathcal{A}}(\mathcal{M},~\mathcal{N})$ with $(\mathcal{F},~\mathcal{G})\mapsto \mathcal{G} \circ \mathcal{F}$.
\end{proof}

\subsection{The Center of A Bimodule Category}
The center $Z_{\mathcal{A}}(\mathcal{M})$ of an $\mathcal{A}$ bimodule category $\mathcal{M}$ is defined in \cite{gr} as below. Here, $\mathcal{A}$ is a finite rigid, monoidal category whose unit object is simple.\\
 
The objects are $(M,~\gamma_M)$ where $M$ is an object in $\mathcal{M}$ and $\gamma_M$ is a family of natural isomorphisms $\gamma_{MX}:~X\otimes M\rightarrow M\otimes X$ which satisfy the commutativity of the following diagram where $X,~Y$ are objects in $\mathcal{A}$ and $M$ is an object in $\mathcal{M}$.
\begin{equation}
\xymatrix{ (X\otimes Y) \otimes M \ar[d]_{a^{-1}_{XYM}} \ar[r]^{\gamma_{M(XY)}} & M\otimes (X\otimes Y) \ar[d]^{a^{-1}_{MXY}}\\ X\otimes (Y\otimes M) \ar[d]_{X\otimes \gamma_{MY}} & (M\otimes X)\otimes Y\\ X\otimes (M\otimes Y)\ar[r]_{a^{-1}_{XMY}} & (X\otimes M)\otimes Y \ar[u]_{\gamma_{MX}\otimes Y}} 
\end{equation}

A morphism between $(M,~\gamma_M)$ and $(N,~\gamma_N)$ in $Z_{\mathcal{A}}(\mathcal{M})$ is a morphism $f:~M\rightarrow N$ in $\mathcal{M}$ satisfying the condition $\gamma_N(X)(id_X\otimes f)=(f\otimes id_X)\gamma_M(X)$.

\subsection{Definition of A Bicategory}
The following definitions are found in \cite{le1} and \cite{le2} in detail.\\

A collection $\mathfrak{X}$ consisting of the objects $A$, $B$, ... is a bicategory if the following conditions are satisfied.
\begin{enumerate}
\item $\mathfrak{X}(A,~B)$ is a category whose objects are 1 arrows $f:~A\rightarrow B$, $g:~A\rightarrow B$, ... and morphisms are 2 arrows $\gamma:~f\Rightarrow g$, $\theta:~f\Rightarrow g$ , ... as shown in the following diagram. 
\begin{align*}
\begin{tikzpicture}[out=145, in=145, relative]
\node (A) at (0,0) {A};
\node (B) at (3,0) {B};
\draw[->, thick, double] (1.5,0.25) -- (1.5,-0.25) [xshift=5pt] node[right, midway] {$\gamma$} (B);
\path[->] (A) edge [bend left] node [above] {f} (B);
\path[->] (A) edge [bend right] node [below] {g} (B);
\end{tikzpicture}
\end{align*}
\item $\mathcal{F}_{ABC}:~\mathfrak{X}(B,~C)\times \mathfrak{X}(A,~B) \rightarrow \mathfrak{X}(A,~C)$ is a functor taking the pairs $(g,~f)$ to $g\circ f=gf$ and $(\theta,~\gamma)$ to $\theta \star \gamma$. $\theta \star \gamma$ is shown as in the following diagram.
\begin{align*}
\begin{tikzpicture}[out=145, in=145, relative]
\node (A) at (0,0) {A};
\node (B) at (3,0) {B};
\node (C) at (6, 0) {C~~=};
\draw[->, thick, double] (1.5,0.25) -- (1.5,-0.25) [xshift=5pt] node[right, midway] {$\gamma$} (B);
\draw[->, thick, double] (4.5,0.25) -- (4.5,-0.25) [xshift=5pt] node[right, midway] {$\theta$} (C);
\path[->] (A) edge [bend left] node [above] {$f$} (B);
\path[->] (A) edge [bend right] node [below] {$g$} (B);
\path[->] (B) edge [bend left] node [above] {$h$} (C);
\path[->] (B) edge [bend right] node [below] {$k$} (C);
\end{tikzpicture}
\begin{tikzpicture}[out=145, in=145, relative]
\node (A) at (0,0) {A};
\node (C) at (3,0) {C};
\draw[->, thick, double] (1.5,0.25) -- (1.5,-0.25) [xshift=5pt] node[right, midway] {$\theta \star \gamma$} (C);
\path[->] (A) edge [bend left] node [above] {$h\circ f$} (C);
\path[->] (A) edge [bend right] node [below] {$k\circ g$} (C);
\end{tikzpicture}
\end{align*}
\item $\mathcal{F}_A:~1\rightarrow \mathfrak{X}(A,~A)$ is a functor sending the object $\star$ in $1$ to the arrow $id_A$ where $1$ is a category with one object.
\item $a_{ABCD}:~\mathcal{F}_{ABD} \circ (\mathcal{F}_{BCD} \times 1) \rightarrow \mathcal{F}_{ACD} \circ (1\times \mathcal{F}_{ABC})$
\begin{align}
\label{45}
\begin{tikzpicture}
\node (A) at (0, 0) {$\mathfrak{X}(C, D) \times \mathfrak{X}(B, C) \times \mathfrak{X}(A, B)$};
\node (C) at (7, 0) {$\mathfrak{X}(B, D) \times \mathfrak{X}(A, B)$};
\node (B) at (0,-2) {$\mathfrak{X}(C, D) \times \mathfrak{X}(A, C)$};
\node (D) at (7, -2) {$\mathfrak{X}(A,~D)$};
\draw[->, thick, double] (4,-0.75)--(3, -1.25) [xshift=5pt] node [right, midway] {$a_{ABCD}$} (B);
\path[->] (A) edge node [above] {$\mathcal{F}_{BCD} \times 1$} (C);
\path[->] (A) edge node [right, midway] {$1\times \mathcal{F}_{ABC}$} (B);
\path[->] (B) edge node [below] {$\mathcal{F}_{ACD}$} (D);
\path[->] (C) edge node [right, midway] {$\mathcal{F}_{ABD}$} (D);
\end{tikzpicture}
\end{align}

is a natural isomorphism. $\xymatrix{a_{ABCD}(f,~g,~h):~(fg)h\ar[rr]^{\sim} & & f(gh)}$ are 2 arrows for all 1 arrows $f:~C\rightarrow D$, $g:~B\rightarrow C$ and $h:~A\rightarrow B$ such that the following pentagon commutes for all 1 arrows $f,~g,~h,~k$.
\begin{align}
\label{46}
\begin{tikzpicture}
\node (A) at (0, 0) {$((fg)h)k$};
\node (B) at (3, 0) {$(f(gh))k$};
\node (C) at (-1, -2) {$(fg)(hk)$};
\node (D) at (4, -2) {$f((gh)k)$};
\node (E) at (1.5, -3.5) {$f(g(hk))$};
\draw [->, thick] (2, -1.5) arc (360:30:10pt);
\path[->] (A) edge node [above] {$a\star id_k$} (B);
\path[->] (A) edge node [right, midway] {$a$} (C);
\path[->] (B) edge node [right, midway] {$a$} (D);
\path[->] (D) edge node [right, midway] {$id_f\star a$} (E);
\path[->] (C) edge node [right, midway] {$a$} (E);
\end{tikzpicture}
\end{align}
\item $r_{AB}:~\mathcal{F}_{AAB} \circ (1\times \mathcal{F}_A) \rightarrow \mathcal{G}$ and $l_{AB}:~\mathcal{F}_{ABB} \circ (\mathcal{F}_B \times 1) \rightarrow \mathcal{H}$ 
\begin{align}
\begin{tikzpicture}
\node (A) at (1, 0) {$\mathfrak{X}(A, B) \times \mathfrak{X}(A, A)$};
\node (B) at (-1, -2) {$\mathfrak{X}(A, B)$};
\node (C) at (3,-2) {$\mathfrak{X}(A, B) \times 1$};
\draw[->, thick, double] (1, -0.5) -- (1, -1.25) [xshift=2pt] node[right] {$r_{AB}$} (B);
\path[->] (A) edge node [left, midway] {$\mathcal{F}_{AAB}$} (B);
\path[->] (C) edge node [right, midway] {$1\times \mathcal{F}_A$} (A);
\path[->] (C) edge node [below] {$\sim$} node [above] {$\mathcal{G}$} (B);
\end{tikzpicture}
\quad
\begin{tikzpicture}
\node (A) at (1, 0) {$\mathfrak{X}(B, B) \times \mathfrak{X}(A, B)$};
\node (B) at (-1, -2) {$\mathfrak{X}(A, B)$};
\node (C) at (3,-2) {$1\times \mathfrak{X}(A, B)$};
\draw[->, thick, double] (1, -0.5) -- (1, -1.25) [xshift=2pt] node[right] {$l_{AB}$} (B);
\path[->] (A) edge node [left, midway] {$\mathcal{F}_{ABB}$} (B);
\path[->] (C) edge node [right, midway] {$\mathcal{F}_B \times 1$} (A);
\path[->] (C) edge node [below] {$\sim$} node [above] {$\mathcal{H}$} (B);
\end{tikzpicture}
\end{align}

are natural isomorphisms. $\xymatrix{r_{AB}(f,~\star):~f\circ id_A \ar[r] & f}$ and $\xymatrix{ l_{AB}(\star,~f):~id_B\circ f\ar[r] & f}$ are 2 arrows for all 1 arrows $f:~A\rightarrow B$ such that the following triangle commutes. 
\begin{align}
\xymatrix{ (fid_A)g\ar[rr]^a \ar[rd]_{r\star id_g} & & f(id_Bg)\ar[ld]^{id_f\star l} \\ & fg}
\end{align}
\end{enumerate}
\begin{remark}
If all natural isomorphisms $a$, $r$, $l$ are identities such that $(fg)h=f(gh)$, $1f=f=f1$ and same conditions are true for the composition of 2 arrows, then $\mathfrak{X}$ is called a 2-category. 
\end{remark}

\section{Internal Hom of Two Objects in A Module Category}
In this section, we are assuming that $\mathcal{M}$ is an exact module category over a finite, rigid monoidal category $\mathcal{A}$ whose unit object $I$ is simple and we are given objects $M$, $N$ in $\mathcal{M}$.
\begin{lem}
\label{75}
The functor $Hom_{\mathcal{M}}(-\otimes M,~N):~\mathcal{A} \rightarrow Set$ is left exact.
\end{lem}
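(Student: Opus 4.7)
The plan is to exhibit the functor as the composition
\begin{align*}
\mathcal{A} \xrightarrow{-\otimes M} \mathcal{M} \xrightarrow{Hom_{\mathcal{M}}(-,~N)} Set
\end{align*}
and then to deduce left exactness from the exactness properties of the two factors. Since the first slot of $Hom$ is contravariant, ``left exact'' here is to be read as: a right exact sequence in $\mathcal{A}$ is sent to a left exact sequence in $Set$.

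First I would observe that the action functor $-\otimes M:~\mathcal{A} \rightarrow \mathcal{M}$ is exact. This is immediate from the definition of a left module category, which stipulates that the bifunctor $\otimes_{l\mathcal{M}}$ in \ref{74} be exact in each variable. Hence for any short exact sequence $0\rightarrow X\rightarrow Y\rightarrow Z\rightarrow 0$ in $\mathcal{A}$, the induced sequence $0\rightarrow X\otimes M\rightarrow Y\otimes M\rightarrow Z\otimes M\rightarrow 0$ is short exact in $\mathcal{M}$.

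Next I would invoke the standard fact that for any object $N$ in the abelian category $\mathcal{M}$, the contravariant functor $Hom_{\mathcal{M}}(-,~N):~\mathcal{M}\rightarrow Set$ is left exact, so it carries cokernels in $\mathcal{M}$ to kernels in $Set$. Applied to the short exact sequence above, this yields the left exact sequence
\begin{align*}
0\rightarrow Hom_{\mathcal{M}}(Z\otimes M,~N)\rightarrow Hom_{\mathcal{M}}(Y\otimes M,~N)\rightarrow Hom_{\mathcal{M}}(X\otimes M,~N),
\end{align*}
which is exactly the left exactness of $Hom_{\mathcal{M}}(-\otimes M,~N)$.

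Both factors in the composition are left exact for essentially formal reasons: exactness of the module action is built into the definition of a module category, and left exactness of a representable contravariant $Hom$ is a standard property of any abelian category. I therefore do not anticipate any substantive obstacle; the only step demanding care is tracking the variance so that one does not confuse covariant and contravariant left exactness when passing from $\mathcal{A}$ through $\mathcal{M}$ to $Set$.
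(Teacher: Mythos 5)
Your proof is correct and follows essentially the same route as the paper: both factor the functor as $Hom_{\mathcal{M}}(-,N)\circ(-\otimes M)$, use the exactness of the module action from \ref{74}, and then apply the left exactness of the contravariant $Hom_{\mathcal{M}}(-,N)$. Your handling of the variance is, if anything, slightly more careful than the paper's.
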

\begin{proof}
Assume that we have an exact sequence $0\rightarrow A\rightarrow B\rightarrow C\rightarrow 0$ for all objects $A$, $B$ and $C$ in $\mathcal{A}$. Then, the sequence $A\otimes M\rightarrow B\otimes M\rightarrow C\otimes M\rightarrow 0$ is exact since $-\otimes M$ is an exact functor by \ref{74}. So, the sequence 
\begin{align*}
0\rightarrow Hom_{\mathcal{M}}(C\otimes M,~N)\rightarrow Hom_{\mathcal{M}}(B\otimes M,~N)\rightarrow Hom_{\mathcal{M}}(A\otimes M,~N)
\end{align*}

is exact since $Hom_{\mathcal{M}}(-,~N)$ is left exact controvariant functor by Example \ref{76}. This proves the left exactness.
\end{proof}
\begin{defn}
Internal hom of $M$ and $N$ is an object $\underline{Hom}_{\mathcal{M}}(M,~N)$ in $\mathcal{A}$ which represents the functor $Hom_{\mathcal{M}}(-\otimes M,~N):~\mathcal{A} \rightarrow Set$ whenever it is representable.
\end{defn}

This means that there exists a natural isomorphism between the functors $Hom_{\mathcal{M}}(-\otimes M,~N)$ and $Hom_{\mathcal{A}}(-,~\underline{Hom}_{\mathcal{M}}(M,~N))$. 
\begin{lem}
The functor $Hom_{\mathcal{M}}(-\otimes M,~N):~\mathcal{A} \rightarrow Set$ is exact if the internal hom $\underline{Hom}_{\mathcal{M}}(M,~N)$ exists and projective in $\mathcal{A}$.
\end{lem}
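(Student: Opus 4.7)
The plan is to reduce, via the assumed representability, to the statement that the projective object $Q := \underline{Hom}_{\mathcal{M}}(M,~N)$ is also injective in $\mathcal{A}$, and then invoke the fact that projectives and injectives coincide in this setting. First I would recall Lemma~\ref{75}, which already gives the left exactness of the contravariant functor $F := Hom_{\mathcal{M}}(-\otimes M,~N) : \mathcal{A} \rightarrow Set$; only right exactness remains. Concretely, I need to show that for every monomorphism $A \hookrightarrow B$ in $\mathcal{A}$ the induced map $F(B) \rightarrow F(A)$ is surjective.

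Next, using the defining natural isomorphism $F \cong Hom_{\mathcal{A}}(-,~Q)$, the surjectivity of $F(B) \rightarrow F(A)$ along any monomorphism $A \hookrightarrow B$ becomes the statement that every morphism $A \rightarrow Q$ extends to a morphism $B \rightarrow Q$. This is exactly the definition of $Q$ being an injective object of $\mathcal{A}$, so the task boils down to showing that, under our standing hypotheses, the projective object $Q$ is also injective.

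The hard part is this final step: promoting projectivity of $Q$ to injectivity of $Q$. Since $\mathcal{A}$ is a finite, rigid monoidal category with simple unit, i.e.\ a finite tensor category, the classes of projective and injective objects are known to coincide; this is the standard result I would cite (it is usually proved by constructing the distinguished invertible, or Nakayama, object of $\mathcal{A}$ and using it to compare projective covers and injective envelopes). Once $Q$ is known to be injective, the required extensions exist, $F$ is right exact, and combined with Lemma~\ref{75} this yields that $F$ is exact. If one preferred not to cite this coincidence, one could instead try to use rigidity directly through adjunctions of the form $Hom_{\mathcal{A}}(X\otimes Y,~Z) \cong Hom_{\mathcal{A}}(Y,~{}^{+}X\otimes Z)$ to rewrite the extension problem as a lifting problem, but this ultimately repackages the same underlying theorem.
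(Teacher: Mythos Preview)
Your argument is correct and follows the same skeleton as the paper: invoke Lemma~\ref{75} for left exactness, use the representing isomorphism $F \cong Hom_{\mathcal{A}}(-,Q)$, and reduce right exactness to a property of $Q$. The only difference is that you are more explicit about the crucial step: you correctly observe that what is actually required is that $Q$ be \emph{injective} in $\mathcal{A}$, and you justify this by citing the standard result that projectives and injectives coincide in a finite tensor category. The paper's proof rewrites the problem in $\mathcal{A}^{op}$ and then asserts that the needed surjectivity is ``obvious since the internal hom is projective''; but projectivity of $Q$ in $\mathcal{A}$ is not the same as projectivity in $\mathcal{A}^{op}$, so the paper is implicitly relying on exactly the projective\,$=$\,injective fact you name. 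In that sense your write-up is the more careful one.
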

\begin{proof}
$Hom_{\mathcal{M}}(X\otimes M,~N)\cong Hom_{\mathcal{A}}(X,~\underline{Hom}_{\mathcal{M}}(M,~N))$ for all objects $X$ in $\mathcal{A}$ by existence of representing object. We know that this functor is left exact controvariant functor by Lemma \ref{75}. We want to show that it is right exact. For this, we need to show that the controvariant functor $Hom_{\mathcal{A}}(-,~\underline{Hom}_{\mathcal{M}}(M,~N)):~\mathcal{A} \rightarrow Set$ is right exact which means that the covariant functor  $Hom_{\mathcal{A}^{op}}(\underline{Hom}_{\mathcal{M}}(M,~N),~-):~\mathcal{A}^{op} \rightarrow Set$ is right exact.\\

Assume that $\xymatrix{ 0\ar[r] & X\ar[r]^f & Y\ar[r]^g & Z\ar[r] & 0}$ is an exact sequence in $\mathcal{A}^{op}$. We want to show that the sequence\\

$\xymatrix{0\ar[r] & Hom_{\mathcal{A}^{op}}(\underline{Hom}_{\mathcal{M}}(M,~N),~X)\ar[r]^F & Hom_{\mathcal{A}^{op}}(\underline{Hom}_{\mathcal{M}}(M,~N),~Y)\\
& \ar[r]^G & Hom_{\mathcal{A}^{op}}(\underline{Hom}_{\mathcal{M}}(M,~N),~Z)\ar[r] & 0}$\\

is exact in $Set$. We just need to show that $G$ is an epimorphism since that sequence is left exact. It is obvious since the internal hom is projective.
\end{proof}

This functor is always exact if $\mathcal{A}$ is semisimple by Lemma \ref{77}.
\begin{lem}
$Hom_{\mathcal{M}}(X\otimes M,~N)\cong Hom_{\mathcal{A}}(X,~\underline{Hom}_{\mathcal{M}}(M,~N))$ canonically for all objects $X$ in $\mathcal{A}$ by definition since the functor $Hom_{\mathcal{M}}(-\otimes M,~N)$ is controvariant.
\end{lem}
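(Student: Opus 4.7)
The plan is to derive this isomorphism directly from the representability that defines the internal hom. By the definition of $\underline{Hom}_{\mathcal{M}}(M,N)$ as the representing object of the functor $Hom_{\mathcal{M}}(-\otimes M,~N):~\mathcal{A}\rightarrow Set$, there is a universal natural isomorphism
\[
\eta:~Hom_{\mathcal{A}}(-,~\underline{Hom}_{\mathcal{M}}(M,~N)) \xrightarrow{\sim} Hom_{\mathcal{M}}(-\otimes M,~N).
\]
Evaluating this natural transformation at any given object $X$ in $\mathcal{A}$ yields the claimed bijection of Hom-sets.

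First I would verify that both sides are functors of the same variance in $X$, since otherwise no natural isomorphism could even be formulated. The functor $Hom_{\mathcal{M}}(-\otimes M,~N)$ is contravariant because $-\otimes M$ is covariant in its first variable while $Hom_{\mathcal{M}}(-,~N)$ is contravariant, as stated in the lemma. The functor $Hom_{\mathcal{A}}(-,~\underline{Hom}_{\mathcal{M}}(M,~N))$ is contravariant by the standard precomposition action. Hence both are contravariant functors from $\mathcal{A}$ to $Set$, so asking for a natural isomorphism between them makes sense.

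Second, I would spell out the canonical correspondence explicitly via the Yoneda recipe. Setting $ev := \eta_{\underline{Hom}_{\mathcal{M}}(M,~N)}(id_{\underline{Hom}_{\mathcal{M}}(M,~N)})$, which is a distinguished morphism $\underline{Hom}_{\mathcal{M}}(M,~N)\otimes M\rightarrow N$ in $\mathcal{M}$, the bijection at $X$ sends a morphism $\varphi:~X\rightarrow \underline{Hom}_{\mathcal{M}}(M,~N)$ in $\mathcal{A}$ to the composition $ev\circ (\varphi\otimes id_M):~X\otimes M\rightarrow N$ in $\mathcal{M}$. Conversely, each $f:~X\otimes M\rightarrow N$ has a unique preimage under $\eta(X)$ by the defining representability property of $\underline{Hom}_{\mathcal{M}}(M,~N)$, which furnishes the inverse map.

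Finally, naturality in $X$ requires that for every morphism $g:~X\rightarrow Y$ in $\mathcal{A}$, the square relating $Hom_{\mathcal{A}}(g,~\underline{Hom}_{\mathcal{M}}(M,~N))$ and $Hom_{\mathcal{M}}(g\otimes id_M,~N)$ commutes; but this is precisely the naturality condition already packaged into the representing transformation $\eta$. There is no genuine obstacle here, since the statement is essentially the Yoneda lemma applied to the defining property of the internal hom; the only thing worth emphasizing in the write-up is that the word ``canonical'' refers to this Yoneda-induced correspondence and does not require any further choice.
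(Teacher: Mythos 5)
Your proposal is correct and follows exactly the route the paper intends: the paper gives no separate proof because the isomorphism is just the component at $X$ of the natural isomorphism $Hom_{\mathcal{A}}(-,~\underline{Hom}_{\mathcal{M}}(M,~N))\cong Hom_{\mathcal{M}}(-\otimes M,~N)$ that defines the representing object. Your additional Yoneda-style description of the bijection via the evaluation morphism $ev\circ(\varphi\otimes id_M)$ is a harmless (and in fact clarifying) elaboration of the same argument, and it matches how the paper later uses this correspondence to construct the multiplication on $\underline{Hom}_{\mathcal{M}}(M,~M)$.
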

\begin{lem}
$Hom_{\mathcal{M}}(M,~X\otimes N)\cong Hom_{\mathcal{A}}(I,~X\otimes \underline{Hom}_{\mathcal{M}}(M,~N))$ canonically for all objects $X$ in $\mathcal{A}$.
\end{lem}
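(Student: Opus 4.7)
The plan is to obtain the desired isomorphism by two applications of the duality adjunction coming from rigidity of $\mathcal{A}$, with the previous lemma sandwiched in between.

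First I would use the fact that, for any object $X$ in a rigid monoidal category $\mathcal{A}$ acting on a module category $\mathcal{M}$, the endofunctor $X\otimes -:\mathcal{M}\to \mathcal{M}$ has a left adjoint $X^+\otimes -:\mathcal{M}\to \mathcal{M}$. The unit of this adjunction is built from $coev_{rX}:I\to X\otimes X^+$ via the module associativity
\[
M\cong I\otimes M\xrightarrow{coev_{rX}\otimes id_M}(X\otimes X^+)\otimes M\xrightarrow{a_{XX^+M}}X\otimes(X^+\otimes M),
\]
while the counit is built from $ev_{rX}$ together with $a^{-1}_{X^+XM}$; the triangle identities for the adjunction follow from the zig-zag identities in Diagrams (\ref{97}) and the next one. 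This gives the natural isomorphism
\[
Hom_{\mathcal{M}}(M,~X\otimes N)\cong Hom_{\mathcal{M}}(X^+\otimes M,~N).
\]

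Next I would apply the previous lemma, which represents the functor $Hom_{\mathcal{M}}(-\otimes M,~N)$ by $\underline{Hom}_{\mathcal{M}}(M,N)$, to the object $X^+\in\mathcal{A}$:
\[
Hom_{\mathcal{M}}(X^+\otimes M,~N)\cong Hom_{\mathcal{A}}(X^+,~\underline{Hom}_{\mathcal{M}}(M,N)).
\]
Finally I would run the same duality adjunction once more, this time in $\mathcal{A}$ viewed as a module over itself (Lemma \ref{72}), using that $I$ is the unit and $X^+\otimes I\cong X^+$:
\[
Hom_{\mathcal{A}}(X^+,~\underline{Hom}_{\mathcal{M}}(M,N))\cong Hom_{\mathcal{A}}(I,~X\otimes\underline{Hom}_{\mathcal{M}}(M,N)).
\]
Composing the three isomorphisms yields the claim.

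There is no serious obstacle; each step is either a direct invocation of the previous lemma or a routine verification of the adjunction coming from $(ev_{rX},coev_{rX})$. The only point that requires a little care is checking that the adjunction $X^+\otimes -\dashv X\otimes -$ truly lives in the module category $\mathcal{M}$ and not merely in $\mathcal{A}$; this is where the module associativity constraint intervenes, and it is the step I would write out most carefully to make the naturality in $X$ (and hence the word ``canonically'') transparent.
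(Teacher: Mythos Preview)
Your proposal is correct and follows essentially the same three-step route as the paper: pass from $Hom_{\mathcal{M}}(M,X\otimes N)$ to $Hom_{\mathcal{M}}(X^+\otimes M,N)$ via rigidity, invoke the defining isomorphism of the internal hom, then pass from $Hom_{\mathcal{A}}(X^+,\underline{Hom}_{\mathcal{M}}(M,N))$ to $Hom_{\mathcal{A}}(I,X\otimes\underline{Hom}_{\mathcal{M}}(M,N))$ via rigidity again. The paper simply writes the first and last steps as explicit maps built from $ev_{rX}$ and $coev_{rX}$ rather than naming the adjunction, but the content is identical.
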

\begin{proof}
For all morphisms $M\rightarrow X\otimes N$ in $\mathcal{M}$, we find a morphism 
\begin{align}
X^+\otimes M\rightarrow X^+\otimes X\otimes N\rightarrow I\otimes N=N
\end{align}

by using rigidity of $\mathcal{A}$. Here, we use the evaluation map $ev_{rX}:~X^+\otimes X\rightarrow I$. So, 
\begin{align}
Hom_{\mathcal{M}}(M,~X\otimes N)\cong Hom_{\mathcal{M}}(X^+\otimes M,~N)\cong Hom_{\mathcal{A}}(X^+,~\underline{Hom}_{\mathcal{M}}(M,~N)).
\end{align}

For all morphisms $X^+\rightarrow \underline{Hom}_{\mathcal{M}}(M,~N)$, we get a morphism 
\begin{align}
I\rightarrow X\otimes X^+\rightarrow X\otimes \underline{Hom}_{\mathcal{M}}(M,~N)
\end{align}

by using the coevaluation map. As a result, we get an isomorphism
\begin{align}
Hom_{\mathcal{M}}(X^+,~\underline{Hom}_{\mathcal{M}}(M,~N))\cong Hom_{\mathcal{A}}(I,~X\otimes \underline{Hom}_{\mathcal{M}}(M,~N)).
\end{align}
\end{proof}
\begin{lem}
$\underline{Hom}_{\mathcal{M}}(X\otimes M,~N) \cong \underline{Hom}_{\mathcal{M}}(M,~N) \otimes X^+$ canonically.
\end{lem}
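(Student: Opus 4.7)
The plan is to apply the Yoneda lemma: it suffices to exhibit, for every object $Y\in\mathcal{A}$, a natural isomorphism
$$Hom_{\mathcal{A}}(Y,\underline{Hom}_{\mathcal{M}}(X\otimes M,~N))\cong Hom_{\mathcal{A}}(Y,\underline{Hom}_{\mathcal{M}}(M,~N)\otimes X^+),$$
functorial in $Y$, and then invoke uniqueness of representing objects to conclude the desired canonical isomorphism in $\mathcal{A}$.

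First I would rewrite the left-hand side using the defining property of the internal hom together with the associativity constraint of the module category $\mathcal{M}$ over $\mathcal{A}$:
$$Hom_{\mathcal{A}}(Y,\underline{Hom}_{\mathcal{M}}(X\otimes M,~N))\cong Hom_{\mathcal{M}}(Y\otimes(X\otimes M),~N)\cong Hom_{\mathcal{M}}((Y\otimes X)\otimes M,~N),$$
and then apply the internal hom property once more to obtain
$$Hom_{\mathcal{M}}((Y\otimes X)\otimes M,~N)\cong Hom_{\mathcal{A}}(Y\otimes X,~\underline{Hom}_{\mathcal{M}}(M,~N)).$$

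Next I would use rigidity of $\mathcal{A}$ to move $X$ to the other side as its right dual $X^+$. Concretely, the adjunction $(-\otimes X)\dashv(-\otimes X^+)$ coming from the evaluation $ev_{rX}:X^+\otimes X\to I$ and coevaluation $coev_{rX}:I\to X\otimes X^+$ supplies a natural isomorphism
$$Hom_{\mathcal{A}}(Y\otimes X,~\underline{Hom}_{\mathcal{M}}(M,~N))\cong Hom_{\mathcal{A}}(Y,~\underline{Hom}_{\mathcal{M}}(M,~N)\otimes X^+),$$
where given $\varphi:Y\otimes X\to\underline{Hom}_{\mathcal{M}}(M,~N)$ one produces its adjunct as the composition $Y\to Y\otimes X\otimes X^+\xrightarrow{\varphi\otimes id}\underline{Hom}_{\mathcal{M}}(M,~N)\otimes X^+$ using $coev_{rX}$, and the inverse uses $ev_{rX}$; the zig-zag identities \ref{97} and \ref{98} guarantee these are mutually inverse.

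Chaining these natural isomorphisms gives the required Yoneda comparison, and so by Yoneda we obtain the canonical isomorphism $\underline{Hom}_{\mathcal{M}}(X\otimes M,~N)\cong\underline{Hom}_{\mathcal{M}}(M,~N)\otimes X^+$ in $\mathcal{A}$. No single step is a real obstacle — each is either the defining universal property of the internal hom, the associativity of the module action, or the standard dual-object adjunction from rigidity — but the bookkeeping step most worth checking carefully is the naturality in $Y$ of the rigidity adjunction, since this is what lets Yoneda identify the two representing objects rather than merely give an abstract bijection on hom-sets.
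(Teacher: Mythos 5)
Your proposal is correct and is essentially the paper's own proof: the paper establishes exactly the same chain of natural isomorphisms $Hom_{\mathcal{A}}(K,\underline{Hom}_{\mathcal{M}}(M,N)\otimes X^+)\cong Hom_{\mathcal{A}}(K\otimes X,\underline{Hom}_{\mathcal{M}}(M,N))\cong Hom_{\mathcal{M}}((K\otimes X)\otimes M,N)\cong Hom_{\mathcal{M}}(K\otimes(X\otimes M),N)\cong Hom_{\mathcal{A}}(K,\underline{Hom}_{\mathcal{M}}(X\otimes M,N))$ and concludes by Yoneda, just read in the opposite direction from yours. Your extra care in spelling out the rigidity adjunction via $ev_{rX}$, $coev_{rX}$ and the zig-zag identities, and in flagging naturality in the test object, only makes explicit what the paper leaves implicit (note the test object should range over $\mathcal{A}$, as you have it, not over $\mathcal{M}$ as the paper's wording suggests).
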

\begin{proof}
\cite{os} We have\\

$Hom_{\mathcal{A}}(K,~\underline{Hom}_{\mathcal{M}}(M,~N)\otimes X^+) \cong Hom_{\mathcal{A}}(K\otimes X,~\underline{Hom}_{\mathcal{M}}(M,~N))\cong \\
Hom_{\mathcal{M}}((K\otimes X)\otimes M,~N)\cong Hom_{\mathcal{M}}(K\otimes (X\otimes M),~N)\cong Hom_{\mathcal{A}}(K,~\underline{Hom}_{\mathcal{M}}(X\otimes M,~N))$\\

for all $K$ in $\mathcal{M}$, so $\underline{Hom}_{\mathcal{M}}(M,~N)\otimes X^+\cong \underline{Hom}_{\mathcal{M}}(X\otimes M,~N)$ canonically.
\end{proof}
\begin{lem}
$\underline{Hom}_{\mathcal{M}}(M,~X\otimes N) \cong X\otimes \underline{Hom}(M,~N)$ canonically.
\end{lem}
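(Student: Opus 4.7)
The plan is to mimic the style of the previous lemma and verify the isomorphism through the Yoneda lemma: since $\underline{Hom}_{\mathcal{M}}(M, X\otimes N)$ represents the functor $Hom_{\mathcal{M}}(-\otimes M,\,X\otimes N)$, it suffices to produce a natural isomorphism
\begin{equation*}
Hom_{\mathcal{A}}(K,\,X\otimes \underline{Hom}_{\mathcal{M}}(M,N)) \;\cong\; Hom_{\mathcal{M}}(K\otimes M,\,X\otimes N)
\end{equation*}
for all $K$ in $\mathcal{A}$, and then identify the right-hand side with $Hom_{\mathcal{A}}(K,\,\underline{Hom}_{\mathcal{M}}(M, X\otimes N))$ via the defining property of the internal hom.

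First I would record the two rigidity adjunctions that are going to do all the work: in $\mathcal{A}$, one has $Hom_{\mathcal{A}}(K,\,X\otimes L)\cong Hom_{\mathcal{A}}(X^{+}\otimes K,\,L)$ using $coev_{rX}$ and $ev_{rX}$; and in the module category $\mathcal{M}$, the same construction (already exploited in the proof of the earlier lemma on projectivity of $A\otimes P$) gives $Hom_{\mathcal{M}}(P,\,X\otimes Q)\cong Hom_{\mathcal{M}}(X^{+}\otimes P,\,Q)$. Both are natural in all arguments.

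The chain of canonical isomorphisms I would write out is then
\begin{align*}
Hom_{\mathcal{A}}(K,\,X\otimes \underline{Hom}_{\mathcal{M}}(M,N))
&\cong Hom_{\mathcal{A}}(X^{+}\otimes K,\,\underline{Hom}_{\mathcal{M}}(M,N))\\
&\cong Hom_{\mathcal{M}}((X^{+}\otimes K)\otimes M,\,N)\\
&\cong Hom_{\mathcal{M}}(X^{+}\otimes (K\otimes M),\,N)\\
&\cong Hom_{\mathcal{M}}(K\otimes M,\,X\otimes N)\\
&\cong Hom_{\mathcal{A}}(K,\,\underline{Hom}_{\mathcal{M}}(M,\,X\otimes N)),
\end{align*}
where the first and fourth steps use the rigidity adjunctions (in $\mathcal{A}$ and in $\mathcal{M}$ respectively), the second and fifth are the defining isomorphisms of the internal hom, and the third is the module-category associativity constraint $a_{X^{+}KM}$. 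Invoking Yoneda on the composite natural isomorphism yields the claimed canonical isomorphism $\underline{Hom}_{\mathcal{M}}(M,\,X\otimes N)\cong X\otimes \underline{Hom}_{\mathcal{M}}(M,N)$.

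The routine manipulations are all natural in $K$, so no real obstacle appears; the only point that requires a moment of care is the rigidity adjunction inside the module category $\mathcal{M}$, which is not stated as a separate lemma in the paper but is a direct analogue of the adjunction in $\mathcal{A}$ using the same unit/counit maps $coev_{rX}$, $ev_{rX}$ together with the $\mathcal{A}$-action on $\mathcal{M}$ and its associativity constraint. Writing that adjunction out explicitly (or simply citing its use in the earlier proof concerning $A\otimes P$) is the one place where I would be most careful.
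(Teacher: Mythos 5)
Your argument is correct. The paper itself gives no proof of this lemma (it simply defers to \cite{os}), but your chain of natural isomorphisms combined with Yoneda is exactly the standard argument and mirrors the paper's own proof of the companion statement $\underline{Hom}_{\mathcal{M}}(X\otimes M,~N)\cong \underline{Hom}_{\mathcal{M}}(M,~N)\otimes X^+$; the rigidity adjunction in $\mathcal{M}$ that you flag as the delicate point is precisely the one the paper already uses (via $ev_{rX}$ and $coev_{rX}$) in its proof that $Hom_{\mathcal{M}}(M,~X\otimes N)\cong Hom_{\mathcal{A}}(I,~X\otimes \underline{Hom}_{\mathcal{M}}(M,~N))$, so nothing is missing.
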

\begin{proof}
See \cite{os}.
\end{proof}
\begin{lem}
\cite{etos} If we assume that $\mathcal{A}$ is a braided monoidal category, then $\underline{Hom}_{\mathcal{M}}(M,~M)$ is an algebra for given object $M$ in $\mathcal{M}$ if it exists as a representing object of the functor $Hom_{\mathcal{M}}(-\otimes M,~M)$.
\end{lem}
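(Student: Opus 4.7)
The plan is to exhibit a unit $u:I\to \underline{Hom}_{\mathcal{M}}(M,M)$ and a multiplication $\mu:\underline{Hom}_{\mathcal{M}}(M,M)\otimes \underline{Hom}_{\mathcal{M}}(M,M)\to \underline{Hom}_{\mathcal{M}}(M,M)$ using the representability property that \emph{defines} the internal hom, and then to verify associativity and unitality by reducing each axiom to an equality of morphisms out of $\underline{Hom}_{\mathcal{M}}(M,M)^{\otimes n}\otimes M$, where the universal property says two morphisms into $\underline{Hom}_{\mathcal{M}}(M,M)$ agree iff their adjoint maps into $M$ agree.

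First I would extract the canonical \emph{evaluation} morphism $\varepsilon:\underline{Hom}_{\mathcal{M}}(M,M)\otimes M\to M$ in $\mathcal{M}$, defined as the image of $id_{\underline{Hom}_{\mathcal{M}}(M,M)}$ under the natural bijection $Hom_{\mathcal{A}}(\underline{Hom}_{\mathcal{M}}(M,M),\underline{Hom}_{\mathcal{M}}(M,M))\cong Hom_{\mathcal{M}}(\underline{Hom}_{\mathcal{M}}(M,M)\otimes M,M)$. Next I would define $u:I\to \underline{Hom}_{\mathcal{M}}(M,M)$ as the image of $l_M:I\otimes M\to M$ under $Hom_{\mathcal{M}}(I\otimes M,M)\cong Hom_{\mathcal{A}}(I,\underline{Hom}_{\mathcal{M}}(M,M))$. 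For the multiplication, I would form the composite
\begin{equation*}
(\underline{Hom}_{\mathcal{M}}(M,M)\otimes \underline{Hom}_{\mathcal{M}}(M,M))\otimes M \xrightarrow{a} \underline{Hom}_{\mathcal{M}}(M,M)\otimes(\underline{Hom}_{\mathcal{M}}(M,M)\otimes M) \xrightarrow{id\otimes \varepsilon} \underline{Hom}_{\mathcal{M}}(M,M)\otimes M \xrightarrow{\varepsilon} M
\end{equation*}
and take $\mu$ to be the unique morphism in $\mathcal{A}$ corresponding to this composite under the bijection $Hom_{\mathcal{M}}(X\otimes M,M)\cong Hom_{\mathcal{A}}(X,\underline{Hom}_{\mathcal{M}}(M,M))$ with $X=\underline{Hom}_{\mathcal{M}}(M,M)\otimes \underline{Hom}_{\mathcal{M}}(M,M)$.

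To prove associativity, I would compute the adjoints of $\mu\circ(\mu\otimes id)$ and $\mu\circ(id\otimes\mu)$ as morphisms $\underline{Hom}_{\mathcal{M}}(M,M)^{\otimes 3}\otimes M\to M$. Using naturality of $\varepsilon$ together with the pentagon for the left $\mathcal{A}$ module action on $\mathcal{M}$, both composites reduce to the same triple evaluation $\varepsilon\circ(id\otimes\varepsilon)\circ(id\otimes id\otimes\varepsilon)$ precomposed with the appropriate associators, and hence they agree as morphisms in $\mathcal{A}$ by the universal property. Unitality is similar: the adjoint of $\mu\circ(u\otimes id)$ is the morphism $I\otimes \underline{Hom}_{\mathcal{M}}(M,M)\otimes M\to M$ obtained by inserting $l_M$ in the appropriate slot and then $\varepsilon$, which by the unit triangle for the module action equals $\varepsilon$ precomposed with $l_{\underline{Hom}_{\mathcal{M}}(M,M)\otimes M}$, so the unit law follows from uniqueness; the right unit law is analogous.

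The main obstacle will not be any single step but the careful bookkeeping of associators and unitors that converts the two geometric manipulations (triple evaluation and degenerate evaluation) into genuine equalities of morphisms under the representability isomorphism; this is exactly where the pentagon and triangle axioms for $\mathcal{M}$ as a left $\mathcal{A}$ module category must be invoked. The braiding hypothesis on $\mathcal{A}$ does not enter into the algebra axioms themselves, which are purely a consequence of the universal property together with the coherence of the action; it is an auxiliary hypothesis that will presumably be used to promote $\underline{Hom}_{\mathcal{M}}(M,M)$ to a commutative or central algebra in later developments.
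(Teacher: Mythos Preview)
Your proposal is correct and follows essentially the same approach as the paper: both extract the evaluation $\varepsilon$ (the paper calls it $f$) as the adjoint of the identity, build the multiplication from the double evaluation $\varepsilon\circ(id\otimes\varepsilon)$, define the unit as the adjoint of $id_M$ (equivalently $l_M$), and verify the axioms by passing to adjoints under the representability bijection. Your treatment is in fact more explicit than the paper's about where the pentagon and triangle axioms for the module action enter, and your remark that the braiding hypothesis plays no role in the algebra axioms is correct---the paper's proof does not invoke it either.
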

\begin{proof}
We need to define a multiplication morphism
\begin{align*}
m:~\underline{Hom}_{\mathcal{M}}(M,~M)\otimes \underline{Hom}_{\mathcal{M}}(M,~M)\rightarrow \underline{Hom}_{\mathcal{M}}(M,~M)
\end{align*}

and a unit morphism $u:~I\rightarrow \underline{Hom}_{\mathcal{M}}(M,~M)$ satisfying the required compatibility conditions.\\

\cite{os} finds a multiplication morphism $m$ as below.\\

$id_{\underline{Hom}_{\mathcal{M}}(M,~M)}$ is in $Hom_{\mathcal{A}}(\underline{Hom}_{\mathcal{M}}(M,~M),~\underline{Hom}_{\mathcal{M}}(M,~M))$ since $\underline{Hom}_{\mathcal{M}}(M,~M)$ is an object in $\mathcal{A}$.
\begin{align*}
Hom_{\mathcal{A}}(\underline{Hom}_{\mathcal{M}}(M,~M),~\underline{Hom}_{\mathcal{M}}(M,~M))\cong Hom_{\mathcal{M}}(\underline{Hom}_{\mathcal{M}}(M,~M) \otimes M,~M) 
\end{align*}

by definition. So, we get a unique morphism $f:~\underline{Hom}_{\mathcal{M}}(M,~M)\otimes M\rightarrow M$ corresponding to $id_{\underline{Hom}_{\mathcal{M}}(M,~M)}$. Using this morphism, we get a composition 
\begin{align*}
\begin{tikzpicture}
\node (A) at (0, 0) {$\underline{Hom}_{\mathcal{M}}(M,~M) \otimes (\underline{Hom}_{\mathcal{M}}(M,~M) \otimes M)$};
\node (B) at (7, 0) {$\underline{Hom}_{\mathcal{M}}(M,~M)\otimes M$};
\node (C) at (11, 0) {$M$};
\path[->] (A) edge  node [above] {$id\otimes f$} (B); 
\path[->] (B) edge node [above] {$f$} (C); 
\end{tikzpicture}
\end{align*}

This is same as the morphism $\xymatrix{ (\underline{Hom}_{\mathcal{M}}(M,~M)\otimes \underline{Hom}_{\mathcal{M}}(M,~M))\otimes M\rightarrow M}$ since
\begin{align*}
\underline{Hom}_{\mathcal{M}}(M,~M)\otimes (\underline{Hom}_{\mathcal{M}}(M,~M)\otimes M) \cong (\underline{Hom}_{\mathcal{M}}(M,~M)\otimes \underline{Hom}_{\mathcal{M}}(M,~M))\otimes M.
\end{align*}

This defines a multiplication morphism\\

$\underline{Hom}_{\mathcal{M}}(M,~M) \otimes \underline{Hom}_{\mathcal{M}}(M,~M) \rightarrow \underline{Hom}_{\mathcal{M}}(M,~M)$ as shown in \cite{os}.\\

This multiplication is associative since $\underline{Hom}_{\mathcal{M}}(M,~M)$ is an object in $\mathcal{A}$ and we have the associativity constraint.\\

For the compatibility conditions, we need to show that the following diagrams commute.
\begin{align*}
\begin{tikzpicture}
\node (A) at (0, 0) {$\underline{Hom}_{\mathcal{M}}(M,~M) \otimes \underline{Hom}_{\mathcal{M}}(M,~M) \otimes \underline{Hom}_{\mathcal{M}}(M,~M)$};
\node (B) at (-4, -2.5) {$\underline{Hom}_{\mathcal{M}}(M,~M) \otimes \underline{Hom}_{\mathcal{M}}(M,~M)$};
\node (C) at (4, -2.5) {$\underline{Hom}_{\mathcal{M}}(M,~M) \otimes \underline{Hom}_{\mathcal{M}}(M,~M)$};
\node (D) at (0, -5) {$\underline{Hom}_{\mathcal{M}}(M,~M)$}; 
\path[->] (A) edge  node [left, midway] {$m\otimes id$} (B); 
\path[->] (A) edge node [right, midway] {$id\otimes m$} (C); 
\path[->] (B) edge node [left, midway] {$m$} (D); 
\path[->] (C) edge node [right, midway] {$m$} (D); 
\end{tikzpicture}
\end{align*}
\begin{align*}
\begin{tikzpicture}
\node (A) at (0, 0) {$\underline{Hom}_{\mathcal{M}}(M,~M) \otimes \underline{Hom}_{\mathcal{M}}(M,~M)$};
\node (B) at (7, 0) {$\underline{Hom}_{\mathcal{M}}(M,~M)$};
\node (C) at (0, -2) {$ I\otimes \underline{Hom}_{\mathcal{M}}(M,~M)$};
\path[->] (A) edge  node [above] {$m$} (B); 
\path[->] (C) edge node [left, midway] {$u\otimes id$} (A); 
\path[->] (C) edge node [below] {$l$} (B); 
\end{tikzpicture}
\end{align*}
\begin{align*}
\begin{tikzpicture}
\node (A) at (0, 0) {$\underline{Hom}_{\mathcal{M}}(M,~M) \otimes \underline{Hom}_{\mathcal{M}}(M,~M)$};
\node (B) at (7, 0) {$\underline{Hom}_{\mathcal{M}}(M,~M)$};
\node (C) at (0, -2) {$\underline{Hom}_{\mathcal{M}}(M,~M) \otimes I$};
\path[->] (A) edge  node [above] {$m$} (B); 
\path[->] (C) edge node [left, midway] {$id\otimes u$} (A); 
\path[->] (C) edge node [below] {$r$} (B); 
\end{tikzpicture}
\end{align*}

We have the isomorphisms
\begin{align*}
Hom_{\mathcal{A}}(\underline{Hom}_{\mathcal{M}}(M,~M)\otimes \underline{Hom}_{\mathcal{M}}(M,~M),~\underline{Hom}_{\mathcal{M}}(M,~M)) \cong \\
Hom_{\mathcal{M}}(\underline{Hom}_{\mathcal{M}}(M,~M) \otimes \underline{Hom}_{\mathcal{M}}(M,~M) \otimes M,~M),
\end{align*}
\begin{align*}
Hom_{\mathcal{A}}(\underline{Hom}_{\mathcal{M}}(M,~M),~\underline{Hom}_{\mathcal{M}}(M,~M))\cong Hom_{\mathcal{M}}(\underline{Hom}(M,~M)\otimes M,~M).
\end{align*}

$m$ is a morphism in the first hom set and $id$ is a morphism in the second one.\\ 

Also, we have a composition of hom sets\\

$Hom_{\mathcal{A}}(A\otimes A\otimes A,~A\otimes A)\times Hom_{\mathcal{A}}(A\otimes A,~A)\rightarrow Hom_{\mathcal{A}}(A \otimes A\otimes A,~A)$\\

for $A=\underline{Hom}_{\mathcal{M}}(M,~M)$ taking $(m\otimes id,~m)$ to $m\circ (m\otimes id)$ and $(id\otimes m,~m)$ to $m\circ (id\otimes m)$. We may prove that $m\circ (m\otimes id)=m\circ (id\otimes m)$ for the commutativity of the first diagram.\\ 

Now, we want to find a unit morphism $u:~I\rightarrow \underline{Hom}_{\mathcal{M}}(M,~M)$ satisfying the commutativity of the required diagrams.\\

$Hom_{\mathcal{M}}(M,~X\otimes N)\cong Hom_{\mathcal{A}}(I,~X\otimes \underline{Hom}_{\mathcal{M}}(M,~N))$ for all objects $X$ in $\mathcal{A}$. Taking $X=I$ and $M=N$, we get an isomorphism $Hom_{\mathcal{M}}(M,~M)\cong Hom_{\mathcal{A}}(I,~\underline{Hom}_{\mathcal{M}}(M,~M))$. So, for the identity morphism $id_M:~M\rightarrow M$, we get a unique morphism $u:~I\rightarrow \underline{Hom}_{\mathcal{M}}(M,~M)$.\\

We have an isomorphism
\begin{align*}
Hom_{\mathcal{A}}(I\otimes \underline{Hom}_{\mathcal{M}}(M,~M),~\underline{Hom}_{\mathcal{M}}(M,~M)) \cong Hom_{\mathcal{M}}(I\otimes \underline{Hom}_{\mathcal{M}}(M,~M) \otimes M,~M).
\end{align*}

The diagram
\begin{align*}
\begin{tikzpicture}
\node (A) at (0, 0) {$\underline{Hom}_{\mathcal{M}}(M,~M) \otimes \underline{Hom}_{\mathcal{M}}(M,~M)$};
\node (B) at (7, 0) {$\underline{Hom}_{\mathcal{M}}(M,~M)$};
\node (C) at (0, -2) {$ I\otimes \underline{Hom}_{\mathcal{M}}(M,~M)$};
\path[->] (A) edge  node [above] {$m$} (B); 
\path[->] (C) edge node [left, midway] {$u\otimes id$} (A); 
\path[->] (C) edge node [below] {$l$} (B); 
\end{tikzpicture}
\end{align*}

corresponds to the diagram
\begin{align*}
\begin{tikzpicture}
\node (A) at (0, 0) {$\underline{Hom}_{\mathcal{M}}(M,~M) \otimes \underline{Hom}_{\mathcal{M}}(M,~M)\otimes M$};
\node (B) at (7, 0) {$M$};
\node (C) at (0, -2) {$ I\otimes \underline{Hom}_{\mathcal{M}}(M,~M) \otimes M$};
\path[->] (A) edge  (B); 
\path[->] (C) edge node [left, midway] {$u\otimes id$} (A); 
\path[->] (C) edge (B); 
\end{tikzpicture}
\end{align*}

And we get another diagram
\begin{align*}
\begin{tikzpicture}
\node (A) at (0, 0) {$M\otimes \underline{Hom}_{\mathcal{M}}(M,~M) \otimes M$};
\node (B) at (7, 0) {$M$};
\node (C) at (0, -2) {$M\otimes \underline{Hom}_{\mathcal{M}}(M,~M) \otimes M$};
\path[->] (A) edge  (B); 
\path[->] (C) edge node [left, midway] {$id_M\otimes id$} (A); 
\path[->] (C) edge (B); 
\end{tikzpicture}
\end{align*}

Obviously, this diagram commutes. As a result, the first one commutes and we get the result. We follow the similar way for the right associativity constraint.
\end{proof}
\begin{lem}
\cite{baki} $(^+A)^+=A$ for all objects $A$ in a rigid monoidal category $\mathcal{A}$.
\end{lem}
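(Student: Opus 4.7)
The plan is to exhibit $A$ itself as a right dual for ${}^+A$ and then appeal to the uniqueness of right duals (Lemma \ref{96}) to conclude $({}^+A)^+ \cong A$ canonically. So the first step is to propose candidates for the structure maps: take
\[
ev_{r({}^+A)} := ev_{lA} : A \otimes {}^+A \rightarrow I, \qquad coev_{r({}^+A)} := coev_{lA} : I \rightarrow {}^+A \otimes A,
\]
which are already at our disposal because ${}^+A$ is by hypothesis a left dual of $A$ in the rigid category $\mathcal{A}$.

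The second step is to check the two snake identities that the definition of a right dual (applied to ${}^+A$ in place of $X$, with $A$ in place of $Y$) demands. Concretely, one must verify that
\[
{}^+A = {}^+A \otimes I \xrightarrow{id_{{}^+A} \otimes coev_{lA}} {}^+A \otimes A \otimes {}^+A \xrightarrow{ev_{lA} \otimes id_{{}^+A}} I \otimes {}^+A = {}^+A
\]
is the identity of ${}^+A$, and that
\[
A = I \otimes A \xrightarrow{coev_{lA} \otimes id_A} A \otimes {}^+A \otimes A \xrightarrow{id_A \otimes ev_{lA}} A \otimes I = A
\]
is the identity of $A$. The key observation is that these are the very same compositions (after relabeling) as the two identities (\ref{98}) and (\ref{99}) which already witness that ${}^+A$ is a left dual of $A$; no further computation is needed.

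Having produced a right-duality datum $(A, ev_{lA}, coev_{lA})$ for ${}^+A$, the final step is to invoke Lemma \ref{96}, which states that a right dual is unique up to a unique isomorphism. This yields the desired canonical isomorphism $({}^+A)^+ \cong A$.

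I do not expect any serious obstacle here: the proof is essentially the observation that the axioms for ``${}^+A$ is a left dual of $A$'' and for ``$A$ is a right dual of ${}^+A$'' coincide after swapping the roles of the two objects. The only mild subtlety, which is absorbed into the appeal to Lemma \ref{96}, is that the equality $({}^+A)^+ = A$ should be interpreted up to the unique canonical isomorphism of right duals rather than as a strict equality of objects.
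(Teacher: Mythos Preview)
Your approach is correct and standard: the left-duality data $(ev_{lA},\,coev_{lA})$ for $A$ is precisely a right-duality datum exhibiting $A$ as a right dual of ${}^+A$, and Lemma~\ref{96} then gives the canonical identification $({}^+A)^+\cong A$. The paper does not supply its own proof of this lemma; it simply cites \cite{baki}, so there is nothing further to compare against.

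One point to fix: as written, your two displayed snake compositions do not type-check. For instance, $id_{{}^+A}\otimes coev_{lA}$ sends ${}^+A\otimes I$ to ${}^+A\otimes{}^+A\otimes A$, not to ${}^+A\otimes A\otimes{}^+A$. You have the unit on the wrong side and the maps applied on the wrong tensor factor in both lines. The correct versions are
\[
{}^+A = I\otimes{}^+A \xrightarrow{\;coev_{lA}\otimes id_{{}^+A}\;} {}^+A\otimes A\otimes{}^+A \xrightarrow{\;id_{{}^+A}\otimes ev_{lA}\;} {}^+A\otimes I = {}^+A
\]
and
\[
A = A\otimes I \xrightarrow{\;id_A\otimes coev_{lA}\;} A\otimes{}^+A\otimes A \xrightarrow{\;ev_{lA}\otimes id_A\;} I\otimes A = A,
\]
which are then literally (\ref{98}) and (\ref{99}). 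Once this is corrected, the argument is complete.
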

\begin{lem}
\label{222}
\cite{etos} $\underline{Hom}_{\mathcal{A} A}(M,~M)=(M\otimes ^+M)^+=(^+M)^+\otimes M^+= M\otimes M^+\cong M^+\otimes M$ for all right $A$ modules $M$ in a left module category $\mathcal{A} A$ over an FRBSU monoidal category $\mathcal{A}$.
\end{lem}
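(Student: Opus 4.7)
The plan is to compute $\underline{Hom}_{\mathcal{A}A}(M,M)$ directly from the representing object definition of the internal Hom and then verify the chain of equalities in the statement using the lemmas on duals plus the braiding.

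By definition, $\underline{Hom}_{\mathcal{A}A}(M,M)$ represents the functor $X \mapsto Hom_{\mathcal{A}A}(X \otimes M,~M)$ from $\mathcal{A}$ to $Set$. Using rigidity of $\mathcal{A}$ and the right dual $M^+$ of $M$, I would produce the natural isomorphism
\begin{align*}
Hom_{\mathcal{A}A}(X \otimes M,~M) \cong Hom_{\mathcal{A}}(X,~M \otimes M^+)
\end{align*}
via the standard adjunction $Hom(X \otimes Y,~Z) \cong Hom(X,~Z \otimes Y^+)$ applied with $Y = Z = M$. Naturality in $X$ then identifies $\underline{Hom}_{\mathcal{A}A}(M,M) = M \otimes M^+$ by the Yoneda lemma.

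To complete the stated chain of equalities, I would use Lemma \ref{71} and the preceding lemma $({}^+A)^+ = A$, written for $M$ as $({}^+M)^+ = M$: these give $M \otimes M^+ = ({}^+M)^+ \otimes M^+ = (M \otimes {}^+M)^+$, which recovers the first two equalities in the statement by running the identification backwards. For the final isomorphism $M \otimes M^+ \cong M^+ \otimes M$, I would invoke the braiding $c_{M,M^+}$ in $\mathcal{A}$, which exists because $\mathcal{A}$ is FRBSU and in particular braided.

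The main obstacle I foresee is verifying that the rigidity adjunction $Hom_{\mathcal{A}}(X \otimes M,~M) \cong Hom_{\mathcal{A}}(X,~M \otimes M^+)$ restricts correctly to morphisms in $\mathcal{A}A$ when $M$ is endowed with its right $A$-module structure, i.e., that the unit and counit of the dual pair $(M,~M^+)$ behave well with respect to the $A$-action so that an $A$-linear map $X \otimes M \rightarrow M$ really does correspond bijectively to a map $X \rightarrow M \otimes M^+$ in $\mathcal{A}$. This $A$-equivariance is where all the module-theoretic content hides, and rather than redo the calculation I would appeal directly to the corresponding construction in \cite{etos}, which is the reference already invoked for the lemma.
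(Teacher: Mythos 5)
The paper itself gives no proof of this lemma --- it is simply quoted from \cite{etos} --- so the real comparison is with the argument there, and against that your central step fails. The isomorphism you propose, $Hom_{\mathcal{A}A}(X\otimes M,~M)\cong Hom_{\mathcal{A}}(X,~M\otimes M^+)$, is the rigidity adjunction applied to $Hom_{\mathcal{A}}(X\otimes M,~M)$, i.e.\ to \emph{all} morphisms $X\otimes M\rightarrow M$ in $\mathcal{A}$; but the internal Hom of the module category $\mathcal{A}A$ represents only the $A$-linear ones. Under the adjunction, $A$-linearity of $f:~X\otimes M\rightarrow M$ translates into the condition that the corresponding $\tilde f:~X\rightarrow M\otimes M^+$ factor through the equalizer of the two maps $M\otimes M^+\rightrightarrows M\otimes A^+\otimes M^+$ induced by the two ways of absorbing the $A$-action. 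That equalizer is $(M\otimes_A{}^+M)^+$ --- note the relative tensor product over $A$ --- and this, not $(M\otimes{}^+M)^+$, is what \cite{etos} actually computes. The two differ in general: take $\mathcal{A}=Vec_f(k)$, $A=M_n(k)$ and $M=k^n$ its simple right module; then $\underline{Hom}_{\mathcal{A}A}(M,~M)=End_A(M)=k$, whereas $M\otimes M^+\cong M_n(k)$ has dimension $n^2$. So the Yoneda step identifies the wrong representing object, and the deferral to \cite{etos} in your last paragraph cannot close the gap, because \cite{etos} proves the $\otimes_A$ version and the passage from $(M\otimes_A{}^+M)^+$ to $(M\otimes{}^+M)^+$ is precisely what is false.

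To be fair, the statement as printed already drops the subscript $A$ from the first tensor product, and the subsequent equalities $(M\otimes{}^+M)^+=({}^+M)^+\otimes M^+=M\otimes M^+$ (via Lemma \ref{71} and $({}^+M)^+=M$) only parse with the plain tensor product; your proof faithfully reproduces that reading and therefore inherits its defect. The parts of your argument that are unconditionally correct are the formal manipulations with duals and the final use of the braiding to obtain $M\otimes M^+\cong M^+\otimes M$. A repair must either restrict to $A=I$ (where $\otimes_A=\otimes$ and your computation is exactly right) or carry $(M\otimes_A{}^+M)^+$ through the whole chain, in which case the later equalities need the $\otimes_A$-analogue of Lemma \ref{71} rather than the lemma as stated.
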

\begin{lem}
$\underline{Hom}_{\mathcal{M}}(M,~N)$ is a right $\underline{Hom}_{\mathcal{M}}(M,~M)$ module in $\mathcal{A}$.
\end{lem}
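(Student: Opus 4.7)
The plan is to imitate the construction that endowed $\underline{Hom}_{\mathcal{M}}(M,M)$ with an algebra structure in the previous lemma. From the defining natural isomorphism $Hom_{\mathcal{A}}(X,\underline{Hom}_{\mathcal{M}}(M,N))\cong Hom_{\mathcal{M}}(X\otimes M,N)$ I extract an ``evaluation'' morphism $\varepsilon_{M,N}:\underline{Hom}_{\mathcal{M}}(M,N)\otimes M\to N$ as the image of $id_{\underline{Hom}_{\mathcal{M}}(M,N)}$; similarly there is $\varepsilon_{M,M}:\underline{Hom}_{\mathcal{M}}(M,M)\otimes M\to M$, and by construction this latter map satisfies $\varepsilon_{M,M}\circ(u\otimes id_M)=l_M$ and is the morphism which, after iteration, encodes the multiplication $m$ on $\underline{Hom}_{\mathcal{M}}(M,M)$.

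First I would define the right action $\rho:\underline{Hom}_{\mathcal{M}}(M,N)\otimes \underline{Hom}_{\mathcal{M}}(M,M)\to \underline{Hom}_{\mathcal{M}}(M,N)$ as the unique morphism corresponding under the adjunction to the composite
\[
(\underline{Hom}_{\mathcal{M}}(M,N)\otimes \underline{Hom}_{\mathcal{M}}(M,M))\otimes M \xrightarrow{a} \underline{Hom}_{\mathcal{M}}(M,N)\otimes (\underline{Hom}_{\mathcal{M}}(M,M)\otimes M)
\]
\[
\xrightarrow{id\otimes \varepsilon_{M,M}} \underline{Hom}_{\mathcal{M}}(M,N)\otimes M \xrightarrow{\varepsilon_{M,N}} N.
\]

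Next I would verify the two right-module axioms by exploiting the faithfulness of the adjunction (Yoneda on the representing object). For associativity, both $\rho\circ(\rho\otimes id)$ and $\rho\circ(id\otimes m)\circ a$ must be shown to have the same transpose into $Hom_{\mathcal{M}}(\underline{Hom}_{\mathcal{M}}(M,N)\otimes \underline{Hom}_{\mathcal{M}}(M,M)\otimes \underline{Hom}_{\mathcal{M}}(M,M)\otimes M,~N)$, namely the iterated composite applying $\varepsilon_{M,M}$ twice (innermost first) and then $\varepsilon_{M,N}$. On the $\rho\circ(id\otimes m)$ side this uses exactly the defining property of $m$ as the transpose of the double evaluation on $\underline{Hom}_{\mathcal{M}}(M,M)\otimes \underline{Hom}_{\mathcal{M}}(M,M)\otimes M\to M$, while the pentagon in $\mathcal{M}$ rearranges the associators. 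The unit axiom is then immediate: transposing $\rho\circ(id\otimes u)$ gives $\varepsilon_{M,N}\circ(id\otimes(\varepsilon_{M,M}\circ(u\otimes id_M)))=\varepsilon_{M,N}\circ(id\otimes l_M)$, which by naturality coincides with the transpose of $r_{\underline{Hom}_{\mathcal{M}}(M,N)}$.

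The main obstacle is the associativity axiom: tracking the associators for triple tensor products in $\mathcal{M}$ and patiently invoking the pentagon identity, exactly as in the associativity argument for the algebra $\underline{Hom}_{\mathcal{M}}(M,M)$. Once that bookkeeping is done the identification of transposes is automatic, and faithfulness of the representing adjunction yields $\rho\circ(\rho\otimes id)=\rho\circ(id\otimes m)\circ a$, completing the proof that $\underline{Hom}_{\mathcal{M}}(M,N)$ is a right $\underline{Hom}_{\mathcal{M}}(M,M)$-module object in $\mathcal{A}$.
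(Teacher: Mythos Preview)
Your proposal is correct and follows essentially the same approach as the paper: both define the action as the adjoint transpose of the composite $\varepsilon_{M,N}\circ(id\otimes \varepsilon_{M,M})$ (the paper calls these maps $k$ and $f$), using the representing isomorphism for $\underline{Hom}_{\mathcal{M}}(M,N)$. Your write-up is in fact more explicit about verifying the module axioms via transposes and the pentagon, whereas the paper simply states that ``it is easy to show that those diagrams commute.''
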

\begin{proof}
We need to define a multiplication morphism 
\begin{align*}
a:~\underline{Hom}_{\mathcal{M}}(M,~N)\otimes \underline{Hom}_{\mathcal{M}}(M,~M) \rightarrow \underline{Hom}_{\mathcal{M}}(M,~N)
\end{align*}

satisfying the commutativity of the following diagrams.
\begin{align*}
\begin{tikzpicture}
\node (A) at (0, 0) {$\underline{Hom}_{\mathcal{M}}(M,~N) \otimes \underline{Hom}_{\mathcal{M}}(M,~M) \otimes \underline{Hom}_{\mathcal{M}}(M,~M)$};
\node (B) at (-4, -2.5) {$\underline{Hom}_{\mathcal{M}}(M,~N) \otimes \underline{Hom}_{\mathcal{M}}(M,~M)$};
\node (C) at (4, -2.5) {$\underline{Hom}_{\mathcal{M}}( M,~N) \otimes \underline{Hom}_{\mathcal{M}}(M,~M)$};
\node (D) at (0, -5) {$\underline{Hom}_{\mathcal{M}}(M,~N)$}; 
\path[->] (A) edge  node [left, midway] {$id\otimes m$} (B); 
\path[->] (A) edge node [right, midway] {$a\otimes id$} (C); 
\path[->] (B) edge node [left, midway] {$a$} (D); 
\path[->] (C) edge node [right, midway] {$a$} (D); 
\end{tikzpicture}
\end{align*}
\begin{align*}
\begin{tikzpicture}
\node (A) at (0, 0) {$\underline{Hom}_{\mathcal{M}}(M,~N) \otimes \underline{Hom}_{\mathcal{M}}(M,~M)$};
\node (B) at (7, 0) {$\underline{Hom}_{\mathcal{M}}(M,~N)$};
\node (C) at (0, -2) {$\underline{Hom}_{\mathcal{M}}(M,~N)\otimes I$};
\path[->] (A) edge  node [above] {$a$} (B); 
\path[->] (C) edge node [left, midway] {$id\otimes u$} (A); 
\path[->] (C) edge node [below] {$l$} (B); 
\end{tikzpicture}
\end{align*}
\begin{align*}
Hom_{\mathcal{A}}(\underline{Hom}_{\mathcal{M}}(M,~N)\otimes \underline{Hom}_{\mathcal{M}}(M,~M),~\underline{Hom}_{\mathcal{M}}(M,~N))\\
\cong Hom_{\mathcal{M}}(\underline{Hom}_{\mathcal{M}}(M,~N)\otimes \underline{Hom}_{\mathcal{M}}(M,~M) \otimes M,~N),
\end{align*}
\begin{align*}
Hom_{\mathcal{A}}(\underline{Hom}_{\mathcal{M}}(M,~N),~\underline{Hom}_{\mathcal{M}}(M,~N))\cong Hom_{\mathcal{M}}(\underline{Hom}_{\mathcal{M}}(M,~N)\otimes M,~N).
\end{align*}

So the identity morphism $id_{\underline{Hom}_{\mathcal{M}}(M,~N)}$ corresponds to a morphism 
\begin{align*}
k:~\underline{Hom}_{\mathcal{M}}(M,~N) \otimes M\rightarrow N
\end{align*}

and $a$ corresponds to a composition
\begin{align*}
\underline{Hom}_{\mathcal{M}}(M,~N) \otimes \underline{Hom}_{\mathcal{M}}(M,~M) \otimes M \rightarrow \underline{Hom}_{\mathcal{M}}(M,~N) \otimes M\rightarrow N
\end{align*}

which is $k\circ (id_{\underline{Hom}_{\mathcal{M}}(M,~N)}\otimes f)$ and $f$ is the morphism $\underline{Hom}_{\mathcal{M}}(M,~M) \otimes M\rightarrow M$ that corresponds to $id_{\underline{Hom}_{\mathcal{M}}(M,~M)}$.\\

It is easy to show that those diagrams commute.
\end{proof}
\begin{lem}
If $A=\underline{Hom}_{\mathcal{M}}(M,~M)$ is the algebra defined as above, then $\mathcal{A} A$ is an exact left module category over $\mathcal{A}$. 
\end{lem}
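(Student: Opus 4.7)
The plan is to show that $\mathcal{A}A$ is equivalent, as a left $\mathcal{A}$-module category, to (an indecomposable summand of) $\mathcal{M}$, and then transfer the exactness hypothesis from $\mathcal{M}$.

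First, by Lemma \ref{43}, $\mathcal{A}A$ is a left $\mathcal{A}$-module category under the action $(X,N) \mapsto X \otimes N$, with the right $A$-action on $X \otimes N$ coming from that of $N$; it remains only to establish exactness.

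Next, I would construct the adjoint pair of $\mathcal{A}$-module functors
\begin{equation*}
F : \mathcal{A}A \longrightarrow \mathcal{M},\ Y \mapsto Y \otimes_A M, \qquad G : \mathcal{M} \longrightarrow \mathcal{A}A,\ N \mapsto \underline{Hom}_{\mathcal{M}}(M,N).
\end{equation*}
The functor $G$ lands in $\mathcal{A}A$ by the preceding lemma, which shows that $\underline{Hom}_{\mathcal{M}}(M,N)$ carries a right $A = \underline{Hom}_{\mathcal{M}}(M,M)$-module structure; it is a module functor thanks to the canonical isomorphism $\underline{Hom}_{\mathcal{M}}(M, X\otimes N) \cong X\otimes \underline{Hom}_{\mathcal{M}}(M,N)$ established earlier. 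The functor $F$ is defined as the coequalizer of the two maps $Y\otimes A \otimes M \rightrightarrows Y\otimes M$ coming from the $A$-actions on $Y$ and on $M$.

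The key step is to verify that the unit $Y \to GF(Y) = \underline{Hom}_{\mathcal{M}}(M, Y\otimes_A M)$ and the counit $FG(N) = \underline{Hom}_{\mathcal{M}}(M,N) \otimes_A M \to N$ are isomorphisms, at least on the indecomposable summand of $\mathcal{M}$ containing $M$ (the rest of $\mathcal{M}$ being irrelevant for the claim about $\mathcal{A}A$). The unit is an isomorphism by the universal property defining $A$ together with the module-structure identities already recorded; the counit is the crux, and requires knowing that $M$ generates the summand, which follows from exactness of $\mathcal{M}$ together with rigidity of $\mathcal{A}$, since in an exact indecomposable module category every nonzero object is a generator.

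Granting this equivalence $\mathcal{A}A \simeq \mathcal{M}'$ for the relevant summand $\mathcal{M}'$ of $\mathcal{M}$, exactness of $\mathcal{A}A$ is then immediate: for any projective $P\in\mathcal{A}$ and $Y\in\mathcal{A}A$ corresponding to $N\in\mathcal{M}'$, the object $P\otimes Y$ corresponds to $P\otimes N$, which is projective in $\mathcal{M}'$ (hence in $\mathcal{M}$) by the exactness hypothesis on $\mathcal{M}$, so $P\otimes Y$ is projective in $\mathcal{A}A$. The main obstacle is establishing the counit isomorphism; this is where one must reduce to the generator case via decomposing $\mathcal{M}$ into indecomposable summands and correctly identify the coequalizer $Y\otimes_A M$ with the value of $F$, using the rigidity-based Hom-tensor adjunctions from the earlier lemmas.
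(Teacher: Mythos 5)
The paper states this lemma without any proof, so there is nothing internal to compare your argument against; your proposal has to stand on its own. The route you choose --- establish a Morita-type equivalence between $\mathcal{A}A$ and the indecomposable summand of $\mathcal{M}$ containing $M$ via the adjoint pair $-\otimes_A M$ and $\underline{Hom}_{\mathcal{M}}(M,-)$, then transfer exactness across the equivalence --- is the standard Etingof--Ostrik argument and is essentially the only viable one: exactness of $\mathcal{A}A$ genuinely depends on $A$ having the special form $\underline{Hom}_{\mathcal{M}}(M,M)$ (for a generic algebra $A$ in $\mathcal{A}=Vec_f(k)$ the category $\mathcal{A}A$ is not exact), so one cannot avoid routing through $\mathcal{M}$. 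Your final transfer step is also sound: a module equivalence sends $P\otimes N$ to $P\otimes \Phi(N)$ and preserves projectivity, and a direct summand of an exact module category is exact.

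The gap is in the middle. You invoke the statement that in an indecomposable exact module category every nonzero object is a generator to get the counit isomorphism. That is a true but nontrivial theorem of Etingof--Ostrik, and it appears nowhere in this paper; indeed the paper's own equivalence result (Theorem \ref{9}) is stated later and carries the generation condition as an explicit \emph{hypothesis} ($Hom_{\mathcal{M}}(X\otimes M,N)\neq 0$ for suitable $X$), precisely because that fact is not established here. So as written you are assuming a strengthened form of a later theorem; to make the proof self-contained you would need to prove the generation lemma (e.g.\ via $\underline{Hom}_{\mathcal{M}}(M,N)\neq 0$ for nonzero $M,N$ in an indecomposable exact module category, which uses exactness of $\underline{Hom}_{\mathcal{M}}(M,-)$ from Proposition \ref{81} together with rigidity). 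A second, smaller inaccuracy: the unit $Y\to \underline{Hom}_{\mathcal{M}}(M,Y\otimes_A M)$ is not an isomorphism ``by the universal property'' alone; it is immediate only on free modules $X\otimes A$, and the extension to arbitrary $Y$ requires a free presentation, right-exactness of $-\otimes_A M$, exactness of $\underline{Hom}_{\mathcal{M}}(M,-)$ (Proposition \ref{81}, which needs only that $\mathcal{M}$ is exact, so there is no circularity), and the five lemma. With those two points filled in, the argument goes through.
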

\begin{prop}
\label{81}
\cite{etos} The mapping $\underline{Hom}_{\mathcal{M}}(M,~-):~\mathcal{M} \rightarrow \mathcal{A}A$ is an exact module functor.
\end{prop}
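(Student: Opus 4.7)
The plan is to verify three properties of $\underline{Hom}_{\mathcal{M}}(M,-)$: it is a well-defined functor $\mathcal{M} \to \mathcal{A}A$, it carries a natural $\mathcal{A}$-module functor structure, and it is exact. Functoriality on the underlying categories is immediate from representability: a morphism $\phi:~N \to N'$ in $\mathcal{M}$ induces a natural transformation $Hom_{\mathcal{M}}(-\otimes M,~\phi)$ between the representing functors on $\mathcal{A}$, which by Yoneda corresponds to a unique arrow $\underline{Hom}_{\mathcal{M}}(M,~N) \to \underline{Hom}_{\mathcal{M}}(M,~N')$ in $\mathcal{A}$.

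That the image lives in $\mathcal{A}A$ follows from the previous lemma, which equips $\underline{Hom}_{\mathcal{M}}(M,~N)$ with the structure of a right $A$-module, $A=\underline{Hom}_{\mathcal{M}}(M,~M)$. The induced map on internal homs is a morphism of right $A$-modules: both routes around the compatibility square correspond, under the representing isomorphism of $\mathcal{A}$, to the same arrow $\underline{Hom}_{\mathcal{M}}(M,~N) \otimes A \otimes M \to N'$ obtained by first acting with $A$ on $M$ via the evaluation map defining $A$, then evaluating $\underline{Hom}_{\mathcal{M}}(M,~N)$ into $N$, then applying $\phi$.

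For the $\mathcal{A}$-module functor structure, I would take as structural isomorphism the canonical
\begin{align*}
f_{XN}:~\underline{Hom}_{\mathcal{M}}(M,~X \otimes N) \rightarrow X \otimes \underline{Hom}_{\mathcal{M}}(M,~N)
\end{align*}
provided by the earlier lemma $\underline{Hom}_{\mathcal{M}}(M,~X\otimes N) \cong X \otimes \underline{Hom}_{\mathcal{M}}(M,~N)$. The pentagon and unit triangle required of a module functor can then be reduced, via the Yoneda lemma applied to the representables $Hom_{\mathcal{A}}(-,~\underline{Hom}_{\mathcal{M}}(M,~(X\otimes Y)\otimes N))$ and $Hom_{\mathcal{A}}(-,~X\otimes(Y\otimes \underline{Hom}_{\mathcal{M}}(M,~N)))$, to the associativity pentagon and unit triangle of the left $\mathcal{A}$-module structure on $\mathcal{M}$, both of which hold by hypothesis.

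Finally, exactness is handed to us by Theorem \ref{250}: $\mathcal{M}$ is exact over the FRBSU monoidal category $\mathcal{A}$, so any additive $\mathcal{A}$-module functor out of $\mathcal{M}$ is automatically exact, and additivity of $\underline{Hom}_{\mathcal{M}}(M,-)$ is clear from its description via representability. The main obstacle in the program is the bookkeeping needed to confirm that $f_{XN}$ is a morphism in $\mathcal{A}A$, i.e.\ a right $A$-module map, and that it satisfies the pentagon coherence; once one translates everything via representability into statements about morphisms out of $(\cdot)\otimes M$ in $\mathcal{M}$, each verification collapses to a diagram already commuting in $\mathcal{M}$ by its left $\mathcal{A}$-module axioms.
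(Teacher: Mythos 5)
Your proposal is correct and follows essentially the same route as the paper: the module functor structure is given by the canonical isomorphism $f_{XN}:~\underline{Hom}_{\mathcal{M}}(M,~X\otimes N)\rightarrow X\otimes \underline{Hom}_{\mathcal{M}}(M,~N)$ from the earlier lemma, and exactness is obtained by invoking Theorem \ref{250}. You supply several verifications the paper leaves implicit (functoriality via Yoneda, that the values and structural maps land in $\mathcal{A}A$, additivity), but the underlying argument is the same.
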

\begin{proof}
$f$ is a family of natural isomorphisms $f_{XN}:~\underline{Hom}_{\mathcal{M}}(M,~X\otimes N) \rightarrow X\otimes \underline{Hom}_{\mathcal{M}}(M,~N)$ for all objects $X$ in $\mathcal{A}$, $N$ in $\mathcal{M}$ such that for all $X$, $Y$ in $\mathcal{A}$, $N$ in $\mathcal{M}$, the following diagrams commute.
\begin{equation*}
\xymatrix{ \underline{Hom}_{\mathcal{M}}(M,~(X\otimes Y)\otimes N) \ar[d] \ar[r] & \underline{Hom}_{\mathcal{M}}(M,~X\otimes (Y\otimes N)) \ar[r] & X\otimes \underline{Hom}_{\mathcal{M}}(M,~Y\otimes N) \ar[d]\\ (X\otimes Y)\otimes \underline{Hom}_{\mathcal{M}}(M,~N) \ar[rr] & & X\otimes(Y\otimes \underline{Hom}_{\mathcal{M}}(M,~N)) }
\end{equation*}
\begin{equation*}
\xymatrix{ \underline{Hom}_{\mathcal{M}}(M,~I\otimes N) \ar[rd]_{\mathcal{F}(l(M))} \ar[rr] & & I\otimes \underline{Hom}_{\mathcal{M}}(M,~N) \ar[ld] \\ & \underline{Hom}_{\mathcal{M}}(M,~N)}
\end{equation*}

Exactness comes from \ref{250} and this proves the proposition.
\end{proof}
\begin{thm}
\label{9}
\cite{etos} Assume that $\mathcal{M}$ is an exact module category over a finite, rigid monoidal category $\mathcal{A}$ such that the unit object in $\mathcal{A}$ is simple. Let $\underline{Hom}(M,~M)=A$ is the algebra defined as above. Assume further that there exists an object $X\in \mathcal{A}$ for all objects $N\in \mathcal{M}$ such that $Hom_{\mathcal{M}}(X\otimes M,~N)\neq 0$. Then, the functor $\underline{Hom}_{\mathcal{M}}(M,~-):~\mathcal{M} \rightarrow \mathcal{A}A$ is an equivalence of module categories.
\end{thm}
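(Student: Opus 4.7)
The plan is to exhibit $\underline{Hom}_{\mathcal{M}}(M,-)$ as one half of an adjoint equivalence, with left adjoint $L(K)=K\otimes_{A}M:\mathcal{A}A\to\mathcal{M}$, and then verify that the unit and counit of the adjunction are isomorphisms. Proposition \ref{81} already gives that $\underline{Hom}_{\mathcal{M}}(M,-)$ is an exact module functor into $\mathcal{A}A$, so the main work is the adjunction and its invertibility.

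First I would set up the adjunction. Because $\underline{Hom}_{\mathcal{M}}(M,M)=A$ and $\underline{Hom}_{\mathcal{M}}(M,N)$ is a right $A$-module by the lemma preceding Proposition \ref{81}, one has a natural isomorphism
\begin{equation*}
Hom_{\mathcal{M}}(K\otimes_{A}M,N)\;\cong\;Hom_{\mathcal{A}A}\bigl(K,\underline{Hom}_{\mathcal{M}}(M,N)\bigr)
\end{equation*}
obtained by descending the defining adjunction $Hom_{\mathcal{M}}(K\otimes M,N)\cong Hom_{\mathcal{A}}(K,\underline{Hom}_{\mathcal{M}}(M,N))$ along the right $A$-action (the $A$-balanced morphisms on the left correspond to $A$-linear morphisms on the right). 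This realises $L=-\otimes_{A}M$ as left adjoint to $\underline{Hom}_{\mathcal{M}}(M,-)$, with counit $\varepsilon_{N}:\underline{Hom}_{\mathcal{M}}(M,N)\otimes_{A}M\to N$ the morphism corresponding to $id$, and unit $\eta_{K}:K\to\underline{Hom}_{\mathcal{M}}(M,K\otimes_{A}M)$.

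Next I would verify that the unit is an isomorphism. This is the easy direction: for $K=A$ one has $A\otimes_{A}M\cong M$, so $\eta_{A}$ is just the canonical map $A\to\underline{Hom}_{\mathcal{M}}(M,M)=A$, which is the identity by construction of the algebra structure. Since both functors $L$ and $\underline{Hom}_{\mathcal{M}}(M,-)$ are right exact (the latter is even exact by Proposition \ref{81}; for $L$, right exactness follows from the exactness of $\otimes$ and the definition of $\otimes_{A}$), and every object of $\mathcal{A}A$ admits a presentation by free $A$-modules $A^{\oplus I}\cong A\otimes(\text{object of }\mathcal{A})$ which one handles using the module structure isomorphism $\underline{Hom}_{\mathcal{M}}(M,X\otimes N)\cong X\otimes\underline{Hom}_{\mathcal{M}}(M,N)$, $\eta_{K}$ being an isomorphism on free modules extends to all $K\in\mathcal{A}A$ by a standard five-lemma argument.

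The main obstacle is showing the counit $\varepsilon_{N}$ is an isomorphism, and this is where the nontriviality hypothesis is essential. I would argue as follows. Let $C$ denote the cokernel of $\varepsilon_{N}$ in $\mathcal{M}$; right exactness of $\underline{Hom}_{\mathcal{M}}(M,-)$ combined with the fact that $\underline{Hom}_{\mathcal{M}}(M,-)$ applied to $\varepsilon_{N}$ is a split epimorphism (by the triangle identity for the adjunction together with $\eta$ being an isomorphism) forces $\underline{Hom}_{\mathcal{M}}(M,C)=0$, i.e.\ $Hom_{\mathcal{A}}(X,\underline{Hom}_{\mathcal{M}}(M,C))=0$ for all $X$, which by the defining adjunction means $Hom_{\mathcal{M}}(X\otimes M,C)=0$ for all $X\in\mathcal{A}$. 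The hypothesis then forces $C=0$, so $\varepsilon_{N}$ is epi. Dually, applying the same reasoning to the kernel of $\varepsilon_{N}$ (using exactness of $\underline{Hom}_{\mathcal{M}}(M,-)$ from Proposition \ref{81} and that $L$ is exact when $\mathcal{M}$ is exact, via Theorem \ref{250}) yields that $\varepsilon_{N}$ is mono, hence an isomorphism. Combining invertibility of $\eta$ and $\varepsilon$, the adjoint pair is an equivalence, which completes the proof.
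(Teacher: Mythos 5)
Your proof is correct in its essentials, but it takes a genuinely different route from the one the paper follows. The paper's proof (following \cite{os}) splits the equivalence into two separate verifications: first that $Hom_{\mathcal{M}}(N,K)\rightarrow Hom_{\mathcal{A}A}(\underline{Hom}_{\mathcal{M}}(M,N),\underline{Hom}_{\mathcal{M}}(M,K))$ is an isomorphism, proved first for objects $N$ of the form $X\otimes M$ and then extended to all $N$ by exactness of $\underline{Hom}_{\mathcal{M}}(M,-)$, and second that the functor is essentially surjective. You instead construct the explicit quasi-inverse $L=-\otimes_A M$, descend the defining adjunction $Hom_{\mathcal{M}}(K\otimes M,N)\cong Hom_{\mathcal{A}}(K,\underline{Hom}_{\mathcal{M}}(M,N))$ to the $A$-balanced level, and prove that the unit and counit are isomorphisms; this is the Morita-theoretic (Eilenberg--Watts style) argument. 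What your approach buys is a single mechanism that delivers full faithfulness and essential surjectivity simultaneously, together with an explicit inverse functor, and it isolates exactly where the generation hypothesis enters (killing the kernel and cokernel of the counit); the cost is that you must justify the existence of the relative tensor product $K\otimes_A M$ in $\mathcal{M}$ (available since $\mathcal{M}$ is abelian, hence has coequalizers) and the compatibility of the two $A$-actions under the internal-hom adjunction, neither of which the paper's route needs. Two small points to tidy: free right $A$-modules should be written $X\otimes A$ for $X$ an object of $\mathcal{A}$ rather than $A^{\oplus I}$ (direct sums of copies of $A$ do not suffice in this setting, though your parenthetical shows you intend the correct notion, and the presentation exists because the action map $K\otimes A\rightarrow K$ is an epimorphism of modules split in $\mathcal{A}$ by the unit); and the exactness of $L$ invoked in the monomorphism step is not actually needed, since left exactness of $\underline{Hom}_{\mathcal{M}}(M,-)$ from Proposition \ref{81} applied to the kernel sequence already suffices.
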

\begin{proof}
We need to show that 
\begin{align*}
Hom_{\mathcal{M}}(N,~K)\rightarrow Hom_{\mathcal{A}A}(\underline{Hom}_{\mathcal{M}}(M,~N),~\underline{Hom}_{\mathcal{M}}(M,~K))
\end{align*}

is an isomorphism for all objects $N$ and $K$ in $\mathcal{M}$ and $\mathcal{F}$ is essentially surjective for the equivalence. \cite{os} proves the isomorphism for all objects $N$ of the form $X\otimes M$ and for all objects $K$ in $\mathcal{M}$ first, then the author proves it for all objects $N$ and $K$ in $\mathcal{M}$ by using the exactness of the functor $\underline{Hom}_{\mathcal{M}}(M,~-)$. After that, \cite{os} shows that $\underline{Hom}_{\mathcal{M}}(M,~-)$ is essentially surjective.\\

We may apply similar way to prove the proposition.  In \cite{os}, the proposition is given whenever the category is semisimple, hence exact means semisimple in such a category. $\mathcal{M}$ is indecomposable module category there instead of the assumption for $\mathcal{M}$ in here.
\end{proof}
\begin{exmp}
$Vec_f(k)$ is an FRBSU monoidal category and it is an exact left module category over itself with the tensor multiplication. Let $A=\underline{Hom}_{Vec_f(k)}(M,~M)$. $A$ is an algebra over $k$ for a right $A$ module $M$ in $Vec_f(k)$. For all right $A$ modules $N$ in $Vec_f(k)$, we get a surjection $N\otimes M\rightarrow N$, hence $M$ genetares $Vec_f(k)$ and as a result, $Vec_f(k)\simeq Vec_f(k)A$ by Theorem \ref{9}.
\end{exmp}
\begin{lem}
\label{79}
The left module category $\mathcal{A} A$ for $A=\underline{Hom}_{\mathcal{M}}(M,~M)$ for some object $M$ in $\mathcal{M}$ is a finite category, hence $\mathcal{M}$ is a finite category.
\end{lem}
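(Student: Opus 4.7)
The plan is to deduce finiteness of $\mathcal{M}$ from that of $\mathcal{A}A$ by invoking the equivalence of Theorem \ref{9}, so the bulk of the work is to verify that $\mathcal{A}A$ satisfies the four defining properties of a finite $k$-linear abelian category when $A=\underline{Hom}_{\mathcal{M}}(M,M)$ is any algebra in the FRBSU category $\mathcal{A}$. The main tool will be the free/forgetful adjunction
\begin{equation*}
-\otimes A:\ \mathcal{A}\rightleftarrows \mathcal{A}A\ :\mathrm{forget},
\end{equation*}
together with the fact that $\otimes$ is biexact in $\mathcal{A}$ by Remark \ref{82} and that $\mathcal{A}$ itself is finite.

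First I would check that $Hom_{\mathcal{A}A}(X,Y)$ is a finite-dimensional $k$-vector space for any two right $A$-modules $X,Y$. This follows because an $A$-linear morphism is in particular a morphism in $\mathcal{A}$, so $Hom_{\mathcal{A}A}(X,Y)\hookrightarrow Hom_{\mathcal{A}}(X,Y)$ is a subspace of a finite-dimensional space. Next I would argue that every object of $\mathcal{A}A$ has finite length: any object $X\in\mathcal{A}A$ has finite length in $\mathcal{A}$, and every chain of $A$-submodules of $X$ is in particular a chain in $\mathcal{A}$, so its length is bounded by the length of $X$ in $\mathcal{A}$.

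Then I would handle simple objects and projective covers. For each simple object $S$ in $\mathcal{A}$ one obtains by adjunction that $S\otimes A$ is a compact projective in $\mathcal{A}A$ whose simple quotients give every simple right $A$-module up to isomorphism. Since $\mathcal{A}$ has finitely many isomorphism classes of simples and each $S\otimes A$ has finite length in $\mathcal{A}A$ (by the previous step, since it has finite length in $\mathcal{A}$), only finitely many simples of $\mathcal{A}A$ can occur as composition factors in the $S\otimes A$'s, and this list is exhaustive. For projective covers, start with a projective cover $P\twoheadrightarrow S$ in $\mathcal{A}$ of the underlying object of a simple $A$-module $S$; then $P\otimes A\twoheadrightarrow S\otimes A\twoheadrightarrow S$ exhibits $S$ as a quotient of a projective in $\mathcal{A}A$, and standard arguments on finite length abelian categories extract a projective cover. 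This shows $\mathcal{A}A$ is finite.

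Finally, I would verify that the hypothesis of Theorem \ref{9} is satisfied so that $\underline{Hom}_{\mathcal{M}}(M,-):\mathcal{M}\to\mathcal{A}A$ is an equivalence of module categories, and conclude $\mathcal{M}\simeq\mathcal{A}A$ is finite. The main obstacle I foresee is the simple-object/projective-cover step: transferring a projective cover from $\mathcal{A}$ to $\mathcal{A}A$ through the free functor $-\otimes A$ requires a careful idempotent/Fitting-style argument because $-\otimes A$ need not preserve simplicity, and one has to check that the projective module thereby produced actually covers the simple module it maps onto in the $A$-module category. The other conditions reduce cleanly to the corresponding statements in $\mathcal{A}$, but this one needs genuine abelian-category bookkeeping.
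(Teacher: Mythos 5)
The paper states Lemma \ref{79} with no proof at all, so there is nothing to compare your argument against line by line; your strategy --- establish finiteness of $\mathcal{A}A$ directly via the free--forgetful adjunction $-\otimes A\dashv U$ and then transport it to $\mathcal{M}$ along the equivalence of Theorem \ref{9} --- is the natural one and evidently what the surrounding text intends. The reductions of Hom-finiteness and finite length to the corresponding properties of $\mathcal{A}$ are correct: the forgetful functor is faithful and exact, so $Hom_{\mathcal{A}A}(X,Y)$ is a subspace of $Hom_{\mathcal{A}}(X,Y)$ and strictly increasing chains of $A$-submodules remain strictly increasing in $\mathcal{A}$. The counting of simples and the construction of projective covers from $P\otimes A$ with $P$ projective in $\mathcal{A}$ also go through, using that $-\otimes A$ is exact by Remark \ref{82} and that the resulting Hom-finite, finite-length situation admits the usual Fitting/Krull--Schmidt extraction of a cover.

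Two points need correction or emphasis. First, your assertion that $S\otimes A$ is projective in $\mathcal{A}A$ for $S$ merely simple is unjustified: by the adjunction $Hom_{\mathcal{A}A}(X\otimes A,-)\cong Hom_{\mathcal{A}}(X,U(-))$, the object $X\otimes A$ is projective only when $X$ is projective in $\mathcal{A}$, and an FRBSU category need not be semisimple. Fortunately your counting argument uses only that each $S\otimes A$ has finite length and that every simple right $A$-module is a quotient of some $S\otimes A$ (via a nonzero map $S\to U(T)$ and adjunction), so the overstatement is harmless --- but delete the word ``projective'' there. Second, the step ``hence $\mathcal{M}$ is finite'' genuinely requires the generating hypothesis of Theorem \ref{9}, namely that for every $N\in\mathcal{M}$ there is an $X\in\mathcal{A}$ with $Hom_{\mathcal{M}}(X\otimes M,N)\neq 0$; this can fail for an arbitrary $M$ (for instance $M$ supported in one summand of a decomposable $\mathcal{M}$). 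That is really a defect of the lemma as stated (``for some object $M$''), but your proof should say explicitly that $M$ must be chosen to be a generator, or that one first reduces to indecomposable summands, rather than merely promising to ``verify'' a hypothesis that is not automatic.
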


\section{ Invertible Bimodule Categories}
We use \cite{gr} and \cite{etnios} at most for the following information.
\begin{lem} 
\label{36}
Assume that $\mathcal{M}$ is an $(\mathcal{A}-\mathcal{B})$ bimodule category and $\mathcal{N}$ is a $(\mathcal{B}-\mathcal{C})$ bimodule category for given finite, rigid monoidal categories $\mathcal{A}$, $\mathcal{B}$ and $\mathcal{C}$. Then, $Hom^{re}_{\mathcal{B}}(\mathcal{M}^{op},~\mathcal{N})$ is an $(\mathcal{A}-\mathcal{C})$ bimodule category. It is abelian. If $\mathcal{A}=\mathcal{B}$, $\mathcal{M}$ and $\mathcal{N}$ are exact, then it is exact. 
\end{lem}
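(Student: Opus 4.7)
The plan is to build the bimodule structure on $Hom^{re}_{\mathcal{B}}(\mathcal{M}^{op},~\mathcal{N})$ by transporting the right $\mathcal{A}$-action on $\mathcal{M}^{op}$ (which exists by Proposition \ref{1}) and the right $\mathcal{C}$-action on $\mathcal{N}$ to the functor category: the former becomes a left $\mathcal{A}$-action by \emph{precomposition}, the latter becomes a right $\mathcal{C}$-action by \emph{postcomposition}. Explicitly, for $A\in \mathcal{A}$, $C\in \mathcal{C}$ and $\mathcal{F}\in Hom^{re}_{\mathcal{B}}(\mathcal{M}^{op},~\mathcal{N})$, I would set
\begin{align*}
(A\triangleright \mathcal{F})(M)=\mathcal{F}(M\otimes A),\qquad (\mathcal{F}\triangleleft C)(M)=\mathcal{F}(M)\otimes C,
\end{align*}
with the obvious extension to morphisms. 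Both new functors remain right exact and left $\mathcal{B}$-linear: right exactness is preserved because the bimodule actions on $\mathcal{M}^{op}$ and on $\mathcal{N}$ are exact in each variable, while the left $\mathcal{B}$-module functor structure on each is built by conjugating $\mathcal{F}$'s own structure through the middle associator of the relevant bimodule.

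Next I would verify the coherence axioms. The pentagon for $\triangleright$ reduces to the right $\mathcal{A}$-module pentagon of $\mathcal{M}^{op}$, and the pentagon for $\triangleleft$ reduces to the right $\mathcal{C}$-module pentagon of $\mathcal{N}$. The middle associator between the two actions is essentially tautological since they act on opposite sides of $\mathcal{F}$:
\begin{align*}
((A\triangleright \mathcal{F})\triangleleft C)(M)=\mathcal{F}(M\otimes A)\otimes C=(A\triangleright (\mathcal{F}\triangleleft C))(M),
\end{align*}
and the unit triangles follow from the unit isomorphisms of $\mathcal{M}^{op}$ and $\mathcal{N}$. Abelianness of $Hom^{re}_{\mathcal{B}}(\mathcal{M}^{op},~\mathcal{N})$ I would obtain by forming kernels and cokernels of natural transformations pointwise in the abelian category $\mathcal{N}$; since the $\mathcal{B}$-action on $\mathcal{N}$ is exact, the pointwise kernel and cokernel automatically inherit a $\mathcal{B}$-module functor structure and remain right exact.

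For the exactness claim when $\mathcal{A}=\mathcal{B}$ and $\mathcal{M},~\mathcal{N}$ are exact, I would reduce to the concrete algebra picture already developed. Using Theorem \ref{9} one identifies $\mathcal{M}^{op}\simeq \mathcal{A}A$ and $\mathcal{N}\simeq \mathcal{A}B$ as left $\mathcal{A}$-module categories for suitable algebras $A,~B\in \mathcal{A}$, and then Proposition \ref{10} yields
\begin{align*}
Hom^{re}_{\mathcal{A}}(\mathcal{M}^{op},~\mathcal{N})\simeq A\mathcal{A}B.
\end{align*}
The $(\mathcal{A}-\mathcal{A})$ bimodule structure I constructed corresponds, under this equivalence, to the one on $A\mathcal{A}B$ treated in an earlier lemma, and the remaining right $\mathcal{C}$-action transports in the same way. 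Exactness as a left $\mathcal{A}$-module category then reduces to the claim that whenever $P\in \mathcal{A}$ is projective and $N\in A\mathcal{A}B$, the bimodule $P\otimes N$ is still projective in $A\mathcal{A}B$; this follows from Lemma \ref{72} (stating that $\mathcal{A}$ is exact over itself) combined with the restriction-induction adjunction between $\mathcal{A}$ and $A\mathcal{A}B$, and the right $\mathcal{C}$-side is symmetric. The main obstacle I expect is exactly this last step: transferring projectivity across the restriction-induction adjunction requires careful tracking of the bimodule structure to ensure that a projective object in $\mathcal{A}$ produces a projective $(A-B)$ bimodule; by contrast, the bimodule coherences and abelianness are essentially formal consequences of the axioms already developed in the paper.
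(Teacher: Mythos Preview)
Your construction of the bimodule structure is exactly the paper's: the same left action $(A\triangleright\mathcal{F})(M)=\mathcal{F}(M\otimes A)$ via the right $\mathcal{A}$-action on $\mathcal{M}^{op}$, the same right action $(\mathcal{F}\triangleleft C)(M)=\mathcal{F}(M)\otimes C$, and the associativity constraints inherited from those of $\mathcal{M}^{op}$ and $\mathcal{N}$. The paper spends most of its proof checking that the left action bifunctor is exact in each variable (invoking Theorem~\ref{250} to upgrade right exactness of $\mathcal{F}$ to exactness), a point you pass over more quickly; conversely, the paper's proof simply stops after the middle associator and never actually argues abelianness or the exactness claim when $\mathcal{A}=\mathcal{B}$, whereas you supply a pointwise-kernel argument for the former and a reduction to $A\mathcal{A}B$ via Theorem~\ref{9} and Proposition~\ref{10} for the latter. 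So your approach coincides with the paper's where the paper gives detail, and fills in the two assertions the paper leaves unproved; your flagged obstacle about transferring projectivity through the restriction--induction adjunction is a real one and is not addressed in the paper either.
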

\begin{proof}
To prove that $Hom^{re}_{\mathcal{B}}(\mathcal{M}^{op},~\mathcal{N})$ is an $(\mathcal{A}-\mathcal{C})$ bimodule category, we define the left action of $\mathcal{A}$ on $Hom^{re}_{\mathcal{B}}(\mathcal{M}^{op},~\mathcal{N})$ by 
\begin{align*}
\mathfrak{F}:~\mathcal{A} \times Hom^{re}_{\mathcal{B}}(\mathcal{M}^{op},~\mathcal{N})\rightarrow Hom^{re}_{\mathcal{B}}(\mathcal{M}^{op},~\mathcal{N}),~(A,~\mathcal{F})\rightarrow A\otimes \mathcal{F}
\end{align*}

for all objects $A$ in $\mathcal{A}$ and right exact module functors $\mathcal{F}:~\mathcal{M}^{op} \rightarrow \mathcal{N}$. Here, we have $(A\otimes \mathcal{F})(M)=\mathcal{F}(M\otimes A)$ for all objects $M$ in $\mathcal{M}^{op}$. $\mathcal{M}^{op}$ is a right $\mathcal{A}$ module category, so $M\otimes A$ is an object in $\mathcal{M}^{op}$. We need to show that $\mathfrak{F}$ is a biexact bifunctor, that is $\mathfrak{F}(-,~\mathcal{F})$ is exact for all right exact module functors $\mathcal{F}:~\mathcal{M}^{op} \rightarrow \mathcal{N}$ which means that exact in the first variable and $\mathfrak{F}(A,~-)$ is exact for all objects $A$ in $\mathcal{A}$ which means that exact in the second variable.\\

To prove that $\mathfrak{F}(-,~\mathcal{F})$ is an exact functor, we need to show that 
\begin{align*}
0\rightarrow A\otimes \mathcal{F} \rightarrow B\otimes \mathcal{F} \rightarrow C\otimes \mathcal{F} \rightarrow 0
\end{align*}

is an exact sequence of natural transformations of right exact module functors from $\mathcal{M}^{op}$ to $\mathcal{N}$ whenever the sequence $0\rightarrow A\rightarrow B\rightarrow C\rightarrow 0$ is exact. That sequence of natural transformations is a sequence of morphisms 
\begin{align}
0\rightarrow (A\otimes \mathcal{F})(M) \rightarrow (B\otimes \mathcal{F})(M) \rightarrow (C\otimes \mathcal{F})(M) \rightarrow 0
\end{align}

for all objects $M$ in $\mathcal{M}^{op}$ which satisfies the compatibility conditions for all morphisms $M\rightarrow N$ in $\mathcal{M}^{op}$. This sequence is same as the sequence 
\begin{align}
\label{17}
0\rightarrow \mathcal{F}(M\otimes A) \rightarrow \mathcal{F}(M\otimes B) \rightarrow \mathcal{F}(M\otimes C) \rightarrow 0
\end{align}

by the above action. The sequence $0\rightarrow M\otimes A\rightarrow M\otimes B\rightarrow M\otimes C\rightarrow 0$ is exact since the action is an exact functor by definition of module category. As a result, \ref{17} is an exact sequence by Lemma \ref{250}.\\  

Assume that $0\rightarrow \mathcal{F} \rightarrow \mathcal{G} \rightarrow \mathcal{H} \rightarrow 0$ is an exact sequence of module functors in $Hom^{re}_{\mathcal{B}}(\mathcal{M}^{op},~\mathcal{N})$. It is clear that the sequence $0\rightarrow A\otimes \mathcal{F} \rightarrow A\otimes \mathcal{G} \rightarrow A\otimes \mathcal{H}\rightarrow 0$ is exact. As a result, $\mathfrak{F}(A,~-)$ is an exact functor.\\ 

How do we get an associativity constraint $a$ consisting of a family of associativity isomorphism $a_{AB\mathcal{F}}:~(A\otimes B)\otimes \mathcal{F} \rightarrow A\otimes (B\otimes \mathcal{F})$ for all objects $A$, $B$ in $\mathcal{A}$, right exact module functors $\mathcal{F}:~\mathcal{M}^{op} \rightarrow \mathcal{N}$ and a unit constraint $l$ consisting of a family of unit isomorphisms $l_{\mathcal{F}}:~I\otimes \mathcal{F} \rightarrow \mathcal{F}$ for the unit object $I$ in $\mathcal{A}$, module functor $\mathcal{F}:~\mathcal{M}^{op} \rightarrow \mathcal{N}$ making the required diagrams commute.\\

For all objects $A$, $B$ in $\mathcal{A}$, module functors $\mathcal{F}:~\mathcal{M}^{op} \rightarrow \mathcal{N}$, we get
\begin{align*}
((A\otimes B) \otimes \mathcal{F})(M)=\mathcal{F}(M\otimes (A\otimes B)),\\ (A\otimes (B\otimes \mathcal{F}))(M)=(B\otimes \mathcal{F})(M\otimes A)=\mathcal{F}((M\otimes A)\otimes B).
\end{align*}

$\mathcal{M}^{op}$ is a right $\mathcal{A}$ module category, so $M\otimes (A\otimes B)\cong (M\otimes A)\otimes B)$. We get a short exact sequence $0\rightarrow M\otimes (A\otimes B)\rightarrow (M\otimes A)\otimes B\rightarrow 0$. $\mathcal{F}$ is exact by \ref{250}, so the sequence $0\rightarrow \mathcal{F}(M\otimes (A\otimes B))\rightarrow \mathcal{F}((M\otimes A)\otimes B)\rightarrow 0$ is exact, hence we get an isomorphism $\mathcal{F}(M\otimes (A\otimes B)) \cong \mathcal{F}((M\otimes A)\otimes B)$ and use this isomorphism as an associativity constraint. Similarly, we define $l$.\\ 
 
$\mathfrak{G}:~Hom^{re}_{\mathcal{B}}(\mathcal{M}^{op},~\mathcal{N})\times \mathcal{C} \rightarrow Hom^{re}_{\mathcal{B}}(\mathcal{M}^{op},~\mathcal{N})$ is the right action of $\mathcal{C}$ on $Hom^{re}_{\mathcal{B}}(\mathcal{M}^{op},~\mathcal{N})$ with $(\mathcal{F},~C)\rightarrow \mathcal{F} \otimes C$ for all objects $C$ in $\mathcal{C}$ and right exact module functors $\mathcal{F}:~\mathcal{M}^{op} \rightarrow \mathcal{N}$.\\

Here, $\mathcal{F} \otimes C:~\mathcal{M}^{op} \rightarrow \mathcal{N}$ such that $(\mathcal{F} \otimes C)(M)=\mathcal{F}(M) \otimes C$ for all objects $M$ in $\mathcal{M}^{op}$. $\mathcal{N}$ is right $\mathcal{C}$ module category, so $\mathcal{F}(M) \otimes C$ is an object in $\mathcal{N}$. We can show that $\mathfrak{G}$ is a biexact bifunctor in a similar way. After that we find a right associativity constraint and a right unit constraint satisfying the required conditions.\\

Finally, we find a middle associativity constraint satisfying two commutative diagrams, hence the lemma is proved.
\end{proof}
\begin{cor}
\label{35}
If $\mathcal{M}$ is an $(\mathcal{A}-\mathcal{B})$ bimodule category, then 
\begin{align}
Hom^{re}_{\mathcal{B}}(\mathcal{M}^{op},~\mathcal{B}) \simeq \mathcal{M} \simeq Hom^{re}_{\mathcal{A}}(\mathcal{A}^{op},~\mathcal{M})
\end{align}

canonically as $(\mathcal{A}-\mathcal{B})$ bimodule categories.
\end{cor}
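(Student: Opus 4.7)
The plan is to produce canonical bimodule equivalences, both realized by internal-hom-type (Yoneda-style) constructions, and reduce the verifications to results already established in the paper. For the first equivalence define
\[ \Phi: \mathcal{M} \to Hom^{re}_{\mathcal{B}}(\mathcal{M}^{op}, \mathcal{B}), \qquad \Phi(M)(N) = \underline{Hom}^{\mathcal{B}}_{\mathcal{M}}(N, M), \]
the object of $\mathcal{B}$ representing the functor $X \mapsto Hom_{\mathcal{M}}(N \otimes X, M)$ for $X \in \mathcal{B}$; this representing object exists as a right-module analog of Lemma \ref{75} combined with rigidity of $\mathcal{B}$. Right-exactness of $\Phi(M)$ on $\mathcal{M}^{op}$ and its $\mathcal{B}$-module functor structure come from the standard internal-hom identities (the right analogs of the lemmas preceding Theorem \ref{9}), so $\Phi(M)$ really lies in $Hom^{re}_{\mathcal{B}}(\mathcal{M}^{op}, \mathcal{B})$.

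To show $\Phi$ is an equivalence of underlying categories, I reduce to Proposition \ref{10}. By the right-module version of Theorem \ref{9}, pick an algebra $C \in \mathcal{B}$ with $\mathcal{M} \simeq C\mathcal{B}$ as right $\mathcal{B}$-module categories; Proposition \ref{1} then identifies $\mathcal{M}^{op} \simeq \mathcal{B}C$ as left $\mathcal{B}$-module categories. Proposition \ref{10} (with $A = C$, $B = I$) yields $Hom^{re}_{\mathcal{B}}(\mathcal{B}C, \mathcal{B}) \simeq C\mathcal{B} \simeq \mathcal{M}$, and unwinding the identifications shows that this composite agrees with $\Phi$ up to canonical natural isomorphism, so $\Phi$ is fully faithful and essentially surjective.

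The step I expect to be the main obstacle is checking that $\Phi$ is a functor of $(\mathcal{A}-\mathcal{B})$-bimodule categories in the sense of Lemma \ref{36}. The right $\mathcal{B}$-compatibility $\Phi(M \otimes B) \cong \Phi(M) \otimes B$ reduces to the canonical isomorphism $\underline{Hom}^{\mathcal{B}}_{\mathcal{M}}(N, M \otimes B) \cong \underline{Hom}^{\mathcal{B}}_{\mathcal{M}}(N, M) \otimes B$ coming from rigidity of $\mathcal{B}$, the right analog of one of the earlier internal-hom lemmas. The left $\mathcal{A}$-compatibility is subtler: Lemma \ref{36} gives $(A \otimes \Phi(M))(N) = \Phi(M)(A^+ \otimes N)$, so the required identity becomes $\underline{Hom}^{\mathcal{B}}_{\mathcal{M}}(A^+ \otimes N, M) \cong \underline{Hom}^{\mathcal{B}}_{\mathcal{M}}(N, A \otimes M)$, which reduces via the representing property to the rigid adjunction on $\mathcal{M}$ (shifting $A$ across an ${}^+A$) combined with middle associativity of the bimodule. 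Carefully matching the paper's choice between $A^+$ and ${}^+A$ from Proposition \ref{1} is the delicate point.

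The second equivalence is handled by the symmetric construction: define $\Psi: \mathcal{M} \to Hom^{re}_{\mathcal{A}}(\mathcal{A}^{op}, \mathcal{M})$ by $\Psi(M)(A) = A^+ \otimes M$, using the left $\mathcal{A}$-action on $\mathcal{M}$. A direct computation with $(X \otimes_{\mathcal{A}^{op}} A)^+ = X \otimes A^+$ (via Lemma \ref{71} and $({}^+X)^+ = X$) shows $\Psi(M)$ is a left $\mathcal{A}$-module functor, and right-exactness is immediate. Essential surjectivity follows by setting $M := \mathcal{G}(I)$ for a given right exact module functor $\mathcal{G}$, in analogy with the proof of Proposition \ref{10}, and the bimodule compatibility check is formally parallel to the one for $\Phi$. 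The canonical nature of both equivalences is inherited from the Yoneda-type universal property built into the definitions.
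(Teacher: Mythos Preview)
The paper states this corollary without proof, presumably intending it as an immediate specialization of Proposition~\ref{10}: taking $A=I$ (respectively $B=I$) there, together with an identification $\mathcal{M}\simeq\mathcal{A}B$ (respectively $\mathcal{M}\simeq C\mathcal{B}$) from Theorem~\ref{9} and the duality equivalence $(-)^+:\mathcal{A}^{op}\to\mathcal{A}$ of left $\mathcal{A}$-module categories, yields both equivalences; the bimodule structures then match by Lemma~\ref{36}. Your argument is correct and follows this same route. For the second equivalence your functor $\Psi(M)(A)=A^+\otimes M$ is literally the composite of $(-)^+:\mathcal{A}^{op}\to\mathcal{A}$ with the Proposition~\ref{10} functor $-\otimes_I M$, and your compatibility check via Lemma~\ref{71} and $({}^+X)^+=X$ is exactly right.

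The only difference worth noting is that for the first equivalence you introduce the right internal Hom $\underline{Hom}^{\mathcal{B}}_{\mathcal{M}}(N,M)$ to define $\Phi$ canonically before reducing to Proposition~\ref{10}. This is fine, but adds a hypothesis: the paper only constructs internal Homs under the standing exactness assumption of Section~2, so your $\Phi$ is a priori defined only for exact $\mathcal{M}$. Since you reduce to Theorem~\ref{9} anyway (which already requires exactness), nothing is lost, but the internal-Hom layer is not strictly needed: your second paragraph already gives the equivalence directly from Proposition~\ref{10} with $A=C$, $B=I$, and one can take that as the \emph{definition} of the equivalence rather than as a verification that $\Phi$ is one. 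The step ``unwinding the identifications shows that this composite agrees with $\Phi$'' then becomes the computation $\underline{Hom}^{\mathcal{B}}_{C\mathcal{B}}(N,M)\cong {}^+N\otimes_C M$ (the right-module analogue of Lemma~\ref{222}), which you should state explicitly if you keep the internal-Hom formulation.
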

\begin{lem}
\label{40}
\cite{os} $\mathcal{F}_{\mathcal{M}}:~\mathcal{A} \rightarrow Hom^{re}_{\mathcal{A}}(\mathcal{M},~\mathcal{M})$ is a monoidal functor that takes any object $A$ in $\mathcal{A}$ to $A\otimes_{l\mathcal{M}} -$ where $\mathcal{A}$ is a finite, braided monoidal category and $\mathcal{M}$ is a left $\mathcal{A}$ module category.
\end{lem}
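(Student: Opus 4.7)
The plan is to exhibit $\mathcal{F}_{\mathcal{M}}$ as a monoidal functor by first producing, for every object $A$ in $\mathcal{A}$, a module functor structure on the endofunctor $A\otimes_{l\mathcal{M}} -$, and then supplying the required monoidal coherence data $\gamma$ and $\varphi$ together with the verification of the pentagon/unit axioms. The crucial ingredient throughout is the braiding $c$ of $\mathcal{A}$, which is what allows $A\otimes -$ to be a \emph{left} $\mathcal{A}$ module functor rather than just a plain functor.

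First, I would fix $A\in\mathcal{A}$ and construct a natural isomorphism
\[
f^A_{XM}:A\otimes_{l\mathcal{M}}(X\otimes_{l\mathcal{M}} M)\;\longrightarrow\; X\otimes_{l\mathcal{M}}(A\otimes_{l\mathcal{M}} M)
\]
as the composite
\[
A\otimes(X\otimes M)\xrightarrow{a^{-1}}(A\otimes X)\otimes M\xrightarrow{c_{AX}\otimes id_M}(X\otimes A)\otimes M\xrightarrow{a}X\otimes(A\otimes M),
\]
where I suppress the subscripts. I would then check the two module-functor coherence diagrams (the mixed-associativity hexagon and the unit triangle) by decomposing them into pieces coming from the pentagon for the $\mathcal{A}$-action on $\mathcal{M}$, naturality of $a$, and \emph{one of the two hexagon axioms} of the braiding. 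Exactness (hence right exactness) of $A\otimes -$ is immediate from the definition of a module category, so $\mathcal{F}_{\mathcal{M}}(A)$ genuinely lies in $Hom^{re}_{\mathcal{A}}(\mathcal{M},\mathcal{M})$; on morphisms $\alpha:A\to B$ in $\mathcal{A}$, I set $\mathcal{F}_{\mathcal{M}}(\alpha)_M=\alpha\otimes id_M$, and naturality of the braiding in the first slot shows this is a morphism of module functors.

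Next I would produce the monoidal data. For $\gamma$, I define
\[
\gamma_{AB}:\mathcal{F}_{\mathcal{M}}(A)\circ\mathcal{F}_{\mathcal{M}}(B)\;\Longrightarrow\;\mathcal{F}_{\mathcal{M}}(A\otimes B),\qquad (\gamma_{AB})_M=a^{-1}_{ABM}:A\otimes(B\otimes M)\to(A\otimes B)\otimes M,
\]
and for $\varphi$, I define $(\varphi)_M=l_M^{-1}:M\to I\otimes M$. I would verify that $\gamma_{AB}$ is a 2-arrow in $Hom^{re}_{\mathcal{A}}(\mathcal{M},\mathcal{M})$, i.e.\ that it intertwines the module structures $f^A\circ(A\otimes f^B)$ and $f^{A\otimes B}$; this reduces to a diagram built from two braiding-triangles of type $c_{A\otimes B,X}$ versus $(c_{AX}\otimes id)\circ(id\otimes c_{BX})$, and this equality is precisely the content of the remaining hexagon axiom. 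The associativity pentagon for $(\mathcal{F}_{\mathcal{M}},\gamma,\varphi)$ collapses into the pentagon for $a_{l\mathcal{M}}$ applied component-wise, and the two unit triangles collapse into the unit triangle for the module action.

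The main obstacle I anticipate is exactly this compatibility check between $\gamma$ and the module-functor structures $f^A$, because it is where the braiding is non-trivially used and where one must be careful not to mix up the two hexagons (the wrong choice gives the opposite braiding, which would yield a monoidal functor $\mathcal{A}^{rev}\to Hom^{re}_{\mathcal{A}}(\mathcal{M},\mathcal{M})$ instead of $\mathcal{A}\to Hom^{re}_{\mathcal{A}}(\mathcal{M},\mathcal{M})$). Once the correct convention for $c$ is pinned down, all the other verifications — naturality, pentagon, unit — are purely diagrammatic consequences of the module-category axioms together with the corresponding monoidal coherences in $\mathcal{A}$, and can be assembled from the diagram technology already exercised in Proposition \ref{32}.
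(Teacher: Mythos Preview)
Your proposal is correct and follows essentially the same approach as the paper: both construct the module-functor structure on $A\otimes -$ via $f_{XM}=a\circ(c_{AX}\otimes id_M)\circ a^{-1}$, take $\gamma_{AB}=a^{-1}_{ABM}$ and $\varphi=l^{-1}$, and appeal to the module-category pentagon and the braiding hexagons for the coherence checks. In fact you spell out more than the paper does (the compatibility of $\gamma$ with the $f^A$'s via the second hexagon, and the caution about $c$ versus $c^{-1}$), whereas the paper merely asserts these verifications are easy.
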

\begin{proof}
First, we want to show that $A\otimes -$ is a right exact $\mathcal{A}$ module functor for all objects $A$ in $\mathcal{A}$. The right exactness is clear by definition of module category.\\

Hom sets are vector spaces, because $\mathcal{M}$ is finite by Lemma \ref{79}. That functor is $k$ linear since all functions $f:~Hom_{\mathcal{M}}(A,~B)\rightarrow Hom_{\mathcal{M}}(A\otimes -,~B\otimes -)$ are linear maps for all objects $A$ and $B$ in $\mathcal{M}$.\\

For the functors $\mathcal{F}_1,~\mathcal{F}_2:~A\rightarrow B$, $k_1,~k_2\in k$, we get $f(k_1\mathcal{F}_1+k_2\mathcal{F}_2)=\mathcal{F}$ where $\xymatrix{((k_1+k_2)A)\otimes -\ar[r]^{\mathcal{F}} & ((k_1+k_2)B)\otimes -}$ is a natural transformation. This natural transformation is same as 
\begin{align*}
(k_1(A\otimes -)\rightarrow k_1(B\otimes -))+(k_2(A\otimes -)\rightarrow k_2(B\otimes -))
\end{align*}

which is same as $k_1f(\mathcal{F}_1)+k_2f(\mathcal{F}_2)$.\\

For all objects $B$ in $\mathcal{A}$ and for all objects $M$ in $\mathcal{M}$, we obtain 
\begin{align*}
\xymatrix{f_{BM}=a_{BAM}\circ c_{AB}\circ a^{-1}_{ABM}:~A\otimes (B\otimes M)\ar[rr] & & B\otimes (A\otimes M)}
\end{align*}

by using associativity constraint and the braiding as in the following diagram.
\begin{align*}
\xymatrix{ A\otimes (B\otimes M) \ar[rr]^{\cong}_{a_{ABM}^{-1}} & & (A\otimes B)\otimes M\ar[rr]^{\cong}_{c_{AB}} & & (B\otimes A)\otimes M\ar[rr]^{\cong}_{a_{BAM}} & & B\otimes (A\otimes M)}.
\end{align*}

It is easy to see that the compatibility conditions are satisfied, hence it is a module functor.\\

Now, we want to prove that the assignment $\mathcal{F}_{\mathcal{M}}:~\mathcal{A} \rightarrow Hom^{re}_{\mathcal{A}}(\mathcal{M},~\mathcal{M})$ taking any object $A$ in $\mathcal{A}$ to a right exact module functor $\mathcal{F}(A):~\mathcal{M} \rightarrow \mathcal{M}$ defined by $\mathcal{F}(A)(M)=A\otimes M$ for all objects $M$ in $\mathcal{M}$ is a monoidal functor.\\

There exists a natural transformation $\gamma_{AB}:~\mathcal{F}_{\mathcal{M}}(A) \circ \mathcal{F}_{\mathcal{M}}(B) \rightarrow \mathcal{F}_{\mathcal{M}}(A\otimes B)$.\\
 
$(\mathcal{F}_{\mathcal{M}}(A)\circ \mathcal{F}_{\mathcal{M}}(B))(M)=\mathcal{F}_{\mathcal{M}}(A)(B\otimes M)=A\otimes (B\otimes M)$ and $\mathcal{F}_{\mathcal{M}}(A\otimes B)(M)=(A\otimes B)\otimes M$ for all $M$ in $\mathcal{M}$.\\

We have an isomorphism $a^{-1}:~A\otimes (B\otimes M)\rightarrow (A\otimes B)\otimes M$. This says that 
\begin{align*}
a^{-1}:~(\mathcal{F}_{\mathcal{M}}(A) \circ \mathcal{F}_{\mathcal{M}}(B))(M)\rightarrow \mathcal{F}_{\mathcal{M}}(A\otimes B)(M)
\end{align*}

is an isomorphism for all $M$ in $\mathcal{M}$, so $\gamma_{AB}$ is a natural isomorphism.\\

Also $I\rightarrow \mathcal{F}_{\mathcal{M}}(I)=I\otimes M$ is an isomorphism by left unit constraint. We may show that the diagrams commute. So, we get the result.
\end{proof}
\begin{defn}
An $(\mathcal{A}-\mathcal{B})$ bimodule category $\mathcal{M}$ for $\mathcal{A}$ and $\mathcal{B}$ are finite monoidal categories is invertible if the monoidal functors $\mathcal{B} \rightarrow Hom^{re}_{\mathcal{A}}(\mathcal{M},~\mathcal{M})$ taking all objects $B$ in $\mathcal{B}$ to $-\otimes B$ and $\mathcal{A} \rightarrow Hom^{re}_{\mathcal{B}}(\mathcal{M}^{op},~\mathcal{M}^{op})=Hom_{\mathcal{B}}(\mathcal{M},~\mathcal{M})$ taking all objects $A$ in $\mathcal{A}$ to $A\otimes -$ are equivalences as bimodule categories.
\end{defn}

The following proposition is found in \cite{etnios} for fusion categories.
\begin{prop}
\label{41}
An $(\mathcal{A}-\mathcal{B})$ bimodule category $\mathcal{M}$ for given finite monoidal categories $\mathcal{A}$ and $\mathcal{B}$ is invertible if and only if for all objects $A$ in $\mathcal{A}$ and $B$ in $\mathcal{B}$, the monoidal functor 
\begin{align*}
\mathcal{R}:~\mathcal{B}^{rev} \rightarrow Hom^{re}_{\mathcal{A}}(\mathcal{M},~\mathcal{M}),~\mathcal{R}(B)(M)=M\otimes B
\end{align*}

is an equivalence of $(\mathcal{B}-\mathcal{B})$ bimodule categories if and only if the monoidal functor 
\begin{align*}
\mathcal{L}:~\mathcal{A} \rightarrow Hom^{re}_{\mathcal{B}}(\mathcal{M},~\mathcal{M}),~L(A)(M)=A\otimes M
\end{align*}

is an equivalence of $(\mathcal{A}-\mathcal{A})$ bimodule categories. 
\end{prop}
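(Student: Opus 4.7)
The plan is to prove the chain of biconditionals by reducing everything to a single implication and exploiting the symmetry between left and right actions. Since invertibility is defined to require \emph{both} functors to be equivalences simultaneously, the real content of the proposition is that one equivalence forces the other; once that is in hand, both iff-statements follow.

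First I would verify the unstated prerequisites. For each $B \in \mathcal{B}$ the assignment $M \mapsto M \otimes B$ is a right-exact $\mathcal{A}$-module endofunctor, using the middle associativity constraint of the bimodule $\mathcal{M}$ to produce the module-functor isomorphism $A \otimes (M \otimes B) \xrightarrow{\sim} (A \otimes M) \otimes B$; similarly $\mathcal{L}(A) = A \otimes -$ is a right-exact $\mathcal{B}$-module endofunctor. That $\mathcal{R}$ is monoidal on $\mathcal{B}^{rev}$ follows from $(M \otimes B_2)\otimes B_1 \cong M \otimes (B_2 \otimes B_1)$, which is the monoidal-to-bimodule coherence; the reversal of order is precisely why $\mathcal{B}^{rev}$ appears. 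One then endows $Hom^{re}_{\mathcal{A}}(\mathcal{M},\mathcal{M})$ with its canonical $(\mathcal{B}-\mathcal{B})$-bimodule structure inherited via $\mathcal{R}$ (left and right action by $\mathcal{B}$ on an endofunctor $\mathcal{F}$ via pre- and post-composition with $\mathcal{R}(B)$), and checks that $\mathcal{R}$ respects this structure; the analogous setup works for $\mathcal{L}$.

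Next I would reduce one of the two directions to the other by duality. The category $\mathcal{M}^{op}$ is a $(\mathcal{B}-\mathcal{A})$ bimodule category by the lemma already proved, and under the canonical identification $Hom^{re}_{\mathcal{A}}(\mathcal{M},\mathcal{M}) \simeq Hom^{re}_{\mathcal{A}}(\mathcal{M}^{op},\mathcal{M}^{op})^{rev}$ (obtained by sending an endofunctor to its opposite, where left and right duals supplied by the rigidity of $\mathcal{A}$ guarantee right-exactness is preserved), the functor $\mathcal{R}$ for $\mathcal{M}$ becomes the functor $\mathcal{L}$ for $\mathcal{M}^{op}$. Hence any statement about $\mathcal{R}$ for a general bimodule category translates to the corresponding statement about $\mathcal{L}$, and vice versa, so it suffices to prove one direction, say that $\mathcal{R}$ being an equivalence of $(\mathcal{B}-\mathcal{B})$-bimodule categories implies that the functor $\mathcal{A} \to Hom^{re}_{\mathcal{B}}(\mathcal{M}^{op},\mathcal{M}^{op})$ (appearing in the definition of invertibility) is an equivalence.

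Finally, for the core implication, I would use the internal-hom machinery from Section 2 and Proposition \ref{81}. Assuming $\mathcal{R}$ is an equivalence, every right-exact $\mathcal{A}$-linear endofunctor of $\mathcal{M}$ is of the form $- \otimes B$ for a unique (up to isomorphism) $B \in \mathcal{B}$, and $Hom_\mathcal{B}(B_1, B_2) \cong \mathrm{Nat}(-\otimes B_1, -\otimes B_2)$. To show $\mathcal{L}$ is fully faithful, test against an object $M \in \mathcal{M}$ and use $\underline{Hom}_{\mathcal{M}}(M, A \otimes N) \cong A \otimes \underline{Hom}_{\mathcal{M}}(M,N)$ to convert a natural transformation $A_1 \otimes - \Rightarrow A_2 \otimes -$ into a morphism $A_1 \to A_2$ in $\mathcal{A}$ via the algebra $\underline{Hom}_\mathcal{M}(M,M)$ and Theorem \ref{9}. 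For essential surjectivity of $\mathcal{L}$, take a $\mathcal{B}$-module endofunctor $\mathcal{G}$ of $\mathcal{M}$, define $A_\mathcal{G} = \underline{Hom}_\mathcal{M}(M, \mathcal{G}(M))$ for a generator $M$, and identify $\mathcal{G}$ with $A_\mathcal{G} \otimes -$ using that the $(\mathcal{B}-\mathcal{B})$-bimodule equivalence $\mathcal{R}$ forces the right $\mathcal{B}$-action on $\mathcal{G}(M)$ to match that on $A_\mathcal{G} \otimes M$.

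The main obstacle is the essential surjectivity of $\mathcal{L}$: one must produce, from a $\mathcal{B}$-linear endofunctor of $\mathcal{M}$, an object of $\mathcal{A}$ whose left action reproduces it, and the only available input is that the \emph{right} $\mathcal{B}$-action is dense in all $\mathcal{A}$-linear endofunctors. The leverage comes from the $(\mathcal{B}-\mathcal{B})$-bimodule (not merely monoidal) nature of the equivalence $\mathcal{R}$: the two $\mathcal{B}$-actions on $Hom^{re}_{\mathcal{A}}(\mathcal{M},\mathcal{M})$ encode both the action and the coaction needed to produce a Morita inverse to $\mathcal{M}$, and disentangling this to recover the desired $A \in \mathcal{A}$ is the delicate step where the rigidity of $\mathcal{A}$ and simplicity of $I$ are both used.
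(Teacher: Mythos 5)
Your reading of the logical skeleton is right: since invertibility is \emph{defined} as the conjunction of the two equivalences (up to the $rev$ and $op$ identifications), the entire content of the proposition is that either one of $\mathcal{R}$, $\mathcal{L}$ being an equivalence forces the other to be one, and you correctly isolate this as the thing to prove. The setup in your first two steps (well-definedness of $\mathcal{R}$ and $\mathcal{L}$, the bimodule structures on the functor categories, and the duality reduction via $\mathcal{M}^{op}$ and $Hom^{re}_{\mathcal{A}}(\mathcal{M},\mathcal{M})\simeq Hom^{re}_{\mathcal{A}}(\mathcal{M}^{op},\mathcal{M}^{op})^{rev}$) is reasonable, though note that the adjoint-functor argument you need there, and the internal-hom machinery you invoke later (Theorem \ref{9}, $\underline{Hom}_{\mathcal{M}}(M,A\otimes N)\cong A\otimes\underline{Hom}_{\mathcal{M}}(M,N)$), all require $\mathcal{A}$ and $\mathcal{B}$ to be rigid with simple unit and $\mathcal{M}$ to be exact, none of which is among the stated hypotheses of the proposition (which asks only for finite monoidal categories); you would need to either add these hypotheses or flag that you are silently working in the FRBSU setting of the rest of the paper.

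The genuine gap is in your third step, and you in effect concede it yourself. For essential surjectivity of $\mathcal{L}$ you propose to take a $\mathcal{B}$-module endofunctor $\mathcal{G}$, set $A_{\mathcal{G}}=\underline{Hom}_{\mathcal{M}}(M,\mathcal{G}(M))$, and then ``identify $\mathcal{G}$ with $A_{\mathcal{G}}\otimes-$ using that the bimodule equivalence $\mathcal{R}$ forces the right $\mathcal{B}$-action on $\mathcal{G}(M)$ to match that on $A_{\mathcal{G}}\otimes M$.'' That sentence is the statement to be proved, not an argument for it: one must exhibit a natural isomorphism of $\mathcal{B}$-module functors $A_{\mathcal{G}}\otimes-\Rightarrow\mathcal{G}$ on all of $\mathcal{M}$, and explain exactly how the hypothesis that $\mathcal{R}$ is an equivalence of $(\mathcal{B}-\mathcal{B})$ bimodule categories (rather than merely a monoidal equivalence) enters. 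Your closing paragraph explicitly labels this ``the delicate step'' and describes only where the leverage should come from, so the proof stops precisely where the difficulty begins. For what it is worth, the paper's own proof does not carry out this step either: it verifies only the formal unwindings of the definition and cites [EtNiOs] for exactly the implication you are attempting, so you are attacking the right problem, but what you have written is a plan for a proof rather than a proof.
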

\begin{proof}
If $\mathcal{M}$ is invertible, then $\mathcal{B} \simeq Hom^{re}_{\mathcal{A}}(\mathcal{M},~\mathcal{M})$ as $(\mathcal{B}-\mathcal{B})$ bimodule categories. So, $\mathcal{B}^{rev} \simeq Hom^{re}_{\mathcal{A}}(\mathcal{M},~\mathcal{M})$.\\

Similarly, $\mathcal{A} \simeq Hom^{re}_{\mathcal{B}}(\mathcal{M}^{op},~\mathcal{M}^{op})=Hom^{re}_{\mathcal{B}}(\mathcal{M},~\mathcal{M})$ as $(\mathcal{B}-\mathcal{B})$ bimodule categories.\\

Conversely, if $\mathcal{R}$ is an equivalence, then $\mathcal{B}^{rev} \simeq Hom^{re}_{\mathcal{A}}(\mathcal{M},~\mathcal{M})$ as $(\mathcal{B}-\mathcal{B})$ bimodule categories, so $\mathcal{B} \simeq Hom^{re}_{\mathcal{A}}(\mathcal{M},~\mathcal{M})$ and if $\mathcal{L}$ is an equivalence, then 
\begin{align*}
\mathcal{A} \simeq Hom^{re}_{\mathcal{B}}(\mathcal{M},~\mathcal{M})=Hom^{re}_{\mathcal{B}}(\mathcal{M}^{op},~\mathcal{M}^{op})
\end{align*}

as $(\mathcal{A}-\mathcal{A})$ bimodule categories. If $\mathcal{R}$ and $\mathcal{L}$ are equivalences, then $\mathcal{M}$ is invertible.\\

See \cite{etnios} for the rest of the proof.
\end{proof}
\begin{note}
If $\mathcal{A}$ is a finite braided monoidal category and $\mathcal{M}$ is an invertible left $\mathcal{A}$ module category such that $\mathcal{M} \simeq \mathcal{A} A$, then $\mathcal{M}$ is an $(\mathcal{A}-\mathcal{A})$ bimodule category with right action given by $M\otimes_{r\mathcal{M}} A=A\otimes_{l\mathcal{M}} M$ for all objects $A$ in $\mathcal{A}$ and $M$ in $\mathcal{M}$.
\end{note}
\begin{remark}
\label{38}
If $\mathcal{A}$ is a finite braided monoidal category, $\mathcal{M}$ is an invertible $(\mathcal{A}-\mathcal{A})$ bimodule category and $A$ is an object in $\mathcal{A}$, then we obtain two monoidal equivalences 
\begin{align*}
\mathcal{R}:~\mathcal{A}^{rev} \rightarrow Hom^{re}_{\mathcal{A}}(\mathcal{M},~\mathcal{M}),~\mathcal{R}(A)(M)=M\otimes A\\
\mathcal{L}:~\mathcal{A} \rightarrow Hom^{re}_{\mathcal{A}}(\mathcal{M},~\mathcal{M}),~\mathcal{L}(A)(M)=A\otimes M
\end{align*}

by Proposition \ref{41} for all objects $M$ in $\mathcal{M}$.
\end{remark}
\begin{cor}
\cite{etnios} Assume that the left module category $\mathcal{M}$ over a fusion category $\mathcal{A}$ is invertible. Then, it is indecomposable left module category over $\mathcal{A}$.
\end{cor}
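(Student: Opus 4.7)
I would argue by contradiction. Suppose that $\mathcal{M}\simeq \mathcal{M}_1\oplus \mathcal{M}_2$ as module categories over $\mathcal{A}$ with both $\mathcal{M}_1$ and $\mathcal{M}_2$ nonzero. The strategy is to transport this decomposition through the monoidal equivalence supplied by invertibility and to violate the simplicity of the unit object of the fusion category $\mathcal{A}$.

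The first step is to read off the relevant equivalence from Proposition \ref{41}: invertibility of $\mathcal{M}$ as an $(\mathcal{A}-\mathcal{B})$ bimodule category (for the appropriate right factor $\mathcal{B}$ implicit in the invertibility hypothesis) delivers a monoidal equivalence
\begin{align*}
\mathcal{L}:~\mathcal{A}\longrightarrow Hom^{re}_{\mathcal{B}}(\mathcal{M},~\mathcal{M}),\qquad \mathcal{L}(A)(M)=A\otimes M.
\end{align*}
Under $\mathcal{L}$ the unit object $I$ of $\mathcal{A}$ is sent to the identity module endofunctor $id_{\mathcal{M}}$, which therefore is the unit object of $Hom^{re}_{\mathcal{B}}(\mathcal{M},~\mathcal{M})$.

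Next I would exploit the assumed decomposition of $\mathcal{M}$ to produce a splitting of this identity endofunctor. Writing $\iota_i:~\mathcal{M}_i\hookrightarrow \mathcal{M}$ and $\pi_i:~\mathcal{M}\twoheadrightarrow \mathcal{M}_i$ for the canonical inclusions and projections associated with $\mathcal{M}\simeq \mathcal{M}_1\oplus \mathcal{M}_2$, the composites $E_i=\iota_i\circ \pi_i$ are nonzero right exact $\mathcal{B}$ module endofunctors of $\mathcal{M}$ and satisfy $id_{\mathcal{M}}\cong E_1\oplus E_2$ in $Hom^{re}_{\mathcal{B}}(\mathcal{M},~\mathcal{M})$. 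Because $\mathcal{L}$ is an equivalence of abelian categories, pulling this splitting back through $\mathcal{L}^{-1}$ produces a nontrivial direct sum decomposition $I\cong I_1\oplus I_2$ of the unit of $\mathcal{A}$ in which both $I_i$ are nonzero.

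This is the desired contradiction: by hypothesis $\mathcal{A}$ is a fusion category, so its unit object $I$ is simple and admits no nontrivial direct sum decomposition. Hence no nontrivial decomposition of $\mathcal{M}$ can exist, and $\mathcal{M}$ is indecomposable. I expect the only delicate technical point to be checking that the projector endofunctors $E_i$ really are right exact $\mathcal{B}$ module functors, that is, that the right $\mathcal{B}$ action on $\mathcal{M}$ respects the summands $\mathcal{M}_1$ and $\mathcal{M}_2$; this amounts to observing that a module category decomposition is automatically compatible with every further bimodule structure one can place on $\mathcal{M}$, which follows from the uniqueness of direct sum decompositions in abelian categories together with the exactness of the right action. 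Once this bookkeeping is in hand, the simplicity of the unit of $\mathcal{A}$ closes the argument.
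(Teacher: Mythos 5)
Your argument is correct in substance but takes a genuinely different route from the paper's proof. The paper writes $\mathcal{M}\simeq \mathcal{P}\oplus\mathcal{Q}$, expands an arbitrary object $A$ of $\mathcal{A}$ into simple summands $\oplus_i A_i$, and tries to extract a contradiction from how $\oplus_i A_i$ acts on objects of $\mathcal{P}$ and $\mathcal{Q}$; it never transports anything back through the monoidal equivalence, and its concluding step (that $\mathcal{P}=\oplus_i(A_i\otimes \mathcal{P})$ ``contradicts'' indecomposability of $\mathcal{P}$) does not actually produce a contradiction, since $A_i\otimes\mathcal{P}$ is not a module-category decomposition of $\mathcal{P}$. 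Your route --- split $id_{\mathcal{M}}$ as $E_1\oplus E_2$ via the projections onto the summands, pull the splitting back through the monoidal equivalence of Proposition \ref{41}, and contradict the simplicity of the unit object of a fusion category --- is the standard Etingof--Nikshych--Ostrik argument, and unlike the paper's version it genuinely closes the logical loop. What it buys is a proof that needs only the simplicity of the unit and the fact that a monoidal equivalence preserves the unit, with no appeal to semisimplicity or to decompositions of objects into simples.

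One adjustment is needed at the point you yourself flag. Since the hypothesis decomposes $\mathcal{M}$ as a \emph{left} $\mathcal{A}$-module category, the functors $E_i=\iota_i\circ\pi_i$ are automatically (right exact, indeed exact) left $\mathcal{A}$-module endofunctors, i.e.\ objects of $Hom^{re}_{\mathcal{A}}(\mathcal{M},~\mathcal{M})$, but they are not obviously right $\mathcal{B}$-module functors; your claim that a module-category decomposition is ``automatically compatible with every further bimodule structure'' is false in general (the full endofunctor category of a decomposable module category contains functors mixing the summands). The clean fix is to run the argument through the other equivalence of Remark \ref{38}, namely $\mathcal{R}:~\mathcal{B}^{rev}\rightarrow Hom^{re}_{\mathcal{A}}(\mathcal{M},~\mathcal{M})$, whose target is exactly the category containing the $E_i$; this splits the unit of $\mathcal{B}$, and in the setting of the corollary (where the right action is induced from the left action of the braided category $\mathcal{A}$, so that $\mathcal{B}=\mathcal{A}$) that is again the simple unit $I$ of $\mathcal{A}$. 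With that substitution your proof is complete.
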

\begin{proof}
Let $\mathcal{A}$ be a fusion category and $\mathcal{M}$ be decomposable module category over $\mathcal{A}$ under the given conditions. Then, $\mathcal{M} \simeq \mathcal{P} \oplus \mathcal{Q}$ for indecomposable module categories $\mathcal{P}$ and $\mathcal{Q}$ over $\mathcal{A}$ and $A=\underset{i}{\oplus} A_i$ for simple objects $A_i$ in $\mathcal{A}$ for all objects $A$ in $\mathcal{A}$ since $\mathcal{A}$ is a semisimple monoidal category. The monoidal functor $L:~\mathcal{A} \rightarrow Hom^{re}_{\mathcal{B}}(\mathcal{M},~\mathcal{M}),~L(A)(M)=A\otimes M$ between monoidal categories is an equivalence for all objects $A$ in $\mathcal{A}$ and for all objects $M$ in $\mathcal{M}$ under these conditions.\\

$A\otimes M=(\underset{i}{\oplus} A_i)\otimes_{l\mathcal{M}} M\simeq (\underset{i}{\oplus} A_i)\otimes_{l\mathcal{M}} (P\oplus Q)=((\underset{i}{\oplus} A_i)\otimes_{l\mathcal{P}} P)\oplus ((\underset{i}{\oplus} A_i)\otimes_{l\mathcal{Q}} Q)\\
=(\underset{i}{\oplus}(A_i\otimes_{l\mathcal{P}} P) \oplus (\underset{i}{\oplus}(A_i\otimes_{l\mathcal{Q}} Q))$ for all objects $P$ in $\mathcal{P}$ and $Q$ in $\mathcal{Q}$.\\

However, $\underset{i}{\oplus}(A_i\otimes_{l\mathcal{P}} P) \oplus \underset{i}{\oplus}(A_i\otimes_{l\mathcal{Q}} Q)$ is an object in $\mathcal{P} \oplus \mathcal{Q}$. This means that $\underset{i}{\oplus}(A_i\otimes_{l\mathcal{P}} P)$ is an object in $\mathcal{P}$ and $\underset{i}{\oplus}(A_i\otimes_{l\mathcal{Q}} Q)$ is an object in $\mathcal{Q}$. So, $\mathcal{P}=\underset{i}{\oplus}(A_i\otimes \mathcal{P})$ and $\mathcal{Q}=\underset{i}{\oplus}(A_i\otimes \mathcal{Q})$. This is a contradiction since $\mathcal{P}$ and $\mathcal{Q}$ are indecomposable module categories over $\mathcal{A}$. As a result, $\mathcal{M}$ is indecomposable.
\end{proof}

\section{2-Category of FRBSU Monoidal Categories}
\begin{lem}
\label{47}
The composition of two monoidal functors is again a monoidal functor. 
\end{lem}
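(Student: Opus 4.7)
The plan is to take two monoidal functors $(\mathcal{F},\gamma,\varphi):\mathcal{A}\to\mathcal{B}$ and $(\mathcal{G},\gamma',\varphi'):\mathcal{B}\to\mathcal{C}$ and construct the required structure on $\mathcal{G}\circ\mathcal{F}:\mathcal{A}\to\mathcal{C}$ by splicing together the structures on $\mathcal{F}$ and on $\mathcal{G}$. Concretely, I would define the natural isomorphism
\begin{align*}
\gamma''_{XY}:~\mathcal{G}(\mathcal{F}(X))\otimes \mathcal{G}(\mathcal{F}(Y)) \xrightarrow{\gamma'_{\mathcal{F}(X)\mathcal{F}(Y)}} \mathcal{G}(\mathcal{F}(X)\otimes \mathcal{F}(Y)) \xrightarrow{\mathcal{G}(\gamma_{XY})} \mathcal{G}(\mathcal{F}(X\otimes Y))
\end{align*}
as the composition $\mathcal{G}(\gamma_{XY})\circ \gamma'_{\mathcal{F}(X)\mathcal{F}(Y)}$, and the unit isomorphism as
\begin{align*}
\varphi'':~I \xrightarrow{\varphi'} \mathcal{G}(I) \xrightarrow{\mathcal{G}(\varphi)} \mathcal{G}(\mathcal{F}(I)).
\end{align*}
Both are isomorphisms because $\gamma,\gamma',\varphi,\varphi'$ are, and naturality of $\gamma''$ in $X$ and $Y$ follows from naturality of $\gamma$ and $\gamma'$ together with functoriality of $\mathcal{G}$.

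Next I would check the two compatibility conditions of a monoidal functor. For the associativity hexagon for $\mathcal{G}\circ\mathcal{F}$, I would paste together the associativity hexagon for $\mathcal{F}$ (applied via $\mathcal{G}$ to get a diagram in $\mathcal{C}$) with the associativity hexagon for $\mathcal{G}$ evaluated at the triple $(\mathcal{F}(X),\mathcal{F}(Y),\mathcal{F}(Z))$; the two squares obtained this way share the middle edge $\gamma'_{\mathcal{F}(X)\otimes \mathcal{F}(Y),\mathcal{F}(Z)}$ (and its variant), and the naturality of $\gamma'$ with respect to $\gamma_{XY}\otimes \mathrm{id}$ and $\mathrm{id}\otimes \gamma_{YZ}$ closes the gap. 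The two unit triangles are verified analogously by combining the unit triangles for $\mathcal{F}$ and for $\mathcal{G}$ with naturality of $\gamma'$ against $\varphi$.

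The main work, and the only place one has to be careful, is the associativity hexagon: it is a diagram chase assembling one naturality square for $\gamma'$ plus the two hexagons for $\mathcal{F}$ and $\mathcal{G}$. No deeper idea is needed; once the definitions of $\gamma''$ and $\varphi''$ are written down, everything follows from functoriality of $\mathcal{G}$ and naturality of $\gamma'$. I would therefore present the proof by writing the two definitions, noting that the isomorphism and naturality properties are immediate, and sketching the pasting of the associativity hexagons, leaving the unit triangles to the reader.
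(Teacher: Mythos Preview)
Your proposal is correct and follows essentially the same approach as the paper: the paper defines the structure isomorphism for the composite as $\mathcal{F}(\psi_{AB})\circ\beta_{\mathcal{G}(A)\mathcal{G}(B)}$ (with the roles of $\mathcal{F}$ and $\mathcal{G}$ swapped relative to your labeling) and the unit isomorphism as the composite of the two unit isomorphisms, then simply asserts that the required diagrams commute. If anything, your outline of the associativity hexagon verification via pasting the two hexagons together with a naturality square is more detailed than the paper's ``it is easy to prove the commutativity of the required diagrams.''
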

\begin{proof}
There exists a functor $Hom(\mathcal{B},~\mathcal{C})\times Hom(\mathcal{A},~\mathcal{B})\rightarrow Hom(\mathcal{A},~\mathcal{C})$ taking any pair $(\mathcal{F},~\mathcal{G})$ to $\mathcal{F} \circ \mathcal{G}$ for given monoidal functors $(\mathcal{F},~\beta,~\varphi_1):~\mathcal{B} \rightarrow \mathcal{C}$ in $Hom(\mathcal{B},~\mathcal{C})$ and $(\mathcal{G},~\psi,~\varphi_2):~\mathcal{A} \rightarrow \mathcal{B}$ in $Hom(\mathcal{A},~\mathcal{B})$.\\

$\beta$ is a family of natural isomorphisms $\beta_{XY}:~\mathcal{F}(X) \otimes \mathcal{F}(Y) \rightarrow \mathcal{F}(X\otimes Y)$ for all objects $X$, $Y$ in $\mathcal{B}$ and $\psi$ is a family of natural isomorphisms $\psi_{AB}:~\mathcal{G}(A) \otimes \mathcal{G}(B) \rightarrow \mathcal{G}(A\otimes B)$ for all objects $A$ and $B$ in $\mathcal{A}$.\\

We want to show that $\mathcal{F} \circ \mathcal{G}$ is a monoidal category. We define $\gamma$ as a family of natural isomorphisms $\gamma_{AB}=\mathcal{F}(\psi_{AB}) \circ \beta_{\mathcal{G}(A) \mathcal{G}(B)}$ for all objects $A$ and $B$ in $\mathcal{A}$ as in the following diagram.
\begin{align*}
\begin{tikzpicture}
\node (A) at (0,0) {$\mathcal{F}(\mathcal{G}(A)) \otimes\mathcal{F}(\mathcal{G}(B))$};
\node (B) at (6,0) {$\mathcal{F}(\mathcal{G}(A) \otimes \mathcal{G}(B))$};
\node (C) at (12,0) {$\mathcal{F}(\mathcal{G}(A\otimes B))$};
\node (D) at (0,-2) {$(\mathcal{F} \circ \mathcal{G})(A)\otimes (\mathcal{F} \circ \mathcal{G})(B)$};
\node (E) at (6, -2) {$\mathcal{F}(\mathcal{G}(A) \otimes \mathcal{G}(B))$};
\node (F) at (12,-2) {$(\mathcal{F} \circ \mathcal{G})(A\otimes B)$};
\draw[thick, double] (0, -0.2)--(0, -1.8) [xshift=5pt];
\draw[thick, double] (12, -0.2)--(12, -1.8) [xshift=5pt];
\path[->] (A) edge [right] node [above] {$\beta_{\mathcal{G}(A) \mathcal{G}(B)}$} (B);
\path[->] (B) edge [right] node [above] {$\mathcal{F}(\psi_{AB})$} (C);
\path[->] (D) edge [right] node [above] {$\beta_{\mathcal{G}(A) \mathcal{G}(B)}$} (E);
\path[->] (E) edge [right] node [above] {$\mathcal{F}(\psi_{AB})$} (F);
\end{tikzpicture}
\end{align*}  

$I\cong \mathcal{G}(I)$, so $I\cong \mathcal{F}(I) \cong (\mathcal{F} \circ \mathcal{G})(I)$ for the unit object $I$ in $\mathcal{A}$. It is easy to prove the commutativity of the required diagrams.\\

If we have natural transformations $\theta_1:~\mathcal{F}_1\Rightarrow \mathcal{F}_2$ in $Hom(\mathcal{B},~\mathcal{C})$ and $\theta_2:~\mathcal{G}_1 \Rightarrow \mathcal{G}_2$ in $Hom(\mathcal{A},~\mathcal{B})$, then the composition is again a natural transformation. This gives a morphism in $Hom(\mathcal{A},~\mathcal{C})$ corresponding to the pair $(\theta_1,~\theta_2)$ in $Hom(\mathcal{B},~\mathcal{C})\times Hom(\mathcal{A},~\mathcal{B})$.
\end{proof}
\begin{prop}
The collection of all FRBSU monoidal categories forms a 2-category $\mathcal{MCT}$ in which 1 arrows are braided monoidal functors of those categories and 2 arrows are natural isomorphisms between those monoidal functors.
\end{prop}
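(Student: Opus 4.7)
The plan is to verify each of the five items in the definition of a bicategory and then observe that they hold strictly, so that $\mathcal{MCT}$ is in fact a 2-category in the sense of the Remark following that definition. The building blocks are already available: Lemma \ref{47} supplies the composition of monoidal functors, and the category $Hom(\mathcal{A},\mathcal{B})$ of monoidal functors with monoidal natural transformations was introduced earlier.

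First I would establish item (1) by defining $\mathfrak{X}(\mathcal{A},\mathcal{B})$ to be the full subcategory of $Hom(\mathcal{A},\mathcal{B})$ whose objects are braided monoidal functors and whose morphisms are natural isomorphisms between them. One only needs to check that the identity of a braided monoidal functor is a natural isomorphism (trivial) and that the vertical composition of two natural isomorphisms of braided monoidal functors is again a natural isomorphism between braided monoidal functors; both facts follow from the corresponding statements for monoidal natural transformations in $Hom(\mathcal{A},\mathcal{B})$, since the defining diagrams for a morphism of monoidal functors only involve $\gamma$ and $\varphi$, not the braidings.

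Next, for item (2) I would exhibit the composition functor $\mathcal{F}_{\mathcal{A}\mathcal{B}\mathcal{C}}$. On objects, given braided monoidal functors $(\mathcal{F},\beta,\varphi_1)$ and $(\mathcal{G},\psi,\varphi_2)$, Lemma \ref{47} already provides the monoidal structure $\gamma_{AB}=\mathcal{F}(\psi_{AB})\circ\beta_{\mathcal{G}(A)\mathcal{G}(B)}$ on $\mathcal{F}\circ\mathcal{G}$. The new content is the verification that $\mathcal{F}\circ\mathcal{G}$ is braided, which I would do by pasting together the two braiding-hexagon squares for $\mathcal{F}$ and $\mathcal{G}$ separately and using functoriality of $\mathcal{F}$ on the inner square for $\mathcal{G}$. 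On morphisms (the horizontal composition $\theta_1\star\theta_2$ of two natural isomorphisms) one just uses the standard horizontal composition of natural transformations and observes that its compatibility with $\gamma$ and $\varphi$ reduces to the two compatibility squares for $\theta_1$ and $\theta_2$ pasted together. Functoriality of $\star$ with respect to vertical composition is the usual interchange law.

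For item (3), the identity $1$-arrow $\mathrm{id}_{\mathcal{A}}:\mathcal{A}\rightarrow\mathcal{A}$ with $\gamma=\mathrm{id}$ and $\varphi=\mathrm{id}_I$ is tautologically a strict braided monoidal functor, so $\mathcal{F}_{\mathcal{A}}:1\rightarrow\mathfrak{X}(\mathcal{A},\mathcal{A})$ is well defined. For items (4) and (5), the composition of underlying functors is strictly associative and strictly unital, and a direct computation shows that the induced data $\gamma$ from Lemma \ref{47} also compose strictly: associativity $(\mathcal{F}\circ(\mathcal{G}\circ\mathcal{H}))$ versus $((\mathcal{F}\circ\mathcal{G})\circ\mathcal{H})$ yields literally equal formulas $\mathcal{F}(\mathcal{G}(\psi^{\mathcal{H}}_{AB}))\circ\mathcal{F}(\psi^{\mathcal{G}}_{\mathcal{H}(A)\mathcal{H}(B)})\circ\beta^{\mathcal{F}}_{\mathcal{G}\mathcal{H}(A),\mathcal{G}\mathcal{H}(B)}$ after one simplification using functoriality of $\mathcal{F}$, and the unit composites likewise collapse to the identity. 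Hence the associator $a$ and unitors $l$, $r$ of the bicategory can be taken to be identities, and the pentagon \ref{46} and triangle collapse to trivial identities, so $\mathcal{MCT}$ is a strict $2$-category.

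The main obstacle I anticipate is the bookkeeping in item (2): verifying that the composite $\mathcal{F}\circ\mathcal{G}$ is braided and that the horizontal composition of two compatible natural isomorphisms is itself compatible with the composite $\gamma$. Both reduce to pasting commutative squares, but one must carefully insert $\mathcal{F}$ applied to $\mathcal{G}$'s braiding square at the right stage; once that diagram is drawn out the rest of the verifications are mechanical.
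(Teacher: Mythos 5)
Your proposal is correct and follows essentially the same route as the paper: both arguments verify the bicategory axioms directly, invoke Lemma \ref{47} for the composition functor $\mathcal{F}_{\mathcal{A}\mathcal{B}\mathcal{C}}$, take $\mathcal{F}_{\mathcal{A}}(\star)=id_{\mathcal{A}}$, and observe that composition of functors is strictly associative and unital so that $a$, $l$, $r$ are identities and $\mathcal{MCT}$ is a strict 2-category. You are in fact slightly more careful than the paper in one respect worth keeping: Lemma \ref{47} only establishes that the composite of monoidal functors is monoidal, and your explicit check that the composite of \emph{braided} monoidal functors is again braided (by pasting the two braiding squares and using functoriality of the outer functor) closes a point the paper's proof passes over in silence.
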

\begin{proof}
$\mathfrak{X}=\mathcal{MCT}$. Objects are FRBSU monoidal categories.\\

$\mathcal{MCT}(\mathcal{A},~\mathcal{B})$ is a category in which 1 arrows are braided monoidal functors $\mathcal{K}:~\mathcal{A} \rightarrow \mathcal{B}$ and 2 arrows are natural isomorphisms $\theta:~\mathcal{K} \Rightarrow \mathcal{L}$ for all monoidal functors $\mathcal{K},~\mathcal{L}:~\mathcal{A} \rightarrow \mathcal{B}$. We show 2 arrows as in the following diagram.
\begin{align*}
\begin{tikzpicture}[out=145, in=145, relative]
\node (A) at (0,0) {$\mathcal{A}$};
\node (B) at (3,0) {$\mathcal{B}$};
\draw[->, thick, double] (1.5,0.25) -- (1.5,-0.25) [xshift=5pt] node[right, midway] {$\theta$} (B);
\path[->] (A) edge [bend left] node [above] {$\mathcal{K}$} (B);
\path[->] (A) edge [bend right] node [below] {$\mathcal{L}$} (B);
\end{tikzpicture}
\end{align*}

$\mathcal{F}_{\mathcal{A} \mathcal{B} \mathcal{C}}:~\mathcal{MCT}(\mathcal{B},~\mathcal{C}) \times \mathcal{MCT}(\mathcal{A},~\mathcal{B}) \rightarrow \mathcal{MCT}(\mathcal{A},~\mathcal{C})$ is a functor taking the pair $(\mathcal{L},~\mathcal{K})$ to $\mathcal{L} \circ \mathcal{K}$ and the pair $(\theta,~\gamma)$ to $\theta \star \gamma$ by Lemma \ref{47}.\\

$\mathcal{F}_{\mathcal{A}}:~1\rightarrow \mathcal{MCT}(\mathcal{A},~\mathcal{A})$ is a functor sending the object $\star$ in $1$ to 1 arrow $id_{\mathcal{A}}:~\mathcal{A}\rightarrow \mathcal{A}$.
\begin{align*}
a_{\mathcal{A} \mathcal{B} \mathcal{C} \mathcal{D}}:~\mathcal{F}_{\mathcal{A} \mathcal{B} \mathcal{D}} \circ (\mathcal{F}_{\mathcal{B} \mathcal{C} \mathcal{D}} \times 1) \rightarrow \mathcal{F}_{\mathcal{A} \mathcal{C} \mathcal{D}} \circ (1\times \mathcal{F}_{\mathcal{A} \mathcal{B} \mathcal{C}})
\end{align*}
\begin{align*}
\begin{tikzpicture}
\node (A) at (0, 0) {$\mathcal{MCT}(\mathcal{C},~\mathcal{D}) \times \mathcal{MCT}(\mathcal{B},~\mathcal{C}) \times \mathcal{MCT}(\mathcal{A},~\mathcal{B})$};
\node (C) at (8, 0) {$\mathcal{MCT}(\mathcal{B},~\mathcal{D}) \times \mathcal{MCT}(\mathcal{A},~\mathcal{B})$};
\node (B) at (0,-2) {$\mathcal{MCT}(\mathcal{C},~\mathcal{D})\times \mathcal{MCT}(\mathcal{A},~\mathcal{C})$};
\node (D) at (8, -2) {$\mathcal{MCT}(\mathcal{A},~\mathcal{D})$};
\draw[->, thick, double] (4,-0.75)--(3, -1.25) [xshift=5pt] node [right, midway] {$a_{\mathcal{A} \mathcal{B} \mathcal{C} \mathcal{D}}$} (B);
\path[->] (A) edge node [above] {$\mathcal{F}_{\mathcal{B} \mathcal{C} \mathcal{D}} \times 1$} (C);
\path[->] (A) edge node [right, midway] {$1\times \mathcal{F}_{\mathcal{A} \mathcal{B} \mathcal{C}}$} (B);
\path[->] (B) edge node [below] {$\mathcal{F}_{\mathcal{A} \mathcal{C} \mathcal{D}}$} (D);
\path[->] (C) edge node [right, midway] {$\mathcal{F}_{\mathcal{A} \mathcal{B} \mathcal{D}}$} (D);
\end{tikzpicture}
\end{align*}

is a natural isomorphism.
\begin{align*}
(\mathcal{F}_{\mathcal{A} \mathcal{B} \mathcal{D}} \circ (\mathcal{F}_{\mathcal{B} \mathcal{C} \mathcal{D}} \times 1))(\mathcal{K},~\mathcal{L},~\mathcal{M})=\mathcal{F}_{\mathcal{A} \mathcal{B} \mathcal{D}}(\mathcal{K} \circ \mathcal{L},~\mathcal{M})=(\mathcal{K}\circ \mathcal{L})\circ \mathcal{M},
\end{align*}
\begin{align*}
(\mathcal{F}_{\mathcal{A} \mathcal{C} \mathcal{D}} \circ (1\times \mathcal{F}_{\mathcal{A} \mathcal{B} \mathcal{C}}))(\mathcal{K},~\mathcal{L},~\mathcal{M})=\mathcal{F}_{\mathcal{A} \mathcal{C} \mathcal{D}}(\mathcal{K},~\mathcal{L}\circ \mathcal{M})=\mathcal{K}\circ (\mathcal{L}\circ \mathcal{M})
\end{align*}

for all 1 arrows $\mathcal{K},~\mathcal{L},~\mathcal{M}$. $(\mathcal{K}\circ \mathcal{L})\circ \mathcal{M}=\mathcal{K}\circ (\mathcal{L}\circ \mathcal{M})$ and $a_{\mathcal{A} \mathcal{B} \mathcal{C} \mathcal{D}}(\mathcal{K},~\mathcal{L},~\mathcal{M})=id_{\mathcal{K} \circ \mathcal{L}\circ \mathcal{M}}$.\\

For all morphisms $\alpha:~(\mathcal{K}_1,~\mathcal{L}_1,~\mathcal{M}_1)\rightarrow (\mathcal{K}_2,~\mathcal{L}_2,~\mathcal{M}_2)$ in 
\begin{align*}
\mathcal{MCT}(\mathcal{C},~\mathcal{D}) \times \mathcal{MCT}(\mathcal{B},~\mathcal{C}) \times \mathcal{MCT}(\mathcal{A},~\mathcal{B}),
\end{align*}

the following diagram 
\begin{align*}
\begin{tikzpicture}
\node (A) at (0, 0) {$(\mathcal{K}_1 \circ \mathcal{L}_1) \circ \mathcal{M}_1$};
\node (C) at (8, 0) {$(\mathcal{K}_2 \circ \mathcal{L}_2) \circ \mathcal{M}_2$};
\node (B) at (0,-2) {$\mathcal{K}_1 \circ (\mathcal{L}_1 \circ \mathcal{M}_1)$};
\node (D) at (8, -2) {$\mathcal{K}_2 \circ (\mathcal{L}_2 \circ \mathcal{M}_2)$};
\path[->] (A) edge node [above] {$(\mathcal{F}_{\mathcal{A} \mathcal{B} \mathcal{D}} \circ (\mathcal{F}_{\mathcal{B} \mathcal{C} \mathcal{D}} \times 1))(\alpha)$} (C);
\path[->] (A) edge node [left, midway] {$a_{\mathcal{A} \mathcal{B} \mathcal{C} \mathcal{D}}(\mathcal{K}_1,~\mathcal{L}_1,~\mathcal{M}_1)$} (B);
\path[->] (B) edge node [below] {$(\mathcal{F}_{\mathcal{A} \mathcal{C} \mathcal{D}} \circ (1\times \mathcal{F}_{\mathcal{A} \mathcal{B} \mathcal{C}}))(\alpha)$} (D);
\path[->] (C) edge node [right, midway] {$a_{\mathcal{A} \mathcal{B} \mathcal{C} \mathcal{D}}(\mathcal{K}_2,~\mathcal{L}_2,~\mathcal{M}_2)$} (D);
\end{tikzpicture}
\end{align*}

commutes. As a result $a_{\mathcal{A} \mathcal{B} \mathcal{C} \mathcal{D}}(\mathcal{K},~\mathcal{L},~\mathcal{M})$ is a natural isomorphism for all 1 arrows $\mathcal{K}$, $\mathcal{L}$ and $\mathcal{M}$. Similarly, we show $r$ and $l$ are natural isomorphisms. Hence, $MCT$ is a 2-category.
\end{proof}

\section{Crossed Modules and Morphisms Of Crossed Modules}
A crossed module $\xymatrix{\mathfrak{C}=[N\ar[r]^{h_{\mathfrak{C}}} & M]}$ is a pair of groups $(M,~N)$ such that $M$ acts on $N$ by $M\times N\rightarrow N$ taking $(m,~n)$ to $^mn$ and $h_{\mathfrak{C}}:~N \rightarrow M$ is a group homomorphism satisfying the conditions $h_{\mathfrak{C}}(^m n)=mh_{\mathfrak{C}}(n)m^{-1}$ and $^{h_{\mathfrak{C}}(n)} n'=nn'n^{-1}$ for all $n,~n'\in N$ and $m\in M$.

\subsection{Strict Morphisms and Butterflies Between Crossed Modules}
We use \cite{no} and \cite{alno} as references here.
\begin{defn}
For given crossed modules $\xymatrix{\mathfrak{C_1}=[N_1\ar[r]^{h_{\mathfrak{C_1}}} & M_1]}$ and $\xymatrix{\mathfrak{C_2}=[N_2\ar[r]^{h_{\mathfrak{C_2}}} & M_2]}$, a strict morphism $F=(f_1,~f_2)$ between them is a pair of group homomorphisms $f_1:~M_1 \rightarrow M_2$ and $f_2:~N_1\rightarrow N_2$ such that $h_{\mathfrak{C_2}} \circ f_2=f_1\circ h_{\mathfrak{C_1}}$ and $f_2(^mn)=^{f_1(m)}f_2(n)$ for all $n\in N_1$ and $m\in M_1$. We show that morphism by the following diagram. 
\begin{align}
\label{24}
\begin{tikzpicture}
\node (A) at (0, 0) {$N_1$};
\node (B) at (2, 0) {$N_2$};
\node (C) at (0, -2) {$M_1$};
\node (D) at (2, -2) {$M_2$};
\path[->] (A) edge node [above] {$f_2$} (B);
\path[->] (A) edge node [left] {$h_{\mathfrak{C_1}}$} (C);
\path[->] (B) edge node [right] {$h_{\mathfrak{C_2}}$} (D);
\path[->] (C) edge node [below] {$f_1$} (D);
\end{tikzpicture}
\end{align}
\end{defn}
\begin{defn}
The strict morphism in Diagram \ref{24} is an equivalence of crossed modules if $\pi_2(\mathfrak{C_1})\cong \pi_2(\mathfrak{C_2})$ and $\pi_1(\mathfrak{C_1})\cong \pi_1(\mathfrak{C_2})$.
\end{defn}
\begin{defn}
The commutative diagram of group homomorphisms 
\begin{equation}
\label{23}
\xymatrix{N_1\ar[rd]^f \ar[dd]_{h_{\mathfrak{C_1}}} & & N_2\ar[ld]_k \ar[dd]^{h_{\mathfrak{C_2}}} \\ & E\ar[rd]_t \ar[ld]^g & \\ M_1 & & M_2} 
\end{equation} 

is a butterfly between two crossed modules $\xymatrix{\mathfrak{C_1}=[N_1\ar[r]^{h_{\mathfrak{C_1}}} & M_1]}$ and $\xymatrix{\mathfrak{C_2}=[N_2\ar[r]^{h_{\mathfrak{C_2}}} & M_2]}$ if it satisfies the following axioms. 
\begin{enumerate}
\item Both diagonal sequences are complexes,
\item The NE-SW sequence is a group extension,
\item $k(^{t(x)}n_2)=xk(n_2)x^{-1}$ and $f(^{g(x)}n_1)=xf(n_1)x^{-1}$ for all $x\in E,~n_1\in N_1,~n_2\in N_2$.
\end{enumerate}
\end{defn}

We denote the above butterfly with $(E,~t,~g,~k,~f)$. Also, we can denote that butterfly by $P:~\mathfrak{C_1} \rightarrow \mathfrak{C}_2$ for crossed modules $\mathfrak{C_1}$ and $\mathfrak{C}_2$.
\begin{defn}
A butterfly is reversible(equivalence) if both of the diagonals are extensions, that is a butterfly such that the NW-SE sequence is short exact. It is splittable if there exists a splitting homomorphism $s:~M_1\rightarrow E$ such that $g\circ s=id_{M_1}$ which is same as the condition that the NE-SW sequence is a split extension.
\end{defn}

The inverse of the butterfly in Diagram \ref{23} is shown as in the following diagram which is a butterfly.
\begin{equation}
\xymatrix{N_2\ar[rd]^k \ar[dd]_{h_{\mathfrak{C_2}}} & & N_1\ar[ld]_f \ar[dd]^{h_{\mathfrak{C_1}}} \\ & E\ar[rd]_g \ar[ld]^t & \\ M_2 & & M_1} 
\end{equation} 
\begin{prop}
\label{29}
Every split butterfly corresponds to a unique strict morphism $(f_1,~f_2)$ between two crossed modules.
\end{prop}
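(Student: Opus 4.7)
The plan is to read the two components of the strict morphism directly off the splitting. Assume we are given a split butterfly $(E,t,g,k,f)$ from $\mathfrak{C}_1=[N_1\to M_1]$ to $\mathfrak{C}_2=[N_2\to M_2]$ together with a splitting $s\colon M_1\to E$ satisfying $g\circ s=\mathrm{id}_{M_1}$. Because the NE-SW sequence $1\to N_2\xrightarrow{k}E\xrightarrow{g}M_1\to 1$ is split exact, each element of $E$ admits a unique factorization $k(n)\,s(m)$ with $n\in N_2$, $m\in M_1$; this internal semidirect product decomposition is the engine of the whole construction.

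First I would set $f_1:=t\circ s\colon M_1\to M_2$, which is manifestly a group homomorphism as a composite of such. To construct $f_2\colon N_1\to N_2$, I use that the butterfly commutativity $g\circ f=h_{\mathfrak{C}_1}$ combined with $g\circ s=\mathrm{id}$ makes the element $s(h_{\mathfrak{C}_1}(n_1))\cdot f(n_1)^{-1}$ lie in $\ker g=k(N_2)$; by injectivity of $k$ there is a unique $f_2(n_1)\in N_2$ with $k(f_2(n_1))=s(h_{\mathfrak{C}_1}(n_1))\,f(n_1)^{-1}$.

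Next I would verify the strict morphism axioms. The identity $h_{\mathfrak{C}_2}\circ f_2=f_1\circ h_{\mathfrak{C}_1}$ is immediate from applying $t$ to the defining formula and using $t\circ k=h_{\mathfrak{C}_2}$ together with the NW-SE complex condition $t\circ f=1$. The equivariance $f_2({}^{m}n_1)={}^{f_1(m)}f_2(n_1)$ follows by conjugating the defining formula by $s(m)$ and invoking the two butterfly axioms $f({}^{g(x)}n_1)=xf(n_1)x^{-1}$ and $k({}^{t(x)}n_2)=xk(n_2)x^{-1}$ at $x=s(m)$. The main computation is multiplicativity $f_2(n_1n_1')=f_2(n_1)f_2(n_1')$: after expanding $k(f_2(n_1n_1'))$ using that $h_{\mathfrak{C}_1}$, $s$, and $f$ are all group homomorphisms, the key step is that $f(n_1)$ commutes with every element of $k(N_2)$, which is exactly the second conjugation axiom applied to $x=f(n_1)$ together with $t\circ f=1$.

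Uniqueness is essentially forced: any strict morphism $(f_1',f_2')$ whose associated standard butterfly (on $N_2\rtimes M_1$ with its canonical splitting) reproduces the given one must satisfy $f_1'=t\circ s$ and the defining relation for $f_2'$, so the pair $(f_1,f_2)$ is uniquely determined by the splitting data. The main obstacle I anticipate is exactly the multiplicativity check for $f_2$, where the interplay between the semidirect decomposition of $E$ and the conjugation axioms must be handled carefully; the other verifications are essentially formal once the defining formula for $f_2$ is in place.
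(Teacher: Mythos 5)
Your construction is correct and matches the paper's: both take $f_1=t\circ s$ and extract $f_2$ from the relation between $s\circ h_{\mathfrak{C}_1}$ and $f$ inside $\ker g=k(N_2)$ (your $k(f_2(n_1))=s(h_{\mathfrak{C}_1}(n_1))\,f(n_1)^{-1}$ is just the paper's $s\circ h_{\mathfrak{C}_1}=f\cdot(k\circ f_2)$ with the factors reordered, which is harmless since $f(N_1)$ centralizes $k(N_2)$ by the conjugation axiom at $x=f(n_1)$ and $t\circ f=1$). Your writeup actually supplies the verifications of multiplicativity, equivariance, and $h_{\mathfrak{C}_2}\circ f_2=f_1\circ h_{\mathfrak{C}_1}$ that the paper leaves to the reader.
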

\begin{proof}
Assume that $(f_1,~f_2)$ is a strict morphism as in Diagram \ref{24}. We get a commutative diagram 
\begin{equation}
\label{25}
\xymatrix{ N_1 \ar[rd]^f \ar[dd]_{h_{\mathfrak{C}_1}} & & N_2 \ar[ld]_k \ar[dd]^{h_{\mathfrak{C}_2}}\\ & N_2\ltimes M_1\ar[rd]_t \ar[ld]^g & \\ M_1 & & M_2}
\end{equation} 

which means that the NE-SW sequence is a split extension in that butterfly, that is $E=N_2\ltimes M_1$ with the product law $(n_1,~m_1).(n_2,~m_2)=(n_1.^{f_1(m_1)}n_2,~m_1.m_2)$ for all $m_1,~m_2\in M_1$ and $n_1,~n_2\in N_2$.\\

Here, we define $g$ as a projection, $k(n)=(id_{N_2},~1)(n)=(n,~1)$ for all $n\in N_2$, $f(n)=(f_2( n^{-1}),~h_1(n))$ for all $n\in N_1$ and $t(n,~m)=h_2(n).f_1(m)$ for all $n\in N_2$ and $m\in M_1$.\\

Conversely, if we are given a split butterfly as in Diagram \ref{25}, we can find a canonical splitting homomorphism $s:~M_1\rightarrow N_2\ltimes M_1$ taking $m$ to $(1,~m)$ for all $m\in M_1$. We define $f_1=t\circ s$. $f_2$ can be defined from the equation $s\circ h_1=f.(k\circ f_2)$. We can see that those group homomorphisms satisfy the required conditions. 
\end{proof}

\subsection{Strict Morphisms and 2-Category of Crossed Modules}
\begin{lem}
\cite{no} The collection $\mathcal{XM}$ consisting of crossed modules forms a category whose morphisms are strict morphisms of crossed modules as defined in \ref{24}.
\end{lem}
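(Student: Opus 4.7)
The plan is to verify the four defining axioms of a category: existence of identity arrows, closure under composition, associativity, and the two-sided unit law. Since a strict morphism is just a pair of ordinary group homomorphisms satisfying two extra constraints (the commuting square with the boundary maps and an equivariance condition for the action), associativity and the unit laws will be inherited componentwise from the category of groups. The real content is therefore concentrated in (i) producing identity strict morphisms and (ii) showing that the composite of two strict morphisms is again strict.

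For the identity, given $\mathfrak{C} = [N \xrightarrow{h_{\mathfrak{C}}} M]$ I set $id_{\mathfrak{C}} := (id_M,\, id_N)$. The square condition $h_{\mathfrak{C}} \circ id_N = id_M \circ h_{\mathfrak{C}}$ is a tautology, and the equivariance condition $id_N(^m n) = {}^{id_M(m)} id_N(n) = {}^m n$ is also automatic. So identities exist in $\mathcal{XM}$.

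For composition, I take strict morphisms $F = (f_1, f_2) : \mathfrak{C}_1 \to \mathfrak{C}_2$ and $G = (g_1, g_2) : \mathfrak{C}_2 \to \mathfrak{C}_3$ and define $G \circ F := (g_1 \circ f_1,\, g_2 \circ f_2)$. Both coordinates are group homomorphisms as composites of such. The square condition for $G \circ F$ telescopes: $h_{\mathfrak{C}_3}\circ g_2\circ f_2 = g_1 \circ h_{\mathfrak{C}_2}\circ f_2 = g_1 \circ f_1 \circ h_{\mathfrak{C}_1}$, using the square for $G$ and then the square for $F$. The equivariance unrolls analogously, applying equivariance for $F$ inside and then equivariance for $G$ outside: $(g_2 \circ f_2)(^m n) = g_2(^{f_1(m)} f_2(n)) = {}^{g_1(f_1(m))} (g_2\circ f_2)(n)$, which is exactly the equivariance condition for $(g_1 f_1, g_2 f_2)$.

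Finally, associativity $(H \circ G) \circ F = H \circ (G \circ F)$ and the unit laws $id_{\mathfrak{C}_2} \circ F = F = F \circ id_{\mathfrak{C}_1}$ follow by applying the corresponding laws for composition of group homomorphisms to each coordinate independently, since both equalities hold in each slot of the pair. The only step requiring genuine attention, and hence the only plausible obstacle, is keeping the action and boundary squares straight when composing; once those two short chains of equalities are written down cleanly, the remainder is purely formal.
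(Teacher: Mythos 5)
Your proof is correct and is the standard componentwise verification; the paper itself gives no proof of this lemma (it only cites [No]), and your argument --- identities as pairs of identity homomorphisms, composition defined coordinatewise, the commuting square telescoping via $h_{\mathfrak{C}_3}\circ g_2\circ f_2 = g_1\circ h_{\mathfrak{C}_2}\circ f_2 = g_1\circ f_1\circ h_{\mathfrak{C}_1}$, and equivariance unrolling as $(g_2\circ f_2)({}^{m}n) = {}^{(g_1\circ f_1)(m)}(g_2\circ f_2)(n)$ --- is exactly the argument the citation stands in for. Nothing is missing.
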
 
\begin{defn}
A pointed natural transformation $PNT:~G\Rightarrow F$ between two strict morphisms $F=(f_1,~f_2)$ and $G=(g_1,~g_2)$ for the crossed modules $\xymatrix{\mathfrak{C_1}=[N_1\ar[r]^{h_{\mathfrak{C_1}}} & M_1]}$ and $\xymatrix{\mathfrak{C_2}=[N_2\ar[r]^{h_{\mathfrak{C_2}}} & M_2]}$ is a crossed homomorphism $\gamma:~M_1\rightarrow N_2$ such that for all $a,~a'\in M_1$,
\begin{align}
\gamma(aa')=(^{f_1(a')}\gamma(a)).\gamma(a')
\end{align}

and the following conditions are satisfied.
\begin{enumerate}
\item $g_1(a)=f_1(a)h_{\mathfrak{C_2}}(\gamma(a^{-1}))$ for all $a\in M_1$ 
\item $g_2(b)=f_2(b)\gamma(h_{\mathfrak{C_1}}(b^{-1}))$ for all $b\in N_1$
\end{enumerate}
\end{defn}
\begin{remark}
A pointed natural transformation $PNT:~G\Rightarrow F$ between the crossed modules $\xymatrix{\mathfrak{C_1}=[N_1\ar[r]^{h_{\mathfrak{C_1}}} & M_1]}$ and $\xymatrix{\mathfrak{C_2}=[N_2\ar[r]^{h_{\mathfrak{C_2}}} & M_2]}$ that is a crossed homomorphism $\gamma:~M_1\rightarrow N_2$ is an isomorphism if there exists a pointed natural transformation $PNT':~F\Rightarrow G$ which is a crossed homomorphism $\gamma':~M_1\rightarrow N_2$ defined by $\gamma'(m)=\gamma^{-1}(m)$ for all $m\in M_1$.
\end{remark}
\begin{lem}
\label{600}
There exists a 2-category $\underline{\mathcal{XM}}$ whose objects are crossed modules, 1 arrows are strict morphisms between those crossed modules and 2 arrows are pointed natural transformations between those strict morphisms such that the pointed natural transformations $PNT:~G\Rightarrow G$ are the trivial pointed natural transformations where $G$ is a strict morphism between any crossed modules $\xymatrix{\mathfrak{C_1}=[N_1\ar[r]^{h_{\mathfrak{C_1}}} & M_1]}$ and $\xymatrix{\mathfrak{C_2}=[N_2\ar[r]^{h_{\mathfrak{C_2}}} & M_2]}$ in $\underline{\mathcal{XM}}$.
\end{lem}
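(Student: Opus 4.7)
The plan is to verify the five axioms of a bicategory from the preliminary section and then observe that the associator and unit constraints can be taken as identities so that $\underline{\mathcal{XM}}$ is strict (a 2-category). The objects will be crossed modules; the 1-arrows will be strict morphisms, whose composition and identities are already established in the preceding lemma on $\mathcal{XM}$; the 2-arrows will be PNTs with the trivial PNT serving as the identity 2-cell. Thus I must put a category structure on each hom-collection $\underline{\mathcal{XM}}(\mathfrak{C}_1,\mathfrak{C}_2)$ and define a suitably functorial horizontal composition.

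For the hom-category $\underline{\mathcal{XM}}(\mathfrak{C}_1,\mathfrak{C}_2)$: I take the identity 2-arrow on a strict morphism $G=(g_1,g_2)$ to be the trivial PNT, i.e.\ the constant crossed homomorphism $\gamma(a)=1_{N_2}$. This is a crossed homomorphism because $1=(^{g_1(a')}1)\cdot 1$, and both compatibility conditions reduce to $g_1(a)=g_1(a)h_{\mathfrak{C}_2}(1)$ and $g_2(b)=g_2(b)\cdot 1$, which hold. For the vertical composition of $\gamma:G\Rightarrow F$ with $\delta:H\Rightarrow G$, I define $(\delta\cdot\gamma)(a)=\delta(a)\gamma(a)$; I must check this is again a crossed homomorphism $M_1\to N_2$ and that it satisfies the two compatibility conditions relating $h_1,h_2,f_1,f_2$, which follows by substituting the conditions for $\gamma$ and $\delta$ one after the other. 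Associativity and unit axioms for this vertical composition then descend from those of the group $N_2$.

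The composition functor $\mathcal{F}_{\mathfrak{C}_1\mathfrak{C}_2\mathfrak{C}_3}$ is already defined on 1-arrows as composition of strict morphisms. On 2-arrows, for $\gamma:G\Rightarrow F$ over $\mathfrak{C}_1\to\mathfrak{C}_2$ and $\delta:G'\Rightarrow F'$ over $\mathfrak{C}_2\to\mathfrak{C}_3$, I will define the horizontal composite $\delta\star\gamma:G'\circ G\Rightarrow F'\circ F$ by a Godement-type formula of the shape $(\delta\star\gamma)(a)=f_2'(\gamma(a))\cdot\delta(f_1(a))$, and verify that it is a crossed homomorphism $M_1\to N_3$ satisfying the two PNT compatibility conditions for $F'\circ F$ and $G'\circ G$ using the strict-morphism relations $h_{\mathfrak{C}_3}\circ f_2'=f_1'\circ h_{\mathfrak{C}_2}$ and the equivariance identity $f_2'(^{m}n)={}^{f_1'(m)}f_2'(n)$. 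Functoriality of $\mathcal{F}$ on 2-arrows then breaks into (i) sending identity pairs to identities and (ii) the interchange law $(\delta_2\cdot\delta_1)\star(\gamma_2\cdot\gamma_1)=(\delta_2\star\gamma_2)\cdot(\delta_1\star\gamma_1)$, both of which reduce to algebraic identities in $N_3$ after applying $f_2'$ termwise.

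Finally, for axioms (3)–(5) the identity 1-arrow on $\mathfrak{C}$ is $(\mathrm{id}_M,\mathrm{id}_N)$, and composition of strict morphisms is strictly associative and unital because it is literally composition of pairs of group homomorphisms. Consequently $a_{ABCD}$, $r_{AB}$, $l_{AB}$ can be chosen to be identities, so $\underline{\mathcal{XM}}$ is a 2-category and not merely a bicategory. The main obstacle will be the verification that the horizontal-composition formula produces a bona fide PNT and satisfies the interchange law: this is where one has to combine the crossed-module axiom $h_{\mathfrak{C}}(^mn)=mh_{\mathfrak{C}}(n)m^{-1}$ with the equivariance and compatibility conditions for $(f_1,f_2),(g_1,g_2),(f_1',f_2'),(g_1',g_2')$ simultaneously, and do careful bookkeeping of conjugations; every other step is a direct consequence of the group operation in $N_2$ or $N_3$ together with the preceding lemma asserting that $\mathcal{XM}$ is a category.
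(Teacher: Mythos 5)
Your overall architecture coincides with the paper's: trivial PNTs as identity $2$-cells, pointwise product in $N_2$ as vertical composition, a Godement-type whiskering for horizontal composition, and the observation that composition of strict morphisms is strictly associative and unital so the associator and unitors are identities. However, the specific horizontal composition formula you propose, $(\delta\star\gamma)(a)=f_2'(\gamma(a))\cdot\delta(f_1(a))$, is not correct, and since you yourself identify its verification as the main obstacle, this is a genuine gap rather than a deferrable detail. Your formula whiskers with the \emph{target} data on both sides ($f_2'$ from the target of $\delta$ and $f_1$ from the target of $\gamma$), whereas a valid Godement product must pair target with source. Concretely, check the second PNT condition $(g_1'\circ g_1)(a)=(f_1'\circ f_1)(a)\,h_{\mathfrak{C_3}}\bigl((\delta\star\gamma)(a^{-1})\bigr)$: using $h_{\mathfrak{C_3}}\circ f_2'=f_1'\circ h_{\mathfrak{C_2}}$ and the condition $g_1'(d)=f_1'(d)h_{\mathfrak{C_3}}(\delta(d^{-1}))$ at $d=f_1(a)$, the right-hand side collapses to $f_1'\bigl(g_1(a)f_1(a)^{-1}\bigr)\cdot g_1'(f_1(a))$, and equating this with $g_1'(g_1(a))$ forces $f_1'$ and $g_1'$ to agree on the element $g_1(a)f_1(a)^{-1}=h_{\mathfrak{C_2}}\bigl({}^{f_1(a)}\gamma(a^{-1})\bigr)$, i.e.\ $h_{\mathfrak{C_3}}(\delta(h_{\mathfrak{C_2}}(n)^{-1}))=1$ for the relevant $n\in N_2$, which fails in general. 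The crossed-homomorphism identity for your $\delta\star\gamma$ likewise requires $f_2'(\gamma(a'))$ to commute with ${}^{f_1'(f_1(a'))}\delta(f_1(a))$ in $N_3$, which is not automatic.

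The paper's formula is $\gamma_3=(k_2\circ\gamma_1)\cdot(\gamma_2\circ g_1)$, i.e.\ in your notation $(\delta\star\gamma)(a)=f_2'(\gamma(a))\cdot\delta(g_1(a))$, using the first component $g_1$ of the \emph{source} of the inner PNT; with that choice the telescoping above goes through ($k_1(f_1(a)h_{\mathfrak{C_2}}(\gamma_1(a^{-1}))g_1(a)^{-1})=k_1(1)=1$ and the condition reduces to $e_1(g_1(a))=e_1(g_1(a))$), and the crossed-homomorphism identity is verified in the paper by a lengthy but routine manipulation using ${}^{h_{\mathfrak{C}}(n)}n'=nn'n^{-1}$. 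Everything else in your plan (hom-category structure, identity $1$-arrows, strictness of $a$, $r$, $l$) matches the paper and is fine; you only need to replace $f_1$ by $g_1$ in the whiskering and then actually carry out the two computations you deferred.
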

\begin{proof}
We take $\mathfrak{X}=\underline{\mathcal{XM}}$. First, we need to show that $\underline{\mathcal{XM}}(\mathfrak{C_1},~\mathfrak{C_2})$ is a category whose objects are 1 arrows and morphisms are 2 arrows for the crossed modules $\xymatrix{\mathfrak{C_1}=[N_1\ar[r]^{h_{\mathfrak{C_1}}} & M_1]}$ and $\xymatrix{\mathfrak{C_2}=[N_2\ar[r]^{h_{\mathfrak{C_2}}} & M_2]}$ in $\underline{\mathcal{XM}}$.\\

The identity morphism is the trivial pointed natural transformation.\\

We define the composition of two pointed natural transformations $PNT_1:~G\Rightarrow F$ which is a crossed homomorphism $\gamma_1$ and $PNT_2:~F\Rightarrow E$ which is a crossed homomorphism $\gamma_2$ between the strict morphisms as $\gamma=\gamma_2. \gamma_1:~M_1\rightarrow N_2$. For all elements $a,~a'$ in $M_1$, we get\\

$\gamma(a.a')=\gamma_2(a.a') . \gamma_1(a.a')=(^{e_1(a')}\gamma_2(a))\gamma_2(a') .(^{f_1(a')}\gamma_1(a))\gamma_1(a')$\\

$=(^{e_1(a')}\gamma_2(a))\gamma_2(a')(^{e_1(a')h_{\mathfrak{C_2}}(\gamma_2((a')^{-1}))}\gamma_1(a))\gamma_1(a')$ since $f_1(a')=e_1(a')h_{\mathfrak{C_2}}(\gamma_2((a')^{-1}))$\\

$=(^{e_1(a')}\gamma_2(a))\gamma_2(a')(^{e_1(a')}\gamma_2((a')^{-1})\gamma_1(a)\gamma_2(a'))\gamma_1(a')$ since\\

$^{h_{\mathfrak{C_2}}(\gamma_2((a')^{-1}))}\gamma_1(a)=\gamma_2((a')^{-1}) . \gamma_1(a) . \gamma_2(a')$ by definition of crossed module\\

$=(^{e_1(a')}\gamma_2(a)). \gamma_2(a'). (^{e_1(a')}\gamma_2((a')^{-1})). (^{e_1(a')}\gamma_1(a)). (^{e_1(a')}\gamma_2(a')) . \gamma_1(a')$\\

$=(^{e_1(a')}\gamma_2(a)). \gamma_2(a').(^{e_1(a')}\gamma_2((a')^{-1})). \gamma_2(a') . \gamma_2((a')^{-1}) . (^{e_1(a')}\gamma_1(a)) . (^{e_1(a')}\gamma_2(a')) . \gamma_1(a')$\\

$=(^{e_1(a')}\gamma_2(a)). \gamma_2(a') . \gamma_2((a')^{-1} . a') . \gamma_2((a')^{-1}) . (^{e_1(a')}\gamma_1(a)) . (^{e_1(a')}\gamma_2(a')) . \gamma_2(a') . \gamma_2((a')^{-1}) . \gamma_1(a')$\\

$=(^{e_1(a')}\gamma_2(a)). (^{e_1(a')}\gamma_1(a)). \gamma_2(a' a'). \gamma_2((a')^{-1}). \gamma_1(a')$\\

$=(^{e_1(a')}\gamma_2(a). \gamma_1(a)). \gamma_2(a'). \gamma_1(a')$\\

$=(^{e_1(a')}\gamma(a)). \gamma(a')$.\\

$g_1(a)=f_1(a). h_{\mathfrak{C_2}}(\gamma_1(a^{-1}))$ since $PNT_1:~G\Rightarrow F$ is a pointed natural transformation which is a crossed homomorphism $\gamma_1$ and $f_1(a)=e_1(a). h_{\mathfrak{C_2}}(\gamma_2(a^{-1}))$ since $PNT_2:~F\Rightarrow E$ is a pointed natural transformation which is a crossed homomorphism $\gamma_2$. Hence, 
\begin{align*}
g_1(a)=e_1(a). h_{\mathfrak{C_2}}(\gamma_2(a^{-1})). h_{\mathfrak{C_2}}(\gamma_1(a^{-1}))=e_1(a). h_{\mathfrak{C_2}}((\gamma_2. \gamma_1)(a^{-1}))=e_1(a). h_{\mathfrak{C_2}}(\gamma(a^{-1})
\end{align*}

as desired. Similarly, we may show the other part. As a result, $\gamma$ is a crossed homomorphism and gives a pointed natural transformation $PNT_3:~G\Rightarrow E$.\\

It is clear that the composition is associative.\\

For all pointed natural transformations $PNT:~G\Rightarrow F$, the crossed homomorphism $\gamma$ and $id:~F\Rightarrow F$, the composition $id\star PNT:~G\Rightarrow F$ is equal to $PNT$. Similarly, we show the other part.\\

As a result, $\underline{\mathcal{XM}}(\mathfrak{C_1},~\mathfrak{C_2})$ is a category.\\

The mapping $\mathcal{F}_{\mathfrak{C_1} \mathfrak{C_2} \mathfrak{C_3}}:~\underline{\mathcal{XM}}(\mathfrak{C_2},~\mathfrak{C_3})\times \underline{\mathcal{XM}}(\mathfrak{C_1},~\mathfrak{C_2}) \rightarrow \underline{\mathcal{XM}}(\mathfrak{C_1},~\mathfrak{C_3})$ is a functor. We send each pair $(G,~F)$ to $G\circ F$ where $G=(g_1,~g_2)$, $F=(f_1,~f_2)$ and $G\circ F=(g_1\circ f_1,~g_2\circ f_2)$ are strict morphims between the corresponding crossed modules.\\

We send each pair of pointed natural transformations $PNT_2:~E\Rightarrow K$ which is a crossed homomorphism $\gamma_2$ and $PNT_1:~G\Rightarrow F$ which is a crossed homomorphism $\gamma_1$ to their composition $PNT_3:~E\circ G\Rightarrow K\circ F$. Here, $G=(g_1,~g_2)$, $F=(f_1,~f_2)$, $E=(e_1,~e_2)$ and $K=(k_1,~k_2)$ are strict morphisms. We need to define a crossed homomorphism $\gamma_3$.\\

We take $\gamma_3=(k_2\circ \gamma_1) . (\gamma_2 \circ g_1)$ and see it satisfies the required conditions to be a crossed homomorphism. We draw the following diagrams to see the relation between the strict morphisms.
\begin{align*}
\begin{tikzpicture}[out=145, in=145, relative]
\node (A) at (0,0) {$\mathfrak{C_1}$};
\node (B) at (3,0) {$\mathfrak{C_2}$};
\node (C) at (6, 0) {$\mathfrak{C_3}~~=$};
\draw[->, thick, double] (1.5,0.25) -- (1.5,-0.25) [xshift=5pt] node[right, midway] {$\gamma_1$} (B);
\draw[->, thick, double] (4.5,0.25) -- (4.5,-0.25) [xshift=5pt] node[right, midway] {$\gamma_2$} (C);
\path[->] (A) edge [bend left] node [above] {$G=(g_1,~g_2)$} (B);
\path[->] (A) edge [bend right] node [below] {$F=(f_1,~f_2)$} (B);
\path[->] (B) edge [bend left] node [above] {$E=(e_1,~e_2)$} (C);
\path[->] (B) edge [bend right] node [below] {$K=(k_1,k_2)$} (C);
\end{tikzpicture}
\begin{tikzpicture}[out=145, in=145, relative]
\node (A) at (0,0) {$\mathfrak{C_1}$};
\node (C) at (3,0) {$\mathfrak{C_3}$};
\draw[->, thick, double] (1.5,0.25) -- (1.5,-0.25) [xshift=5pt] node[right, midway] {$\gamma_3$} (C);
\path[->] (A) edge [bend left] node [above] {$E\circ G=(e_1\circ g_1,~e_2\circ g_2)$} (C);
\path[->] (A) edge [bend right] node [below] {$K\circ F=(k_1\circ f_1,~k_2\circ f_2)$} (C);
\end{tikzpicture}
\end{align*}

We also draw the following diagrams to understand the group homomorphisms better.
\begin{align*}
\begin{tikzpicture}
\node (A) at (0, 0) {$N_1$};
\node (B) at (0, -2) {$M_1$};
\node (C) at (2, -2) {$M_2$};
\node (D) at (2, 0) {$N_2$};
\node (E) at (4, 0) {$N_3$};
\node (F) at (4, -2) {$M_3$};
\path[->] (A) edge node [left] {$h_{\mathfrak{C_1}}$} (B);
\path[->] (A) edge node [above] {$g_2$} node [below] {$f_2$} (D);
\path[->] (B) edge node [above] {$g_1$} node [below] {$f_1$} (C);
\path[->] (D) edge node [right] {$h_{\mathfrak{C_2}}$} (C);
\path[->] (D) edge node [above] {$e_2$} node [below] {$k_2$} (E);
\path[->] (C) edge node [above] {$e_1$} node [below] {$k_1$} (F);
\path[->] (E) edge node [right] {$h_{\mathfrak{C_3}}~~\Rightarrow$} (F);
\path[->] (B) edge node [right] {$\gamma_1$} (D);
\path[->] (C) edge node [right] {$\gamma_2$} (E);
\end{tikzpicture}
\begin{tikzpicture}
\node (A) at (0, 0) {$N_1$};
\node (B) at (0, -2) {$M_1$};
\node (C) at (3, -2) {$M_3$};
\node (D) at (3, 0) {$N_3$};
\path[->] (A) edge node [left] {$h_{\mathfrak{C_1}}$} (B);
\path[->] (A) edge node [above] {$e_2\circ g_2$} node [below] {$k_2\circ f_2$} (D);
\path[->] (B) edge node [above] {$e_1\circ g_1$} node [below] {$k_1\circ f_1$} (C);
\path[->] (D) edge node [right] {$h_{\mathfrak{C_3}}$} (C);
\path[->] (B) edge node [right] {$\gamma_3$} (D);
\end{tikzpicture}
\end{align*}

For all $a,~a'\in M_1$, $b\in N_1$ and $c\in N_2$, $d,~d'\in M_2$, we get the following equalities by definition of $\gamma_1$ and $\gamma_2$.
\begin{enumerate}
\item \label{60} $\gamma_1(a.a')=(^{f_1(a')}\gamma_1(a)). \gamma_1(a')$
\item $g_1(a)=f_1(a). h_{\mathfrak{C_2}}(\gamma_1(a^{-1}))$
\item $g_2(b)=f_2(b). \gamma_1(h_{\mathfrak{C_1}}(b^{-1}))$
\item \label{61} $\gamma_2(d.d')=(^{k_1(d')}\gamma_2(d)). \gamma_2(d')$
\item $e_1(d)=k_1(d). h_{\mathfrak{C_3}}(\gamma_2(d^{-1}))$
\item \label{65} $e_2(c)=k_2(c). \gamma_2(h_{\mathfrak{C_2}}(c^{-1}))$
\end{enumerate}

For all $a'\in M_1$, we have 
\begin{align}
\label{63}
(k_1\circ g_1)(a')=k_1(f_1(a'). h_{\mathfrak{C_2}}(\gamma_1((a')^{-1})))=(k_1\circ f_1)(a'). (k_1\circ h_{\mathfrak{C_2}})(\gamma_1((a')^{-1})),
\end{align}
\begin{align}
(^{(k_1\circ f_1)(a')}(k_2\circ \gamma_1)((a')^{-1}))=k_2(^{f_1(a')}\gamma_1((a')^{-1}))\\
=k_2((^{f_1(a')}\gamma_1((a')^{-1})). \gamma_1(a'). \gamma_1((a')^{-1}))\\
\label{62} 
=k_2((a')^{-1}. a' . \gamma_1((a')^{-1}))=(k_2\circ \gamma_1)((a')^{-1}).
\end{align}

For all $a,~a'\in M_1$, we have\\

$\gamma_3(a . a')=(k_2\circ \gamma_1)(a.a') . (\gamma_2 \circ g_1)(a. a')=k_2((^{f_1(a')}\gamma_1(a)). \gamma_1(a')) . \gamma_2(g_1(a) . g_1(a'))$ by \ref{60}\\

$=k_2(^{f_1(a')}\gamma_1(a)). k_2(\gamma_1(a')). (^{(k_1\circ g_1)(a')}(\gamma_2 \circ g_1)(a)) . (\gamma_2 \circ g_1)(a')$ by \ref{61}\\

$=k_2(^{f_1(a')}\gamma_1(a)). k_2(\gamma_1(a')). (^{(k_1\circ f_1)(a'). k_1(h_{\mathfrak{C_2}}(\gamma_1((a')^{-1})))}(\gamma_2\circ g_1)(a)) . (\gamma_2 \circ g_1)(a')$ by \ref{63}\\

$=k_2(^{f_1(a')}\gamma_1(a)). k_2(\gamma_1(a')). (^{(k_1\circ f_1)(a'). (h_{\mathfrak{C_3}} (k_2\circ \gamma_1)((a')^{-1}))}(\gamma_2\circ g_1)(a)) . (\gamma_2 \circ g_1)(a')$ since $K$ is a strict morphism\\

$=k_2(^{f_1(a')}\gamma_1(a)). k_2(\gamma_1(a')). (^{(k_1\circ f_1)(a')}(k_2\circ \gamma_1)((a')^{-1}).(\gamma_2\circ g_1)(a). (k_2\circ \gamma_1)(a')). (\gamma_2 \circ g_1)(a')$ by definition of $\mathfrak{C_3}$\\

$=k_2(^{f_1(a')}\gamma_1(a)). k_2(\gamma_1(a')). (^{(k_1\circ f_1)(a')}(k_2\circ \gamma_1)((a')^{-1})) .(^{(k_1\circ f_1)(a')}(\gamma_2\circ g_1)(a)).\\
(^{(k_1\circ f_1)(a')}(k_2\circ \gamma_1)(a')). (\gamma_2 \circ g_1)(a')$\\

$=(^{(k_1\circ f_1)(a')}(k_2\circ \gamma_1)(a)).k_2(\gamma_1(a')). (^{(k_1\circ f_1)(a')}(k_2\circ \gamma_1)((a')^{-1})) .(^{(k_1\circ f_1)(a')}(\gamma_2\circ g_1)(a)).\\
(^{(k_1\circ f_1)(a')}(k_2\circ \gamma_1)(a')). (\gamma_2 \circ g_1)(a')$ since $K$ is a strict morphism\\

$=(^{(k_1\circ f_1)(a')}(k_2\circ \gamma_1)(a)). k_2(\gamma_1(a')). (k_2\circ \gamma_1)((a')^{-1}). (^{(k_1\circ f_1)(a')}(\gamma_2\circ g_1)(a)).\\
(^{(k_1\circ f_1)(a')}(k_2\circ \gamma_1)(a')). (\gamma_2 \circ g_1)(a')$ by \ref{62}\\

$=(^{(k_1\circ f_1)(a')}(k_2\circ \gamma_1)(a)). (^{(k_1\circ f_1)(a')}(\gamma_2\circ g_1)(a)). (^{(k_1\circ f_1)(a')}(k_2\circ \gamma_1)(a')). (\gamma_2 \circ g_1)(a')$\\

$=(^{(k_1\circ f_1)(a')}(k_2\circ \gamma_1)(a)). (^{(k_1\circ f_1)(a')}(\gamma_2\circ g_1)(a)). (k_2\circ \gamma_1)(a'). (\gamma_2 \circ g_1)(a')$ by \ref{62}\\

$=(^{(k_1\circ f_1)(a')}(k_2\circ \gamma_1)(a). (\gamma_2\circ g_1)(a)) . (k_2\circ \gamma_1)(a'). (\gamma_2 \circ g_1)(a')$\\

$=(^{(k_1\circ f_1)(a')}((k_2\circ \gamma_1). (\gamma_2\circ g_1))(a)). ((k_2\circ \gamma_1) . (\gamma_2 \circ g_1))(a')$\\

$=(^{(k_1\circ f_1)(a')}\gamma_3(a)). \gamma_3(a')$ as required.\\

The other conditions are satisfied.\\

For a crossed homomorphism $\xymatrix{\mathfrak{C}=[N\ar[r]^{h_{\mathfrak{C}}} & M]}$, the mapping $\mathcal{F}_{\mathfrak{C}}:~1\rightarrow \underline{\mathcal{XM}}(\mathfrak{C},~\mathfrak{C})$ is a functor taking the element $\star$ in $1$ to $id_h=(id,~id)$ and morphisms $\star \rightarrow \star$ to a trivial pointed natural transformation $PNT:~(id,~id)\Rightarrow (id,~id)$.
\begin{align*}
\begin{tikzpicture}
\node (A) at (0, 0) {$\underline{\mathcal{XM}}(\mathfrak{C_3},~\mathfrak{C_4}) \times \underline{\mathcal{XM}}(\mathfrak{C_2},~\mathfrak{C_3}) \times \underline{\mathcal{XM}}(\mathfrak{C_1},~\mathfrak{C_2})$};
\node (B) at (9, 0) {$\underline{\mathcal{XM}}(\mathfrak{C_2},~\mathfrak{C_4}) \times \underline{\mathcal{XM}}(\mathfrak{C_1},~\mathfrak{C_2})$};
\node (C) at (0,-2) {$\underline{\mathcal{XM}}(\mathfrak{C_3},~\mathfrak{C_4}) \times \underline{\mathcal{XM}}(\mathfrak{C_1},~\mathfrak{C_3})$};
\node (D) at (9, -2) {$\underline{\mathcal{XM}}(\mathfrak{C_1},~\mathfrak{C_4})$};
\draw[->, thick, double] (6,-0.75)--(3, -1.25) [xshift=5pt] node [right, midway] {$a_{\mathfrak{C_1} \mathfrak{C_2} \mathfrak{C_3} \mathfrak{C_4}}$} (B);
\path[->] (A) edge node [above] {$\mathcal{F}_{\mathfrak{C_2} \mathfrak{C_3} \mathfrak{C_4}} \times 1$} (B);
\path[->] (A) edge node [left, midway] {$1\times \mathcal{F}_{\mathfrak{C_1} \mathfrak{C_2} \mathfrak{C_3}}$} (C);
\path[->] (B) edge node [right] {$\mathcal{F}_{\mathfrak{C_1} \mathfrak{C_3} \mathfrak{C_4}}$} (D);
\path[->] (C) edge node [below] {$\mathcal{F}_{\mathfrak{C_1} \mathfrak{C_2} \mathfrak{C_4}}$} (D);
\end{tikzpicture}
\end{align*}

$a_{\mathfrak{C_1} \mathfrak{C_2} \mathfrak{C_3} \mathfrak{C_4}}(F,~G,~H):~(F\circ G)\circ H\Rightarrow F\circ (G\circ H)$ is a trivial pointed natural transformation and $(F\circ G)\circ H=F\circ (G\circ H)$ for all strict morphisms $F$, $G$ and $H$. 
$a_{\mathfrak{C_1} \mathfrak{C_2} \mathfrak{C_3} \mathfrak{C_4}}(F,~G,~H)$ is the identity morphism by assumption, hence it is an isomorphism and the required diagram is commutative. As a result, $a_{\mathfrak{C_1} \mathfrak{C_2} \mathfrak{C_3} \mathfrak{C_4}}$ is a natural isomorphism.\\

We show the other conditions in a similar way and see $\underline{\mathcal{XM}}$ is a 2-category.
\end{proof}


\begin{thebibliography}{30}
\bibitem[AlNo]{alno} E. Aldrovandi, B. Noohi, {\bf Butterflies I: Morphisms of Two Group Stacks}, Advances in Mathematics, 221(3), 2009, 687-773.  
\bibitem[Baki]{baki} B. Bakalov, A. Kirillov, {\bf Lectures on Tensor Categories and Modular Functor}, 2000.
\bibitem[CaMu]{camu} A. Carey, M. Murray eds., {\bf Geometric Analysis and Lie Theory in Mathematics and Physics}, Australian Math. Soc. Lecture Notes Series, 11 Cambridge University Press, 1997, 20-104.
\bibitem[DaNi]{dani} A. Davydov, D. Nikshych, {\bf The Picard Crossed Module Of A Braided Tensor Category}, arxiv: math/ 1202.0061v1, 2012.
\bibitem[DeMi]{demi} P. De, J. S. Milne, {\bf Tannakian Categories}, 2012.
\bibitem[DuFo]{dufo} D. S. Dummit, R.M. Foote, {\bf Abstract Algebra}, Third Edition, John Wiley and Sons, Inc, 2004.
\bibitem[EtNiOs] {etnios} P. Etingof, D. Nikshych, and V. Ostrik, {\bf Fusion Categories and Homotopy Theory}, Quantum Topology, 1 (2010), no. 3, 209-273.
\bibitem[EtNiOs1]{etnios1} P. Etingof, D. Nikshych, and V. Ostrik, {\bf On Fusion Categories}, arxiv: math/ 0203060v10, 2011.
\bibitem[EtOs]{etos} P. Etingof, V. Ostrik, {\bf Finite Tensor Categories}, Moscow Math. Journal, 4 (2004), 627-654.
\bibitem[Gr]{gr} J. Greenough, {\bf Bimodule Categories and Monoidal 2-Structure}, ProQuest Dissertations and Theses, (2010), UMI NO. 3430786.
\bibitem[Hi]{hi} M. J. Hilgemann, {\bf On Finite Dimensional Hopf Algebras and Their Classifications}, ProQuest Dissertations and Theses, (2010), UMI NO. 3403886.
\bibitem[JoRo]{joro} A. Joyal, R. Street, {\bf Braided Tensor Categories}, Adv. Math., 102 (1993), 20-78.
\bibitem[KaVo]{kavo} M. M. Kapranov, V. A Voevodsky, {\bf 2 Categories and Zamolodchikov Tetrahedra Equations, Algebraic Groups and Their Generalizations: Quantum and Infinite Dimensional Methods}, Proc. of Sym. in Pure Math, vol.56, 1991, part 2.
\bibitem[La]{la} S. Lang, {\bf Algebra}, Third Edition, Mass: Addison-Wesley Pub. Co., 1993.
\bibitem[Le1]{le1} T. Leinster {\bf Basic Bicategories}, 1998.
\bibitem[Le2]{le2} T. Leinster {\bf Higher Operads, Higher Categories}, 2003.
\bibitem[Mi] {mi} J. S. Milne, {\bf Basic Theory of Affine Group Schemes}, v1.00, 2012
\bibitem[No]{no} B. Noohi, {\bf On Weak Maps Between 2-Groups}, 2005.
\bibitem[Os]{os} V. Ostrik, {\bf Module Categories, Weak Hopf Algebras and Modular Invariants}, Transform. Groups, 8(2003), 177-206.
\bibitem[OyZh]{oyzh} F. Van Oystaeyen, Y. H. Zhang, {\bf The Brauer Group of a Braided Monoidal Category}, J Algebra, 202(1998), 96-128.
\bibitem[Tu]{tu} V. Turaev, {\bf Homotopy Field Theory in Dimension 3 and Crossed Group Categories}, arxiv: math/ 0005291v1, 2000.
\bibitem[We]{we} C. A. Weibel, {\bf An Introduction to Homological Algebra}, Cambridge Studies in Advanced Mathematics 38, 1994.
\end{thebibliography}
\end{document}